 \newtheorem{definition}{Definition}[section]
 \newtheorem{theorem}[definition]{Theorem}
 \newtheorem{lemma}[definition]{Lemma}
 \newtheorem{proposition}[definition]{Proposition}
 \newtheorem*{theorem*}{Theorem}
\newtheorem*{proposition*}{Proposition}
\newtheorem*{lemma*}{Lemma}
 \theoremstyle{remark}
 \newtheorem{example}[definition]{Example}
 \newtheorem{remark}[definition]{Remark}
\newcommand{\op}[1]{\operatorname{#1}}
\newcommand{\acou}[2]{\ensuremath{\left\langle #1 , #2 \right\rangle}} 
\newcommand{\acoup}[2]{\ensuremath{\left(#1,#2\right)}}
\newcommand{\Tr}{\ensuremath{\op{Tr}}}
\newcommand{\tr}{\op{tr}}
\def\Xint#1{\mathchoice
{\XXint\displaystyle\textstyle{#1}}%
{\XXint\textstyle\scriptstyle{#1}}%
{\XXint\scriptstyle\scriptscriptstyle{#1}}%
{\XXint\scriptscriptstyle\scriptscriptstyle{#1}}%
\!\int}
\def\XXint#1#2#3{{\setbox0=\hbox{$#1{#2#3}{\int}$}
\vcenter{\hbox{$#2#3$}}\kern-.5\wd0}}
\def\dashint{\Xint-}
\newcommand{\bint}{\ensuremath{\dashint}}
\newcommand{\Str}{\op{Str}}
\newcommand{\ind}{\op{ind}}
\newcommand{\Ch}{\op{Ch}}
\newcommand{\GL}{\op{GL}}
\newcommand{\C}{\ensuremath{\mathbb{C}}} 
\newcommand{\N}{\ensuremath{\mathbb{N}}} 
\newcommand{\R}{\ensuremath{\mathbb{R}}} 
\newcommand{\Z}{\ensuremath{\mathbb{Z}}}
\newcommand{\ft}{\ensuremath{\mathfrak{t}}}
\newcommand{\Ca}[1]{\ensuremath{\mathcal{#1}}}
\newcommand{\cA}{\Ca{A}}
\newcommand{\cC}{\Ca{C}}
\newcommand{\cE}{\Ca{E}}
\newcommand{\cF}{\ensuremath{\mathcal{F}}}
\newcommand{\cH}{\ensuremath{\mathcal{H}}}
\newcommand{\cL}{\ensuremath{\mathcal{L}}}
\newcommand{\sD}{\ensuremath{{/\!\!\!\!D}}}
\newcommand{\sS}{\ensuremath{{/\!\!\!\!\!\;S}}}
\newcommand{\Hom}{\op{Hom}}
\newcommand{\End}{\ensuremath{\op{End}}}
\newcommand{\ran}{\op{ran}}
\newcommand{\dom}{\op{dom}}
\newcommand{\bt}{\ensuremath{\bullet}}
\newcommand{\HC}{\op{HC}}
\newcommand{\HP}{\op{HP}}
\numberwithin{equation}{section}
\begin{document}

\title{Index map, $\sigma$-connections, and Connes-Chern character in the setting of twisted spectral triples}
 \author{Rapha\"el Ponge}
 \address{Department of Mathematical Sciences, Seoul National University, Seoul, South Korea}
 \email{ponge.snu@gmail.com}
 \author{Hang Wang}
 \address{School of Mathematical Sciences,  University of Adelaide, Adelaide, Australia}
 \email{hang.wang01@adelaide.edu.au}

 \thanks{R.P.\ was partially supported by Research Resettlement Fund and Foreign Faculty Research Fund of Seoul National University and  Basic Research Grant 2013R1A1A2008802 of 
 National Research Foundation of Korea (South Korea)}
 
\begin{abstract}
    Twisted spectral triples are a twisting of the notion of spectral triples aiming at dealing with some type III geometric situations.  In the first part of the paper, we give a geometric construction of the index map of a twisted spectral 
    triple in terms of $\sigma$-connections on finitely generated projective modules. This clarifies the analogy with the indices of Dirac operators with coefficients in 
    vector bundles. In the second part, we give a direct construction of the Connes-Chern character of a twisted 
    spectral triple, both in the invertible and the non-invertible cases. Combining these two parts we obtain an analogue of the Atiyah-Singer index 
    formula for twisted spectral triples. 
\end{abstract}

\maketitle

\section{Introduction}
Motivated by type III geometric situations, e.g., the action of an arbitrary group of diffeomorphisms on a manifold, 
Connes-Moscovici~\cite{CM:TGNTQF} introduced the notion of a twisted spectral triple.  This is a modification of the 
usual definition of a spectral triple $(\cA,\cH,D)$, where the boundedness of commutators $[D,a]$, $a \in \cA$, is replaced by that
of twisted commutators $[D,a]_{\sigma}=Da-\sigma(a)D$, where $\sigma$ is a given automorphism of the 
algebra $\cA$. Examples include the following:
\begin{itemize}
    \item  Conformal deformations of ordinary spectral triples~\cite{CM:TGNTQF}. 
    
    \item Twistings of ordinary spectral triples by scaling automorphisms~\cite{Mo:LIFTST}.

    \item  Conformal Dirac spectral triples $( C^{\infty}(M)\rtimes G, L^{2}_{g}(M,\sS),\ \sD_{g})_{\sigma}$, 
    where $\sD_{g}$ is the Dirac operator acting on spinors and $G$ is a group of conformal 
    diffeomorphisms~\cite{CM:TGNTQF}. 

    \item  Spectral triples over noncommutative tori associated to conformal weights~\cite{CT:GBTNC2T, CM:MCNC2T}. 

    \item  Twisted spectral triples associated to various quantum statistical systems, including 
    Connes-Bost systems, graphs, and supersymmetric Riemann gas~\cite{GMT:T3sSTQSS}. 
    
    \item Twisted spectral triples associated to some continuous crossed-product algebras~\cite{IM:CPEST}. 
\end{itemize}
We refer to Section~\ref{sec:TwistedST} for a review of the first and the third examples. Connes-Moscovici~\cite{CM:TGNTQF} 
showed that, as for ordinary spectral triples, the datum of a twisted spectral $(\cA,\cH,D)_{\sigma}$ gives rise to a well defined index map $\ind_{D}:K_{0}(\cA)\rightarrow \frac{1}{2}\Z$. 
Moreover, in the $p$-summable case, this index map is computed by the pairing of the $K$-theory $K_{0}(\cA)$ with a Connes-Chern character in ordinary cyclic cohomology. 

One goal of this paper is to present a geometric  interpretation of the index map of a twisted spectral triple. 
First, instead of compressing idempotents by $D$ and its inverse as in~\cite{CM:TGNTQF} (see also~\cite{FK:TSTCCF}), we define the index map in terms of Fredholm indices of  
the following operators, 
\begin{equation*}
    \sigma(e)De:e\cH^{q}\rightarrow \sigma(e)\cH^{q}, \qquad e\in M_{q}(\cA), \ e^{2}=e.
\end{equation*}
This definition is totally analogous to the definition of the index map of an ordinary spectral triple mentioned in~\cite{Mo:EIPDNCG}. 

In the case of an ordinary spectral triple, the index map is usually defined in terms of selfadjoint idempotents, since any 
idempotent is equivalent to a selfadjoint idempotent. For a twisted spectral triple 
$(\cA,\cH,D)_{\sigma}$ the relevant  
notion of selfadjointness is meant with respect to the $\sigma$-involution $a\rightarrow \sigma(a)^{*}$. We shall say that such an idempotent is $\sigma$-selfadjoint. 
In general, it is not clear that an idempotent is equivalent to a $\sigma$-selfadjoint idempotent. For this  
reason, it is important to define the index map for \emph{arbitrary} idempotents. As a result, for a twisted 
spectral triple the index map \emph{a priori} takes values in $\frac{1}{2}\Z$. Nevertheless, when the automorphism has a 
suitable square root, it can be shown that any idempotent is equivalent to a 
$\sigma$-selfadjoint idempotent  and the index map is integer-valued 
(Lemma~\ref{lm:CriteriaSigmaAdjointIntegerIndex}). The precise condition is called the \emph{ribbon} condition (see~Definition~\ref{def:Index.ribbon}) 
and is satisfied by all the main examples of twisted spectral triples. 

As it turns out, the aforementioned construction of the index map is only a special case of a more geometric construction in 
terms of couplings of the operator $D$ with  
$\sigma$-connections on finitely generated projective modules. We refer to Section~\ref{sec:IndexMapSigmaConnections} for the precise 
definition of a $\sigma$-connection. This is the twisted analogue of the usual notion of a connection. The two notions actually 
agrees when $\sigma=\op{id}$. Given a $\sigma$-connection on a finitely generated projective module $\cE$, the definition of the coupled operator $D_{\nabla^{\cE}}$ is similar to 
the coupling of a Dirac operator with a connection on an auxiliary vector 
bundle (see Section~\ref{sec:IndexMapSigmaConnections} for the precise 
definition). In the special case $\cE=e\cA^{q}$ we recover the operator $\sigma(e)De$ by using the so-called Grassmannian
$\sigma$-connection, which is the twisted analogue of the Grassmannian connection. We then show that the operator 
$D_{\nabla^{\cE}}$ is Fredholm and we have
\begin{equation}
    \ind_{D,\sigma}[\cE]=\ind D_{\nabla^{\cE}}.
    \label{eq:Intro.geometric-index-formula}
\end{equation}
This provides us with a geometric interpretation of the index map of a twisted spectral triple. In the case of an ordinary spectral triple we recover
 the geometric interpretation of the index map mentioned in~\cite{Mo:EIPDNCG}. 
 The above  formula exhibits a close analogy with the definition of the standard Fredholm index map of a Dirac operator (the construction of which 
is recalled in Section~\ref{sec:Fredholm.Dirac.Operator}). In particular, we recover the latter in the special case of an ordinary Dirac 
spectral triple (see the discussion on this point at the end of Section~\ref{sec:IndexMapSigmaConnections}).

Another goal of this paper is to give a direct construction of the Connes-Chern character of a $p$-summable 
twisted spectral triple $(\cA,\cH,D)_{\sigma}$. In~\cite{CM:TGNTQF} the Connes-Chern character is defined as the difference 
of Connes-Chern characters of a pair of bounded Fredholm modules canonically associated to the twisted spectral triple. 
This is the same passage as in~\cite{Co:NCDG} from the unbounded Fredholm module picture to the bounded Fredholm module 
picture. One advantage of our definition of the index map is the following index formula (Proposition~\ref{lem:CCC.index-formula-Des}):
\begin{equation}
    \ind \sigma(e)De = \frac{1}{2}\Str\left((D^{-1}[D, e]_{\sigma})^{2k+1}\right), \qquad e=e^{2}\in M_{q}(\cA),
    \label{eq:Intro.index-formula}
\end{equation}where $k$ is any integer~$\geq \frac{1}{2}(p-1)$ and $D$ is assumed to be invertible. It is immediate
that the right-hand side is the pairing of $e$ is with the cochain given by
\begin{equation*}
    \tau_{2k}^{D}(a^{0},\ldots,a^{2k})=c_{k}\Str\left(D^{-1}[D, a^{0}]_{\sigma}\cdots D^{-1}[D, 
    a^{2k}]_{\sigma}\right), \qquad a^{j}\in \cA,
\end{equation*}where $c_{k}$ is a normalization constant. This is the same cochain used in~\cite{CM:TGNTQF} 
to define the Connes-Chern character of a twisted spectral triple. We give a direct proof that $\tau_{2k}^{D}$ is a normalized cyclic cocycle whose class in periodic cyclic cohomology is 
independent of $k$ (Proposition~\ref{prop:Cochian.ConnesChernChar}). The Connes-Chern character $\Ch(D)_{\sigma}$ is 
then defined as the class in periodic cyclic cohomology of any cocycles $\tau_{2k}^{D}$. 

We also use some care to define the Connes-Chern character when $D$ is non-invertible by passing to the 
unital invertible double, which we define as a twisted spectral triple over the augmented unital algebra $\tilde{\cA}=\cA\oplus \C$. 
In the invertible case, we thus obtain two definitions  of the Connes-Chern character, but these two 
definitions agree (see Proposition~\ref{prop:CC.equivalence-tau-overlinetau}). This uses the homotopy invariance of the Connes-Chern character, a detailed proof of which is given 
in Appendix~\ref{app:homotopy-invariance}. 

With the use of the Connes-Chern character and the geometric interpretation~(\ref{eq:Intro.geometric-index-formula}) of the index map we obtain the 
following index formula: for any finitely generated projective module $\cE$ and $\sigma$-connection $\nabla^{\cE}$ on 
$\cE$, 
\begin{equation*}
    \ind D_{\nabla^{\cE}}=\acou{\Ch(D)_{\sigma}}{[\cE]}.
\end{equation*}
This is the analogue for twisted spectral triples of the Atiyah-Singer index formula. 

We have attempted to give very detailed accounts of the constructions of the index map and Connes-Chern character of twisted spectral triples. 
 It is hoped that the details of these constructions will also be helpful to readers who are primarily  interested in understanding
  these constructions in the setting of ordinary spectral triples. 

The paper is organized as follows. In Section~\ref{sec:TwistedST}, we review some important definitions and examples regarding twisted 
spectral triples. In Section~\ref{sec:Fredholm.Dirac.Operator}, we recall the construction of the 
Fredholm index map of a Dirac operator. 
In Section~\ref{sec:IndexTwistedSpectralTriple}, we present the construction of the index map of a twisted spectral triple and 
single out a simple condition ensuring us it is integer-valued. 
In Section~\ref{sec:IndexMapSigmaConnections}, we give a geometric description of 
the index map of a twisted spectral triple in terms of $\sigma$-connections on finitely generated projective modules.  
In Section~\ref{sec:CyclicCohomChernChar}, we review the main definitions and properties of cyclic cohomology and 
periodic cyclic cohomology and their pairings with $K$-theory. 
In Section~\ref{sec:Connes-Chern}, we give a direct construction of the Connes-Chern character of a twisted spectral 
triple for both the invertible and non-invertible cases. In Appendix~\ref{app:PfLemCanoHermMetric} and Appendix~\ref{app:H(E)topIndepHermitianMetricE}, we present proofs of two
technical lemmas from Section~\ref{sec:IndexMapSigmaConnections}. In Appendix~\ref{app:homotopy-invariance}, we give a 
detailed proof of the homotopy invariance of the Connes-Chern 
character of a twisted spectral triple. 

\section*{Acknowledgements}
The authors would like to thank the following institutions for their hospitality during the 
preparation of this manuscript: Seoul National University (HW), Mathematical Sciences Center of Tsinghua University, Kyoto University (Research Institute of Mathematical Sciences and 
Department of Mathematics), and the University of Adelaide (RP),  Australian National University, Chern 
Institute of Mathematics of Nankai University, and Fudan University (RP+HW).

\section{Twisted Spectral Triples. }\label{sec:TwistedST}
In this section, we review various definitions and examples regarding twisted spectral triples. 

\subsection{Twisted spectral triples} We start by recalling the definition of an ordinary spectral triple.

\begin{definition}
A spectral triple $(\cA, \cH, D)$ consists of the following data: 
\begin{enumerate}
\item A $\Z_2$-graded Hilbert space $\mathcal{H}=\mathcal{H}^+\oplus \mathcal{H}^-$.
\item An involutive unital algebra $\mathcal{A}$ represented by bounded operators on $\cH$ preserving its $\Z_{2}$-grading.
\item A selfadjoint unbounded operator $D$ on $\mathcal{H}$ such that for all $a\in\mathcal{A},$
\begin{enumerate}
    \item $D$ maps $\dom(D)\cap \cH^{\pm}$ to $\cH^{\mp}$. 
    \item The resolvent $(D+i)^{-1}$ is a compact operator.
    \item $a \dom(D) \subset \dom(D)$ and $[D, a]$ is bounded for all $a \in \cA$. 
\end{enumerate}
\end{enumerate}   
\end{definition}

\begin{example}
\label{ex:DiracSpectralTriple}
The paradigm of a spectral triple is given by a Dirac spectral triple, 
\begin{equation*}
( C^{\infty}(M), L^{2}_{g}(M,\sS), \sD_{g}),
\end{equation*}
where $(M^{n},g)$ is a compact spin Riemannian manifold ($n$ even) and $\sD_{g}$ is its Dirac operator 
acting on the spinor bundle $\sS$.  In this case the $\Z_{2}$-grading of $L^{2}(M,\sS)$ arises from the 
$\Z_{2}$-grading $\sS=\sS^{+}\oplus \sS^{-}$ of the spinor bundle in terms of positive and negative spinors. 
\end{example}

The definition of a twisted spectral triple is  similar to that of an ordinary spectral triple, 
except for some ``twist'' given by the conditions (3) and (4)(b) below.

 \begin{definition}[\cite{CM:TGNTQF}]\label{TwistedSpectralTriple}
A twisted spectral triple $(\cA, \cH, D)_{\sigma}$ consists of the following: 
\begin{enumerate}
\item A $\Z_2$-graded Hilbert space $\mathcal{H}=\mathcal{H}^{+}\oplus \mathcal{H}^{-}$.
\item An involutive unital algebra $\mathcal{A}$ represented by even bounded operators on $\cH$.

\item An automorphism $\sigma:\cA\rightarrow \cA$ such that $\sigma(a)^{*}=\sigma^{-1}(a^{*})$ for all $a\in 
\cA$. 
\item An odd selfadjoint unbounded operator $D$ on $\mathcal{H}$ such that 
\begin{enumerate}
    \item The resolvent $(D+i)^{-1}$ is compact.
    \item $a \dom(D) \subset \dom(D)$ and $[D, a]_{\sigma}:=Da-\sigma(a)D$ is bounded for all $a \in \cA$. 
\end{enumerate}
\end{enumerate}   
\end{definition}

\begin{remark}\label{rmk:TST.sigma-involution}
    The condition that $\sigma(a)^{*}=\sigma^{-1}(a^{*})$ for all $a\in \cA$ exactly means that the map $a\rightarrow \sigma(a)^{*}$ is an involutive antilinear anti-automorphism of $\cA$. 
\end{remark}

\begin{remark}
    Throughout the paper we shall further assume that the algebra $\cA$ is closed under holomorphic functional calculus. This 
    implies that an element $a \in \cA$ is invertible if and only if it is invertible in $\cL(\cH)$. This also implies that all the algebras 
    $M_{q}(\cA)$, $q \in \N$, are closed under holomorphic functional calculus.
\end{remark}

\begin{remark}
    The boundedness of twisted commutators naturally appears in the setting of quantum groups, but in the attempts of 
    constructing twisted spectral triples over quantum groups the compactness of the resolvent of $D$ seems to fail 
    (see~\cite{DA:QGTST, KS:TSTQSU2, KW:TSTCDC}). We also refer to~\cite{KW:TSTCDC} for relationships between 
    twisted spectral triples and Woronowicz's covariant differential calculi. 
\end{remark}

\subsection{Conformal deformations of ordinary spectral triples}
An important class of examples of twisted spectral triples arises from \emph{conformal deformations} (i.e., inner 
twistings) of ordinary spectral triples.  

Let us start with a Dirac spectral triple $( C^{\infty}(M), L^{2}_{g}(M,\sS), \sD_{g})$ 
associated to a compact Riemannian spin oriented manifold $(M^{n},g)$ of even dimension. Consider a conformal change of 
metric, 
\begin{equation*}
    \hat{g}=k^{-2}g, \qquad k\in C^{\infty}(M), \ k>0.
\end{equation*}We then can form a new Dirac spectral triple $(C^{\infty}(M), L^{2}_{\hat{g}}(M,\sS), \sD_{\hat{g}})$. 
Bearing this in mind, note that the inner product of $L^{2}_{g}(M,\sS)$ is given by 
    \begin{equation*}
        \acou{\xi}{\eta}_{g}:= \int_{M} \acoup{\xi(x)}{\eta(x)}\sqrt{g(x)}d^{n}x, \qquad \xi,\eta \in L^{2}_{g}(M,\sS),
    \end{equation*}where $\acoup{\cdot}{\cdot}$ is the Hermitian metric of $\sS$ (and $n= \dim M$). Consider the linear 
    isomorphism $U:L^{2}_{g}(M,\sS)\rightarrow 
    L^{2}_{\hat{g}}(M,\sS)$ given by
    \begin{equation*}
        U\xi= k^{\frac{n}{2}}\xi \qquad \forall \xi \in L^{2}_{{g}}(M,\sS). 
    \end{equation*}We observe that $U$ is a unitary operator since, for all $\xi\in L^{2}_{g}(M,\sS)$, we have
    \begin{equation*}
      \acou{U\xi}{U\xi}_{\hat{g}}= \int_{M}\acoup{k(x)^{\frac{n}{2}}\xi(x)}{k(x)^{\frac{n}{2}}\xi(x)} 
      \sqrt{k(x)^{-2}g(x)}d^{n}x=\acou{\xi}{\xi}_{g}. 
    \end{equation*}Moreover, the conformal invariance of the Dirac operator (see, e.g., \cite{Hi:HS}) means that 
    \begin{equation}
        \sD_{\hat{g}}=k^{\frac{n+1}{2}}\sD_{g}k^{\frac{-n+1}{2}}. 
        \label{eq:TwsitedST.conformal-invariance-Dirac}
    \end{equation}
    Thus,
    \begin{equation}
    \label{eq:Conformal-Invariance}
        U^{*}\sD_{\hat{g}}U=k^{-\frac{n}{2}}\left( 
        k^{\frac{n+1}{2}}\sD_{g}k^{\frac{-n+1}{2}}\right)k^{\frac{n}{2}}=\sqrt{k}\; \sD_{g}\!\sqrt{k}.
    \end{equation}Therefore, we obtain the following result. 
    
\begin{proposition}\label{prop:ConformalChangeDiracST}
        The spectral triples $(C^{\infty}(M),L^{2}_{\hat{g}}(M,\sS),\sD_{\hat{g}})$ 
        and $(C^{\infty}(M),L^{2}_{g}(M,\sS), \sqrt{k}\; \sD_{g}\!\sqrt{k})$ are unitarily equivalent.
\end{proposition}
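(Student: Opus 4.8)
The plan is to exhibit an explicit unitary intertwining the two spectral triples, following exactly the computation laid out just above the statement. First I would recall that a unitary equivalence of spectral triples $(\cA,\cH_1,D_1)$ and $(\cA,\cH_2,D_2)$ means a unitary $U:\cH_1\to\cH_2$ such that $U$ intertwines the two representations of $\cA$, respects the $\Z_2$-gradings, and satisfies $UD_1U^{*}=D_2$ (as unbounded selfadjoint operators, i.e.\ with domains matching appropriately). So the goal reduces to checking these conditions for the map $U:L^2_g(M,\sS)\to L^2_{\hat g}(M,\sS)$, $U\xi=k^{n/2}\xi$.

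The key steps, in order, are as follows. (1) \emph{Unitarity.} This is the second displayed computation before the statement: for $\xi\in L^2_g(M,\sS)$ one has $\acou{U\xi}{U\xi}_{\hat g}=\int_M \acoup{k^{n/2}\xi}{k^{n/2}\xi}\sqrt{k^{-2}g}\,d^nx=\int_M\acoup{\xi}{\xi}\sqrt{g}\,d^nx=\acou{\xi}{\xi}_g$, using that $\hat g=k^{-2}g$ has volume density $k^{-n}\sqrt g$; surjectivity is clear since $\xi\mapsto k^{-n/2}\xi$ is an inverse. (2) \emph{Intertwining the algebra representations.} Both $L^2_g(M,\sS)$ and $L^2_{\hat g}(M,\sS)$ carry the representation of $C^\infty(M)$ by pointwise multiplication, and since $k$ is a function, multiplication by $k^{n/2}$ commutes with multiplication by any $f\in C^\infty(M)$; hence $UfU^{*}=f$. (3) \emph{Gradings.} Multiplication by the scalar function $k^{n/2}$ preserves the splitting $\sS=\sS^+\oplus\sS^-$, so $U$ is even. (4) \emph{Intertwining the operators.} This is equation~(\ref{eq:Conformal-Invariance}): combining the conformal covariance~(\ref{eq:TwsitedST.conformal-invariance-Dirac}), $\sD_{\hat g}=k^{(n+1)/2}\sD_g k^{(-n+1)/2}$, with $U=k^{n/2}$ gives $U^{*}\sD_{\hat g}U=k^{-n/2}\bigl(k^{(n+1)/2}\sD_g k^{(-n+1)/2}\bigr)k^{n/2}=\sqrt{k}\,\sD_g\sqrt{k}$. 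Equivalently $U\bigl(\sqrt k\,\sD_g\sqrt k\bigr)U^{*}=\sD_{\hat g}$, which is the required identity.

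I would also note, briefly, that the domains behave correctly: $U$ maps $\dom(\sD_g)$ onto $\dom(\sD_{\hat g})$ and, since $\sqrt k\,\sD_g\sqrt k$ is selfadjoint with domain $\sqrt k^{-1}\dom(\sD_g)=\dom(\sD_g)$ (as $k$ is smooth, bounded, and bounded below), the algebraic identity on this common core upgrades to genuine unitary equivalence of selfadjoint operators; conditions (a) compactness of the resolvent and the boundedness/mapping conditions are automatically transported by a unitary and need no separate verification.

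The main obstacle—really the only nontrivial input—is the conformal covariance of the Dirac operator, equation~(\ref{eq:TwsitedST.conformal-invariance-Dirac}); but this is a classical fact (Hitchin~\cite{Hi:HS}) which I would simply cite rather than reprove. Everything else is a routine, short verification that the three structural conditions defining a unitary equivalence of spectral triples hold for the explicit $U=k^{n/2}$, and the proof is essentially just the assembly of the two displayed computations immediately preceding the statement.
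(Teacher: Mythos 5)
Your proof is correct and follows the same route as the paper: the paper's argument is precisely the pair of displayed computations preceding the proposition (unitarity of $U\xi=k^{n/2}\xi$, and $U^{*}\sD_{\hat g}U=\sqrt{k}\,\sD_{g}\sqrt{k}$ via the conformal covariance law~(\ref{eq:TwsitedST.conformal-invariance-Dirac})), with the remaining verifications left implicit. You simply make explicit the routine checks (intertwining of the $C^{\infty}(M)$-action, evenness, domains) that the paper omits, so there is no substantive difference.
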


\begin{remark}
\label{rem:kLip}
 Whereas the definition of $(C^{\infty}(M),L^{2}_{\hat{g}}(M,\sS),\sD_{\hat{g}})$ requires $k$ to be 
    smooth, in the definition of  $(C^{\infty}(M),L^{2}_{g}(M,\sS), \sqrt{k}\; 
    \sD_{g}\!\sqrt{k})$ it is enough to assume that $k$ is a positive Lipschitz function. 
\end{remark}

More generally, let $(\cA,\cH,D)$ be an ordinary spectral and $k$ a positive element of $\cA$. If we replace $D$ by its 
conformal deformation $kDk$ then, when $\cA$ is noncommutative, the triple $(\cA,\cH,kD{k})$ need not be an ordinary spectral triple. However, as the 
following result shows, it always gives rise to a twisted spectral triple. 

\begin{proposition}[\cite{CM:TGNTQF}]
\label{Prop:ConformalPerturbation}Consider the automorphism $\sigma:\cA\rightarrow \cA$ defined by
    \begin{equation}
        \sigma(a)= k^{2}a k^{-2} \qquad \forall a \in \cA.
        \label{eq:TwistedST.sigmah}
    \end{equation}
Then $(\mathcal{A}, \mathcal{H}, kDk)_{\sigma}$ is a twisted spectral triple. 
\end{proposition}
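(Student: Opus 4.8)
The plan is to verify, one at a time, the four defining conditions of a twisted spectral triple from Definition~\ref{TwistedSpectralTriple} for the triple $(\cA,\cH,kDk)_{\sigma}$ with $\sigma(a)=k^{2}ak^{-2}$. Conditions (1) and (2) are inherited verbatim from the ordinary spectral triple $(\cA,\cH,D)$, since we are not changing the Hilbert space, its $\Z_{2}$-grading, or the representation of $\cA$. So the work is concentrated in conditions (3) and (4).

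For condition (3), I would first note that, since $k$ is a positive element of $\cA$ and $\cA$ is closed under holomorphic functional calculus, $k$ is invertible in $\cA$, so $\sigma(a)=k^{2}ak^{-2}$ is a genuine algebra automorphism of $\cA$ (its inverse is $a\mapsto k^{-2}ak^{2}$). The compatibility with the involution, $\sigma(a)^{*}=\sigma^{-1}(a^{*})$, is a one-line computation: $\sigma(a)^{*}=(k^{2}ak^{-2})^{*}=k^{-2}a^{*}k^{2}=\sigma^{-1}(a^{*})$, using that $k=k^{*}$ (so $k^{2}$ and $k^{-2}$ are selfadjoint).

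For condition (4): the operator $kDk$ is selfadjoint because $D$ is selfadjoint and $k$ is a bounded selfadjoint invertible operator, so conjugation $T\mapsto kTk$ preserves selfadjointness (more carefully, one should remark that $\dom(kDk)=k^{-1}\dom(D)$ and that $kDk$ is selfadjoint on this domain because $k$ implements a bounded similarity with bounded inverse; one can invoke the standard fact that $S^{*}DS$ is selfadjoint when $D$ is selfadjoint and $S$ is bounded, boundedly invertible, and selfadjoint). It is odd because $D$ is odd and $k$ is even. For (4)(a), the resolvent: $kDk+i$ is invertible iff $D+ik^{-2}$ is (up to multiplication by the bounded invertible $k$ on each side), and since $(D+i)^{-1}$ is compact and $D$ has compact resolvent, a standard perturbation/resolvent-identity argument shows $(kDk+i)^{-1}$ is compact — alternatively, $(kDk+i)^{-1}=k^{-1}(D+ik^{-2})^{-1}k^{-1}$ and $(D+ik^{-2})^{-1}$ is compact as a product of the compact operator $(D+i)^{-1}$ with bounded operators after a resolvent identity. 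For (4)(b), the key computation is the twisted commutator: for $a\in\cA$,
\begin{equation*}
    [kDk,a]_{\sigma}=kDka-\sigma(a)kDk=kDka-k^{2}ak^{-2}kDk=k\big(Dka-kaD\big)k^{-1}\cdot k^{2}\cdot k^{-1}?
\end{equation*}
the clean way is to write $[kDk,a]_{\sigma}=k\,[D,kak^{-1}\cdot k]\cdots$; more transparently, $[kDk,a]_{\sigma}=k\big(D(ka)-(kak^{-1})(kD)\big)k = k\,[D,ka]\,k^{-1}k \cdots$ — I would organize this as $[kDk,a]_{\sigma}=k[D,kak]k^{-1}$ is not quite right either, so the honest step is: $[kDk,a]_{\sigma}=kDka-k^{2}ak^{-2}kDk = k\big(D(ka)-(ka)D\big)k - k\big((ka)D k - kaDk\big) \cdots$; in any case it reduces to a finite $\cA$-linear combination of terms of the form $k\,[D,b]\,k'$ with $b\in\cA$ and $k,k'$ bounded, hence bounded because $[D,b]$ is bounded by hypothesis. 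Also $a\,\dom(kDk)=ak^{-1}\dom(D)\subset k^{-1}\dom(D)$ since $k^{-1}\dom(D)$ is $\cA$-invariant (as $a\,\dom(D)\subset\dom(D)$ and $k^{-1}$ commutes appropriately), giving $a\,\dom(kDk)\subset\dom(kDk)$.

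The main obstacle is purely bookkeeping: carefully identifying $\dom(kDk)$ and checking that $kDk$ is genuinely selfadjoint (not merely symmetric) on that domain, and expanding the twisted commutator $[kDk,a]_{\sigma}$ so that it visibly becomes a finite sum of bounded operators built from the ordinary bounded commutators $[D,b]$. Everything else — the algebraic properties of $\sigma$, the grading, the compact resolvent via a resolvent identity — is routine once $k$ is known to be invertible in $\cA$ with bounded inverse. I would therefore present the selfadjointness and the twisted-commutator expansion as the two substantive lemmatic steps and dispatch the rest in a sentence each.
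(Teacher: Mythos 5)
Your overall plan is the right one and matches the paper's: conditions (1)--(2) are inherited, condition (3) is the one-line computation $\sigma(a)^{*}=(k^{2}ak^{-2})^{*}=k^{-2}a^{*}k^{2}=\sigma^{-1}(a^{*})$, and the resolvent, oddness, selfadjointness, and domain-preservation observations for $kDk$ are all correct (if terse). The paper itself only records the twisted-commutator step, treating the rest as routine, so you are in good company there.

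The gap is precisely in that key step, which you attempt three or four times and never complete. You write several incorrect factorizations, flag them as wrong, and then fall back on the assertion that the twisted commutator ``reduces to a finite $\cA$-linear combination of terms of the form $k[D,b]k'$.'' That is not a proof, and it also misses that no sum is needed: there is a clean one-term identity. Setting $b:=kak^{-1}\in\cA$, one has $ka=bk$, so
\begin{equation*}
[kDk,a]_{\sigma}=kDka-k^{2}ak^{-2}\,kDk=kD(bk)-k\,b\,Dk=k\,[D,b]\,k=k\,[D,kak^{-1}]\,k,
\end{equation*}
which is bounded because $kak^{-1}\in\cA$ and $[D,\cdot]$ is bounded on $\cA$. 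This is exactly the identity the paper records, and it is the one substantive computation the proof turns on; without nailing it down, the argument for condition (4)(b) — which is the whole point of the proposition — is incomplete. Once you have it, your remarks about $a\,\dom(kDk)\subset\dom(kDk)$ also become immediate, since the same conjugate $kak^{-1}\in\cA$ preserves $\dom(D)$.
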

\begin{remark}
    The main property to check is the boundedness of twisted commutators $[kDk,a]_{\sigma}$, $a \in \cA$. This follows 
    from the equalities,
    \begin{equation*}
      [kDk,a]_{\sigma}=(kDk)a-(k^{2}ak^{-2})(kDk)=k(D(kak^{-1})-(kak^{-1})D)k=k[D,kak^{-1}]k. 
    \end{equation*}
\end{remark}

\begin{remark}
    We refer to~\cite{PW:NCGCGI.PartIII} for a generalization of the above construction in terms  ``pseudo-inner twistings'' of 
    ordinary spectral triples. We note that this construction also encapsulates the construction of twisted spectral triples over 
    noncommutative tori associated to conformal weights of~\cite{CT:GBTNC2T}. 
\end{remark}

\subsection{Conformal Dirac spectral triple}
The conformal Dirac spectral triple of~\cite{CM:TGNTQF} is a nice illustration of the geometric relevance of twisted spectral 
triples. Let $\Gamma$ be the diffeomorphism group of a compact manifold $M$. In order to study the action of $\Gamma$ 
on $M$, noncommutative geometry suggests seeking for a spectral triple over the crossed-product algebra 
$C^{\infty}(M)\rtimes \Gamma$, i.e., the algebra with generators $f\in C^{\infty}(M)$ and 
$u_{\varphi}$, $\varphi\in \Gamma$, with relations,
\begin{equation*}
   u_{\varphi}^{*}= u_{\varphi}^{-1}=u_{\varphi^{-1}}, \qquad u_{\varphi}f=(f\circ \varphi)u_{\varphi}.
\end{equation*}The first set of relations implies that any unitary representation of $C^{\infty}(M)\rtimes \Gamma$ 
induces a unitary representation of $\Gamma$. The second set of relations shows the compatibility with the pushforward 
of functions of by diffeomorphisms. 

The manifold structure is the only diffeomorphism-invariant differentiable structure on $M$, so in particular $M$ does 
not carry a diffeomorphism-invariant metric. This prevents us from constructing a unitary representation of $\Gamma$ in 
an $L^{2}$-space of tensors or differential forms or a first-order (pseudo)differential operator $D$ with a 
$\Gamma$-invariant principal symbol (so as to ensure the boundedness of commutators $[D,u_{\varphi}]$). As observed by 
Connes~\cite{Co:CCTFCF} we can bypass this issue by passing to the total space of the metric bundle $P\rightarrow M$ (seen as 
a ray subbundle of the bundle $T^{*}M\odot T^{*}M$ of symmetric $2$-tensors). As it turns out, the metric bundle $P$ 
carries a wealth of diffeomorphism-invariant structures, including a  diffeomorphism-invariant Riemannian structure. 
The construction of a spectral triple over $C^{\infty}_{c}(P)\rtimes \Gamma$ was carried out by 
Connes-Moscovici~\cite{CM:LIFNCG} who also computed its Connes-Chern character~\cite{CM:HACCTIT,CM:DCCHASTG}. The 
passage from the base manifold $M$ to the metric bundle $P$ is the geometric counterpart of the  well known passage from type III 
factors to type II factors by taking crossed-products with the action of $\R$. 

Even if there is a Thom isomorphism $K_{*}(C^{\infty}_{c}(P)\rtimes \Gamma)\simeq  K_{*}(C^{\infty}(M)\rtimes \Gamma)$, 
it would be desirable to work directly with the base manifold. As mentioned above there are obstructions to doing so when 
dealing with the full group of diffeomorphism. However, as observed by Connes-Moscovici, if we restrict our attention 
to a group of diffeomorphisms preserving a conformal structure, then we are able to construct a spectral triple 
provided we relax the definition of an ordinary spectral triple to that of a \emph{twisted} spectral triple. This 
construction can be explained as follows. 

Let $M^{n}$ be a compact (closed) spin oriented manifold of even dimension $n$ equipped with a conformal structure 
$\cC$, i.e., a conformal class of Riemannian metrics. We denote by $G$ (the identity component of) the group of (smooth) 
 orientation-preserving diffeomorphisms of $M$ preserving the  conformal and spin structures.  
Let $g$ be a metric in the conformal class $\cC$ with associated Dirac operator $\sD_{g}:C^{\infty}(M,\sS)\rightarrow C^{\infty}(M,\sS)$ acting on the sections of the spinor bundle
$\sS=\sS^{+}\oplus \sS^{-}$.  We also denote by $L^{2}_{g}(M,\sS)$ the corresponding Hilbert space of $L^{2}$-spinors. 

If $\phi:M\rightarrow M$ is a diffeomorphism preserving the conformal class $\cC$, then there is a unique function 
$k_{\phi}\in C^{\infty}(M)$, $k_{\phi}>0$, such that 
\begin{equation}
    \phi_{*}g=k_{\phi}^{2}g.
    \label{eq:TwistedST.conformal-factor}
\end{equation}In addition, $\phi$ uniquely lifts to a unitary vector bundle isomorphism $\phi^{\sS}:\sS \rightarrow 
\phi_{*}\sS$, i.e., a unitary section of $\Hom(\sS, \phi_{*}\sS)$ (see~\cite{BG:SODVM}). 
We then let $V_{\phi}:L^{2}_{g}(M,\sS)\rightarrow L^{2}_{g}(M,\sS)$ be the bounded operator given by
\begin{equation}
    V_{\phi}u(x) = \phi^{\sS}\left( u\circ \phi^{-1}(x)\right) \qquad \forall u \in L^{2}_{g}(M,\sS)\ \forall x \in M.
    \label{eq:TwistedST.Vphi}
\end{equation}
The map $\phi \rightarrow V_{\phi}$ is a representation of $G$ in $L^{2}_{g}(M,\sS)$, but this is not a unitary 
representation. In order to get a unitary representation we need to take into account the Jacobian 
$|\phi'(x)|=k_{\phi}(x)^{n}$ of 
$\phi \in G$. This is achieved by using the unitary operator $U_{\phi}:L^{2}_{g}(M,\sS)\rightarrow 
L^{2}_{g}(M,\sS)$ given by 
\begin{equation}
    U_{\phi}=k_{\phi}^{\frac{n}{2}}V_{\phi}, \qquad \phi \in G.
    \label{eq:TwistedST.Uphi}
\end{equation}Then $\phi \rightarrow U_{\phi}$ is a unitary representation of $G$ in $L^{2}_{g}(M,\sS)$. This enables 
us to represent the elements of the crossed-product algebra $C^{\infty}(M)\rtimes G$ as linear combinations of 
operators $fU_{\phi}$ on $L^{2}_{g}(M,\sS)$, where $\phi\in G$ and $f\in C^{\infty}(M)$ acts by scalar multiplication. These operators are subject to the relations, 
\begin{equation*}
    U_{\phi^{-1}}=U_{\phi}^{-1}=U_{\phi}^{*} \qquad \text{and} \qquad  U_{\phi}f =(f\circ \phi^{-1})U_{\phi}. 
\end{equation*}
We then let $\sigma_{g}$ be the automorphism of $C^{\infty}(M)\rtimes G$ given by
\begin{equation}
    \sigma_{g}(fU_{\phi}):=k_{\phi}fU_{\phi} \qquad \forall f \in C^{\infty}(M) \ \forall \phi \in G.
    \label{eq:TwistedST.automorphism-conformal-DiracST}
\end{equation}

\begin{proposition}[\cite{CM:TGNTQF, Mo:LIFTST}]\label{prop:TwistedST.automorphism-conformal-DiracST} 
    The triple $( C^{\infty}(M)\rtimes G, L^{2}_{g}(M,\sS), \,\sD_{g})_{\sigma_{g}}$ is a twisted 
    spectral triple.
\end{proposition}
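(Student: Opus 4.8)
The plan is to verify in turn the four items of Definition~\ref{TwistedSpectralTriple}, the only genuine content being the boundedness of the twisted commutators $[\sD_g,a]_{\sigma_g}$ in item~(4).

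Items~(1) and~(2) are bookkeeping. The $\Z_2$-grading of $L^2_g(M,\sS)$ is the one induced by the chirality splitting $\sS=\sS^+\oplus\sS^-$. The crossed-product algebra $C^\infty(M)\rtimes G$ is a unital involutive algebra (for the relations recalled just before~\eqref{eq:TwistedST.automorphism-conformal-DiracST}), and it is represented on $L^2_g(M,\sS)$ by the operators $fU_\phi$ of~\eqref{eq:TwistedST.Vphi}--\eqref{eq:TwistedST.Uphi}; these are bounded because each $U_\phi$ is unitary and each $f$ acts by bounded scalar multiplication, and they are even because multiplication by $f$ and by $k_\phi^{n/2}$ preserves the grading while the spin lift $\phi^{\sS}$ is compatible with the chirality splitting, $\phi$ being orientation-preserving and preserving the spin structure; hence $V_\phi$, and so $U_\phi$, is even.

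For item~(3) I would first check that~\eqref{eq:TwistedST.automorphism-conformal-DiracST} defines an algebra automorphism. Multiplicativity on generators reduces to the cocycle identity $k_{\phi\psi}=k_\phi\,(k_\psi\circ\phi^{-1})$, which follows from~\eqref{eq:TwistedST.conformal-factor} upon writing $(\phi\psi)_*g=\phi_*(\psi_*g)=(k_\psi^2\circ\phi^{-1})\,k_\phi^2\,g$ and taking positive square roots; bijectivity is then clear, with $\sigma_g^{-1}(fU_\phi)=k_\phi^{-1}fU_\phi$. The twist compatibility $\sigma_g(a)^{*}=\sigma_g^{-1}(a^{*})$ I would verify on generators $a=fU_\phi$: expanding both sides via $(fU_\phi)^{*}=(\bar f\circ\phi)U_{\phi^{-1}}$ and using the specialization $\psi=\phi^{-1}$ of the cocycle identity (which gives $k_{\phi^{-1}}\circ\phi^{-1}=k_\phi^{-1}$), both sides come out equal to $(k_\phi\circ\phi)(\bar f\circ\phi)U_{\phi^{-1}}$.

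The heart of the proof is item~(4). That $\sD_g$ is odd for the chirality grading is standard, and that $\sD_g$ is self-adjoint with $(\sD_g+i)^{-1}$ compact is the usual fact for an elliptic, formally self-adjoint, first-order operator on a closed manifold (essential self-adjointness with domain $H^1(M,\sS)$, and compactness through the Rellich embedding $H^1\hookrightarrow L^2$). For the boundedness of $[\sD_g,fU_\phi]_{\sigma_g}=\sD_g fU_\phi-k_\phi fU_\phi\sD_g$ the key identity is
\begin{equation*}
    U_\phi\,\sD_g\,U_\phi^{-1}=k_\phi^{-1/2}\,\sD_g\,k_\phi^{-1/2},
\end{equation*}
which I would obtain by combining the naturality of the Dirac operator under the spin lift of $\phi$, giving $V_\phi\sD_g V_\phi^{-1}=\sD_{\phi_*g}$, with the conformal covariance~\eqref{eq:TwsitedST.conformal-invariance-Dirac} of the Dirac operator applied to $\phi_*g=k_\phi^2g$, and then conjugating by the scalar factor $k_\phi^{n/2}$ built into $U_\phi$. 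Substituting this and using $[\sD_g,f]=c(df)$, I expect the twisted commutator to collapse to
\begin{equation*}
    [\sD_g,fU_\phi]_{\sigma_g}=[\sD_g,f]\,U_\phi+f\,[\sD_g,k_\phi^{1/2}]\,k_\phi^{-1/2}\,U_\phi,
\end{equation*}
a sum of products of bounded operators (Clifford multiplication by $df$ and by $dk_\phi^{1/2}$, multiplication by the bounded functions $f$ and $k_\phi^{-1/2}$, and the unitary $U_\phi$), hence bounded; the domain condition $fU_\phi\dom(\sD_g)\subset\dom(\sD_g)$ follows from the same identity, since multiplication by the smooth positive functions $k_\phi^{\pm1/2}$ and by $f$ preserves $\dom(\sD_g)=H^1(M,\sS)$. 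Boundedness for an arbitrary element of $C^\infty(M)\rtimes G$ then follows by linearity. I expect the main obstacle to be pinning down the precise conjugation identity above: one has to keep careful track of the power of $k_\phi$ produced by the naturality-plus-conformal-covariance step and verify that the normalization $k_\phi^{n/2}$ in $U_\phi$ exactly cancels the Jacobian, leaving the symmetric weight $k_\phi^{-1/2}$ on each side — the equivariant counterpart of the computation behind~\eqref{eq:Conformal-Invariance}.
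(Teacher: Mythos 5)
Your proposal is correct and takes essentially the same approach as the paper's remark following the proposition: the key conjugation identity $U_\phi\sD_g U_\phi^{*}=k_\phi^{-1/2}\sD_g k_\phi^{-1/2}$, obtained from naturality of $\sD_g$ under the spin lift plus conformal covariance~(\ref{eq:TwsitedST.conformal-invariance-Dirac}) and the $k_\phi^{n/2}$ normalization, and the resulting collapse $[\sD_g,U_\phi]_{\sigma_g}=[\sD_g,k_\phi^{1/2}]k_\phi^{-1/2}U_\phi$, are exactly the paper's computation. Your additional verifications of items (1)--(3) (grading compatibility, the cocycle identity $k_{\phi\psi}=k_\phi\,(k_\psi\circ\phi^{-1})$, multiplicativity of $\sigma_g$, and the $\sigma$-involution condition) are correct and fill in routine details that the paper delegates to the cited references.
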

\begin{remark}
    The bulk of the proof is showing the boundedness of the twisted commutators $[\sD_{g},U_{\phi}]_{\sigma_{g}}$, 
    $\phi \in G$. We remark that 
    \begin{equation*}
        U_{\phi}\sD_{g}U_{\phi}^{*}=k_{\phi}^{\frac{n}{2}}(V_{\phi}\sD_{g}V_{\phi}^{-1})k_{\phi}^{-\frac{n}{2}}=  
       k_{\phi}^{\frac{n}{2}} 
       \sD_{\phi_{*}g}k_{\phi}^{-\frac{n}{2}}=k_{\phi}^{\frac{n}{2}}\sD_{k_{\phi}^{2}g}k_{\phi}^{-\frac{n}{2}}. 
    \end{equation*}Thus, using the conformal invariance law~(\ref{eq:TwsitedST.conformal-invariance-Dirac}), we  get
    \begin{equation*}
        U_{\phi}\sD_{g}U_{\phi}^{*}=k_{\phi}^{\frac{n}{2}}\left(  k_{\phi}^{-\left(\frac{n+1}{2}\right)}\sD_{g}k_{\phi}^{\frac{n-1}{2}} \right)k_{\phi}^{-\frac{n}{2}}= 
                k_{\phi}^{-\frac{1}{2}}\sD_{g}k_{\phi}^{-\frac{1}{2}}
    \end{equation*}
    Using this we see that the twisted commutator $[\sD_{g},U_{\phi}]_{\sigma_{g}}=\sD_{g}U_{\phi}-k_{\phi}U_{\phi}\sD_{g} $ is equal to
    \begin{equation*}
       \left( 
        \sD_{g}k_{\phi}^{\frac{1}{2}}-k_{\phi}(U_{\phi}\sD_{g}U_{\phi}^{*})k_{\phi}^{\frac{1}{2}}\right)k_{\phi}^{-\frac{1}{2}}U_{\phi} 
        = \left( 
        \sD_{g}k_{\phi}^{\frac{1}{2}}-k_{\phi}^{\frac{1}{2}}\sD_{g}\right)k_{\phi}^{-\frac{1}{2}}U_{\phi} = 
        [\sD_{g},k_{\phi}^{\frac{1}{2}}]k_{\phi}^{-\frac{1}{2}}U_{\phi}.
            \end{equation*}This shows that $[\sD_{g},U_{\phi}]_{\sigma_{g}}$ is bounded. 
\end{remark}

\section{The Fredholm Index Map of a Dirac Operator}
\label{sec:Fredholm.Dirac.Operator}
In this section, we recall  how the datum of a Dirac operator gives rise to an additive index map in $K$-theory. In the 
next two sections we shall generalize this construction to \emph{arbitrary} twisted spectral triples. 

Let $(M^n, g)$ be a compact spin oriented Riemannian manifold of even dimension $n$ and 
let $\sD_g: C^{\infty}(M, \sS)\rightarrow C^{\infty}(M, \sS)$ be the associated Dirac operator acting on sections of the spinor bundle. 
As $n$ is even, the spinor bundle splits as $\sS=\sS^+\oplus\sS^-$, where $\sS^+$ (resp., $\sS^-$) is the bundle of 
positive (resp., negative) spinors.
The Dirac operator is odd with respect to this $\Z_{2}$-grading, and so it takes the form,
\begin{equation*}
\sD_{g}=\begin{pmatrix} 0 & \sD^{-}_{g} \\ \sD^{+}_{g} & 0 \end{pmatrix}, \qquad \sD^{\pm}_{g}: C^{\infty}(M, \sS^{\pm})\rightarrow C^{\infty}(M, \sS^{\mp}).
\end{equation*}
Let $E$ be a Hermitian bundle over $M$ and $\nabla^E: C^{\infty}(M, E)\rightarrow C^{\infty}(M, T^*M\otimes E)$ be a Hermitian connection on $E.$
The operator $\sD_{\nabla^E}: C^{\infty}(M, \sS\otimes E)\rightarrow C^{\infty}(M, \sS\otimes E)$ is defined by 
\begin{equation*}
\sD_{\nabla^E}=\sD_{g}\otimes1_E+c(\nabla^E),
\end{equation*}
where $c(\nabla^E)$ is given by the composition,
\begin{equation*}
    C^{\infty}(M, \sS\otimes E) \xrightarrow{1_{\sS}\otimes\nabla^{E}} C^{\infty}(M, \sS\otimes T^*M\otimes E) 
   \xrightarrow{c\otimes 1_E} C^{\infty}(M, \sS\otimes E),
\end{equation*}
where $c: \sS\otimes T^*M\rightarrow\sS$ is the Clifford action of $T^*M$ on $\sS$.
With respect to the splitting $\sS\otimes E=(\sS^+\otimes E)\oplus(\sS^-\otimes E)$, the operator $\sD_{\nabla^E}$ takes the form,
\begin{equation}
\label{eq:Coupled.Dirac.Operator}
\sD_{\nabla^{E}}=\begin{pmatrix}0 & D_{\nabla^E}^- \\ D_{\nabla^E}^+ & 0\end{pmatrix}, \qquad \sD_{\nabla^E}^{\pm}: C^{\infty}(M, \sS^{\pm}\otimes E)\rightarrow C^{\infty}(M, \sS^{\mp}\otimes E).
\end{equation}
As $\nabla^{E}$ is a Hermitian connection, the operator $\sD_{\nabla^{E}}$ is formally selfadjoint, i.e., $(\sD_{\nabla^E}^+)^*=\sD_{\nabla^E}^-.$ 
Moreover, $\sD_{\nabla^E}$ is an elliptic differential operator, and hence is Fredholm. 
We then define its Fredholm index by 
\begin{equation*}  
\ind\sD_{\nabla^E}:=\ind\sD_{\nabla^E}^{+}=\dim\ker\sD_{\nabla^E}^+-\dim\ker\sD_{\nabla^E}^-.
\end{equation*}
This index is computed by the local index formula of Atiyah-Singer~\cite{AS:IEO1, AS:IEO3}, 
\begin{equation*}
\ind\sD_{\nabla^E}=(2i\pi)^{-\frac{n}{2}}\int_M\hat{A}(R^M)\wedge\Ch(F^E),
\end{equation*}
where $\hat{A}(R^M)=\det{}^{\frac12}\left(\frac{R^M/2}{\sinh(R^M/2)}\right)$ is the (total) $\hat{A}$-form of the Riemann curvature $R^M$ and 
$\Ch(F^E)=\Tr\left( e^{-F^E}\right)$ is the (total) Chern form of the curvature $F^E$ of the connection $\nabla^E$.

We observe that even without using the Atiyah-Singer index formula, it is not difficult to see that the value of $\ind\sD_{\nabla^E}$ depends only on the class of $E$ in the 
$K$-theory group $K^{0}(M)$. First, it is immediate to see that its value is independent of the choice of the Hermitian structure of $E$ and the 
Hermitian connection $\nabla^E$, since the principal symbol of $\sD_{\nabla^E}$ does not depend on these data. Second, 
let $\phi:E\rightarrow E'$ be a vector bundle isomorphism. We push forward the Hermitian metric of $E$ to a Hermitian 
metric on $E'$, so that pushing forward the connection $\nabla^{E}$ we get a Hermitian connection on  $E'$. Then 
$\sD_{\nabla^{E'}}=(1_{\sS}\otimes \phi_{*})\sD_{\nabla^{E}}(1\otimes \phi^{*})$, so that $\ker 
\sD_{\nabla^{E}}^{\pm}\simeq \ker \sD_{\nabla^{E'}}^{\pm}$, and hence $\ind\sD_{\nabla^{E'}}=\ind\sD_{\nabla^E}$. In 
addition, let $F$ be another Hermitian vector bundle  equipped with a Hermitian connection $\nabla^F$. 
We equip $E\oplus F$ with the connection $\nabla^{E\oplus F}=\nabla^E\oplus\nabla^F$. 
Then, with respect to the splitting  
$\sS\otimes(E\oplus F)=(\sS\otimes E)\oplus(\sS\otimes F)$, we have $\sD_{\nabla^{E\oplus F}}=\sD_{\nabla^E}\oplus\sD_{\nabla^F}$, 
so that $\ker\sD^{\pm}_{\nabla^{E\oplus F}}=\ker\sD^{\pm}_{\nabla^E}\oplus\ker\sD^{\pm}_{\nabla^{F}}$. Thus,
\begin{equation*}
\ind\sD_{\nabla^{E\oplus F}}=\ind\sD_{\nabla^E}+\ind\sD_{\nabla^F}.
\end{equation*}
It follows from all these that the index $\ind \sD_{\nabla^{E}}$ depends only on the $K$-theory class of $E$, and there actually 
is a well-defined additive map,
\begin{equation}
\ind_{\sD}: K^0(M)\rightarrow\Z,
\label{eq:Dirac.index-map1}
\end{equation}
such that, for any Hermitian vector bundle $E$ equipped with a Hermitian connection $\nabla^E$, we have
\begin{equation}
\ind_{\sD}[E]=\ind \sD_{\nabla^E}.
\label{eq:Dirac.index-map2}
\end{equation}

Let $H_{[0]}(M, \C)=\bigoplus_{i\geq 0} H_{2i}(M, \C)$ be the even de Rham homology of $M$ and $H^{[0]}(M, 
\C)=\bigoplus_{i\geq 0} H^{2i}(M, \C)$ its even de Rham cohomology. Composing the natural duality pairing between 
$H_{[0]}(M, \C)$ and $H^{[0]}(M, \C)$ with the Chern character map $\Ch: K^0(M)\rightarrow H^{[0]}(M, \C)$, we obtain a bilinear pairing,
\begin{equation}
\label{eq:pairing.deRhamHomology.KTheory}
\acou{\cdot}{\cdot}: H_{[0]}(M, \C)\times K^0(M)\rightarrow\C,
\end{equation}
so that, for any closed even de Rham current $C$ and any vector bundle $E$ over $M$, we have
\begin{equation*}
\acou{[C]}{[E]}=\acou{C}{\Ch(F^E)},
\end{equation*}
where $F^E$ is the curvature of any connection on $E$. Then the Atiyah-Singer index formula can be rewritten as 
\begin{equation}
\ind D_{\nabla^{E}}=(2i\pi)^{-\frac{n}{2}}\acou{\hat{A}(R^M)^{\wedge}}{\Ch(F^E)}=(2i\pi)^{-\frac{n}{2}}\acou{\left[\hat{A}(R^M)^{\wedge}\right]}{[E]},
\label{eq:Dirac.Atiyah-Singer}
\end{equation}
where $[\hat{A}(R^M)^{\wedge}]$ is the homology class of the Poincar\'e dual of the $\hat{A}$-form $\hat{A}(R^M)$.

Finally, we stress out that the definition of $\sD_{\nabla^E}$ does not require the connection $\nabla^E$ to be Hermitian, 
and so the construction of $\sD_{\nabla^E}$ holds for any connection $\nabla^E$ on $E.$
In this general case, the operator $\sD_{\nabla^E}$ need not be selfadjoint, but it still is Fredholm and of the form~(\ref{eq:Coupled.Dirac.Operator}). 
\emph{A priori} we could consider the two Fredholm indices $\ind\sD_{\nabla^E}^+$ and $\ind\sD_{\nabla^E}^-$ separately. 
When $\nabla^E$ is Hermitian, we have $\ind\sD_{\nabla^{E}}^-=\ind(\sD_{\nabla^E}^+)^*=-\ind\sD_{\nabla^E}^+.$
The value of these indices are independent of the the choice of Hermitian connection, so we see that 
$\ind\sD_{\nabla^E}^+=-\ind\sD_{\nabla^E}^-$ even when $\nabla^{E}$ is not Hermitian.
In any case, we equivalently could define the index of $\sD_{\nabla^{E}}$ by
\begin{equation}
\ind\sD_{\nabla^E}=\frac12\left(\ind\sD_{\nabla^E}^+-\ind\sD_{\nabla^E}^-\right).
\label{eq:Dirac.average-index}
\end{equation}

\section{The Index Map of a Twisted Spectral Triple}
\label{sec:IndexTwistedSpectralTriple}
Let $(\cA, \cH, D)_{\sigma}$ be a twisted spectral triple. As observed by Connes-Moscovici~\cite{CM:TGNTQF},  the datum of $(\cA, \cH, 
D)_{\sigma}$ gives rise to a well-defined index map $\ind_{D,\sigma}:K_{0}(\cA)\rightarrow \frac{1}{2}\Z$. 
The 
definition of the index map in~\cite{CM:TGNTQF} is based on the observation that the phase $F=D|D|^{-1}$ defines an \emph{ordinary} Fredholm module 
over $\cA$ (namely, the pair $(\cH,F)$). The index map is then defined in terms of compressions of $F$ by 
idempotents. As we shall now explain, we also can define the index map by using twisted versions of the compression of 
the operator $D$ by idempotents. This construction is actually a special case of the coupling of $D$ by $\sigma$-connections which 
will be described in the next section.  We still need to deal with this special case in order to carry out the more general 
construction in the next section. 
 
Let $e$ be an idempotent in $M_{q}(\cA)$, $q \in \N$. We regard $e\cH^{q}$ as a closed subspace of the Hilbert space 
$\cH^{q}$, so that $e\cH^{q}$ is a Hilbert space with the induced inner product. As the action of $\cA$ on $\cH$ is by \emph{even} 
operators on $e\cH^{q}\cap (\cH^{\pm})^{q}=e(\cH^{\pm})^{q}$, and so we have the orthogonal splitting 
$e\cH^{q}=e(\cH^{+})^{q}\oplus e(\cH^{-})^{q}$. In addition, the action of $\cA$ preserves the 
domain of $D$, so we see that $e(\dom (D))^{q}=(\dom (D))^{q}\cap e\cH^{q}$. We then let $D_{e,\sigma}$ be  the unbounded operator  from 
$e\cH^{q}$ to $\sigma(e)\cH^{q}$ given by
\begin{equation}
    D_{e,\sigma}:=\sigma(e) (D\otimes 1_{q}), \qquad \dom (D_{e,\sigma})=e(\dom (D))^{q}.
    \label{eq:Index.Desigma}
 \end{equation}   
 We note that, as $D$ is an odd operator, with respect to the orthogonal splitting 
$\cH=\cH^{+}\oplus \cH^{-}$ it takes the form, 
\begin{equation}
    D= 
    \begin{pmatrix}
        0 & D^{-} \\
        D^{+} & 0
    \end{pmatrix}, \qquad D^{\pm}:\dom(D)\cap \cH^{\pm}\rightarrow \cH^{\mp}.
    \label{eq:TST.decompositionD}
\end{equation}
Incidentally,  with respect to the orthogonal splittings $e\cH^{q}=e (\cH^{+})^{q}\oplus e (\cH^{-})^{q}$ and 
 $\sigma(e)\cH^{q}=\sigma(e) (\cH^{+})^{q}\oplus \sigma(e)(\cH^{-})^{q}$ the operator 
$D_{e,\sigma}$ takes the form, 
\begin{equation*}
    D_{e,\sigma}= 
    \begin{pmatrix}
        0 & D^{-}_{e,\sigma} \\
        D_{e,\sigma}^{+} & 0
    \end{pmatrix}, \qquad D_{e,\sigma}^{\pm}=\sigma(e)(D^{\pm}\otimes 1_{q}).
\end{equation*}

In order to determine the adjoint of $D_{e,\sigma}$ we make the following observation. 

\begin{lemma}
   Let $S_{e}:e\cH^{q}\rightarrow e^{*}\cH^{q}$ be the restriction to $e\cH^{q}$ of $e^{*}$ (which we represent as an operator on 
   $\cH^{q}$). Then $S_{e}$ is a linear isomorphism from $e\cH^{q}$ onto $e^{*}\cH^{q}$ such that 
    \begin{equation}
        \acou{S_{e}\xi_{1}}{\xi_{2}}=\acou{\xi_{1}}{\xi_{2}} \qquad \forall \xi_{j}\in e\cH^{q}.
        \label{eq:IsoEHEstarH}
    \end{equation}    
\end{lemma}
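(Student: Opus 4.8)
The plan is to verify, in order: that $S_{e}$ genuinely takes values in $e^{*}\cH^{q}$, that it obeys the inner-product identity~(\ref{eq:IsoEHEstarH}), and that it is bijective onto $e^{*}\cH^{q}$. The first point is immediate, since $S_{e}\xi=e^{*}\xi\in e^{*}\cH^{q}$ for $\xi\in e\cH^{q}$; and linearity is clear because $S_{e}$ is a restriction of the bounded operator $e^{*}$. Note also that both $e\cH^{q}$ and $e^{*}\cH^{q}$ are \emph{closed} subspaces of $\cH^{q}$, being ranges of the bounded idempotents $e$ and $e^{*}=(e^{2})^{*}$ respectively, so that the relevant orthogonal projections exist.

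The identity~(\ref{eq:IsoEHEstarH}) is the crux and is a one-line computation: for $\xi_{1},\xi_{2}\in e\cH^{q}$, idempotence gives $e\xi_{2}=\xi_{2}$, whence
\begin{equation*}
    \acou{S_{e}\xi_{1}}{\xi_{2}}=\acou{e^{*}\xi_{1}}{\xi_{2}}=\acou{\xi_{1}}{e\xi_{2}}=\acou{\xi_{1}}{\xi_{2}}.
\end{equation*}
Injectivity of $S_{e}$ then follows by taking $\xi_{1}=\xi_{2}=\xi$: if $S_{e}\xi=0$ then $\acou{\xi}{\xi}=\acou{S_{e}\xi}{\xi}=0$, so $\xi=0$.

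For surjectivity onto $e^{*}\cH^{q}$ I would argue as follows. Let $P$ be the orthogonal projection of $\cH^{q}$ onto $e\cH^{q}$; then $1-P$ has range $(e\cH^{q})^{\perp}=\ker e^{*}$, so $e^{*}(1-P)=0$, i.e.\ $e^{*}P=e^{*}$. Consequently, for any $\eta\in e^{*}\cH^{q}$ one has $\eta=e^{*}\eta=e^{*}(P\eta)=S_{e}(P\eta)$ with $P\eta\in e\cH^{q}$, which is exactly surjectivity. I do not anticipate any real obstacle; the only mild subtlety is that the range of $e^{*}$ restricted to $e\cH^{q}$ is not obviously closed, but the identity $e^{*}P=e^{*}$ sidesteps this and pins down an explicit preimage. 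Equivalently, one can see everything at a glance by writing $e$ in $2\times2$ block form with respect to $\cH^{q}=e\cH^{q}\oplus(e\cH^{q})^{\perp}$, where $S_{e}$ becomes a graph map and is patently bijective onto the graph $e^{*}\cH^{q}$.
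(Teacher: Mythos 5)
Your proof is correct and follows essentially the same route as the paper: the inner-product identity and injectivity arguments are word-for-word the paper's. For surjectivity you make the preimage explicit as the orthogonal projection $P\eta$ of $\eta$ onto $e\cH^{q}$, justifying $e^{*}P=e^{*}$ via $(e\cH^{q})^{\perp}=\ker e^{*}$; the paper produces the same element $\tilde\eta=P\eta$ implicitly through Riesz representation of the functional $\acou{\eta}{\cdot}|_{e\cH^{q}}$ and then checks $\eta=e^{*}\tilde\eta$ by a short computation, so the two are merely different phrasings of the same idea.
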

 \begin{proof}
     Let $\xi_{j}\in e\cH^{q}$, $j=1,2$. Then
     \begin{equation*}
         \acou{S_{e}\xi_{1}}{\xi_{2}}=\acou{e^{*}\xi_{1}}{\xi_{2}}=\acou{\xi_{1}}{e\xi_{2}}=\acou{\xi_{1}}{\xi_{2}}. 
     \end{equation*}In particular, when $\xi_{2}=\xi_{1}$ we get $ \acou{S_{e}\xi_{1}}{\xi_{1}}=\|\xi_{1}\|^{2}$, which 
     shows that $S_{e}$ is one-to-one. 
     
     Let $\eta\in e^{*}\cH^{q}$. Then $\acou{\eta}{\cdot}_{|e\cH^{q}}$ is a continuous linear form on $e\cH^{q}$, so 
     there exists $\tilde{\eta}\in e\cH^{q}$ so that $\acou{\eta}{\xi}=\acou{\tilde{\eta}}{\xi}$ for all $\xi \in 
     e\cH^{q}$. Therefore, for all $\xi\in \cH^{q}$,
     \begin{equation*}
         \acou{\eta}{\xi}=\acou{e^{*}\eta}{\xi}=\acou{\eta}{e\xi}=\acou{\tilde{\eta}}{e\xi}=\acou{e^{*}\tilde{\eta}}{\xi}.
     \end{equation*}Thus $\eta=e^{*}\tilde{\eta}=S_{e}\tilde{\eta}$. This shows that $S_{e}$ is onto. As $S_{e}$ is 
     one-to-one we then deduce that $S_{e}$ is a linear isomorphism. The proof is complete.
 \end{proof}
 
 The above lemma holds for the idempotent $\sigma(e)$ as well. In what follows, we denote by $S_{\sigma(e)}$ the linear isomorphism  from  $\sigma(e)\cH^{q}$ to 
 $ \sigma(e)^{*}\cH^{q}$ induced by $\sigma(e)^{*}$.

\begin{lemma}
    Let $D_{e,\sigma}^{*}$ be the adjoint of $D_{e,\sigma}$. Then
    \begin{equation}
    \label{eq:D_esigmaAdjoint}
        D_{e,\sigma}^{*}=S_{e}^{-1}D_{\sigma(e)^{*},\sigma}S_{\sigma(e)}.
    \end{equation}
\end{lemma}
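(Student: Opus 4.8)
The plan is to compute the adjoint directly from the definition, using the isomorphisms $S_e$ and $S_{\sigma(e)}$ to translate the inner products on $e\cH^q$ and $\sigma(e)\cH^q$ into inner products on the ambient space $\cH^q$ where the adjoint of $D\otimes 1_q$ is available. Concretely, for $\xi\in e(\dom D)^q$ and $\eta\in \sigma(e)(\dom D)^q$ I would write
\begin{equation*}
    \acou{D_{e,\sigma}\xi}{\eta} = \acou{\sigma(e)(D\otimes 1_q)\xi}{\eta} = \acou{(D\otimes 1_q)\xi}{\sigma(e)^*\eta},
\end{equation*}
where the last step uses that $\sigma(e)^*$ is the adjoint of $\sigma(e)$ as a bounded operator on $\cH^q$. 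Now $\sigma(e)^*\eta = S_{\sigma(e)}\eta \in \sigma(e)^*\cH^q$, and since $D$ is selfadjoint this equals $\acou{\xi}{(D\otimes 1_q)S_{\sigma(e)}\eta}$. The right-hand vector lies in $(\dom D)^q$ but not \emph{a priori} in $\sigma(e)^*\cH^q$, so I apply the idempotent $\sigma(e)^*$ again: since $\xi = e\xi$, we have $\acou{\xi}{(D\otimes 1_q)S_{\sigma(e)}\eta} = \acou{e\xi}{(D\otimes 1_q)S_{\sigma(e)}\eta} = \acou{\xi}{e^*(D\otimes 1_q)S_{\sigma(e)}\eta}$. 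Recognizing that $e^* = \sigma(\sigma^{-1}(e^*)) = \sigma(\sigma(e)^*)$ (using the $\sigma$-involution identity $\sigma(a)^* = \sigma^{-1}(a^*)$ from Definition~\ref{TwistedSpectralTriple}(3)), the vector $e^*(D\otimes 1_q)S_{\sigma(e)}\eta = \sigma(\sigma(e)^*)(D\otimes 1_q)S_{\sigma(e)}\eta$ is exactly $D_{\sigma(e)^*,\sigma}S_{\sigma(e)}\eta$ — note that $\sigma(e)^*$ is an idempotent in $M_q(\cA)$ to which the construction~(\ref{eq:Index.Desigma}) applies, and its domain $\sigma(e)^*(\dom D)^q$ contains $S_{\sigma(e)}\eta$ since $S_{\sigma(e)}\eta = \sigma(e)^*\eta$ with $\eta\in(\dom D)^q$.

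At this point I have $\acou{D_{e,\sigma}\xi}{\eta} = \acou{\xi}{D_{\sigma(e)^*,\sigma}S_{\sigma(e)}\eta}$, an equality of inner products in $\cH^q$ with $\xi\in e\cH^q$ on the left. To convert the right-hand side into the inner product of $e\cH^q$, I use~(\ref{eq:IsoEHEstarH}): for any $\zeta\in e^*\cH^q$ one has $\acou{\xi}{\zeta} = \acou{S_e^{-1}\zeta}{\xi}^{\overline{\phantom{x}}}$... more cleanly, $\acou{\xi}{\zeta} = \acou{\xi}{S_e S_e^{-1}\zeta} = \acou{S_e^{-1}\zeta}{\xi}^{*}$ — I would instead apply the lemma in the form $\acou{\xi_1}{\zeta} = \acou{\xi_1}{\xi_2}$ when $\zeta = S_e\xi_2$, i.e. $\acou{\xi}{D_{\sigma(e)^*,\sigma}S_{\sigma(e)}\eta} = \acou{\xi}{S_e^{-1}D_{\sigma(e)^*,\sigma}S_{\sigma(e)}\eta}$ where now both entries lie in $e\cH^q$. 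Comparing with the defining relation $\acou{D_{e,\sigma}\xi}{\eta} = \acou{\xi}{D_{e,\sigma}^*\eta}$ gives the claimed formula~(\ref{eq:D_esigmaAdjoint}) on the algebraic level.

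The main obstacle is the domain bookkeeping needed to make this a genuine identity of unbounded operators rather than a formal manipulation: I must check that $\dom(D_{e,\sigma}^*)$ is exactly $\sigma(e)(\dom D)^q$, equivalently that $S_e^{-1}D_{\sigma(e)^*,\sigma}S_{\sigma(e)}$ with its natural domain is the full adjoint and not a proper restriction of it. For this I would verify that $\eta\mapsto \acou{D_{e,\sigma}\,\cdot\,}{\eta}$ is bounded on $e(\dom D)^q$ \emph{only if} $\eta\in\sigma(e)(\dom D)^q$: if the functional $\xi\mapsto\acou{\sigma(e)(D\otimes 1_q)\xi}{\eta} = \acou{(D\otimes 1_q)\xi}{\sigma(e)^*\eta}$ is bounded on $e(\dom D)^q$, then since $e(\dom D)^q = e\cH^q\cap(\dom D)^q$ is a core issue one reduces to the selfadjointness of $D$ (hence $D\otimes 1_q$) to conclude $\sigma(e)^*\eta\in\dom D$, and then $\eta = \sigma(e)\sigma(e)^*\eta$... this last step needs $\sigma(e)$ to preserve $\dom D$, which holds because $\sigma(e)\in M_q(\cA)$ acts and $\cA$ preserves $\dom D$ by Definition~\ref{TwistedSpectralTriple}(4)(b). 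Once the domain identification is secured, the algebraic computation above closes the proof.
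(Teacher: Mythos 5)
Your formal computation showing that $S_{e}^{-1}D_{\sigma(e)^{*},\sigma}S_{\sigma(e)}$ is a \emph{restriction} of $D_{e,\sigma}^{*}$ (i.e.\ the graph inclusion $S_{e}^{-1}D_{\sigma(e)^{*},\sigma}S_{\sigma(e)}\subset D_{e,\sigma}^{*}$) is essentially correct and matches the easy half of the paper's argument, which is packaged through the auxiliary operator $D_{e,\sigma}^{\dagger}$ between the ``starred'' spaces $\sigma(e)^{*}\cH^{q}$ and $e^{*}\cH^{q}$; the change of variables by $S_{e}$ and $S_{\sigma(e)}$ is the same content.

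The genuine gap is in the converse inclusion $\dom(D_{e,\sigma}^{*})\subset\sigma(e)(\dom D)^{q}$, which you dismiss with ``is a core issue one reduces to the selfadjointness of $D$.'' That reduction does not go through: you only know the functional $\zeta\mapsto\acou{(D\otimes 1_{q})\zeta}{\sigma(e)^{*}\eta}$ is bounded on the \emph{proper subspace} $e(\dom D)^{q}$, and $e(\dom D)^{q}$ is not a core for $D\otimes 1_{q}$ (it is not even dense in $\cH^{q}$). Selfadjointness of $D$ lets you conclude $\sigma(e)^{*}\eta\in(\dom D)^{q}$ only if the functional is bounded on all of $(\dom D)^{q}$. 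Bridging this is where the twisted-spectral-triple axiom must enter: writing $\zeta=e\zeta+(1-e)\zeta$, the missing piece is
\begin{equation*}
\acou{(D\otimes 1_{q})(1-e)\zeta}{\sigma(e)^{*}\eta}=\acou{R\zeta}{\eta},\qquad R:=\sigma(e)(D\otimes 1_{q})(1-e)=-\sigma(e)[D\otimes 1_{q},e]_{\sigma},
\end{equation*}
and $R$ is bounded \emph{precisely because} the twisted commutator $[D,e]_{\sigma}$ is bounded. The paper encodes this by replacing $\eta$ with $\tilde{\eta}=\eta+(1-e^{*})R^{*}S_{\sigma(e)}^{-1}\xi$ and checking $(\xi,\tilde{\eta})$ lies in the graph of $D\otimes 1_{q}$; without some version of this $R$-correction the argument does not close.

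A secondary error: the assertion ``$\eta=\sigma(e)\sigma(e)^{*}\eta$'' is false, since $\sigma(e)\sigma(e)^{*}$ is not the identity on $\sigma(e)\cH^{q}$ unless $\sigma(e)$ is selfadjoint. The correct statement is that $\eta=p_{\sigma(e)}\eta$ where $p_{\sigma(e)}=\sigma(e)\sigma(e)^{*}b^{-1}$ (with $b=1+(\sigma(e)-\sigma(e)^{*})(\sigma(e)^{*}-\sigma(e))$) is the orthogonal projection onto $\sigma(e)\cH^{q}$; this $p_{\sigma(e)}$ does lie in $M_{q}(\cA)$ because $\cA$ is closed under holomorphic functional calculus, and hence preserves $(\dom D)^{q}$. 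You need this observation (or something equivalent) both to identify the natural domain of $S_{e}^{-1}D_{\sigma(e)^{*},\sigma}S_{\sigma(e)}$ with $\sigma(e)(\dom D)^{q}$ and to conclude $\eta\in(\dom D)^{q}$ from $\sigma(e)^{*}\eta\in(\dom D)^{q}$.
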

\begin{proof}
Let $D_{e,\sigma}^{\dagger}$ be the operator given by the following graph,
    \begin{equation*}
    G(D_{e,\sigma}^{\dagger})=\left\{ (\xi,\eta)\in \sigma(e)^{*}\cH^{q}\times e^{*}\cH^{q}; \ \acou{\xi}{D_{e,\sigma}\zeta}=\acou{\eta}{\zeta} \quad \forall 
    \zeta \in \dom (D_{e,\sigma})\right\}.
\end{equation*}We note that the graph of $D_{e,\sigma}^{*}$ is 
 \begin{equation*}
      G(D_{e,\sigma}^{*})=\left\{ (\xi,\eta)\in \sigma(e)\cH^{q}\times e\cH^{q}; \ \acou{\xi}{D_{e,\sigma}\zeta}=\acou{\eta}{\zeta} \quad \forall 
    \zeta \in \dom (D_{e,\sigma}) \right\}.   
 \end{equation*} It then follows from~(\ref{eq:IsoEHEstarH}) that a pair $(\xi,\eta)\in \sigma(e)\cH^{q}\times e\cH^{q}$ is 
contained in $G(D_{e,\sigma}^{*})$ if and only if $(S_{\sigma(e)}\xi,S_{e}\eta)$ lies in 
$G(D_{e,\sigma}^{\dagger})$. That is, $S_{e}D_{e,\sigma}^{*}=D_{e,\sigma}^{\dagger}S_{\sigma(e)}$. Therefore, showing 
that $D_{e,\sigma}^{*}=S_{e}^{-1}D_{\sigma(e)^{*},\sigma}S_{\sigma(e)}$ is equivalent to showing that the operators 
$D_{e,\sigma}^{\dagger}$ and $D_{\sigma(e)^{*},\sigma}$ agree. 

 Let $(\xi,\eta)\in G(D_{e^{*}})$. For all $\zeta \in \dom (D_{e})$, we have
    \begin{equation*}
     \acou{D_{\sigma(e)^{*},\sigma}\xi}{\zeta} =\acou{\sigma(e)^{*}(D\otimes 
        1_{q})e^{*}\xi}{\zeta}=\acou{\xi}{\sigma(e)(D\otimes 1_{q})e\zeta} =\acou{\xi}{D_{e,\sigma}\zeta}.
    \end{equation*}Thus $(\xi,D_{\sigma(e)^{*},\sigma}\xi)$ belongs to $G(D_{e,\sigma}^\dagger)$. Then $G(D_{\sigma(e)^{*},\sigma})$ is contained in 
    $G(D_{e}^\dagger)$, i.e., $D_{e}^\dagger$ is an extension of $D_{\sigma(e)^{*},\sigma}$. 

   Let $(\xi,\eta)\in G(D_{e}^\dagger)$ and set $R:=\sigma(e)(D\otimes 1_{q})(1-e)$. We note that 
    \begin{equation*}
       R=\sigma(e)(D\otimes 1_{q})(1-e)=\sigma(e)\left\{(1-\sigma(e))(D\otimes 1_{q})-[D\otimes 
       1_{q},e]_{\sigma}\right\}=-\sigma(e)[D\otimes 1_{q},e]_{\sigma}.
    \end{equation*}
    Thus, $R$ is a bounded operator. Incidentally, its adjoint $R^*$ is a bounded operator as well. Set 
    $\tilde{\eta}=\eta+(1-e^{*})R^{*}S_{\sigma(e)}^{-1}\xi$ and let $ \zeta \in (\dom 
D)^{q}$.  As $e\zeta\in \dom (D_{e,\sigma})$ and the subspaces $e^{*}\cH^{q}$ and $(1-e)\cH^{q}$ are orthogonal to each other, we have
  \begin{equation*}
     \acou{\tilde{\eta}}{\zeta}  = \acou{\eta}{e\zeta}+ \acou{\eta}{(1-e)\zeta}+\acou{(1-e^{*})R^{*}S_{\sigma(e)}^{-1}\xi}{\zeta}= 
     \acou{\xi}{D_{e,\sigma}(e\zeta)}+\acou{S_{\sigma(e)}^{-1}\xi}{R((1-e)\zeta)}. 
 \end{equation*}
 Moreover, as $\xi\in \sigma(e)^{*}\cH^{q}$ and $R((1-e)\zeta)\in\sigma(e)\cH^{q}$,  using~(\ref{eq:IsoEHEstarH}) we see that 
 \(\acou{S_{\sigma(e)}^{-1}\xi}{R((1-e)\zeta)}$ agrees with $\acou{\xi}{R((1-e)\zeta)}.\) Therefore, $ \acou{\tilde{\eta}}{\zeta}$ is equal to
    \begin{equation*}
      \acou{\xi}{D_{e,\sigma}(e\zeta)+R((1-e)\zeta)}=\acou{\xi}{\sigma(e)(D\otimes 
      1_{q})\zeta}=\acou{\sigma(e)^{*}\xi}{(D\otimes 1_{q})\zeta}= \acou{\xi}{(D\otimes 1_{q})\zeta}. 
   \end{equation*}This shows that $(\xi,\tilde{\eta})$ lies in the graph of the operator $(D\otimes 1_{q})^{*}$, which agrees with 
   $D\otimes 1_{q}$ since $D$ is selfadjoint. Thus $(\xi,\tilde{\eta})$ lies in the graph of $D\otimes 1_{q}$. Therefore, we see that 
   $\xi$ is contained in both $(\dom (D))^{q}$ and $\sigma(e)^{*}\cH^{q}$, so it lies in $(\dom (D))^{q} \cap 
   \sigma(e)^{*}\cH^{q}=\sigma(e)^{*}(\dom (D))^{q}=\dom(D_{\sigma(e)^{*},\sigma})$. This shows that $\dom 
   D_{e,\sigma}^{\dagger}$ is contained in $\dom (D_{\sigma(e)^{*},\sigma})$. 
   As $D_{e,\sigma}^\dagger$ is an extension of $D_{\sigma(e)^{*},\sigma}$ we then deduce that the two operators agree. As explained above this proves that $D_{e,\sigma}^{*}=S_{e}^{-1}D_{\sigma(e)^{*},\sigma}S_{\sigma(e)}$. 
   The proof is complete.     
\end{proof}

\begin{lemma}\label{lem:index.Desigma-Fredholm} 
    The operator $D_{e,\sigma}$ is closed and Fredholm, and we have
\begin{equation}
\label{eq:IndexD_eSigmaPM}
    \ind D_{e,\sigma}^{\pm}=\dim \ker D_{e,\sigma}^{\pm}-\dim \ker D_{\sigma(e)^{*},\sigma}^{\mp}.
\end{equation}
\end{lemma}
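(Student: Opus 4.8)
The plan is to exploit the identity for the adjoint established in the previous lemma, namely $D_{e,\sigma}^{*}=S_{e}^{-1}D_{\sigma(e)^{*},\sigma}S_{\sigma(e)}$, together with the fact that the full operator $D\otimes 1_{q}$ is selfadjoint with compact resolvent on $\cH^{q}$. First I would check that $D_{e,\sigma}$ is closed: it is the compression $\sigma(e)(D\otimes 1_{q})$ restricted to $e(\dom D)^{q}$, and since $\sigma(e)$ and $e$ are bounded idempotents commuting with the grading while $D\otimes 1_{q}$ is closed, a routine graph argument (or the observation that $D_{e,\sigma}$ differs from the closed operator $(D\otimes 1_{q})|_{e\cH^{q}}$ by the bounded operator $-\sigma(e)[D\otimes 1_{q},e]_{\sigma}$, already computed as $R$ in the proof above) shows closedness. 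The splitting into $D_{e,\sigma}^{\pm}$ is immediate from the oddness of $D$, so it suffices to treat $D_{e,\sigma}^{+}$ and $D_{e,\sigma}^{-}$ separately and then take the direct sum.

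The core of the argument is to reduce the Fredholm property of $D_{e,\sigma}^{\pm}$ to that of $D\otimes 1_{q}$ modulo the compact resolvent. The key step: write $D_{e,\sigma}^{\pm} = \sigma(e)(D^{\pm}\otimes 1_{q})e$ and show it is Fredholm by exhibiting a parametrix. Using the phase $F=D|D|^{-1}$, or more directly using that $(D\pm i)^{-1}$ is compact, one sees that $\sigma(e)(D^{\pm}\otimes 1_{q})e$ and $e(D^{\mp}\otimes 1_{q})\sigma(e)$ compose to $e + \text{(compact)}$ on $e\cH^{q}$ and to $\sigma(e) + \text{(compact)}$ on $\sigma(e)\cH^{q}$, respectively, where the compact error terms come from commutators $[D,e]_{\sigma}$ multiplied against the compact operator $(D^{2}+1)^{-1}$. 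This gives that $D_{e,\sigma}^{\pm}$ is Fredholm with $\ker$ finite-dimensional and $\coker$ finite-dimensional. Then identity~(\ref{eq:D_esigmaAdjoint}) gives $\ker D_{e,\sigma}^{*}\cap \sigma(e)\cH^{q}$ on the plus/minus pieces; since $S_{e}$ and $S_{\sigma(e)}$ are linear isomorphisms, $\dim\ker (D_{e,\sigma}^{\pm})^{*} = \dim\ker D_{\sigma(e)^{*},\sigma}^{\mp}$. Combining $\ind D_{e,\sigma}^{\pm} = \dim\ker D_{e,\sigma}^{\pm} - \dim\coker D_{e,\sigma}^{\pm} = \dim\ker D_{e,\sigma}^{\pm} - \dim\ker (D_{e,\sigma}^{\pm})^{*}$ with this identification yields~(\ref{eq:IndexD_eSigmaPM}).

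I expect the main obstacle to be the parametrix construction showing Fredholmness — specifically, being careful that the compressions by the \emph{non-selfadjoint} idempotents $e$ and $\sigma(e)$ still behave well, since $e\cH^{q}$ and $\sigma(e)\cH^{q}$ carry the induced (not orthogonally projected) inner products. The cleanest route is probably to observe that $D_{e,\sigma}^{\pm}$ differs from $P_{\sigma(e)}(D^{\pm}\otimes 1_{q})P_{e}$ (with $P$ the orthogonal projections onto the ranges) by operators that are bounded and that the difference of these bounded pieces composed against the compact resolvent stays compact; alternatively, invoke that $(\cH^{q}, F\otimes 1_{q})$ is a Fredholm module over $M_{q}(\cA)$ and that compressions of Fredholm modules by idempotents are Fredholm, a standard fact. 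Once Fredholmness is in hand, the index identity is essentially bookkeeping via the adjoint formula and the isomorphisms $S_{e}$, $S_{\sigma(e)}$.
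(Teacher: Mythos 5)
Your plan is essentially correct and follows the same strategy as the paper's proof: establish Fredholmness by a parametrix argument using the compactness of the (partial) inverse of $D$, and then use the adjoint identity $D_{e,\sigma}^{*}=S_{e}^{-1}D_{\sigma(e)^{*},\sigma}S_{\sigma(e)}$ together with the evenness of $S_{e}$ and $S_{\sigma(e)}$ to turn $\dim\coker D_{e,\sigma}^{\pm}=\dim\ker(D_{e,\sigma}^{\pm})^{*}$ into $\dim\ker D_{\sigma(e)^{*},\sigma}^{\mp}$. Two small points deserve attention.

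First, closedness. Your ``bounded perturbation of a restriction'' route is sound and actually differs a little from the paper, which instead substitutes $\sigma(e)^{*}$ for $e$ in the adjoint identity to get $D_{e,\sigma}=S_{\sigma(e)^{*}}D_{\sigma(e)^{*},\sigma}^{*}S_{e^{*}}^{-1}$ and deduces closedness as a bounded conjugate of a closed adjoint. But the bounded term you name is wrong: $R=\sigma(e)(D\otimes 1_{q})(1-e)=-\sigma(e)[D\otimes 1_{q},e]_{\sigma}$ vanishes identically on $e\cH^{q}$, so $D_{e,\sigma}$ does not ``differ from $(D\otimes 1_{q})|_{e\cH^{q}}$ by $R$.'' The correct bounded difference is the complementary piece $(1-\sigma(e))(D\otimes 1_{q})|_{e\cH^{q}}=(1-\sigma(e))[D\otimes 1_{q},e]_{\sigma}|_{e\cH^{q}}$, i.e.\ you split the closed operator $(D\otimes 1_{q})|_{e\cH^{q}}:e\cH^{q}\to\cH^{q}$ along $\cH^{q}=\sigma(e)\cH^{q}\oplus(1-\sigma(e))\cH^{q}$ and observe that the second component is bounded. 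With that fix the argument works.

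Second, the parametrix. As written you compose $\sigma(e)(D^{\pm}\otimes 1_{q})e$ with $e(D^{\mp}\otimes 1_{q})\sigma(e)$, but $D^{\mp}$ is unbounded; this cannot be the parametrix. The mention of $(D^{2}+1)^{-1}$ shows you have the right idea, but you need to use a compact right/left inverse: the paper takes $Q_{e,\sigma}=e(D^{-1}\otimes 1_{q}):\sigma(e)\cH^{q}\to e\cH^{q}$, where $D^{-1}$ is the partial inverse (zero on $\ker D$), and then $D_{e,\sigma}Q_{e,\sigma}-1$ and $Q_{e,\sigma}D_{e,\sigma}-1$ are sums of terms of the form (bounded twisted commutator)$\cdot D^{-1}$ and of the finite-rank projection $P_{0}\otimes 1_{q}$ onto $\ker D$, hence compact. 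Your alternative detour through the bounded Fredholm module $(\cH^{q},F\otimes 1_{q})$ would prove Fredholmness of the compressed \emph{bounded} operator, but you would then owe an extra argument to transfer the conclusion to $D_{e,\sigma}$ itself; the direct parametrix with $D^{-1}$ is cleaner and is what the paper does.
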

\begin{proof}Substituting $\sigma(e)^{*}=\sigma^{-1}(e^{*})$ for $e$ in~(\ref{eq:D_esigmaAdjoint}) shows that 
    $D_{\sigma(e)^{*},\sigma}^{*}=S^{-1}_{\sigma(e)^{*}}D_{e,\sigma}S_{e^{*}}$, i.e., 
    $D_{e,\sigma}=S_{\sigma(e)^{*}}D_{\sigma(e)^{*},\sigma}^{*}S_{e^{*}}^{-1}$. As $D_{\sigma(e)^{*},\sigma}^{*}$  is a 
    closed operator and the operators $S_{\sigma(e)^{*}}$ and $S_{e^{*}}^{-1}$ are bounded, we see that $D_{e,\sigma}$ 
    is a closed operator. 
    
Let $D^{-1}$ be the partial inverse of $D$ and $P_{0}$ is the orthogonal projection onto $\ker D$. Set $Q_{e,\sigma}:=e(D^{-1}\otimes 1_{q})$, which we regard as a bounded 
operator from $\sigma(e)\cH^{q}$ to $e\cH^{q}$. Note that $Q_{e,\sigma}$ is a compact operator. Moreover, on $\sigma(e)\cH^{q}$ we have 
\begin{align*}
 D_{e,\sigma} Q_{e,\sigma}=\sigma(e)(D\otimes 1_{q})e(D^{-1}\otimes 1_{q})=&\sigma(e)+\sigma(e)[D\otimes 
 1_{q},e]_{\sigma}(D^{-1}\otimes 1_{q})-\sigma(e)(P_0\otimes1_q)\\
 =&1+ e[D\otimes 1_{q},e](D^{-1}\otimes 1_{q})-\sigma(e)(P_0\otimes1_q). 
\end{align*}
Likewise, on $e(\dom (D))^{q}$ we have 
\begin{equation*}
    Q_{e,\sigma}D_{e,\sigma}=e(D^{-1}\otimes 1_{q})\sigma(e)(D\otimes 1_{q})= 1-eD^{-1}[D\otimes 1_{q},e]_{\sigma}-e(P_0\otimes1_q)e.
\end{equation*}As $D^{-1}, P_0$ are compact operators and $[D\otimes 1_{q},e]_{\sigma}$ is bounded, we see that $Q_{e, \sigma}$ 
inverts $D_{e, \sigma}$ modulo compact operators. It then follows that $D_{e, \sigma}$ is a Fredholm operator.

We note that $S_{e}$ and $S_{\sigma(e)}$ are even operators, so the equality~(\ref{eq:D_esigmaAdjoint}) means that $\left( 
D^{\pm}_{e,\sigma}\right)^{*}=S_{e}^{-1}D^{\mp}_{\sigma(e)^{*},\sigma}S_{\sigma(e)}^{-1}$. Therefore, the operator 
$S_{e}$ induces an isomorphism $\ker D^{\mp}_{\sigma(e)^{*},\sigma}\simeq \ker \left( 
D^{\pm}_{e,\sigma}\right)^{*}$. Thus,
\begin{equation*}
     \ind D_{e,\sigma}^{\pm}=\dim \ker D_{e,\sigma}^{\pm}-\dim \ker \left( 
D^{\pm}_{e,\sigma}\right)^{*}=\dim \ker D_{e,\sigma}^{\pm}-\dim \ker D_{\sigma(e)^{*},\sigma}^{\mp}.
\end{equation*}
The proof is complete.
 \end{proof}

We define the index of $D_{e,\sigma}$ by
\begin{equation}
    \ind D _{e,\sigma}:= \frac{1}{2} \left(\ind  D_{e,\sigma}^{+}  - \ind  D_{e,\sigma}^{-}\right). 
    \label{eq:Index-index-Desigma}
\end{equation}
Thanks to~(\ref{eq:IndexD_eSigmaPM}) we have
\begin{equation}
\label{eq:IndexDesigma}
  \ind D _{e,\sigma}= \frac{1}{2} \left(  \dim \ker D_{e,\sigma}^{+}+ \dim \ker D_{\sigma(e)^{*},\sigma}^{+} -
  \dim \ker D_{e,\sigma}^{-}-\dim \ker D_{\sigma(e)^{*},\sigma}^{-}\right).
\end{equation}
In particular, when $\sigma(e)^{*}=e$ we get
\begin{equation*}
    \ind D_{e,\sigma}= \dim \ker D_{e,\sigma}^{+}-\dim \ker D_{e,\sigma}^{-}. 
\end{equation*}

Let $g\in \op{Gl}_{q}(\cA)$ and set $\hat{e}=g^{-1}eg$. On $(\dom (D))^{q}$ the operator  $\sigma(\hat{e})(D\otimes 
1_{q})\hat{e}$ agrees with
\begin{multline*}
   \sigma(g)^{-1}\sigma(e)\sigma(g) (D\otimes 1_{q})g^{-1}eg \\
   =  \sigma(g)^{-1}\sigma(e)\sigma(g) \sigma(g^{-1}) (D\otimes 1_{q})eg  
    + \sigma(g)^{-1}\sigma(e)\sigma(g)[D\otimes 1_{q},g^{-1}]_{\sigma}eg \\
     =  \sigma(g)^{-1}D_{e,\sigma}g+  \sigma(g)^{-1}\sigma(e)\sigma(g)[D\otimes 1_{q},g^{-1}]_{\sigma}eg. 
\end{multline*}
As $[D\otimes 1_{q},g^{-1}]_{\sigma}$ is a bounded operator, we see that $D^{\pm}_{\hat{e},\sigma}$ and 
$\sigma(g)^{-1}(D_{e,\sigma}^{\pm})g$ agree up to a bounded operator. 
It then follows that $D^{\pm}_{\hat{e},\sigma}$ and $D_{e,\sigma}^{\pm}$ have the same Fredholm index. Thus, 
\begin{equation}
    \ind D_{g^{-1}eg,\sigma}=\ind D_{e,\sigma} \qquad \forall g \in \op{Gl}_{q}(\cA).
    \label{eq:Index.similarity-twisted}
\end{equation}
Moreover, if $e'\in M_{q'}(\cA)$ is another idempotent, then, with respect to the splittings $(e\oplus e')(\cH^{\pm})^{q}= 
e(\cH^{\pm})^{q}\oplus e'(\cH^{\pm})^{q'}$ and $\sigma(e\oplus e')(\cH^{\pm})^{q}= 
\sigma(e)(\cH^{\pm})^{q}\oplus \sigma(e')(\cH^{\pm})^{q'}$, we have $D_{e\oplus e',\sigma}^{\pm}=D_{e,\sigma}^{\pm}\oplus 
D_{e',\sigma}^{\pm}$. We then see that $\ind D_{e\oplus e',\sigma}^{\pm}=\ind D_{e,\sigma}^{\pm} + \ind
D_{e',\sigma}^{\pm}$. Thus,
\begin{equation*}
    \ind D_{e\oplus e',\sigma}=\ind D_{e,\sigma}+\ind D_{e',\sigma}. 
\end{equation*}
Therefore, we arrive at the following statement. 

\begin{proposition}[\cite{CM:TGNTQF}] There is a unique additive map $\ind_{D,\sigma}:K_{0}(\cA)\rightarrow \frac{1}{2} \Z$ such that
\begin{equation}
    \ind_{D,\sigma}[e]=\ind D_{e,\sigma} \qquad \forall e \in M_{q}(\cA), \ e^{2}=e.
    \label{eq:Index.index-map}
\end{equation}    
\end{proposition}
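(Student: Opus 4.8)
The plan is to deduce the statement from the universal property of $K_{0}$. For a unital algebra, $K_{0}(\cA)$ is the Grothendieck group of the abelian monoid $V(\cA)$ of Murray--von Neumann equivalence classes of idempotents in $M_{\infty}(\cA)=\bigcup_{q}M_{q}(\cA)$, with addition induced by $(e,e')\mapsto e\oplus e'$. Thus it suffices to show that $e\mapsto \ind D_{e,\sigma}$ is additive under $\oplus$ and constant on Murray--von Neumann classes; it then factors through a monoid homomorphism $V(\cA)\rightarrow \tfrac12\Z$, which extends uniquely to a group homomorphism $\ind_{D,\sigma}:K_{0}(\cA)\rightarrow \tfrac12\Z$ satisfying~(\ref{eq:Index.index-map}). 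Uniqueness of $\ind_{D,\sigma}$ is then automatic, since the classes $[e]$ generate $K_{0}(\cA)$.

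First I would collect the three elementary facts already at hand. Additivity, $\ind D_{e\oplus e',\sigma}=\ind D_{e,\sigma}+\ind D_{e',\sigma}$, is the displayed identity immediately preceding the statement. Invariance under $\op{Gl}_{q}(\cA)$-conjugation, $\ind D_{g^{-1}eg,\sigma}=\ind D_{e,\sigma}$, is~(\ref{eq:Index.similarity-twisted}). Finally, $\ind D_{0_{q},\sigma}=0$, since $D_{0_{q},\sigma}$ acts on the zero Hilbert space; in particular $\ind D_{e\oplus 0_{k},\sigma}=\ind D_{e,\sigma}$, so $\ind D_{e,\sigma}$ does not depend on the matrix size used to present a given class, and zero-padding lets one compare idempotents of different sizes. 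Note also that $\ind D_{e,\sigma}\in\tfrac12\Z$ by~(\ref{eq:Index-index-Desigma}), so the homomorphism really lands in $\tfrac12\Z$.

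Then I would upgrade conjugation invariance to Murray--von Neumann invariance. If $e,e'\in M_{q}(\cA)$ are Murray--von Neumann equivalent, a standard algebraic lemma, valid over any unital ring, asserts that $e\oplus 0_{q}$ and $e'\oplus 0_{q}$ are conjugate by an element of $\op{Gl}_{2q}(\cA)$: writing $e=ab$ and $e'=ba$ and, after replacing $a$ by $eae'$ and $b$ by $e'be$ so that $a=ea=ae'$ and $b=e'b=be$, one checks that $u=\left(\begin{smallmatrix} a & 1-e \\ 1-e' & b\end{smallmatrix}\right)$ is invertible with inverse $\left(\begin{smallmatrix} b & 1-e' \\ 1-e & a\end{smallmatrix}\right)$ and that $u^{-1}(e\oplus 0_{q})u=e'\oplus 0_{q}$. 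Combining this with the three facts above gives $\ind D_{e,\sigma}=\ind D_{e\oplus 0_{q},\sigma}=\ind D_{e'\oplus 0_{q},\sigma}=\ind D_{e',\sigma}$; for idempotents of unequal size one first pads with zero blocks to a common size. Hence $e\mapsto \ind D_{e,\sigma}$ is well defined on $V(\cA)$, and by additivity it is a monoid homomorphism $V(\cA)\rightarrow \tfrac12\Z$. Passing to Grothendieck groups completes the proof.

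The only step that requires any real care is the stabilisation lemma bridging Murray--von Neumann equivalence and $\op{Gl}_{q}(\cA)$-conjugation; everything else is bookkeeping with~(\ref{eq:Index.similarity-twisted}) and the additivity identity. If one prefers, one can bypass $V(\cA)$ and instead invoke directly the presentation of $K_{0}(\cA)$ by generators $[e]$, $e=e^{2}\in M_{\infty}(\cA)$, and relations $[e\oplus e']=[e]+[e']$, $[geg^{-1}]=[e]$ with $g\in\op{Gl}_{q}(\cA)$, and $[e\oplus 0]=[e]$; the three facts above then show that $e\mapsto \ind D_{e,\sigma}$ respects all of these relations and therefore descends to $K_{0}(\cA)$, again with uniqueness forced by the fact that the $[e]$ generate.
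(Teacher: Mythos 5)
Your argument is correct and is essentially the detailed version of what the paper leaves implicit. The paper establishes the two invariance facts — similarity invariance~(\ref{eq:Index.similarity-twisted}) and additivity under $\oplus$ — and then concludes with a terse ``Therefore, we arrive at the following statement,'' delegating to the reader the routine but necessary step of passing from these two facts to a well-defined homomorphism on $K_0(\cA)$. You spell out exactly that step: the stabilization lemma identifying Murray--von Neumann equivalence with $\op{Gl}$-conjugation after zero-padding, together with the observation that padding by $0_k$ does not change the index, is precisely what bridges the gap between conjugation invariance in fixed matrix size and invariance on $K_0$-classes. Your explicit invertibility check for the $2\times 2$ block matrix and the verification $u^{-1}(e\oplus 0)u = e'\oplus 0$ are both correct. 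So this is the same proof, fully written out rather than waved at.
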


As pointed out in Remark~\ref{rmk:TST.sigma-involution} the fact that $\sigma(a)^{*}=\sigma^{-1}(a^{*})$ for all $a\in \cA$ means that the map 
$a\rightarrow \sigma(a)^{*}$ is an  involutive antilinear anti-automorphism of $\cA$, which we shall call the 
$\sigma$-involution. An element $a \in \cA$ is selfadjoint with respect to this involution if and only if 
$\sigma(a)^{*}=a$. As~(\ref{eq:IndexDesigma}) shows that when $\sigma(e)^{*}=e$ the index of $D_{e,\sigma}$ is an integer. 
While an idempotent in $M_{q}(\cA)$ is always conjugate to a selfadjoint idempotent, in general it need not be 
conjugate to an idempotent which is selfadjoint with respect to the $\sigma$-involution.  Nevertheless, this property 
holds under a further assumption on the automorphism $\sigma$. 

\begin{definition}\label{def:Index.ribbon}
 The automorphism $\sigma$ is called ribbon when it has a square root in the sense there is an automorphism $\tau:\cA\rightarrow  \cA$ such that
\begin{equation}
    \sigma(a)=\tau(\tau(a)) \quad \text{and} \quad \tau(a)^{*}=\tau^{-1}(a^{*}) \qquad \text{for all $a\in \cA$}.
    \label{eq:Index.square-root-sigma}
\end{equation}  
\end{definition}

\begin{remark}
    The terminology \emph{ribbon} is used in analogy with the theory of quantum groups, where a quasi-triangular 
    Hopf algebra is called ribbon when a certain element admits a square root compatible with the quasi-triangular 
    structure (see, e.g., \cite{Ma:QGP}). 
\end{remark}

\begin{lemma}
\label{lm:CriteriaSigmaAdjointIntegerIndex}
 Assume that the  automorphism $\sigma$ is ribbon.Then
    \begin{enumerate}
        \item[(i)]  Any idempotent $e\in M_{q}(e)$, $q \in \N$, is conjugate to an idempotent which is selfadjoint with 
        respect to the $\sigma$-involution.
    
        \item[(ii)]  The index map $\ind_{D,\sigma}$ is integer-valued. 
    \end{enumerate}
\end{lemma}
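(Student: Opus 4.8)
The plan is to prove (i) first and then derive (ii) from it together with the integrality statement following~(\ref{eq:IndexDesigma}). For (i), fix an idempotent $e\in M_{q}(\cA)$ and let $\tau$ be a square root of $\sigma$ as in Definition~\ref{def:Index.ribbon}. The first step is to produce, out of $e$, a $\sigma$-selfadjoint idempotent conjugate to $e$ by adapting the classical construction that conjugates an idempotent to a selfadjoint one (the one that takes $e$ to $e e^{*}(1-(e-e^{*})^{2})^{-1}$, say). Concretely, I would introduce the element $z:=\tau(e)^{*}\tau(e)+(1-\tau(e))^{*}(1-\tau(e))\in M_{q}(\cA)$, note that $z$ is selfadjoint and positive, hence invertible in $M_{q}(\cA)$ (using that $M_{q}(\cA)$ is closed under holomorphic functional calculus), and observe that $z$ commutes with $\tau(e)$. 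Then $f:=z^{-1/2}\tau(e)z^{1/2}$ is an idempotent conjugate to $\tau(e)$, and a direct check shows it is selfadjoint in the ordinary sense. The key point is to translate this back through $\tau$: apply $\tau^{-1}$ to $f$ to get $e':=\tau^{-1}(f)$, which is conjugate to $\tau^{-1}(\tau(e))=e$ via the invertible element $\tau^{-1}(z^{1/2})$.

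The crux is then to verify that $e'$ is $\sigma$-selfadjoint, i.e.\ $\sigma(e')^{*}=e'$. Since $\sigma=\tau^{2}$ and $e'=\tau^{-1}(f)$, we have $\sigma(e')=\tau(f)$, so $\sigma(e')^{*}=\tau(f)^{*}=\tau^{-1}(f^{*})=\tau^{-1}(f)=e'$, where the second equality is the compatibility $\tau(a)^{*}=\tau^{-1}(a^{*})$ from~(\ref{eq:Index.square-root-sigma}) and the third is $f^{*}=f$. This is exactly the step where the ribbon hypothesis is essential: without a square root of $\sigma$ compatible with the involution, there is no reason the ordinary self-adjointness of $f$ would propagate to $\sigma$-self-adjointness after undoing $\tau$. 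One point requiring a little care is that $z^{1/2}$ and $z^{-1/2}$ genuinely lie in $M_{q}(\cA)$: this is where I invoke closure under holomorphic functional calculus, since $z$ is a positive invertible element and the square root is obtained by holomorphic functional calculus on a neighbourhood of its spectrum, which avoids $0$.

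For (ii), let $[e]\in K_{0}(\cA)$ be represented by an idempotent $e\in M_{q}(\cA)$. By part (i), $e$ is conjugate to a $\sigma$-selfadjoint idempotent $e'$, i.e.\ $e'=g^{-1}eg$ for some $g\in\op{Gl}_{q}(\cA)$, and $[e]=[e']$ in $K_{0}(\cA)$. By~(\ref{eq:Index.similarity-twisted}) we have $\ind D_{e,\sigma}=\ind D_{e',\sigma}$, and since $\sigma(e')^{*}=e'$ the formula displayed just after~(\ref{eq:IndexDesigma}) gives $\ind D_{e',\sigma}=\dim\ker D_{e',\sigma}^{+}-\dim\ker D_{e',\sigma}^{-}\in\Z$. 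Hence $\ind_{D,\sigma}[e]=\ind D_{e,\sigma}\in\Z$. As every class in $K_{0}(\cA)$ is a difference of classes of idempotents, additivity of $\ind_{D,\sigma}$ shows it is integer-valued on all of $K_{0}(\cA)$, completing the proof.

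The main obstacle I anticipate is purely bookkeeping: carefully checking that the classical "polar-type" conjugation $f=z^{-1/2}\tau(e)z^{1/2}$ really is an idempotent and really is selfadjoint, using only the relation $z\tau(e)=\tau(e)z^{**}$-type identities correctly, and making sure the conjugating elements stay inside $M_{q}(\cA)$ rather than merely in $M_{q}(\cL(\cH))$. The conceptual content — that a compatible square root $\tau$ is exactly what is needed to transport ordinary self-adjointness to $\sigma$-self-adjointness — is short; the rest is the standard $K$-theory argument already used for ordinary spectral triples.
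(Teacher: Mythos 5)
Your route is genuinely different from the paper's. The paper runs Kaplansky's conjugation directly in the $\sigma$-involution: with $a=e-\sigma(e)^{*}$ it sets $b=1+\sigma(a)^{*}a$, $p=e\sigma(e)^{*}b^{-1}$, $g=1-p+e$, checks $p^{2}=\sigma(p)^{*}=p$ and $g^{-1}pg=e$, and invokes the ribbon hypothesis only once, to see that $\tau(b)=1+\tau(a)^{*}\tau(a)$, hence $b$, is invertible. You instead push $e$ forward to $\tau(e)$, conjugate $\tau(e)$ to an ordinary selfadjoint idempotent $f$, and pull back by $\tau^{-1}$; your verification $\sigma(e')^{*}=\tau(f)^{*}=\tau^{-1}(f^{*})=\tau^{-1}(f)=e'$ and the deduction of (ii) from (i), \eqref{eq:Index.similarity-twisted} and \eqref{eq:IndexD_eSigmaPM} are correct as written. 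What you buy is a clean conceptual separation of ``transport by $\tau$'' from the classical construction; what the paper's version buys is a conjugating element $g=1-p+e$ that is an explicit polynomial in $e$, with no square roots and no functional calculus beyond inverting the positive element $b$.

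There is, however, a concrete error in the middle step. The element $z=\tau(e)^{*}\tau(e)+(1-\tau(e))^{*}(1-\tau(e))$ does \emph{not} commute with $\tau(e)$: writing $p=\tau(e)$, one has $zp=p^{*}p$ but $pz=2pp^{*}p-pp^{*}$, and these disagree already for $p=\begin{pmatrix}1&1\\0&0\end{pmatrix}$ in $M_{2}(\C)$, where $z=\begin{pmatrix}1&1\\1&3\end{pmatrix}$. Moreover, if $z$ \emph{did} commute with $p$, then $z^{-1/2}pz^{1/2}$ would simply equal $p$, which is generally not selfadjoint, so the construction would be vacuous. The identity you actually need is $zp=p^{*}z$ (both sides equal $p^{*}p$), and then the selfadjoint idempotent is $f=z^{1/2}p z^{-1/2}$, \emph{not} $z^{-1/2}pz^{1/2}$: indeed $f^{*}=z^{-1/2}p^{*}z^{1/2}=z^{-1/2}(zpz^{-1})z^{1/2}=z^{1/2}pz^{-1/2}=f$, while your formula has adjoint $z^{1/2}p^{*}z^{-1/2}$, which need not equal it. Once you replace $f$ by $z^{1/2}\tau(e)z^{-1/2}$ and drop the false commutativity claim, the rest of your argument (including $e'=\tau^{-1}(f)=\tau^{-1}(z^{1/2})\,e\,\tau^{-1}(z^{1/2})^{-1}$ and $z^{\pm 1/2}\in M_{q}(\cA)$ by closure under holomorphic functional calculus) goes through.
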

\begin{proof}
 The 2nd part follows by combining the first part with~(\ref{eq:Index.similarity-twisted}) and~(\ref{eq:IndexD_eSigmaPM}). Therefore, we only need to prove the first part. In addition, 
 without any loss of generality, we may assume that $q=1$ in the first part. Thus let $e$ be an idempotent element of 
 $\cA$. 
 
 Let  us briefly recall  how we construct a selfadjoint idempotent in $\cA$ which is conjugate to $e$ (see, e.g., 
 \cite[Prop.~4.6.2]{Bl:KTOA} for more details). Set $a=e-e^{*}$ and $b=1+aa^{*}$. Observing that $b$ is an invertible element of $\cA$ which commutes with $e$ and $e^{*}$, define 
 $p=ee^{*}b^{-1} $. It can be checked that $p^{2}=p^{*}=p$, i.e., $p$ is a selfadjoint idempotent of $\cA$. Moreover, 
 if we set $g=1-p+e$, then $g$ has inverse $g^{-1}=1+p-e$ and $g^{-1}pg=e$. 
 
 We remark that the above construction holds \emph{verbatim} if we replace the involution $a\rightarrow a^{*}$ by 
 \emph{any} other involutive antilinear anti-automorphism of $\cA$, provided it can be shown that the corresponding operator 
 $b$ is invertible (in $\cA)$. Thus, if we substitute $\sigma(e)^{*}$ for $e^{*}$ and we assume that $b:=1+\sigma(a)^{*}a$ is invertible, where 
 $a=e-\sigma(e)^{*}$, then $p:=e\sigma(e)^{*}b^{-1}$ is such that $p^{2}=\sigma(p)^{*}=p$ 
 and $g^{-1}pg=e$ where $g:=1-p+e$ has inverse $g^{-1}=1+p-e$. Therefore, the main question at stake is to 
 show that $b$ is invertible. 
 
 Let $\tau$ be a square root of $\sigma$ in the sense of~(\ref{eq:Index.square-root-sigma}).  Then
 \(
     \tau(\sigma(a)^{*})=\tau\circ \sigma^{-1}(a^{*})=\tau^{-1}(a^{*})=\tau(a)^{*}.
 \) Thus, 
  \begin{equation*}
      \tau(b)=1+\tau(\sigma(a)^{*}a)=1+\tau(\sigma(a)^{*})\tau(a)=1+ \tau(a)^{*}\tau(a).
 \end{equation*}As $ \tau(a)^{*}\tau(a)$ is a positive element of $\cA$ we see that $\tau(b)$ is invertible, and hence 
 $b$ is invertible as well. The proof is complete. 
\end{proof}

As we shall now see, the ribbon condition~(\ref{eq:Index.square-root-sigma}) is satisfied by the automorphisms occurring in the main examples of twisted spectral triples. 

\begin{example}
    Assume that $\sigma(a)=kak^{-1}$ where $k$ is a positive invertible element of $\cA$. Then $\sigma$ has the square root 
    $\tau(a)=k^{\frac{1}{2}}ak^{-\frac{1}{2}}$. We note that $k^{\frac{1}{2}}$ is an element of $\cA$ since $\cA$ is 
    closed under holomorphic functional calculus. 
\end{example}

More generally, we have the following.
\begin{example}
\label{ex:ribbon.1PG}
   Suppose that $\sigma$ agrees with the value at $t=-i$ of the analytic extension of a strongly continuous one-parameter group of 
    isometric $*$-isomorphisms $(\sigma_{t})_{t\in \R}$. This condition is called (1PG) in~\cite{CM:TGNTQF}. In this 
    case $\sigma$ is ribbon with square 
    root~$\tau:=\sigma_{\left|t=-i/2\right.}$.      
 \end{example}

 \begin{remark}
By a result of Bost~\cite{Bo:POKTSDNC} the analytic extension of a 
     strongly continuous one-parameter group of isometric isomorphisms on an involutive Banach algebra always exists on a dense subalgebra which is closed under 
    holomorphic functional calculus.     
\end{remark}
 
 \begin{remark}
     Connes-Moscovici~\cite{CM:TGNTQF} showed when the condition (1PG) holds $\ind D_{e,\sigma}^{+}$ is equal to $-\ind 
     D_{e,\sigma}^{-}$, so that in this case the index map $\ind_{D,\sigma}$ is integer-valued. 
 \end{remark}

 \begin{example}
     The ribbon condition is also satisfied by the automorphism $\sigma_{g}$ appearing in the construction of the 
     conformal Dirac spectral triple. From the definition~(\ref{eq:TwistedST.automorphism-conformal-DiracST}) of $\sigma_{g}$ we see that a square root satisfies the ribbon condition~(\ref{eq:Index.square-root-sigma}). Indeed, a square root satisfying~(\ref{eq:Index.square-root-sigma}) is 
   given by the automorphism $\tau_{g}$ defined by
\begin{equation*}
    \tau_{g}(fU_{\phi}):=\sqrt{k_{\phi}}fU_{\phi} \qquad \forall f \in C^{\infty}(M) \ \forall \phi \in G.
\end{equation*}In fact, $\sigma_g$ satisfies the (1PG) condition with respect to the one-parameter group of isometric $*$-isomorphisms 
$\sigma_t$, $t\in \R$, given by $\sigma_t(fU_\phi)=k_\phi^{it}fU_\phi$. 
 \end{example}
 
\section{Index Map and $\sigma$-Connections}
\label{sec:IndexMapSigmaConnections}
In this section, we present a geometric description of the index map of a twisted spectral triple in terms of  
couplings by $\sigma$-connections on finitely generated projective modules (i.e, noncommutative vector bundles). As we 
shall explain in the next section, this description makes it much more transparent the analogy 
with the construction of the index map in the commutative case in terms of Dirac operators coupled with connections (see, e.g, \cite{BGV:HKDO}).  
We refer to~\cite{Mo:EIPDNCG} for a similar description of the index map in the case of ordinary spectral triples. 

Throughout this section we let $(\cA,\cH,D)_{\sigma}$ be a twisted spectral triple. 
In addition, we let $\cE$ be a finitely generated projective right module over $\cA$, i.e., $\cE$ is the direct summand of a finite rank free module 
$\cE_{0}\simeq \cA^{q}$.  In order to define $\sigma$-connections we 
need to introduce the notion of ``$\sigma$-translation''. 

\begin{definition}\label{def:sigmaTranslate}
    A $\sigma$-translation of $\cE$ is given by a pair $(\cE,\sigma^{\cE})$, where
    \begin{itemize}
        \item[(i)] $\cE^{\sigma}$ is   finitely generated projective right module over $\cA$, called $\sigma$-translate.  
    
        \item[(ii)]  $\sigma^{\cE}:\cE\rightarrow \cE^{\sigma}$ is a $\C$-linear isomorphism such that
                  \begin{equation}
                       \sigma^{\cE}(\xi a)=\sigma^{\cE}(\xi)\sigma(a)  \qquad \text{for all $\xi\in \cE$ and $a\in \cA$}.
	        \label{eq:sigma^E(xi a)}
                  \end{equation}    
    \end{itemize}
\end{definition}

\begin{remark}
    The condition~(\ref{eq:sigma^E(xi a)}) means that $\sigma^{\cE}$ is a right module isomorphism from $\cE$ onto $\cE^{(\sigma)}$, where $\cE^{(\sigma)}$ is 
$\cE^{\sigma}$ equipped with the action $(\xi,a)\rightarrow \xi\sigma(a)$. In particular, when $\sigma=\op{id}$ a 
$\sigma$-translation of $\cE$ is simply given by a right-module isomorphism $\sigma^{\cE}:\cE\rightarrow \cE^{\sigma}$. 
Therefore, a canonical choice of $\sigma$-translation is to take $(\cE, \op{id})$. This will always the choice we shall 
make when $\sigma=\op{id}$. 
\end{remark}

\begin{remark}\label{rmk:sigmae}.
   Suppose that $\cE=e\cA^{q}$, for some idempotent $e\in M_{q}(\cA)$, $q\geq 1$. The automorphism $\sigma$ lifts to $\cA^{q}$ by 
\begin{equation*}
    \sigma(\xi)=(\sigma(\xi_{j})) \qquad \forall \xi=(\xi_{j})\in \cA^{q}. 
\end{equation*}
Note that $\sigma$ is a $\C$-linear isomorphism of $\cA^{q}$ onto itself and maps $e\cA^{q}$ onto 
$\sigma(e)\cA^{q}$, and so it induces a $\C$-linear isomorphism $\sigma^{e}:e\cA^{q}\rightarrow \sigma(e)\cA^{q}$. 
Moreover, for all $\xi=(\xi_{j})\in e\cA^{q}$ and $a\in \cA$,
\begin{equation}
    \sigma_{e}(\xi a)=\sigma\left( 
    (\xi_{j}a)\right)=\left(\sigma(\xi_{j}a)\right)=\left(\sigma(\xi_{j})\right)\sigma(a)=\sigma_{e}(\xi)\sigma(a). 
    \label{eq:sigma_e(xi a)}
\end{equation}That is, $\sigma_{e}$ satisfies~(\ref{eq:sigma^E(xi a)}). Therefore, the pair $(\sigma(e)\cA^{q},\sigma_{e})$ is a $\sigma$-translation of $e\cA^{q}$. This 
will be our canonical choice of $\sigma$-translation when $\cE$ is of the form $e\cA^{q}$, with $e^{2}=e\in 
M_{q}(\cA)$, $q\geq 1$.
\end{remark}

\begin{remark}\label{rmk:sigma-translate}
    In general, a $\sigma$-translation is obtained as follows. By definition $\cE$ is a direct-summand of a free module 
    $\cE_{0}\simeq \cA^{q}$. Let $\phi:\cE_{0}\rightarrow \cA^{q}$ be a right module isomorphism. The 
image of $\cE$ by $\phi$ is a right module of the form $e\cA^{q}$ for some idempotent $e \in M_{q}(\cA)$. Set $\cE^{\sigma}:=\phi^{-1}\left( 
\sigma(e)\cA^q\right)$; this is a direct summand of $\cE_{0}$. The isomorphism $\phi$ induces isomorphisms of right modules,
\begin{equation*}
    \phi_{e}:\cE\longrightarrow e\cA^{q}\qquad \text{and}\qquad \phi_{\sigma(e)}:\cE^{\sigma}\longrightarrow \sigma(e)\cA^{q}.
\end{equation*}
Set $\sigma^{\cE}=\left( \phi_{\sigma(e)}\right)^{-1}\circ \sigma^{e}\circ \phi_{e}$, where 
$\sigma_{e}:e\cA^{q}\rightarrow \sigma(e)\cA^{q}$ is the $\sigma$-lift introduced in Remark~\ref{rmk:sigmae}. Then $\sigma^{\cE}$ is a 
$\C$-linear isomorphism from $\cE$ onto $\cE^{\sigma}$. Moreover, using~(\ref{eq:sigma^E(xi a)}) we see that, for all $\xi \in \cE$ and $a \in \cA$, we have
\begin{equation*}
    \sigma^{\cE}(\xi a)= \left( \phi_{\sigma(e)}\right)^{-1}\circ \sigma_{e}(\phi(\xi)a)=\left( 
    \phi_{\sigma(e)}\right)^{-1}\left(  \sigma_{e}(\xi)\sigma(a)\right)=\sigma^{\cE}(\xi )\sigma(a). 
\end{equation*}This shows that $\sigma^{\cE}$ satisfies~(\ref{eq:sigma^E(xi a)}), and so $(\cE^{\sigma},\sigma^{\cE})$ is a $\sigma$-translation of $\cE$. 

Conversely, given a $\sigma$-translation $(\cE,\sigma^{\cE})$ and a right-module isomorphism $\phi:\cE\rightarrow 
e\cA^{q}$, the map $\phi_{\sigma}= \sigma^e \circ \phi \circ \left(\sigma^{\cE}\right)^{-1}$ is a $\C$-linear 
isomorphism from  $\cE^{\sigma}$ onto $\sigma(e)\cA^{q}$ such that $\sigma^{\cE}=\left( \phi_{\sigma}\right)^{-1}\circ 
\sigma^{e}\circ \phi$. We note that~(\ref{eq:sigma^E(xi a)}) implies that
\begin{equation*}
    \left(\sigma^{\cE}\right)^{-1}(\xi a) \qquad \text{for all $\xi\in \cE$ and $a\in \cA$}.
\end{equation*}
Combining this with~(\ref{eq:sigma_e(xi a)}) shows that, for all $\xi$ and $a \in \cA$, we have 
\begin{equation*}
    \phi_{\sigma}(\xi a)=   \sigma^e \circ \phi \left( 
 \left(\sigma^{\cE}\right)^{-1}(\xi)\sigma^{-1}(a)\right)= \sigma^{e}\left( \phi \circ \left(\sigma^{\cE}\right)^{-1}(\xi) 
 \sigma^{-1}(a)\right)=  \phi_{\sigma}(\xi)a. 
\end{equation*}
Therefore $\phi_{\sigma}$ is a right-module isomorphism from $\cE^{\sigma}$ onto $\sigma(e)\cA^{q}$. 
In addition, we note that when $\sigma$ is ribbon Lemma~\ref{lm:CriteriaSigmaAdjointIntegerIndex} enables us to choose the idempotent $e$ 
so that $\sigma(e)=e^{*}$. 
\end{remark}
 
Following~\cite{CM:TGNTQF} we consider the space of twisted 1-forms, 
 \begin{equation*}
    \Omega^{1}_{D,\sigma}(\cA)=\left\{\Sigma a^{i}[D, b^{i}]_{\sigma}; a^{i}, b^{i} \in\cA \right\}.
\end{equation*}We note that $ \Omega^{1}_{D,\sigma}(\cA)$ is a subspace of $\cL(\cH)$. Moreover, it is naturally an $(\cA,\cA)$-bimodule, since
\begin{equation*}
    a^{2}(a^{1}[D,b^{1}]_{\sigma})b^{2}= a^{2}a^{1}[D,b^{1}b^{2}]_{\sigma}-a^{2}a^{1}\sigma(b^{1})[D,b^{2}]_{\sigma}
   \qquad  \forall  a^{j}, b^{j} \in \cA.
\end{equation*}
 We also have a ``twisted'' differential $d_{\sigma}:\cA \rightarrow \Omega^{1}_{D,\sigma}(\cA)$ defined by 
\begin{equation}
\label{eq:TwistedDifferential}
    d_{\sigma}a:= [D,a]_{\sigma} \qquad \forall a \in \cA.
\end{equation}
This is a $\sigma$-derivation, in the sense that
\begin{equation}
\label{eq:SigmaDerivation}
    d_{\sigma}(ab)=(d_{\sigma}a)b+\sigma(a)d_{\sigma}b\qquad \forall a, b \in \cA. 
\end{equation}

In what follows we let $(\cE^{\sigma},\sigma^{\cE})$ be a 
$\sigma$-translation of $\cE$.
\begin{definition}
\label{def:SigmaConnection}
A $\sigma$-connection on $\cE$ is a $\C$-linear map $\nabla: \cE\rightarrow 
\cE^{\sigma}\otimes_{\cA}\Omega^1_{D,\sigma}(\cA)$ such that 
\begin{equation}
\label{eq:SigmaConnectionModuleMulti}
 \nabla(\xi a)=(\nabla\xi) a+\sigma^{\cE}(\xi)\otimes d_{\sigma}a \qquad \forall \xi\in\cE \ \forall a\in\cA.   
\end{equation}    
 \end{definition}

\begin{example}
 Suppose that $\cE=e\cA^{q}$ with $e=e^{2}\in M_{q}(\cA)$. Then a natural $\sigma$-connection on $\cE$ is the \emph{Grassmannian $\sigma$-connection} $\nabla_{0}^{\cE}$ defined by 
 \begin{equation}
 \label{eq:GrassmannianSigmaConnection}
     \nabla_{0}^{\cE}\xi= \sigma(e)(d_{\sigma}\xi_{j}) \qquad \text{for all $\xi=(\xi_{j})$ in $\cE$}.
 \end{equation}    
\end{example}

\begin{lemma}
\label{lem:DifferenceSigmaConnection}
    The set of $\sigma$-connections on $\cE$ is an affine space modeled on $\op{Hom}_{\cA}(\cE,\cE^{\sigma}\otimes \Omega^{1}_{D,\sigma}(\cA))$.
\end{lemma}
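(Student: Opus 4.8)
The plan is to verify the two standard ingredients of the statement ``affine space modeled on a vector space'': first, that the difference of two $\sigma$-connections lies in $\op{Hom}_{\cA}(\cE,\cE^{\sigma}\otimes \Omega^{1}_{D,\sigma}(\cA))$; and second, that adding a homomorphism to a $\sigma$-connection produces another $\sigma$-connection. Nonemptiness of the set of $\sigma$-connections can be taken care of separately, either by invoking the Grassmannian $\sigma$-connection after fixing an isomorphism $\cE\simeq e\cA^q$ and transporting it via Remark~\ref{rmk:sigma-translate}, or by simply noting it is implicit in the surrounding discussion; I would mention it in one sentence.

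First I would take two $\sigma$-connections $\nabla_1,\nabla_2$ on $\cE$ and set $\phi=\nabla_1-\nabla_2$. This is visibly $\C$-linear from $\cE$ to $\cE^{\sigma}\otimes_{\cA}\Omega^1_{D,\sigma}(\cA)$. For $\xi\in\cE$ and $a\in\cA$, subtracting the two instances of~(\ref{eq:SigmaConnectionModuleMulti}) gives
\begin{equation*}
    \phi(\xi a)=\nabla_1(\xi a)-\nabla_2(\xi a)=(\nabla_1\xi)a+\sigma^{\cE}(\xi)\otimes d_{\sigma}a-(\nabla_2\xi)a-\sigma^{\cE}(\xi)\otimes d_{\sigma}a=\phi(\xi)a,
\end{equation*}
so the inhomogeneous terms cancel and $\phi$ is right-$\cA$-linear, i.e.\ $\phi\in\op{Hom}_{\cA}(\cE,\cE^{\sigma}\otimes \Omega^{1}_{D,\sigma}(\cA))$. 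Conversely, given a $\sigma$-connection $\nabla$ and $\phi\in\op{Hom}_{\cA}(\cE,\cE^{\sigma}\otimes \Omega^{1}_{D,\sigma}(\cA))$, I would check that $\nabla+\phi$ is $\C$-linear and satisfies
\begin{equation*}
    (\nabla+\phi)(\xi a)=(\nabla\xi)a+\sigma^{\cE}(\xi)\otimes d_{\sigma}a+\phi(\xi)a=\bigl((\nabla+\phi)\xi\bigr)a+\sigma^{\cE}(\xi)\otimes d_{\sigma}a,
\end{equation*}
so~(\ref{eq:SigmaConnectionModuleMulti}) holds and $\nabla+\phi$ is again a $\sigma$-connection. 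These two facts together say precisely that the set of $\sigma$-connections is a torsor under $\op{Hom}_{\cA}(\cE,\cE^{\sigma}\otimes \Omega^{1}_{D,\sigma}(\cA))$, i.e.\ an affine space modeled on that module (which is a $\C$-vector space).

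There is essentially no obstacle here; the only point requiring a word of care is that $\cE^{\sigma}\otimes_{\cA}\Omega^1_{D,\sigma}(\cA)$ must be treated as a right $\cA$-module via the right action on $\Omega^1_{D,\sigma}(\cA)$, so that the expression $(\nabla\xi)a$ and the target of $\phi$ make sense; this is the bimodule structure on $\Omega^1_{D,\sigma}(\cA)$ recorded just before Definition~\ref{def:SigmaConnection}. I would also remark that, as with ordinary connections, the affine space is nonempty because $\cE$ is a direct summand of a free module, so one may pull back the Grassmannian $\sigma$-connection~(\ref{eq:GrassmannianSigmaConnection}) along an isomorphism $\cE\simeq e\cA^q$ and the corresponding $\sigma$-translation from Remark~\ref{rmk:sigma-translate}.
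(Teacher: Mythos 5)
Your proof is correct and follows essentially the same approach as the paper's: verify that the difference of two $\sigma$-connections is $\cA$-linear, verify that adding a module homomorphism preserves the defining property~(\ref{eq:SigmaConnectionModuleMulti}), and establish nonemptiness by pulling back the Grassmannian $\sigma$-connection along an isomorphism $\cE\simeq e\cA^q$ as in Remark~\ref{rmk:sigma-translate}. The only difference is one of emphasis: the paper treats the torsor verification as immediate and devotes the bulk of the proof to nonemptiness (which it calls ``the only issue at stake''), whereas you spell out the torsor verification and compress the nonemptiness to a sentence; both choices are reasonable, and your remark about the right $\cA$-module structure on $\cE^{\sigma}\otimes_{\cA}\Omega^1_{D,\sigma}(\cA)$ is a good point to make explicit.
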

\begin{proof}
    It follows from~(\ref{eq:SigmaConnectionModuleMulti}) that two $\sigma$-connections on $\cE$ differ by an element of 
    $\op{Hom}_{\cA}(\cE,\cE^{\sigma}\otimes \Omega^{1}_{D,\sigma}(\cA))$. Therefore, the only issue at stake is to 
    show that the set of $\sigma$-connections is nonempty. This is a true fact when $\cE=e\cA^{q}$ with $e=e^{2}\in 
    M_{q}(\cA)$ since in this case there is always the Grassmannian $\sigma$-connection. 
    
    In general, as shown in Remark~\ref{rmk:sigma-translate}, we always can find an idempotent $e\in M_{q}(\cA)$, $q\geq 1$, and right module isomorphisms 
    $\phi:\cE\rightarrow e\cA^{q}$ and $\phi^{\sigma}:\cE^{\sigma}\rightarrow 
    \sigma(e)\cA^{q}$ satisfying~(\ref{eq:sigma^E(xi a)}). We then can pullback to $\cE$ any connection $\nabla$ on $\cA^{q}$ to the 
    linear map  $\nabla^{\cE}: \cE\rightarrow \cE^{\sigma}\otimes_{\cA}\Omega^1_{D,\sigma}(\cA)$ defined by
    \begin{equation*}
        \nabla^{\cE}=\left(\left( \phi^{\sigma}\right)^{-1}\otimes 1_{\Omega^1_{D,\sigma}(\cA)}\right)\circ \nabla \circ\phi.
    \end{equation*}For $\xi \in \cE$ and $a\in \cA$ we have
    \begin{align*}
         \nabla^{\cE}(\xi a)= \left(\left( \phi^{\sigma}\right)^{-1}\otimes 1_{\Omega^1_{D,\sigma}(\cA)}\right)\circ 
         \nabla \left (\phi(\xi)a\right)& = \left(\nabla^{\cE}\xi\right) a+ 
         \left(\left(\phi^{\sigma}\right)^{-1}\circ\sigma\circ \phi\right)(\xi)\otimes d_{\sigma}a \\
         & =  \left(\nabla^{\cE}\xi\right) a
         +\sigma^{\cE}(\xi)\otimes d_{\sigma}a.
    \end{align*}Thus $\nabla^{\cE}$ is a $\sigma$-connection on $\cE$. The proof is complete. 
\end{proof}

In what follows we denote by $\cE'$ the dual $\cA$-module $\Hom_{\cA}(\cE,\cA)$. 
 
\begin{definition}
     A Hermitian metric on $\cE$ is a map $\acoup{\cdot}{\cdot}:\cE\times \cE \rightarrow \cA$ such that
     \begin{enumerate}
        \item $\acoup{\cdot}{\cdot}$ is $\cA$-sesquilinear, i.e., it is $\cA$-antilinear with respect to the first 
        variable and $\cA$-linear with respect to the second variable. 
        
         \item  $(\cdot, \cdot)$ is positive, i.e., $\acoup{\xi}{\xi}\geq 0$ for all $\xi \in \cE$.
     
         \item   $(\cdot, \cdot)$ is nondegenerate, i.e., $\xi \rightarrow \acoup{\xi}{\cdot}$ 
         is an $\cA$-antilinear isomorphism from $\cE$ onto $\cE'$.
     \end{enumerate}
 \end{definition}

  \begin{remark}
     Using (2) and a polarization argument it can be shown $(\xi_{1},\xi_{2})=(\xi_{2},\xi_{1})^{*}$ for all 
     $\xi_{j}\in \cA$.
  \end{remark}

 \begin{example}\label{ex:CanonicalHermitianStructure}
  The canonical Hermitian structure on the free module $\cA^{q}$ is given by
  \begin{equation}
  \label{eq:CanonicalHermitianStructure}
      \acoup{\xi}{\eta}_{0}=\xi_{1}^{*}\eta_{1}+\cdots + \xi_{q}^{*}\eta_{q} \qquad \text{for all $\xi=(\xi_{j})$ and 
      $\eta=(\eta_{j})$ in $\cA^{q}$}.
  \end{equation}
  \end{example}
 
\begin{lemma}
\label{lem:CanonicalHermitMetric-eA^q}
    Suppose that $\cE=e\cA^{q}$ with $e=e^{2}\in M_{q}(\cA)$. Then the canonical Hermitian metric of $\cA^{q}$ induces 
    a Hermitian metric on $\cE$.
\end{lemma}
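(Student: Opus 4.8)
The plan is to verify that the restriction of the canonical Hermitian metric on $\cA^{q}$, namely $\acoup{\cdot}{\cdot}_{0}$ from~(\ref{eq:CanonicalHermitianStructure}), to the submodule $\cE=e\cA^{q}$ yields a map satisfying the three axioms of a Hermitian metric. Axioms (1) and (2) are immediate: $\cA$-sesquilinearity is inherited verbatim from $\cA^{q}$, and positivity $\acoup{\xi}{\xi}_{0}\geq 0$ holds because $\acoup{\xi}{\xi}_{0}=\xi_{1}^{*}\xi_{1}+\cdots+\xi_{q}^{*}\xi_{q}$ is a sum of positive elements of $\cA$. So the entire content of the lemma is axiom (3): the nondegeneracy, i.e., that $\xi\mapsto\acoup{\xi}{\cdot}_{0}$ is an $\cA$-antilinear isomorphism $\cE\to\cE'=\Hom_{\cA}(\cE,\cA)$.

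To prove nondegeneracy, first I would record that the canonical metric on $\cA^{q}$ is itself nondegenerate: the map $\Phi_{0}:\cA^{q}\to(\cA^{q})'$, $\eta\mapsto\acoup{\eta}{\cdot}_{0}$, is an $\cA$-antilinear isomorphism, with inverse sending a functional $\lambda$ to the column whose $j$-th entry is $\lambda(\epsilon_{j})^{*}$, where $\epsilon_{1},\ldots,\epsilon_{q}$ is the standard basis. Next, since $e$ is an idempotent in $M_{q}(\cA)$, we have a direct-sum decomposition of right $\cA$-modules $\cA^{q}=e\cA^{q}\oplus(1-e)\cA^{q}$, which induces a dual decomposition $(\cA^{q})'=(e\cA^{q})'\oplus((1-e)\cA^{q})'$; concretely, restriction of functionals gives the surjection $(\cA^{q})'\to\cE'$ whose kernel is the annihilator of $e\cA^{q}$, and this annihilator is precisely the image of $((1-e)\cA^{q})'$. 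The key algebraic point is that $\Phi_{0}$ respects this splitting, in the sense that for $\xi\in e\cA^{q}$ the functional $\acoup{\xi}{\cdot}_{0}$ annihilates $(1-e)\cA^{q}$: indeed, using $(\xi_{2},\xi_{1})_{0}=(\xi_{1},\xi_{2})_{0}^{*}$ (the identity from the remark after the definition) together with $e^{2}=e$, for $\eta\in(1-e)\cA^{q}$ one computes $\acoup{\xi}{\eta}_{0}=\acoup{e\xi}{\eta}_{0}=\acoup{\xi}{e^{\dagger}\eta}$ where $e^{\dagger}$ denotes the adjoint of $e$ with respect to $\acoup{\cdot}{\cdot}_{0}$, namely $e^{*}$ (conjugate-transpose matrix); hence $\acoup{\xi}{\eta}_{0}=\acoup{\xi}{e^{*}\eta}_{0}$, which need not vanish unless $e$ is self-adjoint. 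This shows the naive restriction argument is subtle, and this is the main obstacle: nondegeneracy of the restricted form on $e\cA^{q}$ does \emph{not} follow from a splitting orthogonal for $\acoup{\cdot}{\cdot}_{0}$ unless $e=e^{*}$.

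The way around this, and the heart of the proof, is to realize the dual module $\cE'$ correctly. I claim $\cE'=\Hom_{\cA}(e\cA^{q},\cA)\cong\cA^{q}e^{*}$ (row vectors), the isomorphism sending a functional $\lambda$ to the row $(\lambda(e\epsilon_{1}),\ldots,\lambda(e\epsilon_{q}))$, which lies in $\cA^{q}e^{*}$ because $\lambda$ is determined on $e\cA^{q}$; one checks this is a well-defined $\cA$-module isomorphism using that any $\eta\in e\cA^{q}$ can be written $\eta=\sum_{j}e\epsilon_{j}\eta_{j}$ with $\eta_{j}\in\cA$ and $e\eta=\eta$. Then I would exhibit the inverse of $\Phi_{0}|_{\cE}:\cE\to\cE'$ explicitly: given a functional $\lambda\in\cE'$ corresponding to a row $r=(r_{1},\ldots,r_{q})\in\cA^{q}e^{*}$, the element $\xi:=e(r_{1}^{*},\ldots,r_{q}^{*})^{t}\in e\cA^{q}$ satisfies $\acoup{\xi}{\cdot}_{0}=\lambda$ on $\cE$; verifying this requires only the computation $\acoup{\xi}{\eta}_{0}=\sum_{j}\xi_{j}^{*}\eta_{j}=\sum_{j}r_{j}(e\eta)_{j}$ combined with $e\eta=\eta$, i.e., the projector $e$ appearing in the definition of $\xi$ compensates exactly for the row lying in $\cA^{q}e^{*}$. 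Injectivity of $\Phi_{0}|_{\cE}$ follows from positivity as in the proof of the earlier lemma on $S_{e}$: if $\acoup{\xi}{\cdot}_{0}$ vanishes on $\cE$ then in particular $\acoup{\xi}{\xi}_{0}=0$, forcing $\xi=0$. Combining surjectivity and injectivity gives that $\Phi_{0}|_{\cE}$ is an $\cA$-antilinear isomorphism $\cE\to\cE'$, which is axiom (3), and the proof is complete. I would also remark that an alternative, cleaner route is available under the ribbon hypothesis: by Lemma~\ref{lm:CriteriaSigmaAdjointIntegerIndex} one may conjugate $e$ to a self-adjoint idempotent, in which case the splitting $\cA^{q}=e\cA^{q}\oplus(1-e)\cA^{q}$ \emph{is} orthogonal and everything reduces to the free case — but since the lemma is stated for arbitrary $e$, the explicit-inverse argument above is the one to carry out in full generality.
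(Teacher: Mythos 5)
Your proposal correctly isolates nondegeneracy as the nontrivial axiom and correctly flags the pitfall that the splitting $\cA^{q}=e\cA^{q}\oplus(1-e)\cA^{q}$ is not $\acoup{\cdot}{\cdot}_{0}$-orthogonal when $e\neq e^{*}$. However, the explicit-inverse argument you then carry out has a concrete algebraic error and does not actually circumvent this pitfall — it reintroduces it. First, the row $r=(\lambda(e\epsilon_{1}),\ldots,\lambda(e\epsilon_{q}))$ does \emph{not} lie in $\cA^{q}e^{*}$; it lies in $\cA^{q}e$. Indeed, the $\cA$-linearity of $\lambda$ gives
\begin{equation*}
(re)_{k}=\sum_{j}\lambda(e\epsilon_{j})\,e_{jk}=\lambda\Bigl(\sum_{j}e\epsilon_{j}e_{jk}\Bigr)=\lambda(e\cdot e\epsilon_{k})=\lambda(e\epsilon_{k})=r_{k},
\end{equation*}
so $re=r$; there is no corresponding mechanism forcing $re^{*}=r$, and it fails in general. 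Second, the displayed computation $\acoup{er^{\dagger}}{\eta}_{0}=\sum_{j}r_{j}(e\eta)_{j}$ is wrong: writing $\xi=er^{\dagger}$ one has $\xi_{j}^{*}=\sum_{k}r_{k}e_{jk}^{*}=\sum_{k}r_{k}(e^{*})_{kj}$, hence
\begin{equation*}
\acoup{\xi}{\eta}_{0}=\sum_{j}\xi_{j}^{*}\eta_{j}=\sum_{k}r_{k}(e^{*}\eta)_{k}=(re^{*})\,\eta,
\end{equation*}
with $e^{*}$, not $e$. Since $e\eta=\eta$ but in general $e^{*}\eta\neq\eta$ for $\eta\in\cE$, you do not get $\lambda(\eta)=r\eta$; you get $(re^{*}e)\eta$, which equals $r\eta$ only if $re^{*}e=r$. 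A minimal counterexample: $\cA=\C$, $q=2$, $e=\bigl(\begin{smallmatrix}1&1\\0&0\end{smallmatrix}\bigr)$, $\lambda((c,0)^{t})=c$; then $r=(1,1)$, $\xi=er^{\dagger}=(2,0)^{t}$, and $\acoup{\xi}{(c,0)^{t}}_{0}=2c\neq c=\lambda((c,0)^{t})$.

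The ``extra factor'' $r\mapsto re^{*}e$ that appears is precisely the dual of the map $\xi\mapsto e^{*}\xi$ from $\cE$ to $\cE^{*}:=e^{*}\cA^{q}$, and proving that this map is invertible is exactly the nontrivial content of the paper's argument (Lemma~\ref{lem:ALinearIsoEE^*} in Appendix~\ref{app:PfLemCanoHermMetric}). That lemma requires a genuine input — closure of $\cA$ under holomorphic functional calculus, used to produce a selfadjoint idempotent similar to $e^{*}$ and to identify $\op{ran}\,e^{*}e$ with $\op{ran}\,e^{*}$ — and cannot be absorbed into a bare formula of the kind you wrote. In short, your candidate inverse is off by the very operator whose invertibility is the heart of the lemma, so the proof as proposed is circular at best and incorrect as stated. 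The paper avoids the problem by decoupling it into two steps: first it shows the pairing restricted to $\cE^{*}\times\cE$ is nondegenerate (this part is elementary and parallels your extension-and-restriction idea, except the recovered vector lands naturally in $e^{*}\cA^{q}$, not $e\cA^{q}$); second it shows $\cE\cong\cE^{*}$ via $e^{*}$, which is where the functional-calculus input enters.

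Your closing remark about the ribbon case is beside the point here: even granting the ribbon hypothesis, you would only get an idempotent that is selfadjoint for the $\sigma$-involution ($\sigma(e)^{*}=e$), not $e^{*}=e$, so it does not reduce the present lemma to the free case.
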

\begin{proof}
    See Appendix~\ref{app:PfLemCanoHermMetric}. 
\end{proof}

\begin{remark}
      Let $\phi:\cE\rightarrow \cF$ be an isomorphism of finitely generated projective modules and assume $\cF$ carries 
      a Hermitian metric $\acoup{\cdot}{\cdot}_{\cF}$. Then using $\phi$ we can pullback the Hermitian metric of $\cF$ to  the 
      Hermitian metric on $\cE$ given by
      \begin{equation*}
          \acoup{\xi_{1}}{\xi_{2}}_{\cE}:=\acoup{\phi(\xi_{1})}{\phi(\xi_{2})}_{\cF} \qquad \forall \xi_{j} \in \cE.
      \end{equation*}
      In particular, if we take $\cF$ to be of the form $e\cA^{q}$ with $e=e^{2}\in M_{q}(\cA)$, then we can 
      pullback the canonical Hermitian metric $\acoup{\cdot}{\cdot}_{0}$ to a Hermitian metric on $\cE$. 
  \end{remark}  
  
 From now on we assume that $\cE$ and its $\sigma$-translate carry Hermitian metrics.  We denote by $\cH(\cE)$ the pre-Hilbert space 
consisting of $\cE\otimes_\cA \cH$ equipped with the Hermitian inner product, 
 \begin{equation}
 \label{eq:HermitianInnerProductH(E)}
     \acou{\xi_{1}\otimes \zeta_{1}}{\xi_{2}\otimes \zeta_{2}}:= \acou{\zeta_{1}}{(\xi_{1},\xi_{2})\zeta_{2}}, \qquad 
     \xi_{j}\in \cE,  \zeta_{j} \in \cH, 
 \end{equation}where $\acoup{\cdot}{\cdot}$ is the Hermitian metric of $\cE$. 
 
 \begin{lemma}
 \label{lem:H(E)topIndepHermitianMetricE}
    $ \cH(\cE)$ is a Hilbert space whose topology is independent of the choice of the Hermitian inner product of $\cE$. 
 \end{lemma}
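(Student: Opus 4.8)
I will first show that $\cH(\cE)$ is complete for a \emph{fixed} Hermitian metric, and then that replacing the metric by an equivalent one yields the same topology. For the completeness part, the natural strategy is to compare $\cH(\cE)$ with a manifestly complete space. By Lemma~\ref{lem:CanonicalHermitMetric-eA^q} and Remark~\ref{rmk:sigma-translate} we may fix an idempotent $e=e^{2}\in M_{q}(\cA)$ and a right-module isomorphism $\phi:\cE\to e\cA^{q}$, and (since Hermitian metrics may be pulled back along isomorphisms) we may assume the metric on $\cE$ is the one induced from the canonical metric $\acoup{\cdot}{\cdot}_{0}$ on $\cA^{q}$. Then there is a canonical isometric isomorphism
\begin{equation*}
    \cH(\cE)=\cE\otimes_{\cA}\cH \simeq e\cA^{q}\otimes_{\cA}\cH \simeq e\cH^{q},
\end{equation*}
where $e\cH^{q}$ is the closed subspace of $\cH^{q}$ considered in Section~\ref{sec:IndexTwistedSpectralTriple}. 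The point is that the map $e\xi\otimes\zeta\mapsto e(\xi\zeta)$ (with $\xi\in\cA^{q}$, $\zeta\in\cH$) is well defined on the balanced tensor product, surjective onto $e\cH^{q}$, and preserves inner products precisely by the definition~(\ref{eq:HermitianInnerProductH(E)}) of the inner product on $\cH(\cE)$ together with~(\ref{eq:CanonicalHermitianStructure}): indeed $\acou{e\xi\otimes\zeta_{1}}{e\eta\otimes\zeta_{2}}=\acou{\zeta_{1}}{\acoup{e\xi}{e\eta}_{0}\zeta_{2}}=\sum_{j}\acou{(e\xi)_{j}\zeta_{1}}{(e\eta)_{j}\zeta_{2}}=\acou{e(\xi\zeta_{1})}{e(\eta\zeta_{2})}_{\cH^{q}}$. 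Since $e\cH^{q}$ is a closed subspace of the Hilbert space $\cH^{q}$, it is complete, and hence so is $\cH(\cE)$.

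**Independence of the metric.** Now let $\acoup{\cdot}{\cdot}$ and $\acoup{\cdot}{\cdot}'$ be two Hermitian metrics on $\cE$, giving inner products $\acou{\cdot}{\cdot}$ and $\acou{\cdot}{\cdot}'$ on $\cE\otimes_{\cA}\cH$ via~(\ref{eq:HermitianInnerProductH(E)}). I want to show these induce the same topology, i.e.\ that there are constants $c,C>0$ with $c\,\acou{u}{u}\le\acou{u}{u}'\le C\,\acou{u}{u}$ for all $u\in\cE\otimes_{\cA}\cH$. Using the nondegeneracy of the metrics (condition (3) in the definition), there is an $\cA$-linear isomorphism $T:\cE\to\cE$ with $\acoup{\xi}{\eta}'=\acoup{\xi}{T\eta}$ for all $\xi,\eta\in\cE$; by positivity and the polarization identity in the Remark after the definition, $T$ is ``positive and invertible'' in the appropriate sense. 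Transporting everything to $e\cA^{q}$ as above, $T$ becomes left multiplication by a positive invertible element $h\in eM_{q}(\cA)e$ (positive and invertible in the corner algebra, using that $\cA$ is closed under holomorphic functional calculus so that $M_{q}(\cA)$ is too). Then on $e\cH^{q}$ the second inner product is $\acou{u}{u}'=\acou{u}{(h\otimes1)u}_{\cH^{q}}$, and since $h$ is a positive invertible operator in $M_{q}(\cL(\cH))$ we have the operator inequalities $\|h^{-1}\|^{-1}\le h\le\|h\|$, which give exactly the required two-sided bound with $c=\|h^{-1}\|^{-1}$ and $C=\|h\|$. Hence the two norms are equivalent and define the same topology.

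**The main obstacle.** The genuinely delicate point is the passage from the abstract nondegeneracy hypothesis on a Hermitian metric to the concrete statement that, after identifying $\cE$ with $e\cA^{q}$, the metric is implemented by a \emph{positive invertible} element $h$ of the corner $eM_{q}(\cA)e$ — in particular that $h$ is invertible \emph{in the algebra} $\cA$ (equivalently in $M_{q}(\cA)$), not merely injective as an operator. This is where closure of $\cA$ under holomorphic functional calculus is used: it guarantees that an element of $M_{q}(\cA)$ which is invertible in $\cL(\cH^{q})$ is invertible in $M_{q}(\cA)$, and that positive square roots, etc., stay within the algebra. One must also check the well-definedness and $\cA$-linearity of the comparison operator $T$ carefully, using that both metrics are $\cA$-sesquilinear and the self-adjointness relation $\acoup{\xi_{1}}{\xi_{2}}=\acoup{\xi_{2}}{\xi_{1}}^{*}$. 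Granting these algebraic facts, everything else is routine. A detailed proof is given in Appendix~\ref{app:H(E)topIndepHermitianMetricE}.
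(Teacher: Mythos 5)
Your proof is correct and arrives at the same conclusion, but it handles the metric-independence step by a somewhat different route than the paper. The completeness part agrees with the paper: both identify $\cH(e\cA^{q})_{0}$ (with the metric induced from the canonical one) isometrically with the closed subspace $e\cH^{q}$ of $\cH^{q}$ via the map $\xi\otimes\zeta\mapsto(\xi_{j}\zeta)_{j}$, and then reduce the general $\cE$ to the case $e\cA^{q}$ by pushing forward along $\phi:\cE\to e\cA^{q}$. Where you diverge is in comparing two Hermitian metrics on $e\cA^{q}$. You work \emph{inside the corner}: the comparison operator is some $h\in eM_{q}(\cA)e$ with $(\xi,\eta)'=(\xi,h\eta)_{0}$, you observe it is automatically invertible in the corner (it is a composite of the two module isomorphisms $\cE\xrightarrow{\sim}\cE'$ coming from nondegeneracy — holomorphic functional calculus is not actually needed here), and then you bound the restriction $h|_{e\cH^{q}}$ above and below as a positive operator. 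The paper instead \emph{extends} the metric from $e\cA^{q}$ to all of $\cA^{q}$ by declaring $(1-e)\cA^{q}$ orthogonal with the canonical metric, producing a comparison operator $g=e^{*}ae+(1-e)^{*}(1-e)\in\GL_{q}(\cA)$; it needs Lemma~\ref{lem:ALinearIsoEE^*} to verify that this $g$ really is an isomorphism of $\cA^{q}$, but in return the spectral estimate $c^{-1}\le g\le c$ becomes a clean statement about a positive invertible element of the unital algebra $M_{q}(\cA)$ acting on $\cH^{q}$, and one simply restricts the resulting norm equivalence back to $e\cH^{q}$. Your corner approach is more direct and avoids the extension lemma, but it does leave the selfadjointness and positivity of $h$ as an operator on $e\cH^{q}$ a bit under-argued: the positivity of the restricted operator $h|_{e\cH^{q}}$ is exactly the positivity of the inner product~(\ref{eq:HermitianInnerProductH(E)}) on all of $e\cA^{q}\otimes_{\cA}\cH$ (not just on elementary tensors), which the paper gets cleanly from the free-module computation; and the inequality $\|h^{-1}\|^{-1}\le h\le\|h\|$ must be read on $e\cH^{q}$ only, since $h=ehe$ annihilates $(1-e)\cH^{q}$ and is not invertible in $M_{q}(\cL(\cH))$ as you wrote. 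These are fixable expository points rather than genuine gaps.
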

 \begin{proof}
     See Appendix~\ref{app:H(E)topIndepHermitianMetricE}. 
 \end{proof}
 
 \begin{remark}
     In~\cite{Mo:EIPDNCG} the Hilbert space $\cH(\cE)$ is defined as the completion of $\cE\otimes_\cA \cH$ with respect to the 
     Hermitian inner product~(\ref{eq:HermitianInnerProductH(E)}). As Lemma~\ref{lem:H(E)topIndepHermitianMetricE} shows this pre-Hilbert space is already complete. 
 \end{remark}

 We note there is a natural $\Z_{2}$-grading on $\cH(\cE)$ given by
 \begin{equation}
 \label{eq:Z_2GradingH(E)}
     \cH(\cE)=\cH^{+}(\cE)\oplus \cH^{-}(\cE), \qquad \cH^{\pm}(\cE):=\cE\otimes_{\cA}\cH^{\pm}.
 \end{equation}
 
 We also form the $\Z_{2}$-graded Hilbert space 
 $\cH(\cE^{\sigma})$ as above.  In addition, we let $\nabla^{\cE}$ be a $\sigma$-connection on $\cE$. 
 Regarding $\Omega_{D,\sigma}^{1}(\cA)$ as a subalgebra of 
$\cL(\cH)$ we have a natural left-action $c:\Omega_{D,\sigma}^{1}(\cA)\otimes_{\cA}\cH \rightarrow \cH$ given by 
\begin{equation*}
    c(\omega\otimes \zeta)=\omega(\zeta) \qquad \text{for all $\omega\in \Omega_{D,\sigma}^{1}(\cA)$ and $\zeta \in 
    \cH$}.
\end{equation*}
We denote by $c\left(\nabla^{\cE}\right)$ the composition $(1_{\cE^{\sigma}}\otimes c)\circ (\nabla^{\cE}\otimes 
1_{\cH}):\cE\otimes\cH \rightarrow \cE^{\sigma}\otimes\cH$.  Thus, for $\xi\in \cE$ and $\zeta \in \cH$, and upon writing
 $\nabla^{\cE}\xi=\sum \xi_{\alpha} \otimes\omega_{\alpha}$ with $\xi_{\alpha}\in \cE^{\sigma}$ and 
 $\omega_{\alpha}\in  \Omega^{1}_{D,\sigma}(\cA)$, we have
 \begin{equation}
 \label{eq:CliffordNabla}
        c\left(\nabla^{\cE}\right)(\xi\otimes\zeta)=\sum \xi_{\alpha}\otimes\omega_{\alpha}(\zeta). 
\end{equation}

In what follows we regard the domain of $D$ as a left $\cA$-module, which is possible since the action of $\cA$ on 
$\cH$ preserves $\dom(D)$. 

\begin{definition}
 \label{def:DNablaE}
   The coupled operator $D_{\nabla^{\cE}}:\cE\otimes_{\cA} \dom(D) \rightarrow \cH(\cE^{\sigma})$ is  defined by
\begin{equation}
\label{eq:Index.Dnabla}
    D_{\nabla^{\cE}}(\xi\otimes \zeta):=\sigma^{\cE}(\xi)\otimes D\zeta + c(\nabla^{\cE})(\xi\otimes \zeta) \qquad 
    \text{for all $\xi\in \cE$ and $\zeta \in \dom(D)$}.
\end{equation}   
\end{definition}


\begin{remark} 
As the operators $\sigma^{\cE}$, $D$ and $\nabla^{\cE}$ are not module maps, we need to check the compatibility 
of~(\ref{eq:Index.Dnabla}) with the action of $\cA$. This is a consequence of~(\ref{eq:TwistedDifferential}). Indeed, if $\xi \in \cE$ and $\zeta \in 
\dom(D)$, then, for all $a \in \cA$,
\begin{align*}
c\left(\nabla^{\cE}\right)(\xi a\otimes \zeta)= (1\otimes c)\left( \nabla^{\cE}(\xi a)\otimes \zeta\right) 
& = (1\otimes c)\left( \left(\nabla^{\cE}\xi\right) a\otimes \zeta  +\sigma^{\cE}(\xi)\otimes d_{\sigma}(a)\otimes \zeta\right) \\ 
& = c\left( \nabla^{\cE}\right)(\xi \otimes a \zeta)+ \sigma^{\cE}(\xi)\otimes d_{\sigma}(a)\zeta.
\end{align*}Thus,
\begin{align*}
  D_{\nabla^{\cE}}(\xi a\otimes \zeta)-    D_{\nabla^{\cE}}(\xi\otimes a\zeta) & = \sigma^{\cE}(\xi a)\otimes D\zeta + 
  \sigma^{\cE}(\xi)\otimes d_{\sigma}(a)\zeta - \sigma^{\cE}(\xi)\otimes D(a \zeta)\\ & = \sigma^{\cE}(\xi)\sigma(a) 
  \otimes D\zeta - \sigma^{\cE}(\xi)
  \otimes \sigma(a) D\zeta, 
  \end{align*}which shows that $  D_{\nabla^{\cE}}(\xi a\otimes \zeta)=   D_{\nabla^{\cE}}(\xi\otimes a\zeta)$ in 
  $\cE^{\sigma}\otimes_{\cA}\cH$. 
\end{remark}

\begin{remark}
    With respect to the $\Z_2$-gradings~(\ref{eq:Z_2GradingH(E)}) for $\cH(\cE)$ and $\cH(\cE^{\sigma})$ the operator $D_{\nabla^{\cE}}$ takes the form, 
\begin{equation*}
 D_{\nabla^{\cE}}=   \begin{pmatrix}
    0    & D_{\nabla^{\cE}}^{-} \\
        D_{\nabla^{\cE}}^{+} & 0
    \end{pmatrix}, \qquad D_{\nabla^{\cE}}^{\pm}:\cE\otimes_{\cA}\dom (D^{\pm})\longrightarrow \cH^{\mp}(\cE^{\sigma}).
\end{equation*}That is, $D_{\nabla^\cE}$ is an odd operator.
\end{remark}

Suppose that $\cE=e\cA^{q}$ with $e=e^{2}\in M_{q}(\cA)$. Then there is a canonical isomorphism $U_{e}$ from 
$\cH(\cE)$ to $e\cH^{q}$ given by
\begin{equation*}
    U_{e}(\xi\otimes \zeta)=\left(\xi_{j}\zeta\right)_{1\leq j \leq q} \qquad \text{for all $\xi=(\xi_{j})\in \cE$ and $\zeta \in \cH$},
\end{equation*}where $\cE=e\cA^{q}$ is regarded as a submodule of $\cA^{q}$. The inverse of $U_e$ is given by 
\begin{equation*}
    U^{-1}_{e}\left( (\zeta_{j})\right) = \sum e_{j}\otimes \zeta_{j} \qquad \text{for all $(\zeta_{j})\in e\cH^{q}$},
\end{equation*}where $e\cH^{q}$ is regarded as a subspace of $\cH^{q}$ and $e_{1},\ldots,e_{q}$ are the column vectors 
of $e$. We also note that $U_{e}$ is a graded isomorphism. 

\begin{lemma}
\label{lm:EqualityOfD}
    Suppose that $\cE=e\cA^{q}$ as above and let $\nabla_{0}^{\cE}$ be the Grassmannian $\sigma$-connection of $\cE$. Then     
    \begin{equation*}
        U_{\sigma(e)}D_{\nabla_{0}^{\cE}}U_{e}^{-1}=D_{e,\sigma},
    \end{equation*}where $D_{e,\sigma}$ is defined in~(\ref{eq:Index.Desigma}). 
\end{lemma}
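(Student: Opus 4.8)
The plan is to compute $U_{\sigma(e)}D_{\nabla_0^{\cE}}U_e^{-1}$ directly on an element $(\zeta_j)\in e(\dom D)^q$ and check it equals $D_{e,\sigma}(\zeta_j)=\sigma(e)(D\otimes 1_q)(\zeta_j)$. First I would unwind the definitions: $U_e^{-1}((\zeta_j))=\sum_j e_j\otimes\zeta_j$, where $e_1,\dots,e_q$ are the column vectors of $e$, and then apply the coupled operator $D_{\nabla_0^{\cE}}$ using~(\ref{eq:Index.Dnabla}), which has two pieces, the ``$\sigma^{\cE}(\xi)\otimes D\zeta$'' term and the Clifford term $c(\nabla_0^{\cE})(\xi\otimes\zeta)$.

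For the first piece, $\sum_j\sigma^{e}(e_j)\otimes D\zeta_j$; since $\sigma^e$ is the $\sigma$-lift of Remark~\ref{rmk:sigmae}, $\sigma^e(e_j)=(\sigma(e_{ij}))_i$ is the $j$-th column of $\sigma(e)$. For the second piece I would use the definition~(\ref{eq:GrassmannianSigmaConnection}) of the Grassmannian $\sigma$-connection: writing $e_j=(e_{ij})_i$, we have $\nabla_0^{\cE}e_j=\sigma(e)(d_\sigma e_{ij})_i=\sigma(e)([D,e_{ij}]_\sigma)_i$, and then $c(\nabla_0^{\cE})(e_j\otimes\zeta_j)$ applies these operators $[D,e_{ij}]_\sigma$ to $\zeta_j$ and multiplies by $\sigma(e)$ on the left. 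Applying $U_{\sigma(e)}$ to the sum of the two pieces turns the abstract tensor back into a vector in $\sigma(e)\cH^q$, and collecting everything the $i$-th component becomes $\sum_j\sum_k\sigma(e)_{ik}\big(e_{kj}D\zeta_j+[D,e_{kj}]_\sigma\zeta_j\big)$. The key algebraic identity is simply that $e_{kj}D + [D,e_{kj}]_\sigma = e_{kj}D + De_{kj}-\sigma(e_{kj})D$... so I should be careful: actually $[D,e_{kj}]_\sigma=De_{kj}-\sigma(e_{kj})D$, so $e_{kj}(D\zeta_j)$ is \emph{not} what pairs with it. The correct bookkeeping is that the first piece of $D_{\nabla_0^{\cE}}$ contributes $\sigma(e_{kj})D\zeta_j$ (after the $\sigma^{\cE}$), and adding $[D,e_{kj}]_\sigma\zeta_j=(De_{kj}-\sigma(e_{kj})D)\zeta_j$ gives $De_{kj}\zeta_j$; then the leftover $\sigma(e)$ factor from the Grassmannian connection and the matrix identity $\sigma(e)^2=\sigma(e)$ collapse the double sum, yielding $\sum_j(\sigma(e)De)_{ij}\zeta_j=(\sigma(e)(D\otimes 1_q))(\zeta_j)_i$, using $e(\zeta_j)=(\zeta_j)$.

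The main obstacle, and the step I would do most carefully, is exactly this index/matrix bookkeeping: tracking how $\sigma^e$ acts on column vectors of $e$, how the left multiplication by $\sigma(e)$ in the Grassmannian connection interacts with the reassembly map $U_{\sigma(e)}$, and making sure the ``$D$ hits $e_{kj}$ then $e_{kj}$ hits $\zeta_j$'' telescoping is correct, so that the $[D,\cdot]_\sigma$ terms combine with the diagonal term to produce $De$ rather than something off by a $\sigma$. One clean way to organize this is to note that $D_{\nabla_0^{\cE}}$ is visibly well-defined on $\cE\otimes_{\cA}\dom D$ (by the compatibility remark following Definition~\ref{def:DNablaE}), so it suffices to check the identity on generators $e_j\otimes\zeta$ with $\zeta\in\dom D$, which reduces the computation to a single finite sum over the row index; then applying $U_{\sigma(e)}$ and comparing with the explicit matrix form of $D_{e,\sigma}=\sigma(e)(D\otimes 1_q)$ restricted to $e\cH^q$ finishes the proof. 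Finally I would remark that since $U_e$ and $U_{\sigma(e)}$ are graded isomorphisms, the identity automatically holds componentwise for the $\pm$ parts as well.
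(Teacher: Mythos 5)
Your proposal is correct and takes essentially the same computational route as the paper: unwind $D_{\nabla_{0}^{\cE}}$ on simple tensors, apply the twisted Leibniz identity $aD+[D,a]_{\sigma}=Da$ to the relevant entries, and use idempotency to collapse everything to $\sigma(e)(D\otimes 1_{q})$. The paper streamlines the bookkeeping by computing on $\xi\otimes\zeta$ with $\xi=(\xi_{j})\in\cE$ generic and $\zeta\in\dom D$, writing $\xi=\sum e_{j}\xi_{j}$, so that the telescoping happens against the $\xi_{j}$ rather than against the matrix entries $e_{kj}$, which avoids your double sum over $k$ and the explicit appeal to $\sigma(e)^{2}=\sigma(e)$.
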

\begin{proof}
The image of $\cE\otimes_{\cA}\dom(D)$ under $U_{e}$ is $e(\dom(D))^q=\dom (D_{e,\sigma})$.  Let $\zeta \in \dom(D)$ 
and let $\xi=(\xi_{j})$ be in $\cE\subset \cA^{q}$. Then 
\begin{equation*}
   c\left(\nabla_{0}^{\cE}\right)(\xi\otimes \zeta)= \sum \sigma(e_{j})\otimes 
   \left(d_{\sigma}\xi_{j}\right)\zeta = 
  \sum \sigma(e_{j})\otimes D(\xi_{j}\zeta) - \sum \sigma(e_{j})\otimes \sigma(\xi_{j})D\zeta .
\end{equation*}
The fact that $\xi\in \cE$ means that $\xi=e\xi=\sum e_{j}\xi_{j}$. Thus, 
\begin{equation*}
  \sum \sigma(e_{j})\otimes \sigma(\xi_{j})D\zeta = \sum \sigma(e_{j}\xi_{j})\otimes D\zeta =\sigma^{\cE}(\xi)\otimes 
  D\zeta.
\end{equation*}
Therefore,
\begin{equation}
\label{eq:D_nabla_0^E}
    D_{\nabla_{0}^{\cE}}(\xi\otimes\eta)=\sigma^{\cE}(\xi)\otimes D\eta+ 
    c\left(\nabla_{0}^{\cE}\right)(\xi\otimes \zeta)=\sum \sigma(e_{j})\otimes D(\xi_{j}\zeta). 
\end{equation}

For $j=1,\ldots,q$ set $\zeta_{j}=\xi_{j}\zeta$, so that $U_{e}(\xi\otimes \zeta)=(\xi_{j}\zeta)_{1\leq j \leq 
q}=(\zeta_{j})_{1\leq j \leq q}$. From~(\ref{eq:D_nabla_0^E}) we get
\begin{equation*}
    U_{\sigma(e)}\left(D_{\nabla_{0}^{\cE}}(\xi\otimes\eta)\right)_{i}= \sum_{j} \sigma(e_{ij})D(\zeta_{j}). 
\end{equation*}
In view of the definition of the operator $D_{e,\sigma}$ in~(\ref{eq:Index.Desigma}) we see that
\begin{equation*}
   U_{\sigma(e)}D_{\nabla_{0}^{\cE}}(\xi\otimes\eta) = \sigma(e)\left( D\zeta_{j}\right)_{1\leq j \leq q}=D_{e,\sigma}\left( 
   (\zeta_{j})_{1\leq j \leq q}\right) =D_{e,\sigma}U_{e}(\xi\otimes \zeta).
\end{equation*}
This proves the lemma. 
\end{proof}
\begin{remark}
    As $U_{e}$ and $U_{\sigma(e)}$ are graded isomorphisms, we further have 
    \begin{equation}
    \label{eq:DesigmaDNabla_0EgradedUE}
        U_{\sigma(e)}^{\mp}D_{\nabla_{0}^{\cE}}^{\pm}\left(U_{e}^{\pm}\right)^{-1}=D_{e,\sigma}^{\pm}.
    \end{equation}
\end{remark}

\begin{lemma}\label{lem:sConnections.FredhlomnessDes}
  For any $\sigma$-connection $\nabla^{\cE}$ on $\cE$,  the operator $D_{\nabla^{\cE}}$ is closed and Fredholm. 
\end{lemma}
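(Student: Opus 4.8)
The plan is to reduce the general case to the case $\cE = e\cA^q$ treated in Lemma~\ref{lem:index.Desigma-Fredholm}, using the affine structure of the space of $\sigma$-connections (Lemma~\ref{lem:DifferenceSigmaConnection}) together with the isomorphism $U_e$ and the identity of Lemma~\ref{lm:EqualityOfD}. First I would fix, via Remark~\ref{rmk:sigma-translate}, an idempotent $e = e^2 \in M_q(\cA)$ and right-module isomorphisms $\phi:\cE \rightarrow e\cA^q$ and $\phi^\sigma:\cE^\sigma \rightarrow \sigma(e)\cA^q$ compatible with the $\sigma$-structure, and I would also pick Hermitian metrics so that $\phi$ and $\phi^\sigma$ are pulled back to isometries (as in the Remark before Lemma~\ref{lem:H(E)topIndepHermitianMetricE}, and using that the topology of $\cH(\cE)$ is metric-independent by Lemma~\ref{lem:H(E)topIndepHermitianMetricE}). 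These induce unitary isomorphisms $\cH(\cE) \simeq \cH(e\cA^q) \simeq e\cH^q$ and $\cH(\cE^\sigma) \simeq \sigma(e)\cH^q$ intertwining the $\Z_2$-gradings. Transporting $\nabla^\cE$ through $\phi$ gives a $\sigma$-connection on $e\cA^q$, so without loss of generality I may assume $\cE = e\cA^q$ from the start.

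Next, by Lemma~\ref{lem:DifferenceSigmaConnection} any $\sigma$-connection $\nabla^\cE$ on $e\cA^q$ differs from the Grassmannian $\sigma$-connection $\nabla_0^\cE$ by an element $\theta \in \Hom_\cA(\cE,\cE^\sigma \otimes \Omega^1_{D,\sigma}(\cA))$; that is, $D_{\nabla^\cE} = D_{\nabla_0^\cE} + c(\theta)$. The key point is that $c(\theta)$ extends to a \emph{bounded} operator $\cH(\cE) \rightarrow \cH(\cE^\sigma)$: indeed $\theta$ is $\cA$-linear, $\Omega^1_{D,\sigma}(\cA) \subset \cL(\cH)$, and $\cE, \cE^\sigma$ are finitely generated projective, so after conjugating by $U_e$, $U_{\sigma(e)}$ the operator $U_{\sigma(e)} c(\theta) U_e^{-1}$ is represented by a matrix over $\Omega^1_{D,\sigma}(\cA)$ acting on $e\cH^q$, hence bounded. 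Then by Lemma~\ref{lm:EqualityOfD} (and its graded refinement~(\ref{eq:DesigmaDNabla_0EgradedUE})) I have, up to the unitary identifications,
\begin{equation*}
    D_{\nabla^\cE} = D_{e,\sigma} + (\text{bounded operator}).
\end{equation*}
Since $D_{e,\sigma}$ is closed and Fredholm by Lemma~\ref{lem:index.Desigma-Fredholm}, and closedness and the Fredholm property are both stable under bounded perturbations, it follows that $D_{\nabla^\cE}$ is closed and Fredholm.

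The main obstacle I anticipate is verifying carefully that $c(\theta)$, and more generally the discrepancy between $D_{\nabla^\cE}$ and the transported $D_{e,\sigma}$, is genuinely bounded on the Hilbert-space completions rather than merely defined on the algebraic tensor product $\cE \otimes_\cA \dom(D)$. This requires knowing that the pre-Hilbert structure on $\cH(\cE)$ behaves well under the module isomorphisms and under the idempotent compression — precisely the content of Lemma~\ref{lem:H(E)topIndepHermitianMetricE} and the explicit form of $U_e$, $U_e^{-1}$ — so the argument hinges on pushing everything through those identifications consistently and observing that a $\Hom_\cA$-valued one-form, evaluated via the bounded representation of $\Omega^1_{D,\sigma}(\cA)$ on $\cH$, yields a bounded operator between the relevant finitely generated projective tensor products. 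Everything else (closedness, Fredholmness under bounded perturbation) is standard once this reduction is in place.
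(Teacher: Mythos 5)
Your proposal is correct and follows essentially the same route as the paper's proof: reduce via $\phi$, $\phi^\sigma$ and the induced unitaries to the case $\cE = e\cA^q$, use Lemma~\ref{lem:DifferenceSigmaConnection} to express $D_{\nabla^{\cE}} - D_{\nabla_0^{\cE}}$ as a bounded odd operator, and use Lemma~\ref{lm:EqualityOfD} to identify $D_{\nabla_0^{\cE}}$ with the closed Fredholm operator $D_{e,\sigma}$ from Lemma~\ref{lem:index.Desigma-Fredholm}. The only (cosmetic) difference is the order: the paper treats the case $\cE = e\cA^q$ first and then performs the general reduction, whereas you reduce first and then handle the special case — and you spell out more explicitly why $c(\theta)$ is bounded, which the paper leaves somewhat implicit.
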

\begin{proof}
Let us first assume that $\cE=e\cA^{q}$ and $\cE^{\sigma}=\sigma(e)\cA^{q}$ with $e=e^{2}\in M_{q}(\cA)$. As shown in 
Section~\ref{sec:IndexTwistedSpectralTriple},  the operator $D_{e,\sigma}$ is Fredholm, 
so it follows from Lemma~\ref{lm:EqualityOfD} that 
$D_{\nabla_{0}^{\cE}}$ is a Fredholm operator as well. By Lemma~\ref{lem:DifferenceSigmaConnection} the difference 
$\nabla^{\cE}-\nabla^{\cE}_{0}$ lies in  $\Hom_{\cA}\left(\cE,\cE^{(\sigma)}\otimes 
\Omega_{D,\sigma}^{1}(\cA)\right)$. This implies that $D_{\nabla^{\cE}}-D_{\nabla^{\cE}_{0}}$ is a bounded odd 
operator. As $D_{\nabla^{\cE}_{0}}$ is closed and Fredholm by Lemma~\ref{lem:index.Desigma-Fredholm}, we then deduce that so is $D_{\nabla^{\cE}}$. 
    
In general, thanks to Remark~\ref{rmk:sigma-translate} we always can find an idempotent $e\in M_{q}(\cA)$, $q\geq 1$, and right module isomorphisms 
$\phi:\cE\rightarrow e\cA^{q}$ and $\phi^{\sigma}:\cE^{\sigma}\rightarrow 
    \sigma(e)\cA^{q}$ satisfying~(\ref{eq:sigma^E(xi a)}). 
    We equip $e\cA^{q}$ with the 
    Hermitian metric $\acoup{\cdot}{\cdot}_{\phi}:=\acoup{\phi^{-1}(\cdot)}{\phi^{-1}(\cdot)}_{\cE}$, 
   where $\acoup{\cdot}{\cdot}_{\cE}$  is the Hermitian 
    metric of $\cE$. Likewise, we equip $\sigma(e)\cA^{q}$ with the 
    Hermitian metric  
    $\acoup{\cdot}{\cdot}_{\phi^{\sigma}}:=\acoup{\left(\phi^{\sigma}\right)^{-1}(\cdot)}{\left(\phi^{\sigma}\right)^{-1}(\cdot)}_{\cE^{\sigma}}$, 
   where $\acoup{\cdot}{\cdot}_{\cE^{\sigma}}$ is the Hermitian 
    metric of $\cE^{\sigma}$. 
    We then have unitary operators $U_{\phi}:\cH(\cE)\rightarrow 
    \cH(e\cA^{q})$ and $U_{\phi^{\sigma}}:\cH(\cE^{\sigma})\rightarrow \sigma(e)\cA^{q}$ given by
    \begin{equation}
        U_{\phi}:=\phi\otimes 1_{\cH} \qquad \text{and}\qquad U_{\phi^{\sigma}}:=\phi^{\sigma}\otimes 1_{\cH}.
        \label{eq:sConnections.Uphi}
    \end{equation}
    In addition, we denote by $\nabla^{\phi_{*}\cE}$ the $\sigma$-connection on $e\cA^{q}$ defined by
    \begin{equation}
        \nabla^{\phi_{*}\cE}:=\left(\phi^{\sigma}\otimes 1_{\Omega_{D,\sigma}^{1}(\cA)}\right)\circ \nabla^{\cE}\circ 
        \phi^{-1}.
        \label{lem:sConnections.pushforward-sconnection}
    \end{equation}
    
 Let $\xi\in \cE$ and $\zeta \in \dom(D)$. Set $\eta=\phi(\xi)$ and $\nabla^{\cE}\xi=  \sum 
 \xi_{\alpha}\otimes \omega_{\alpha}$ with $\xi_{\alpha}\in \cE^{\sigma}$ and $\omega_{\alpha}\in 
 \Omega_{D,\sigma}^{1}(\cA)$. Then $\nabla^{\phi_{*}\cE}\eta=\left(\phi^{\sigma}\otimes 1_{\Omega_{D,\sigma}^{1}(\cA)}\right)( \nabla^{\cE}\xi)= \sum 
     \phi^{\sigma}(\xi_{\alpha})\otimes \omega_{\alpha}$. Thus, 
     \begin{equation}
     \label{eq:D_nabla^phi_*cEP1}
         c\left(\nabla^{\phi_{*}\cE}\right)(\eta \otimes \zeta)= \sum \phi^{\sigma}(\xi_{\alpha})\otimes 
         \omega_{\alpha}(\zeta)=U_{\phi^{\sigma}} \circ c\left(\nabla^{\cE}\right)(\xi\otimes \zeta).
     \end{equation}
 We also note that $\sigma(\eta)=\sigma\circ \phi(\xi)=\phi^{\sigma}\circ \sigma^{\cE}(\xi)$, and hence
 \begin{equation}
 \label{eq:D_nabla^phi_*cEP2}
     \sigma(\eta)\otimes D\zeta= (\phi^{\sigma}\otimes 1_{\cH})\left(\sigma^{\cE}(\xi)\otimes 
     D\zeta\right)=U_{\phi^{\sigma}}\left(\sigma^{\cE}(\xi)\otimes 
     D\zeta\right).
 \end{equation}
Combining~(\ref{eq:D_nabla^phi_*cEP1}) and~(\ref{eq:D_nabla^phi_*cEP2}) we get
\begin{equation*}
    D_{\nabla^{\phi_{*}\cE}}U_{\phi}(\xi\otimes \zeta)=D_{\nabla^{\phi_{*}\cE}}(\eta\otimes \zeta)=\sigma(\eta)\otimes 
    D\zeta+  c\left(\nabla^{\phi_{*}\cE}\right)(\eta \otimes \zeta)=U_{\phi^{\sigma}}D_{\nabla^{\cE}}(\xi\otimes \zeta).
\end{equation*}
This shows that
\begin{equation}
\label{eq:D_nabla^EeA^q}
    D_{\nabla^{\cE}}=  U_{\phi^{\sigma}}^{-1}D_{\nabla^{\phi_{*}\cE}}U_{\phi}.
\end{equation}
By the first part of the proof we know that the operator $D_{\nabla^{\phi_{*}\cE}}$ is closed and Fredholm. 
As $U_{\phi}$ and $U_{\phi^{\sigma}}$ are isomorphisms we then deduce that  the operator $D_{\nabla^{\cE}}$ is closed 
and Fredholm. The proof is complete. 
\end{proof}

In analogy with~(\ref{eq:Dirac.average-index}) and~(\ref{eq:Index-index-Desigma}) we make the following definition. 

\begin{definition}
  Given any $\sigma$-connection $\nabla^{\cE}$ on $\cE$, the index of $D_{\nabla^{\cE}}$ is defined by
  \begin{equation}
  \label{eq:index.D.coupled.sigma.connection}
    \ind D_{\nabla^{\cE}}:=\frac{1}{2}\left( \ind D_{\nabla^{\cE}}^{+}-\ind D_{\nabla^{\cE}}^{-}\right).  
  \end{equation}
\end{definition}

We are now in a position to prove the main result of this section. 

\begin{theorem}\label{thm.IndexTwisted-connection}
Let $\cE$ be a finitely generated projective right $\cA$-module. Then, 
for any $\sigma$-connection $\nabla^{\cE}$ on $\cE$, we have
    \begin{equation}
        \ind_{D,\sigma}[\cE]= \ind D_{\nabla^{\cE}}.
        \label{eq:s-connections.index-formula}
    \end{equation}
\end{theorem}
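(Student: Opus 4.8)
The plan is to reduce the general statement to the special case $\cE=e\cA^q$ that was already handled in Section~\ref{sec:IndexTwistedSpectralTriple}, exploiting two facts established above: first, that $\ind D_{\nabla^{\cE}}$ depends only on $\cE$ and not on the choice of $\sigma$-connection $\nabla^{\cE}$ or of the Hermitian metrics on $\cE$ and $\cE^{\sigma}$; and second, that for $\cE=e\cA^q$ with the Grassmannian $\sigma$-connection $\nabla_0^{\cE}$ we have the identification $U_{\sigma(e)}D_{\nabla_0^{\cE}}U_e^{-1}=D_{e,\sigma}$ from Lemma~\ref{lm:EqualityOfD}.

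First I would prove that $\ind D_{\nabla^{\cE}}$ is independent of the chosen $\sigma$-connection. If $\nabla^{\cE}$ and $\tilde\nabla^{\cE}$ are two $\sigma$-connections on $\cE$, then by Lemma~\ref{lem:DifferenceSigmaConnection} their difference lies in $\Hom_{\cA}(\cE,\cE^{\sigma}\otimes\Omega^1_{D,\sigma}(\cA))$, so $D_{\nabla^{\cE}}-D_{\tilde\nabla^{\cE}}$ is a bounded odd operator on $\cH(\cE)$; adding a bounded operator to a Fredholm operator leaves each of $D^{\pm}$ Fredholm with unchanged index, hence $\ind D_{\nabla^{\cE}}=\ind D_{\tilde\nabla^{\cE}}$. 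Independence of the Hermitian metrics follows because changing the metric on $\cE$ (or $\cE^{\sigma}$) only changes $\cH(\cE)$ (resp.\ $\cH(\cE^{\sigma})$) to an isomorphic Hilbert space with an equivalent norm (Lemma~\ref{lem:H(E)topIndepHermitianMetricE}), and the identity map realizes this equivalence as a bounded invertible operator intertwining the two coupled operators up to bounded error, which again does not affect the Fredholm indices $\ind D^{\pm}_{\nabla^{\cE}}$. So $\ind D_{\nabla^{\cE}}$ is a well-defined invariant of the pair $\cE$ (together with a choice of $\sigma$-translate, which is itself essentially unique).

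Next I would check well-definedness on $K$-theory classes. Given $\cE=e\cA^q$ with $e=e^2\in M_q(\cA)$, choosing the Grassmannian $\sigma$-connection $\nabla_0^{\cE}$ and the canonical metric, Lemma~\ref{lm:EqualityOfD} together with~(\ref{eq:DesigmaDNabla_0EgradedUE}) gives $\ind D^{\pm}_{\nabla_0^{\cE}}=\ind D^{\pm}_{e,\sigma}$, hence $\ind D_{\nabla^{\cE}}=\ind D_{e,\sigma}=\ind_{D,\sigma}[e]$ by~(\ref{eq:Index-index-Desigma}) and~(\ref{eq:Index.index-map}). For a general finitely generated projective $\cE$, Remark~\ref{rmk:sigma-translate} furnishes an idempotent $e\in M_q(\cA)$ and right-module isomorphisms $\phi:\cE\to e\cA^q$, $\phi^{\sigma}:\cE^{\sigma}\to\sigma(e)\cA^q$; pulling back the canonical metrics and using the pushed-forward $\sigma$-connection $\nabla^{\phi_*\cE}$, equation~(\ref{eq:D_nabla^EeA^q}) shows $D_{\nabla^{\cE}}=U_{\phi^{\sigma}}^{-1}D_{\nabla^{\phi_*\cE}}U_{\phi}$ with $U_{\phi},U_{\phi^{\sigma}}$ unitaries, so $\ind D^{\pm}_{\nabla^{\cE}}=\ind D^{\pm}_{\nabla^{\phi_*\cE}}$ and therefore $\ind D_{\nabla^{\cE}}=\ind D_{e,\sigma}=\ind_{D,\sigma}[e]$. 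Finally, if $\cE\simeq e\cA^q$ and $\cE\simeq e'\cA^{q'}$ via two different presentations, then $e$ and $e'$ define the same class in $K_0(\cA)$, so by~(\ref{eq:Index.similarity-twisted}) (invariance under conjugation) and additivity one gets $\ind D_{e,\sigma}=\ind D_{e',\sigma}$; thus $\ind D_{\nabla^{\cE}}$ depends only on $[\cE]\in K_0(\cA)$, and equals $\ind_{D,\sigma}[\cE]$ by the very definition~(\ref{eq:Index.index-map}) of the index map. This establishes~(\ref{eq:s-connections.index-formula}).

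The main obstacle I anticipate is the metric-independence step for general $\cE$: one must be careful that changing the Hermitian metric genuinely leaves $\ind D^{\pm}_{\nabla^{\cE}}$ unchanged even though the Hilbert space inner product changes. The clean way around it is to observe that by~(\ref{eq:D_nabla^EeA^q}) everything can be transported to $e\cA^q$, where the coupled operator is, up to the bounded perturbation $D_{\nabla^{\phi_*\cE}}-D_{\nabla_0^{e\cA^q}}$, literally the operator $D_{e,\sigma}$ of Section~\ref{sec:IndexTwistedSpectralTriple}, whose index was computed there directly on a fixed Hilbert space. So in fact the whole theorem is essentially a bookkeeping exercise chaining together Lemmas~\ref{lem:DifferenceSigmaConnection}, \ref{lm:EqualityOfD}, and~\ref{lem:sConnections.FredhlomnessDes} with the identities~(\ref{eq:Index.similarity-twisted}) and~(\ref{eq:Index.index-map}); the only real content was already packed into those lemmas.
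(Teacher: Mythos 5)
Your argument is correct and follows essentially the same route as the paper: reduce to $\cE=e\cA^q$ via Remark~\ref{rmk:sigma-translate} and the transport identity~(\ref{eq:D_nabla^EeA^q}), then invoke Lemma~\ref{lm:EqualityOfD} together with the bounded-perturbation observation from Lemma~\ref{lem:DifferenceSigmaConnection}. Your separate metric-independence step and your presentation-independence step are harmless but unnecessary, since (as you yourself note at the end) they are already subsumed by the unitary transport via $U_\phi$ and by the established well-definedness of $\ind_{D,\sigma}$ on $K_0(\cA)$, which is exactly how the paper handles both points.
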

\begin{proof}
Let us first assume that $\cE=e\cA^{q}$ with $e=e^{2}\in M_{q}(\cA)$ and let $\nabla_{0}^{\cE}$ be the 
$\sigma$-Grassmannian connection. 
As shown in the proof of Lemma~\ref{lem:sConnections.FredhlomnessDes}, the Fredholm operators $D_{\nabla^{\cE}}$ and $D_{\nabla^{\cE}_{0}}$ differ by a 
bounded odd operator, and so $\ind D_{\nabla^{\cE}}^{\pm}=\ind D_{\nabla^{\cE}_{0}}^{\pm}$. Moreover, it follows 
from~(\ref{eq:DesigmaDNabla_0EgradedUE}) that $\ind D_{\nabla^{\cE}_{0}}^{\pm}= \ind  D_{e,\sigma}^{\pm}$. Thus,
\begin{equation}
   \ind D_{\nabla^{\cE}}^{\pm}=\ind D_{\nabla^{\cE}_{0}}^{\pm}= \ind  D_{e,\sigma}^{\pm}=\ind_{D,\sigma}[e]. 
   \label{eq:sConnections.index-formula-eAq}
\end{equation}
    
In general, we can find an idempotent $e\in M_{q}(\cA)$ and right module isomorphisms $\phi:\cE\rightarrow e\cA^{q}$ and $\phi^{\sigma}:\cE^{\sigma}\rightarrow 
    \sigma(e)\cA^{q}$ satisfying~(\ref{eq:sigma^E(xi a)}). We equip $e\cA^{q}$ and $\sigma(e)\cA^{q}$ with Hermitian 
    metrics as in the proof of Lemma~\ref{lem:sConnections.FredhlomnessDes}, and let $\nabla^{\phi_{*}\cE}$ be the 
    $\sigma$-connection on $e\cA^{q}$ given by~(\ref{lem:sConnections.pushforward-sconnection}). Then from~(\ref{eq:D_nabla^EeA^q}) we have
    \begin{equation*}
    D_{\nabla^{\cE}}=  U_{\phi^{\sigma}}^{-1}D_{\nabla^{\phi_{*}\cE}}U_{\phi},
\end{equation*}
where $U_{\phi}:\cH(\cE)\rightarrow \cH(e\cA^{q})$ and $U_{\phi^{\sigma}}:\cH(\cE^{\sigma})\rightarrow 
\cH(\sigma(e)\cA^{q})$ are the unitary operators given by~(\ref{eq:sConnections.Uphi}). As 
$U_{\phi}$ and $U_{\phi^{\sigma}}$ are even isomorphisms we see that $\ind D_{\nabla^{\cE}}^{\pm}=\ind D_{\nabla^{\phi_{*}\cE}}^{\pm}$. 
Combining this with~(\ref{eq:sConnections.index-formula-eAq}) we then get
\begin{equation*}
   \ind D_{\nabla^{\cE}}=\ind D_{\nabla^{\phi_{*}\cE}}=\ind_{D,\sigma}[e]=    \ind_{D,\sigma}[\cE].  
\end{equation*}
The proof is complete. 
\end{proof}

We conclude this section by looking at the index formula~(\ref{eq:s-connections.index-formula}) in the example of a Dirac spectral triple $(C^{\infty}(M), L^{2}(M,\sS),\sD_{g})$, 
where $(M^{n},g)$ is a compact Riemannian spin oriented manifold of even dimension. Let $E$ be a vector 
bundle over $M$ and $\nabla^{E}$ a connection on $E$. Then $\cE:=C^{\infty}(M,E)$ is a finitely generated 
projective module over the algebra $\cA:=C^{\infty}(M)$. We observe that, $\cA$ is a commutative algebra we can identify left and right modules over $\cA$. 
It would be more convenient to work with left modules instead of right modules as we have been doing so far.  This provides us with a natural identification of 
$\cA$-modules $\cE_{1}\otimes_{\cA} \cE_{2}\simeq 
    \cE_{2}\otimes_{\cA} \cE_{1}$ for the tensor products of two modules $\cE_{1}$ and $\cE_{2}$; the isomorphism is given by the flip map 
    $\xi_1\otimes \xi_2\rightarrow \xi_2\otimes \xi_1$. 
   
   Let $c:\Lambda^{*}_{\C}T^{*}M\rightarrow \End \sS$ be the Clifford representation. Then, for all $a$ and $b$ in 
   $\cA$, 
   \begin{equation}
       a[\sD_{g},b]=a c(db)=c(adb).
       \label{eq:VW.commutator-c-Dirac}
   \end{equation}Therefore, we see that
    \begin{equation*}
        \Omega_{\sD_{g}}^{1}(\cA)=\op{Span}\left\{ c(\omega); \ \omega\in C^{\infty}(M,T^{*}_{\C}M)\right\}.
    \end{equation*} 
    Note that $\nabla^{E}$ is a linear map from $\cE=C^{\infty}(M,E)$ to $C^{\infty}(M, T^{*}M\otimes 
    E)=C^{\infty}(M,T^{*}_{\C}M)\otimes_{\cA}\cE$. Consider the linear map $\nabla^{\cE}$ from $\cE$ to 
    $\Omega_{\sD_{g}}^{1}(\cA)\otimes_{\cA}\cE\simeq \cE\otimes_{\cA}\Omega_{\sD_{g}}^{1}(\cA)$ defined by
    \begin{equation*}
        \nabla^{\cE}:=(c\otimes 1_{\cE})\circ \nabla^{E}.
    \end{equation*}
   Let $\xi\in \cE$ and $a \in \cA$. Using~(\ref{eq:VW.commutator-c-Dirac}) we get
   \begin{equation*}
       \nabla^{\cE}(a\xi)=(c\otimes 1_{\cE})\left(da\otimes 
       \xi+a\nabla^{E}\xi\right)=c(da)+a\nabla^{\cE}\xi=[\sD_{g},a]\xi+a\nabla^{\cE}\xi. 
   \end{equation*}Therefore, we see that $\nabla^{\cE}$ is a connection on the finitely generated projective module 
   $\cE$.   
   
As $\nabla^{\cE}$ is a connection on $\cE,$ we can form the operator 
$\sD_{\nabla^{\cE}}:=(\sD_{g})_{\nabla^{\cE}}$. Set $\cH=L^{2}_{g}(M,E)$. In what follows we identify $\cH(\cE)=\cE\otimes_{\cA}\cH$ with $\cH\otimes_{\cA}\cE\simeq 
 L^{2}(M,\sS\otimes E)$, so that we can regard $ \sD_{\nabla^{\cE}}$ as an unbounded operator of  $L^{2}(M,\sS\otimes E)$. 
Let $\zeta \in C^{\infty}(M,\sS)$ and $\xi \in \cE$. Let us write $\nabla^{E}\xi=\sum \omega_{\alpha}\otimes \xi_{\alpha}$, where 
   $\omega_{\alpha}\in C^{\infty}(M,T^{*}_{\C}M)$ and $\xi_{\alpha}\in \cE$. For each $\alpha$ let us also write $\omega_{\alpha}=\sum 
   a_{\alpha\beta}db_{\alpha\beta}$, with $a_{\alpha\beta}$ and $b_{\alpha\beta}$ in $C^{\infty}(M)$. Then,
   using~(\ref{eq:CliffordNabla}) and~(\ref{eq:VW.commutator-c-Dirac}), we see that $ \sD_{\nabla^{\cE}}(\zeta\otimes \xi)$ is equal to
   \begin{equation*}
     \sD_{g}\zeta \otimes \xi +\sum_{\alpha,\beta} 
       (a_{\alpha\beta}[\sD_{g},b_{\alpha\beta}])\zeta \otimes \xi_{\alpha}= 
       \sD_{g}\zeta \otimes \xi +\sum_{\alpha} c(\omega_{\alpha})\zeta 
       \otimes \xi_{\alpha}=\sD_{\nabla^{E}}(\zeta\otimes \xi).
   \end{equation*}
   Thus, under the identification $\cH(\cE)\simeq L^{2}(M,\sS\otimes E)$, the operators $\sD_{\nabla^{\cE}}$ and 
   $\sD_{\nabla^{E}}$ agree. Combining this with~(\ref{eq:Dirac.index-map2}) and~(\ref{eq:s-connections.index-formula}) 
   we then deduce that, for $\sigma=\text{id}$, 
   \begin{equation}
       \ind_{\sD_{g},\sigma}[\cE]=\ind D_{\nabla^{\cE}}=\ind D_{\nabla^{E}}=\ind_{\sD_{g}}[E],
       \label{eq:s-connections.Dirac}
   \end{equation}where the second index map is the Fredholm index map~(\ref{eq:index.D.coupled.sigma.connection}). Thus under the Serre-Swan isomorphism $K^{0}(M)\simeq K_{0}(C^{\infty}(M))$ 
   this Fredholm index map  agrees with the index map~(\ref{eq:Dirac.index-map1}).

\section{Cyclic Cohomology and Pairing with $K$-theory}
\label{sec:CyclicCohomChernChar}
In this section, we review the main definitions and properties regarding cyclic cohomology and its pairing with $K$-theory. 
Cyclic cohomology was discovered by Connes~\cite{Co:NCDG} and Tsygan~\cite{Ts:UMN} independently. For more details on cyclic cohomology we refer to~\cite{Co:NCG, Lo:CH}. 
Throughout this section we let $\cA$ be a unital algebra over $\C$. 

 \subsection{Cyclic cohomology} The Hochschild cochain-complex of $\cA$ is defined as follows. The space of $m$-cochains 
$C^{m}(\cA)$, $m\in \N_{0}$, consists of $(m+1)$-linear maps $\varphi:\cA^{m+1}\rightarrow \C$.
The Hochschild coboundary $b:C^{m}(\cA)\rightarrow C^{m+1}(\cA)$, $b^{2}=0$, is given by
\begin{align}
\label{eq:HochCoboundary}
    b\varphi(a^{0},\ldots,a^{m+1})= & \sum_{j=0}^{m}(-1)^{j}\varphi(a^{0},\ldots,a^{j}a^{j+1},\ldots,a^{m+1})\\ &+ 
    (-1)^{m+1}\varphi(a^{m+1}a^{0},\ldots, a^{m}), \qquad a^{j}\in \cA.
\end{align}

A cochain $\varphi\in C^{m}(\cA)$ is called \emph{cyclic} when $T\varphi=\varphi$, 
where the operator $T:C^{m}(\cA)\rightarrow C^{m}(\cA)$ is defined by
\begin{equation}
\label{eq:Operator.T}
    T\varphi(a^{0},\ldots,a^{m})=(-1)^{m}\varphi(a^{m},a^{0},\ldots,a^{m-1}), \qquad a^{j}\in \cA.
\end{equation}
We denote by $C_{\lambda}^{m}(\cA)$ the space of cyclic $m$-cochains. As $b(C^{\bt}_{\lambda}(\cA))\subset 
C^{\bt+1}_{\lambda}(\cA)$, we obtain a subcomplex $(C^{\bt}_{\lambda}(\cA),b)$, the cohomology of which is denoted 
$\HC^{\bt}(\cA)$ and called the cyclic cohomology of $\cA$. 

The operator $B:C^{m}(\cA)\rightarrow C^{m-1}(\cA)$ is given by
\begin{equation}
\label{eq:PeriodicityBoundary}
 B=  AB_{0}(1-T) , \qquad \text{where}\ A=1+T+\cdots + T^{m},
\end{equation}
and the operator $B_{0}:C^{m}(\cA)\rightarrow C^{m-1}(\cA)$ is defined by
\begin{equation}
\label{eq:B_0Boundary}
    B_{0}\varphi(a^{0},\ldots,a^{m-1})=\varphi(1,a^{0},\ldots,a^{m-1}), \qquad a^{j}\in \cA.
\end{equation}
Note that $B$ is annihilated by cyclic cochains. Moreover, it can be checked that $B^{2}=0$ and 
$bB+Bb=0$. Therefore, in the terminology of Kassel~\cite{Ka:JAlg87} and Loday~\cite[\S 2.5.13]{Lo:CH}, we obtain a mixed cochain-complex $(C^{\bt}(\cA), 
b,B)$, which is called the \emph{cyclic mixed cochain-complex} of $\cA$. Associated to this mixed complex is the periodic 
cyclic complex $(C^{[\bt]}(\cA),b+B)$, where 
\begin{equation*}
    C^{[i]}(\cA)=\bigoplus_{k=0}^{\infty} C^{2k+i}(\cA), \qquad i=0,1,
\end{equation*}
and we regard $b$ and $B$ as operators between $C^{[0]}(\cA)$ and $C^{[1]}(\cA)$. The corresponding cohomology is 
called the \emph{periodic cyclic cohomology} of $\cA$ and is denoted by $\HP^{\bt}(\cA)$. Note that a periodic cyclic 
cocycle is a \emph{finitely
supported} sequence $\varphi=(\varphi_{2k+i})$ with $\varphi_{2k+i}\in C^{2k+i}(\cA)$, $k\geq 0$, such that
\begin{equation*}
    b\varphi_{2k+i}+B\varphi_{2k+2+i}=0 \qquad \text{for all $k\geq 0$}. 
\end{equation*}

As the operator ${B}$ is annihilated by cyclic cochains, any cyclic 
$m$-cocycle $\varphi$ is naturally identified with the periodic cyclic cocycle $(0,\ldots,0,\varphi,0,\ldots)\in 
C^{[i]}(\cA)$, where $i$ is the parity of $m$. This gives rise to natural morphisms,
\begin{equation*}
    \HC^{2k+\bt}(\cA) \longrightarrow \HP^{\bt}(\cA), \qquad k\geq 0. 
\end{equation*}
Connes' periodicity operator $S:C^{m}_{\lambda}(\cA)\rightarrow 
C^{m+2}_{\lambda}(\cA)$ is obtained from the cup product with the unique cyclic $2$-cocycle on 
$\C$ taking the value $1$ at $(1,1,1)$ (see~\cite{Co:NCDG, Co:NCG}). 
 Equivalently, 
 \begin{equation*}
  S=\frac{1}{(m+1)(m+2)}\sum_{j=1}^{m+1}(-1)^{j}S_{j},   
 \end{equation*}where the operator $S_{j}:C^{m}_{\lambda}(\cA)\rightarrow C^{m+2}_{\lambda}(\cA)$ is given by
 \begin{align}
     S_{j}\varphi(a^{0},\ldots,a^{m+2}) = & \sum_{0\leq l\leq 
     j-2}(-1)^{l}\varphi(a^{0},\ldots,a^{l}a^{l+1},\ldots,a^{j}a^{j+1},\ldots,a^{m+2})\nonumber \\ & + 
     (-1)^{j+1}\varphi(a^{0},\ldots,a^{j-1}a^{j}a^{j+1},\ldots,a^{m+2}).
     \label{eq:CC.operatorSj}
 \end{align}
 Here the operator $S$ is normalized so that, for any cochain $\varphi\in  C^{m+1}(\cA)$, we have 
 \begin{equation}
     \text{$b\varphi$ is cyclic} \ \Longrightarrow \ \text{$B\varphi$ is a cyclic cocycle and $SB\varphi=-b\varphi$ in 
     $\HC^{m+2}(\cA)$}.
     \label{eq:SB=-b}
 \end{equation}
Incidentally, if $\varphi$ is a cyclic cocycle, then $S\varphi$ is a cyclic cocycle whose class in $\HP^{\bt}(\cA)$ agrees 
 with that defined by $\varphi$ (\emph{cf.}~\cite{Co:NCDG, Co:NCG}). Furthermore, Connes~\cite[Theorem~II.40]{Co:NCDG} proved that
\begin{equation}
    \varinjlim \left(\HC^{2k + \bt}(\cA),S\right) = \HP^{\bt}(\cA), 
     \label{eq:CC.injective-limit-HP}
\end{equation}where the left-hand side is the inductive limit of the direct system $(\HC^{2k+\bt}(\cA),S)$. 

It is sometimes convenient to ``normalize'' the cyclic mixed complex. More precisely, we say that a cochain $\varphi\in C^{m}(\cA)$ is \emph{normalized} when  
\begin{equation}
\label{eq:NormalizedCondition}
    \varphi(a^{0},\ldots,a^{m})=0 \quad \text{whenever $a^{j}=1$ for some $j\geq 1$}.   
\end{equation}
We denote by $C_{0}^{m}(\cA)$ the space of normalized $m$-cochains. As the operators $b$ and $B$ preserve the space 
$C_{0}^{\bt}(\cA)$, we obtain a mixed subcomplex $(C^{\bt}_{0}(\cA), b,B)$ of the cyclic mixed complex. Note that 
$B=B_{0}(1-T)$ on $C^{\bt}_{0}(\cA)$.  We denote by $\op{HP}_{0}^{\bt}(\cA)$ the cohomology of the normalized periodic 
complex $(C_{0}^{[\bt]}(\cA),b+B)$, where  $C_{0}^{[\bt]}(\cA)=\bigoplus_{k\geq 0}C^{2k+\bt}_{0}(\cA)$. Furthermore
(see~\cite[Corollary~2.1.10]{Lo:CH}), the inclusion of $C^{\bt}_{0}(\cA)$ in $C^{\bt}(\cA)$ gives rise to an isomorphism, 
\begin{equation}
\label{eq:HP_0=HP}
   \HP^{\bt}_{0}(\cA)\simeq \HP^{\bt}(\cA). 
\end{equation}

 \begin{remark}
     For $m\in \N_{0}$ let $C_{\lambda,0}^{m}(\cA)$ be the space of normalized cyclic $m$-chains. We get a subcomplex 
     $(C_{\lambda,0}^{\bt}(\cA),b)$ of the cyclic complex. We denote by $\HC^{\bt}_{0}(\cA)$ the cohomology of this 
     complex. We observe that if $\varphi\in C_{\lambda,0}^{m}(\cA)$ then the cyclicity and normalization condition~(\ref{eq:NormalizedCondition}) 
     imply that
     \begin{equation}
     \label{eq:property.normalized.cyclic.cochain}
           \varphi(1,a^{1},\ldots,a^{m})=(-1)^{m}\varphi(a^{1},\ldots,a^{m},1)=0 \qquad \forall a^{j}\in \cA.
      \end{equation}
      In fact (\emph{cf.}~\cite[\S 2.2.13]{Lo:CH}), we have
      \begin{equation}
      \label{eq:HC_0&HC}
          \HC^{\bt}(\cA)\simeq \HC^{\bt}(\C)\oplus \HC^{\bt}_{0}(\cA).
      \end{equation}The cyclic cohomology of $\C$ has dimension 1 in even degree and is zero in odd degree. Thus, 
      given any $0$-cochain $\varphi_{0}\in C_{\lambda}^{0}(\cA)$ such that $\varphi_{0}(1)= 1$, the isomorphism~(\ref{eq:HC_0&HC}) can be 
      rewritten as
      \begin{equation}
      \label{eq:HC&HC_0.even.odd}
        \HC^{2k}(\cA)\simeq \C\left[S^{k}\varphi_{0}\right]\oplus \HC^{2k}_{0}(\cA) \qquad \text{and} \qquad   
        \HC^{2k+1}(\cA)\simeq \HC^{2k+1}_{0}(\cA). 
      \end{equation}
      Observe that $S^{2k}\varphi_{0}$ is cohomologous in $\HP^{0}(\cA)$ to $\varphi_{0}$, which is a normalized 
      cocycle. Therefore, combining~(\ref{eq:CC.injective-limit-HP}) and~(\ref{eq:property.normalized.cyclic.cochain}) gives the isomorphism~(\ref{eq:HP_0=HP}). 
 \end{remark}

 \begin{example}
\label{ex:current.cochain}
    Let $\cA=C^{\infty}(M)$, where $M$ is a closed manifold. For $m=0,1,\ldots,n$, let $\Omega_{m}(M)$ be the space of 
    $m$-dimensional currents. Any current $C\in \Omega_{m}(M)$ defines a cochain $\varphi_{C}\in C^{m}(\cA)$ by
        \begin{equation}
       \label{eq:current.cochain}
       \varphi_{C}(f^{0},\ldots,f^{m}) = \frac{1}{m!}\acou{C}{f^{0}df^{1}\wedge \cdots \wedge df^{m}}, \qquad 
       f_{j}\in C^{\infty}(M). 
    \end{equation}
    Note that $\varphi_{C}$ is a normalized cochain. Moreover it can be checked that  $b \varphi_{C}=0$ and $B\varphi_{C}=\varphi_{d^{t}C}$, where $d^{t}$ is the de Rham 
    boundary for currents. Therefore, we obtain a morphism from the mixed complex $(\Omega_{\bt}(M), 0, d^{t})$ to the 
    cyclic mixed complex of $\cA=C^{\infty}(M)$. In particular, we have a natural linear map, 
    \begin{equation}
    \label{eq:beta}
       \alpha^{M}: H_{[\bt]}(M,\C)\longrightarrow \HP^{\bt}(C^{\infty}(M)), \qquad H_{[i]}(M,\C):=\bigoplus_{k\geq 
        0}H_{2k+i}(M,\C), \quad i=0,1,
    \end{equation}
    where $H_{2k+i}(M,\C)$ is the de Rham homology of $M$ of degree $2k+i$. 
\end{example}

\subsection{Pairing with $K$-theory} There are two equivalent ways to define the pairing between $\HP^{0}(\cA)$ and $K_{0}(\cA)$ (see~\cite{Co:NCDG, 
GS:OCCTSM}). Given any even 
\emph{normalized} cochain $\varphi=(\varphi_{2k})$  
 and an idempotent $e\in M_{q}(\cA)$, define
\begin{equation}
    \acou{\varphi}{e}:= \tr\# \varphi_{0}(e)+\sum_{k\geq 1}(-1)^{k}\frac{(2k)!}{k!}\tr\# 
    \varphi_{2k}\biggl(e-\frac{1}{2}, e,\ldots,e\biggr),
    \label{eq:cyclic.pairing-KT}
\end{equation}where $ \tr\# \varphi_{2k}$ is the $2k$-cochain on $M_{q}(\cA)=M_{q}(\C)\otimes \cA$ given by
\begin{equation*}
    \tr\# \varphi_{2k}(\mu^{0}\otimes a^{0},\ldots,\mu^{2k}\otimes a^{2k})= \tr\left[ \mu^{0}\cdots 
     \mu^{k}\right]\varphi_{2k}(a^{0},\ldots,a^{2k}),  \qquad  \mu^{j}\in M_{q}(\C),\ a^{j}\in \cA.
\end{equation*}It can be checked that when $\varphi$ is an even normalized periodic cocycle, the value of 
$\acou{\varphi}{e}$ depends only on the class of $\varphi$ in $\HP^{0}_{0}(\cA)$ and the class of $e$ in $K_{0}(\cA)$. 
Combining this with~(\ref{eq:HP_0=HP}) we then obtain a bilinear pairing,
\begin{equation}
    \acou{\cdot}{\cdot}:\HP^{0}(\cA)\times K_{0}(\cA)\rightarrow\C.
    \label{eq:cyclic.pairing-HP0-K0}
\end{equation}

In addition, given any cyclic $2k$-cocycle $\varphi$ it can be shown (see Remark~\ref{rem:pairing-cyclic-cocycles} below) that
\begin{equation}
\label{eq:pairing.cyclic.cocycle.Chern.char}
    \acou{\varphi}{e}=(-1)^{k}\frac{(2k)!}{k!}\tr \#\varphi(e,...,e)
    \qquad \forall e\in M_{q}(\cA), \ e^{2}=e. 
\end{equation}In fact, the right-hand side of~(\ref{eq:pairing.cyclic.cocycle.Chern.char}) depends only the class of $\varphi$ in 
$\HC^{2q}(\cA)$ and is invariant under the periodicity operators $S$ (see~\cite{Co:NCDG, Co:NCG}). 
Therefore, under the inductive limit~(\ref{eq:CC.injective-limit-HP}) this gives rise to a pairing between $\HP^{0}(\cA)$ and 
$K_{0}(\cA)$ which agrees with the pairing~(\ref{eq:cyclic.pairing-KT}).

\begin{example}
 Suppose now that $\cA=C^{\infty}(M)$, where $M$ is a closed manifold, and let $e\in M_{q}(C^{\infty}(M))$, where $e^{2}=e$. 
Consider the vector bundle $E=\op{ran}e$, which we regard as a subbundle of the trivial vector bundle $E_{0}=M\times 
\C^{q}$. We note that by Serre-Swan theorem any vector bundle over $M$ is isomorphic to a vector bundle of this form. 
We equip $E$ with the Grassmannian connection $\nabla^{E}$ defined by $e$, 
so that 
\begin{equation*}
    \nabla_{X}^{E}\xi=e(X\xi_{j}) \qquad \text{for all $X\in C^{\infty}(M,TM)$ and $\xi=(\xi_{j})\in \cE^q=C^{\infty}(M, E)^q$}.
\end{equation*}
The curvature of $\nabla^{E}$ is $F^{E}=e(de)^{2}=e(de)^{2}e$, and so its Chern form is given by
\begin{equation*}
    \Ch(F^{E})=\sum (-1)^{k}\frac{1}{k!}\tr\left[ e(de)^{2k}\right]\in \Omega^{[0]}(M).
\end{equation*}
Let $C=(C_{2k})$ be a closed even de Rham current and denote by $\varphi_{C}$ the associated cocycle defined 
by~(\ref{eq:current.cochain}). Noting that all the cochains $\varphi_{C_{2k}}$ satisfy~(\ref{eq:property.normalized.cyclic.cochain}), we obtain
\begin{equation}
    \acou{C}{\Ch(F^{E})}=\sum (-1)^{k}\frac{1}{k!}\acou{C_{2k}}{\tr\left[ e(de)^{2k}\right]}=\sum  
    (-1)^{k}\frac{(2k)!}{k!}\tr\#\varphi_{C_{2k}}(e,\ldots,e)= \acou{\varphi_{C}}{e}.
    \label{eq:CC.manifold-Chern-character}
\end{equation}Therefore, the pairing~(\ref{eq:pairing.cyclic.cocycle.Chern.char}) between even periodic cyclic cohomology and $K$-theory reduces to the 
classical pairing~(\ref{eq:pairing.deRhamHomology.KTheory}) between de Rham homology and $K$-theory. 
\end{example}

 \begin{remark}\label{rem:pairing-cyclic-cocycles}
     The equality~(\ref{eq:pairing.cyclic.cocycle.Chern.char}) is proved as follows. Thanks to~(\ref{eq:HC&HC_0.even.odd}) we know that
  \begin{equation}
      \varphi=S^{k}\varphi_{0}+\psi \qquad \bmod b\left( C_{\lambda}^{2k-1}(\cA)\right),
      \label{eq:decomposition-phi}
  \end{equation}where $\varphi_{0}\in C_{\lambda}^{0}(\cA)$ is such that $\varphi_{0}(1)\neq 0$ and $\psi$ is a 
  \emph{normalized} cyclic $2k$-cocycle. As $\varphi_{0}$ and $S^{k}\varphi_{0}$ are cohomologous in $\HP^{0}(\cA)$, 
  we see that $\varphi$ is cohomologous in $\HP^{0}(\cA)$ to the \emph{normalized} even cocycle 
  $\varphi_{0}+\psi$. Thus, 
  \begin{equation}
  \label{eq:pairing.cyclic.cocycle.Chern.char.calculation}
      \acou{\varphi}{e}=\acou{\varphi_{0}}{e}+\acou{\psi}{e}.
  \end{equation}
 Set $c_{k}=(-1)^{k}(k!)^{-1}(2k)!$. Using~(\ref{eq:CC.operatorSj}) it can be checked that
 \begin{equation}
     \acou{\varphi_{0}}{e}=\tr \#\varphi_{0}(e)= c_{k}\tr \#(S^{k}\varphi_{0})(e,\ldots,e).   
     \label{eq:pairing-phi0-e}
 \end{equation}
   Moreover, as the cocycle $\psi$ is normalized and cyclic, it follows 
   from~(\ref{eq:cyclic.pairing-KT}) and~(\ref{eq:property.normalized.cyclic.cochain}) that 
     \begin{equation*}
      \acou{\psi}{e}=c_{k}\tr \#\psi\biggl(e-\frac{1}{2}, e,\ldots,e\biggr)=c_{k}\tr 
      \#\psi(e,e,\ldots,e). 
  \end{equation*}
  Combining this with~(\ref{eq:decomposition-phi})--(\ref{eq:pairing-phi0-e}) then gives
  \begin{equation*}
      \acou{\varphi}{e}=c_{k}\tr \#(S^{k}\varphi_{0}+\psi)(e,\ldots,e)=c_{k}\tr\#\varphi(e,\ldots,e),
  \end{equation*}
  which proves~(\ref{eq:pairing.cyclic.cocycle.Chern.char}). 
 \end{remark}

\section{Connes-Chern Character and Index Formula}
\label{sec:Connes-Chern}

In this section, we give a direct construction of the Connes-Chern character of a twisted spectral triple. Combining 
with the results of Section~\ref{sec:IndexMapSigmaConnections} we shall obtain a reformulation of the Atiyah-Singer index formula~(\ref{eq:Dirac.Atiyah-Singer}) for 
twisted spectral triples. 

Throughout this section we let  $(\cA, \cH, D)_{\sigma}$ be a twisted spectral triple. For $p\geq 1$ we denote by $\cL^{p}(\cH)$ the Schatten ideal of 
operators $T\in \cL(\cH)$ such that $\Tr |T|^{p}<\infty$. We recall that $\cL^{p}(\cH)$ is a Banach ideal with respect to 
the $p$-norm, 
\begin{equation*}
    \|T\|_{p}=\left( \Tr|T|^{p}\right)^{\frac{1}{p}}, \qquad T\in \cL^{p}(\cH). 
\end{equation*}
In what follows we assume that the twisted spectral triple $(\cA, \cH, D)_{\sigma}$ is $p$-\emph{summable}, i.e.,
\begin{equation*}
    D^{-1}\in \cL^{p}(\cH).
\end{equation*}

\subsection{Invertible Case} We start by assuming that $D$ is invertible. We shall explain later how to remove this assumption. We recall the following 
result. 

\begin{lemma}[{\cite[Lemma 7.1]{Ho:CPAM}}; see also~{\cite[p.~304]{Co:NCDG}}]
\label{lem:index.SchattenP.trace}
Let $\cH_1$ and $\cH_2$ be Hilbert spaces and $T\in\cL(\cH_1, \cH_2)$ a Fredholm operator. Let $S\in\cL(\cH_2, \cH_1)$ be such that $1-ST\in\cL^p(\cH_1)$ and $1-TS\in\cL^p(\cH_2).$ 
Then 
\begin{equation*}
\ind T=\Tr\left((1-ST)^q\right)-\Tr\left((1-TS)^q\right) \qquad \forall q\ge p.
\end{equation*}
\end{lemma}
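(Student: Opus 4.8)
The plan is to reduce the statement to a finite-rank computation by exploiting the algebraic identity $T S = 1 - (1-TS)$ together with the fact that the operators $1-ST$ and $1-TS$ are trace-class when raised to the $q$-th power. First I would recall the basic fact that for a Fredholm operator $T$ with a ``parametrix'' $S$ in the above sense, the operator $ST-1$ maps $\cH_1$ into itself and, being a power-summable hence compact perturbation of the identity argument aside, the operators $P:=1-ST$ and $Q:=1-TS$ are themselves merely bounded, but $P^q$ and $Q^q$ are trace-class. The key point is the classical algebraic observation (Calder\'on's formula, or the Fedosov-type index formula) that for any $q$,
\begin{equation*}
 \Tr\left(P^{q}\right)-\Tr\left(Q^{q}\right)=\ind T,
\end{equation*}
and that the left-hand side is in fact independent of $q\ge p$.

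The main steps I would carry out are as follows. First, observe that $TP = T(1-ST) = (1-TS)T = QT$, and similarly $PS = SQ$; these intertwining relations let one move powers of $P$ past $T$ and $S$. Second, using $T P^{q} = Q^{q} T$ one shows that $T$ restricts to a map $\ker P^{q}\to \ker Q^{q}$ with inverse induced by $S$ on the appropriate complements, so that more care is needed; the cleanest route is to use the Fedosov identity directly: one checks that
\begin{equation*}
 \Tr\left(P^{q}\right)-\Tr\left(Q^{q}\right)=\Tr\left(P^{q}\right)-\Tr\left(TSP^{\,q-1}\right)\cdots,
\end{equation*}
telescoping via $Q^{q}=Q^{q-1}(1-TS)$ and the cyclicity of the trace applied to the trace-class terms, until everything collapses to $\Tr$ of a finite-rank projection-like operator supported on $\ker T$ minus one supported on $\coker T$. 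Third, to handle the $q$-independence, note that $\Tr(P^{q+1})-\Tr(P^{q}) = -\Tr(P^{q}(1-P)) = -\Tr(P^{q}ST)$ and similarly for $Q$; using $P^q S T = P^q S T$ and cyclicity $\Tr(P^{q}ST)=\Tr(TP^{q}S)=\Tr(Q^{q}TS)$, the two correction terms cancel, so $\Tr(P^{q})-\Tr(Q^{q})$ does not change when $q$ is increased. Since the statement is already available in \cite[Lemma 7.1]{Ho:CPAM} and \cite[p.~304]{Co:NCDG}, one could alternatively simply cite it, but the self-contained argument above is short.

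The main obstacle I anticipate is making rigorous the manipulation with the trace: $P$ and $Q$ individually are not trace-class, only their $q$-th powers are, so one must be careful that every expression to which cyclicity of the trace is applied genuinely involves a trace-class operator. The safe way is to write $P^{q}=P^{a}\cdot P^{b}$ with $a,b\ge 1$, $a+b=q$, only when $q\ge 2$, and to treat $q=1$ (which forces $p\le 1$, so $1-ST$ itself is trace-class) separately; in general one factors off enough copies of $P$ (respectively $Q$) to land in $\cL^{1}$ before invoking $\Tr(XY)=\Tr(YX)$, using H\"older's inequality for Schatten norms ($\cL^{a}\cdot\cL^{b}\subset\cL^{1}$ when $a^{-1}+b^{-1}=1$) to justify that the intermediate products are trace-class. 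Once this bookkeeping is in place, the telescoping and the final identification with $\dim\ker T-\dim\coker T=\ind T$ are routine.
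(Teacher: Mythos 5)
Your decomposition into two steps — first proving that $\Tr(P^{q})-\Tr(Q^{q})$ is independent of $q\geq p$, then identifying the common value with $\ind T$ — is a reasonable strategy, and your $q$-independence argument is essentially complete: with $P=1-ST$, $Q=1-TS$, the intertwining relations $TP=QT$ and $PS=SQ$ together with cyclicity of the trace (applied to $P^{q}S\in\cL^{1}(\cH_{2},\cH_{1})$ against the bounded operator $T$) give $\Tr(P^{q}ST)=\Tr(TP^{q}S)=\Tr(Q^{q}TS)$, whence $\Tr(P^{q+1})-\Tr(Q^{q+1})=\Tr(P^{q})-\Tr(Q^{q})$.

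The base case, however, is not established as written. The telescoping $Q^{q}=Q^{q-1}(1-TS)$ you invoke would require $Q^{q-1}\in\cL^{1}$, which fails at the minimal admissible $q=\lceil p\rceil$; the telescope therefore never enters $\cL^{1}$ and cannot collapse to anything finite-rank. Appealing to the Fedosov--Calder\'on formula ``directly'' only handles $p\le 1$, and your $q$-independence cannot lower $q$ below $p$. The standard device you are missing is to \emph{improve the parametrix}: set $S_{1}:=(1+P+\cdots+P^{q-1})S$. Then
\begin{equation*}
 S_{1}T=(1+P+\cdots+P^{q-1})(1-P)=1-P^{q},\qquad TS_{1}=\sum_{j=0}^{q-1}TP^{j}S=\sum_{j=0}^{q-1}Q^{j}(1-Q)=1-Q^{q},
\end{equation*}
so $1-S_{1}T=P^{q}$ and $1-TS_{1}=Q^{q}$ lie in $\cL^{1}$, and the $q=1$ Calder\'on formula applied to $(T,S_{1})$ yields $\ind T=\Tr(P^{q})-\Tr(Q^{q})$ at once. (The $q=1$ case itself follows by writing $T$ in block form with respect to $\cH_{1}=\ker T\oplus(\ker T)^{\perp}$ and $\cH_{2}=\ran T\oplus(\ran T)^{\perp}$: the lower-right blocks of $1-ST$ and $1-TS$ are conjugate by the invertible restriction of $T$, so their traces cancel and only $\dim\ker T-\dim\coker T$ survives.) Once $S_{1}$ is introduced, your $q$-independence step becomes superfluous, since the formula is obtained for every $q\geq p$ at once. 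For what it is worth, the paper itself does not prove this lemma but cites H\"ormander and Connes; a citation would suffice, but the self-contained argument needs the parametrix-improvement step to close the gap.
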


The main impetus for our construction of the Connes-Chern character is the following index formula.

\begin{proposition}\label{lem:CCC.index-formula-Des}
Let $e\in M_q(\cA), q\ge 1,$ be an idempotent. Then, for any integer $k\ge\frac12 p,$
\begin{equation}
\label{eq:Index.Supertrace}
\ind D_{e, \sigma}=\frac12\Str\left((D^{-1}[D, e]_{\sigma})^{2k+1}\right).
\end{equation}
\end{proposition}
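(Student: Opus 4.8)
The plan is to reduce the proposition to the trace identity of Lemma~\ref{lem:index.SchattenP.trace} applied to the graded pieces $D_{e,\sigma}^{\pm}$. Recall from Section~\ref{sec:IndexTwistedSpectralTriple} that in the proof of Lemma~\ref{lem:index.Desigma-Fredholm} an explicit parametrix $Q_{e,\sigma}=e(D^{-1}\otimes 1_q)$ was exhibited, together with the computations of $1-Q_{e,\sigma}D_{e,\sigma}$ and $1-D_{e,\sigma}Q_{e,\sigma}$ as bounded operators built from $D^{-1}$, $[D,e]_{\sigma}$ and the projection $P_0$ onto $\ker D$. Since $D$ is now assumed invertible, $P_0=0$, and those two formulas simplify to
\begin{equation*}
 1-Q_{e,\sigma}D_{e,\sigma}=-eD^{-1}[D\otimes 1_q,e]_{\sigma},\qquad 1-D_{e,\sigma}Q_{e,\sigma}=e[D\otimes 1_q,e]_{\sigma}(D^{-1}\otimes 1_q),
\end{equation*}
acting on $e\cH^q$ and $\sigma(e)\cH^q$ respectively. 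Because $D^{-1}\in\cL^p(\cH)$ and $[D,e]_{\sigma}$ is bounded, each of these operators lies in $\cL^p$; hence raising to the power $2k+1\ge p$ lands us in the trace class and Lemma~\ref{lem:index.SchattenP.trace} applies (separately in the even and odd sectors), giving
\begin{equation*}
 \ind D_{e,\sigma}^{\pm}=\Tr_{e(\cH^{\pm})^q}\!\left((1-Q_{e,\sigma}D_{e,\sigma})^{2k+1}\right)-\Tr_{\sigma(e)(\cH^{\mp})^q}\!\left((1-D_{e,\sigma}Q_{e,\sigma})^{2k+1}\right).
\end{equation*}

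**Next I would** rewrite these traces in terms of the single operator $T:=D^{-1}[D,e]_{\sigma}$ on $\cH^q$ (suppressing $\otimes 1_q$). The first term is $(-1)^{2k+1}\Tr(eD^{-1}[D,e]_{\sigma}eD^{-1}[D,e]_{\sigma}\cdots)$; using $[D,e]_{\sigma}e=[D,e]_{\sigma}-[D,e]_{\sigma}(1-e)$ and the algebraic identity $D^{-1}[D,e]_{\sigma}(1-e)$ vs.\ $eD^{-1}[D,e]_{\sigma}$, one checks (exactly as in the standard spectral-triple computation, e.g.\ \cite{Co:NCDG}) that $eD^{-1}[D,e]_{\sigma}$ and $D^{-1}[D,e]_{\sigma}$ agree when restricted appropriately, or more cleanly that $e\,T^{2k+1}=T^{2k+1}$ on the relevant subspace because $[D,e]_{\sigma}$ maps into $\sigma(e)\cH$ and a telescoping argument absorbs the projections. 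Similarly the second trace, $\Tr(e[D,e]_{\sigma}D^{-1}\cdots)$, equals $\Tr(T^{2k+1})$ restricted to the complementary grading, up to cyclicity of the trace which lets one move the outer $D^{-1}$ to the front. The upshot is that both $\Tr(\cdots)$ terms, once the projections $e$, $\sigma(e)$ are disposed of, become partial traces of $T^{2k+1}=(D^{-1}[D,e]_{\sigma})^{2k+1}$ over $\cH^{\pm}$; the sign $(-1)^{2k+1}=-1$ from the first term combines with the grading to produce a supertrace.

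**Then** I would assemble the two graded indices. By the definition~(\ref{eq:Index-index-Desigma}),
\begin{equation*}
 \ind D_{e,\sigma}=\tfrac12\left(\ind D_{e,\sigma}^{+}-\ind D_{e,\sigma}^{-}\right),
\end{equation*}
and substituting the trace expressions above, the four partial traces of $T^{2k+1}$ (over $\cH^{+}$ and $\cH^{-}$, from the $+$ and $-$ sectors) organize—tracking the Clifford grading operator—into $\tfrac12\Str(T^{2k+1})$. Here it is important that $T=D^{-1}[D,e]_{\sigma}$ is an \emph{even} operator: $D^{-1}$ is odd and $[D,e]_{\sigma}=De-\sigma(e)D$ is odd (as $D$ is odd and $e$, $\sigma(e)$ are even), so their product is even, and $\Str(T^{2k+1})$ makes sense with $T^{2k+1}$ even and trace-class. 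Finally, the invertibility of $D$ guarantees $\ker D^{\pm}=0$ so that there are no correction terms from $P_0$, and the identity~(\ref{eq:Index.Supertrace}) follows for every integer $k\ge\frac12 p$ (the independence of $k$ being automatic once the formula is established for one, or coming for free since each value equals the same integer $\ind D_{e,\sigma}$).

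**The main obstacle** I anticipate is the bookkeeping in the second paragraph: carefully justifying that the projections $e$ and $\sigma(e)$ appearing in $1-Q_{e,\sigma}D_{e,\sigma}$ and $1-D_{e,\sigma}Q_{e,\sigma}$ can be absorbed so that the partial traces become clean partial traces of $(D^{-1}[D,e]_{\sigma})^{2k+1}$, and that the two sector-traces combine with the correct signs into a single supertrace. This is where the twist matters—one must use $\sigma(e)^2=\sigma(e)$, the bimodule identity for $\Omega^1_{D,\sigma}$, and cyclicity of the trace in the right order, being careful that $[D,e]_{\sigma}$ intertwines $e\cH$ and $\sigma(e)\cH$ rather than preserving a single subspace. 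The rest—Schatten membership, applying Lemma~\ref{lem:index.SchattenP.trace}, and the final averaging—is routine.
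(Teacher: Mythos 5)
The overall architecture you describe matches the paper's — parametrix, Lemma~\ref{lem:index.SchattenP.trace}, then algebraic manipulations with $D^{-1}[D,e]_\sigma = e - D^{-1}\sigma(e)D$ — but there is a genuine gap precisely where you flag the ``bookkeeping'' as tricky, and it changes the exponent in the final formula. First, the signs: with $Q = eD^{-1}$ on $\sigma(e)\cH^q$ one has $1-QD_{e,\sigma} = eD^{-1}[D,e]_\sigma e =: eTe$ on $e\cH^q$ and $1-D_{e,\sigma}Q = -\sigma(e)[D,e]_\sigma D^{-1}\sigma(e)$ on $\sigma(e)\cH^q$ (you have a spurious minus on the first and a missing one on the second). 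You then propose to raise to the power $2k+1$ since these errors lie in $\cL^p$. But the identity
\begin{equation*}
  eTe = (e-D^{-1}\sigma(e)D)(e-D^{-1}\sigma(e)D)e = T^2 e
\end{equation*}
propagates to $(eTe)^m = T^{2m}e$ for every $m\ge 1$, so $\Tr\left((eTe)^{2k+1}\right) = \Tr\left(T^{4k+2}e\right)$, \emph{not} a partial trace of $T^{2k+1}$. Your ``telescoping'' claim $eT^{2k+1}e = T^{2k+1}$ on the relevant subspace is false (already $eTe \neq Te$), and $[D,e]_\sigma$ does not map into $\sigma(e)\cH$: on $e\cH$ it equals $(1-\sigma(e))D$. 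Carrying your computation through would yield $\frac12\Str\left((D^{-1}[D,e]_\sigma)^{4k+3}\right)$, i.e.\ the formula only for the sparse set of exponents $4k+3$ with $2k+1\ge p$, not for every integer $k\ge\frac12 p$.

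The missing idea is that the very same identity $eTe = T^2 e$ shows the parametrix error is actually in $\cL^{p/2}$ (not merely $\cL^p$), since $T\in\cL^p$. This is what lets one apply Lemma~\ref{lem:index.SchattenP.trace} with the power $k$ rather than $2k+1$, producing $\Tr\left((eTe)^k\right) = \Tr\left(T^{2k}e\right)$. Together with the analogous identity $-\sigma(e)[D,e]_\sigma D^{-1}\sigma(e) = \sigma(e)\left([D,e]_\sigma D^{-1}\right)^2$ on the other side, and keeping careful track of a sign flip coming from the grading interchange $\sigma(e)(\cH^\mp)^q$ in the two graded indices, one gets
\begin{equation*}
  2\ind D_{e,\sigma} = \Str\left(T^{2k}e\right) + \Str\left(\sigma(e)\left([D,e]_\sigma D^{-1}\right)^{2k}\right) = \Str\left((e-D^{-1}\sigma(e)D)T^{2k}\right) = \Str\left(T^{2k+1}\right),
\end{equation*}
where the middle equality uses cyclicity of the supertrace for the odd operators $\sigma(e)D$ and $T^{2k}D^{-1}$. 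A further detail you pass over: Lemma~\ref{lem:index.SchattenP.trace} is stated for \emph{bounded} Fredholm operators, so $D_{e,\sigma}$ must first be recast as a bounded operator from $e\cH_1^q$ to $\sigma(e)\cH^q$, where $\cH_1 = \dom D$ carries the graph inner product, and the Schatten membership of the error terms must then be verified on those Hilbert spaces.
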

\begin{proof}
We defined $D_{e,\sigma}$ as an unbounded operator from $e\cH^q$ to $\sigma(e)\cH^q$.
Alternatively, let $\cH_1$ be the Hilbert space given by the vector space $\dom(D)$ equipped with the Hermitian inner product,
\begin{equation*}
\acou{\xi}{\eta}_1:=\acou{\xi}{\eta}+\acou{D\xi}{D\eta}, \qquad \xi, \eta\in\dom(D).
\end{equation*}
We denote by $\|\cdot\|_1$ the norm of $\cH_{1}$. This norm is complete since $D$ is a closed operator.
We then can regard $D$ as an invertible bounded operator from $\cH_1$ to $\cH$. 
Let $a\in\cA$ and $\xi\in\dom(D)$. 
Upon writing $Da\xi=aD\xi+[D,a]\xi$ we see that 
\begin{align*}
\|a\xi\|_1^2=\|a\xi\|^2+\|Da\xi\|^2\le\|a\xi\|^2+2(\|aD\xi\|^2+\|[D, a]\xi\|^2)
\le 2(\|a\|^2+\|[D,a]\|^2)\|\xi\|_1^{2}.
\end{align*}
Thus $a$ induces a bounded operator on $\cH_1$. It then follows that $e$ induces a bounded operator on $\cH_1^q$, 
so that $e\cH_1^q$ is a closed subspace of $\cH_1^q.$ We regard $D_{e,\sigma}$ as a bounded operator from $e\cH_1^q$ to $\sigma(e)\cH^q$. 
Set $Q=eD^{-1}\sigma(e)\in\cL\left(\sigma(e)\cH^q, e\cH_1^q\right)$. Then the product $D_{e, \sigma}Q$ is equal to
\begin{equation}
\sigma(e)DeD^{-1}\sigma(e)= \sigma(e)\left(\sigma(e)D+[D, e]_{\sigma}\right)D^{-1}\sigma(e)
=1+\sigma(e)[D, e]_{\sigma}D^{-1}\sigma(e),
\label{eq:D_esigma.Q}
\end{equation} 
where we have used the fact that $e=1$ on $e\cH^{q}$. Likewise, the operator $QD_{e,\sigma}$ is equal to
\begin{equation}
\label{eq:Q.D_esigma}
eD^{-1}\sigma(e)De=eD^{-1}(De-[D, e]_{\sigma})e
=1-eD^{-1}[D, e]_{\sigma}e.
\end{equation}

We observe that $D^{-1}[D, e]_{\sigma}D^{-1}[D, e]_{\sigma} e$ is equal to 
\begin{equation}
\label{eq:LpLpLp/2.1}
(e-D^{-1}\sigma(e)D)(e-D^{-1}\sigma(e)D)e= e-eD^{-1}\sigma(e)De=   eD^{-1}[D, e]_{\sigma}e.
\end{equation}
Similarly, the operator $\sigma(e)[D, e]_{\sigma}D^{-1}[D, e]_{\sigma}D^{-1}$ is equal to
\begin{equation}
\label{eq:LpLpLp/2.2}
\sigma(e)(DeD^{-1}-\sigma(e))(DeD^{-1}-\sigma(e))
=-\sigma(e)DeD^{-1}\sigma(e)+\sigma(e)
=-\sigma(e)[D, e]_{\sigma}D^{-1}\sigma(e).
\end{equation}
As $D^{-1}\in\cL^p(\cH)$ and $[D, e]_{\sigma}$ is bounded, using~(\ref{eq:LpLpLp/2.1}) we see that $eD^{-1}[D, e]_{\sigma}e$ is in the 
Schatten class $\cL^{\frac{p}{2}}(e\cH^q)$. Note also that $D^{-1}[D, e]_{\sigma}=D^{-1}([D, e]_{\sigma}D^{-1})D.$
As $D$ is an isomorphism from $\cH_1$ onto $\cH$, we then see that $[D, e]_{\sigma}D^{-1}$ induces on $\cH^q$ an 
operator in $\cL^p(\cH^q).$
Combing this with~(\ref{eq:LpLpLp/2.2}) then shows that $\sigma(e)[D, e]_{\sigma}D^{-1}\sigma(e)$ induces on $\sigma(e)\cH^q$ an operator in 
$\cL^{\frac{p}{2}}\left(\sigma(e)\cH^q\right).$

The $\Z_2$-grading $\cH=\cH^+\oplus\cH^-$ induces a $\Z_2$-grading $\cH_1=\cH_1^+\oplus\cH_1^-$ with $\cH_1^{\pm}=\cH_1\cap\cH^{\pm}.$ This gives rise to splittings $e\cH_1^q=e(\cH_1^+)^q\oplus e(\cH_1^-)^q$ and $\sigma(e)\cH^q=\sigma(e)(\cH^+)^q\oplus\sigma(e)(\cH^-)^q.$ With respect to these splittings, the operator $Q=eD\sigma(e)$ takes the form, 
\begin{equation*}
 Q=\begin{pmatrix}0 & Q^- \\ Q^+ & 0 \end{pmatrix} \qquad \text{where}\ Q^{\pm}:=e(D^{\mp})^{-1}\sigma(e). 
\end{equation*}  
Therefore,~(\ref{eq:D_esigma.Q}) and~(\ref{eq:Q.D_esigma}) can be rewritten as 
\begin{equation*}
D_{e,\sigma}^{\pm}Q^{\mp}=1+\sigma(e)[D^{\pm}, e]_{\sigma}(D^{\pm})^{-1}\sigma(e), \qquad 
Q^{\mp}D_{e,\sigma}^{\pm}=1-e[D^{\mp}, e]_{\sigma}(D^{\mp})^{-1}e,
\end{equation*}where the first equality holds in $\cL(\sigma(e)\cH^{\pm})$ and the second holds in 
$\cL(e\cH^{\pm}_{1})$. As shown above the operators $\sigma(e)[D^{\pm}, e]_{\sigma}(D^{\pm})^{-1}\sigma(e)$ and 
$e[D^{\mp}, e]_{\sigma}(D^{\mp})^{-1}e$ are in the Schatten classes $\cL^{\frac{p}{2}}(\sigma(e)(\cH^{\pm})^q)$ and the second holds in 
$\cL^{\frac{p}{2}}(e(\cH^{\pm}_{1})^q)$, respectively. Therefore, we may apply Lemma~\ref{lem:index.SchattenP.trace} to 
obtain that, for all $k\ge\frac12 p$,
\begin{equation*}
\ind D_{e, \sigma}^{\pm}=\Tr\left(e(D^{\mp})^{-1}[D^{\mp}, e]_{\sigma}e\right)^k-\Tr\left(-\sigma(e)[D^{\pm}, e]_{\sigma}(D^{\pm})^{-1}\sigma(e)\right)^k.
\end{equation*}
Thus,
\begin{equation*}
2\ind D_{e, \sigma}=\Str\left(eD^{-1}[D, e]_{\sigma}e\right)^k-\Str\left(-\sigma(e)[D, e]_{\sigma}D^{-1}\sigma(e)\right)^k. 
\end{equation*}
Combining this with~(\ref{eq:LpLpLp/2.1}) and~(\ref{eq:LpLpLp/2.2}) we then get 
\begin{equation*}
2\ind D_{e, \sigma}=\Str\left((D^{-1}[D, e]_{\sigma})^{2k}e\right)+\Str\left(\sigma(e)([D, e]_{\sigma}D^{-1})^{2k}\right).
\end{equation*}
We observe that $\Str(\sigma(e)\left([D, e]_{\sigma}D^{-1})^{2k}\right)$ is equal to
\begin{equation*}
\Str\left(\sigma(e)D(D^{-1}[D, e]_{\sigma})^{2k}D^{-1}\right)=-\Str\left(D^{-1}\sigma(e)D(D^{-1}[D, e]_{\sigma})^{2k}\right).
\end{equation*}
Therefore, we obtain
\begin{align*}
\ind D_{e, \sigma}=\frac{1}{2}\Str\left((e-D^{-1}\sigma(e)D)(D^{-1}[D, e]_{\sigma})^{2k}\right)
=\frac{1}{2}\Str\left((D^{-1}[D, e]_{\sigma})^{2k+1}\right).
\end{align*}
The lemma is thus proved.
\end{proof}

\begin{definition}\label{eq:CC.tauD}
For $k\ge\frac{1}{2}(p-1)$ let $\tau^{D}_{2k}$ be the $2k$-cochain on $\cA$ defined by 
\begin{equation}
\label{eq:tau_2k}
\tau^{D}_{2k}(a^0, \ldots, a^{2k})=c_k\Str\big(D^{-1}[D, a^0]_{\sigma}\cdots D^{-1}[D, a^{2k}]_{\sigma}\big) \qquad \forall a^j\in\cA,
\end{equation}
where we have set $c_k=\frac12(-1)^k\frac{k!}{(2k)!}$ .  
\end{definition}

We note that $\tau^{D}_{2k}$ is a normalized cyclic cochain. Moreover, using~(\ref{eq:pairing.cyclic.cocycle.Chern.char}), for $k\geq \frac{1}{2}p$, we can rewrite the index 
formula~(\ref{eq:Index.Supertrace}) in the form,
\begin{equation}
\label{eq:ind=pairing.CCChar.K}
\ind D_{e,\sigma} = \acou{\tau^{D}_{2k}}{e} \qquad \forall e\in M_{q}(\cA), \ e^{2}=e. 
\end{equation}

For $m\ge p$, we let $\varphi_m$ and $\psi_m$ be the normalized $m$-cochains on $\cA$ defined by 
\begin{gather}
\label{eq:phi_m}
\varphi_m(a^0, \ldots, a^m)=\Str(a^0D^{-1}[D, a^1]_{\sigma}\cdots D^{-1}[D, a^m]_{\sigma}),\\ 
\psi_m(a^0, \ldots, a^m)=\Str(\sigma(a^0)[D, a^1]_{\sigma}D^{-1}\cdots[D, a^m]_{\sigma}D^{-1}), \qquad a^j\in\cA.
\label{eq:psi_m} 
\end{gather}
We observe that $\psi_m(a^0, \ldots, a^m)$ is equal to
\begin{equation}
\label{eq:phi_m&psi_m}
-\Str(D^{-1}\sigma(a^0)DD^{-1}[D, a^1]_{\sigma}\cdots D^{-1}[D, a^m]_{\sigma})
=-\varphi_m(D^{-1}\sigma(a^0)D, a^1, \ldots, a^m).
\end{equation}
Using the equality $D^{-1}[D, a^0]_{\sigma}=a^0-D^{-1}\sigma(a^0)D,$ we then see that, for $k\ge\frac12 p,$
\begin{align}
\label{eq:tau_2kphi_2kpsi_2k}
c_{k}^{-1}\tau^{D}_{2k}(a^0, \ldots, a^{2k}) 
= & \Str\left\{a^0D^{-1}[D, a^1]_{\sigma}\cdots D^{-1}[D, a^{2k}]_{\sigma}\right\} \nonumber\\
& -\Str\left\{D^{-1}\sigma(a^0)DD^{-1}[D, a^1]_{\sigma}\cdots D^{-1}[D, a^{2k}]_{\sigma}\right\} \\
=& \varphi_{2k}(a^0, \ldots, a^{2k})+\psi_{2k}(a^0, \ldots, a^{2k})\nonumber
\end{align}

\begin{lemma}\label{lem:B.phi_2k+1}
Let $k\ge\frac12(p-1).$ Then 
\begin{equation*}
B\varphi_{2k+1}=-B\psi_{2k+1}=(2k+1)c_k^{-1}\tau^{D}_{2k}.
\end{equation*}
\end{lemma}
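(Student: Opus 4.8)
The plan is to compute $B\varphi_{2k+1}$ and $B\psi_{2k+1}$ directly from the formula $B=AB_{0}(1-T)$ in~(\ref{eq:PeriodicityBoundary}), using repeatedly that $\sigma$ is unital, so that $[D,1]_{\sigma}=D-\sigma(1)D=0$, and that $\varphi_{2k+1}$ and $\psi_{2k+1}$ are normalized cochains.

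First I would treat $\varphi_{2k+1}$. From the definition of $B_{0}$ in~(\ref{eq:B_0Boundary}) together with~(\ref{eq:phi_m}), we have $B_{0}\varphi_{2k+1}(a^{0},\ldots,a^{2k})=\varphi_{2k+1}(1,a^{0},\ldots,a^{2k})$; since the inserted unit sits in the $0$-th slot, which is not enclosed in a twisted commutator, this equals $\Str(D^{-1}[D,a^{0}]_{\sigma}\cdots D^{-1}[D,a^{2k}]_{\sigma})=c_{k}^{-1}\tau^{D}_{2k}(a^{0},\ldots,a^{2k})$. On the other hand, by~(\ref{eq:Operator.T}), $B_{0}(T\varphi_{2k+1})(a^{0},\ldots,a^{2k})=(T\varphi_{2k+1})(1,a^{0},\ldots,a^{2k})=-\varphi_{2k+1}(a^{2k},1,a^{0},\ldots,a^{2k-1})$, and this vanishes because the unit now occupies a slot of index $\ge 1$ and $\varphi_{2k+1}$ is normalized. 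Hence $B_{0}(1-T)\varphi_{2k+1}=c_{k}^{-1}\tau^{D}_{2k}$. Since $\tau^{D}_{2k}$ is a cyclic $2k$-cochain (as noted just after Definition~\ref{eq:CC.tauD}), the cyclic averaging operator $A$ multiplies it by $2k+1$, so I would conclude $B\varphi_{2k+1}=AB_{0}(1-T)\varphi_{2k+1}=(2k+1)c_{k}^{-1}\tau^{D}_{2k}$.

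The computation for $\psi_{2k+1}$ runs along the same lines: $B_{0}(T\psi_{2k+1})=0$ again by normalization, while $B_{0}\psi_{2k+1}(a^{0},\ldots,a^{2k})=\psi_{2k+1}(1,a^{0},\ldots,a^{2k})=\Str([D,a^{0}]_{\sigma}D^{-1}[D,a^{1}]_{\sigma}D^{-1}\cdots[D,a^{2k}]_{\sigma}D^{-1})$, using $\sigma(1)=1$. The one step that is not purely formal — and the place where I expect the only real care is needed — is recognizing this last supertrace as a multiple of $\tau^{D}_{2k}$: one moves the trailing factor $D^{-1}$ to the front via the graded trace identity $\Str(XY)=(-1)^{|X||Y|}\Str(YX)$, with $X=[D,a^{0}]_{\sigma}D^{-1}\cdots[D,a^{2k}]_{\sigma}$ and $Y=D^{-1}$. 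Here $D^{-1}$ is odd and $X$ has odd total degree (a product of $2k+1$ odd factors $[D,a^{j}]_{\sigma}$ interleaved with $2k$ odd factors $D^{-1}$, hence of odd degree), which produces a sign $-1$; thus $B_{0}\psi_{2k+1}=-c_{k}^{-1}\tau^{D}_{2k}$ and therefore $B\psi_{2k+1}=-(2k+1)c_{k}^{-1}\tau^{D}_{2k}$. Comparing the two outcomes gives $B\varphi_{2k+1}=-B\psi_{2k+1}=(2k+1)c_{k}^{-1}\tau^{D}_{2k}$, as claimed. Finally, I would point out that the hypothesis $k\ge\tfrac12(p-1)$ guarantees that every operator product occurring above — each carrying $2k+1$ factors of $D^{-1}\in\cL^{p}$ — lies in $\cL^{1}(\cH)$, so that all the supertraces are defined and the graded cyclicity used here is legitimate; this is the same summability bookkeeping already carried out in the proof of Proposition~\ref{lem:CCC.index-formula-Des}.
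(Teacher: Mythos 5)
Your proof is correct and follows essentially the same route as the paper's: reduce $B$ to $AB_0$ via normalization, identify $B_0\varphi_{2k+1}$ and $B_0\psi_{2k+1}$ with $\pm c_k^{-1}\tau^D_{2k}$, and use cyclicity of $\tau^D_{2k}$ to evaluate $A$. You merely spell out two steps the paper leaves implicit — the vanishing of $B_0T\varphi_{2k+1}$ and $B_0T\psi_{2k+1}$ by normalization, and the graded-cyclicity sign computation that shows $B_0\psi_{2k+1}=-c_k^{-1}\tau^D_{2k}$ — both of which are handled correctly.
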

\begin{proof}
As $\varphi_{2k+1}$ and $\psi_{2k+1}$ are normalized cochains, we know that $B\varphi_{2k+1}= AB_{0}\varphi_{2k+1}$ and $B\psi_{2k+1}= AB_{0}\psi_{2k+1}$. Moreover, it follows from~(\ref{eq:phi_m}) and~(\ref{eq:psi_m}) that \[B_0\varphi_{2k+1}=-B_0\psi_{2k+1}=c_k^{-1}\tau^{D}_{2k}.\] 
 As $\tau^{D}_{2k}$ is a cyclic cochain, we then deduce that 
\[B\varphi_{2k+1}=-B\psi_{2k+1}=c_k^{-1}A\tau^{D}_{2k}=(2k+1)c_k^{-1}\tau^{D}_{2k}.\] This proves the lemma.
\end{proof}

\begin{lemma}
\label{lem:b.phi_2k-1=phi_2k}
Let $k\ge\frac12(p+1).$ Then
\begin{equation}
\label{eq:b.phi_2k-1=phi_2k}
b\varphi_{2k-1}=\varphi_{2k}\qquad\text{and}\qquad b\psi_{2k-1}=-\psi_{2k}.
\end{equation}
\end{lemma}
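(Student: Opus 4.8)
The plan is to verify both equalities by direct computation with the Hochschild coboundary $b$, exploiting the $\sigma$-derivation property~(\ref{eq:SigmaDerivation}) and the cyclicity of the supertrace. First I would recall that $\varphi_{2k-1}$ and $\psi_{2k-1}$ are normalized cochains, so when I expand $b\varphi_{2k-1}(a^0,\ldots,a^{2k})$ using~(\ref{eq:HochCoboundary}), all terms in which a product $a^ja^{j+1}$ with $1\le j\le 2k-1$ appears can be handled by the Leibniz-type identity for the twisted differential: from $d_\sigma(a^ja^{j+1})=(d_\sigma a^j)a^{j+1}+\sigma(a^j)d_\sigma a^{j+1}$ one gets
\begin{equation*}
D^{-1}[D,a^ja^{j+1}]_\sigma = (D^{-1}[D,a^j]_\sigma) a^{j+1} + D^{-1}\sigma(a^j)D\cdot D^{-1}[D,a^{j+1}]_\sigma,
\end{equation*}
and since $a^{j+1}=D^{-1}[D,a^{j+1}]_\sigma + D^{-1}\sigma(a^{j+1})D$ this is a telescoping mechanism: consecutive terms in the alternating sum cancel in pairs. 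The surviving contributions are the $j=0$ term, involving $a^0a^1$, and the wrap-around term $(-1)^{2k}\varphi_{2k-1}(a^{2k}a^0,a^1,\ldots,a^{2k-1})$; after using $a^0[D,a^1]_\sigma = [D,a^0a^1]_\sigma - \sigma(a^0)\sigma(a^1)D + \cdots$ (more precisely $a^0 D^{-1}[D,a^1]_\sigma = a^0a^1 - a^0D^{-1}\sigma(a^1)D$) and cyclicity of $\Str$ to move $D^{-1}$ around, these combine to give exactly $\varphi_{2k}(a^0,\ldots,a^{2k})$. The computation for $\psi$ is the mirror image: $\psi_{2k-1}$ is built from $[D,a]_\sigma D^{-1}$ factors, and the analogous Leibniz identity $[D,a^ja^{j+1}]_\sigma D^{-1} = \sigma(a^j)\sigma(a^{j+1}) + \cdots$ together with $\sigma(a^0)[D,a^1]_\sigma D^{-1} = \sigma(a^0)\sigma(a^1) - \sigma(a^0)\sigma(a^1) + \cdots$ produces $-\psi_{2k}$, the sign arising from the structural asymmetry between the $a^0$ slot (which carries $\sigma(a^0)$ with no adjacent $D^{-1}$ on the left) and the $a^{2k}$ slot.

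A cleaner route, which I would prefer to present, is to reduce the $\psi$ identity to the $\varphi$ identity using~(\ref{eq:phi_m&psi_m}), namely $\psi_m(a^0,\ldots,a^m)=-\varphi_m(D^{-1}\sigma(a^0)D,a^1,\ldots,a^m)$. Since $b$ commutes with the substitution $a^0\mapsto D^{-1}\sigma(a^0)D$ in the first slot — this requires checking that $D^{-1}\sigma(a^0a^1)D = (D^{-1}\sigma(a^0)D)(D^{-1}\sigma(a^1)D)$, which holds because $\sigma$ is multiplicative, and that the wrap-around term behaves correctly, where one uses $\varphi_m(a^ma^0,a^1,\ldots)$ versus the $D^{-1}\sigma(\cdot)D$-twisted version and cyclicity of $\Str$ — one obtains $b\psi_{2k-1}=-b[\varphi_{2k-1}(D^{-1}\sigma(\cdot)D,\ldots)]=-[b\varphi_{2k-1}](D^{-1}\sigma(\cdot)D,\ldots)=-\varphi_{2k}(D^{-1}\sigma(\cdot)D,\ldots)=\psi_{2k}\cdot(-1)\cdot(-1)$; tracking the signs here gives $b\psi_{2k-1}=-\psi_{2k}$ once one reconciles that the defining relation already carries one minus sign. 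I would spell out the slot-substitution compatibility as a short sublemma rather than redo the full telescoping twice.

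The role of the hypothesis $k\ge\frac12(p+1)$ is purely to guarantee that every supertrace appearing is of a trace-class operator: in $\varphi_{2k-1}$ there are $2k-1\ge p$ factors of the form $D^{-1}[D,a^j]_\sigma$, each lying in $\cL^p(\cH)$ since $D^{-1}\in\cL^p(\cH)$ and $[D,a^j]_\sigma$ is bounded, so the product is in $\cL^{p/(2k-1)}(\cH)\subset\cL^1(\cH)$; likewise $b\varphi_{2k-1}$ and $\varphi_{2k}$ involve $2k\ge p+1$ such factors, again trace class. This finiteness is what makes the cyclicity-of-$\Str$ manipulations legitimate, so I would note it at the outset.

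The main obstacle I anticipate is bookkeeping of signs — both the $(-1)^j$ from the Hochschild differential, the supertrace sign $(-1)$ incurred each time an odd operator is cycled past another odd operator (each $D^{-1}$ and each $[D,a]_\sigma$ is odd), and the overall $(-1)$ relating $\varphi$ and $\psi$ — and making sure the telescoping cancellation is stated so that the reader sees it is genuinely pairwise. I would organize the proof by first writing $b\varphi_{2k-1}$ explicitly, then isolating the ``interior'' terms $j=1,\ldots,2k-1$ and showing term $j$ of the first kind cancels against a piece of term $j-1$, leaving only the boundary terms, and finally identifying the boundary remainder with $\varphi_{2k}$; the $\psi$ case is then dispatched by the substitution argument above.
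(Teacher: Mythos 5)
Your direct telescoping computation for $b\varphi_{2k-1}=\varphi_{2k}$ is essentially the paper's proof: the paper introduces auxiliary cochains $\theta_j'$, $\theta_j''$ with $\theta_j'-\theta_j''=\varphi_{2k}$, uses the Leibniz identity to write $b_j\varphi_{2k-1}=\theta_{j+1}'+\theta_j''$, and lets the alternating sum telescope, with the $j=0$ and $j=2k$ terms supplying $\theta_1'$ and $\theta_{2k}'$. Your remarks on $p$-summability are also fine.

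The ``cleaner route'' you say you would prefer for $\psi$, however, contains a genuine sign error, not merely bookkeeping. Write $\rho(a)=D^{-1}\sigma(a)D$, so $\psi_m(a^0,\ldots,a^m)=-\varphi_m(\rho(a^0),a^1,\ldots,a^m)$. If $b$ did commute with the substitution $a^0\mapsto\rho(a^0)$, you would get
\begin{equation*}
b\psi_{2k-1}(a^0,\ldots,a^{2k})=-\left(b\varphi_{2k-1}\right)(\rho(a^0),a^1,\ldots,a^{2k})=-\varphi_{2k}(\rho(a^0),a^1,\ldots,a^{2k})=+\psi_{2k}(a^0,\ldots,a^{2k}),
\end{equation*}
which is the \emph{wrong} sign; there is no extra minus sign ``already carried by the defining relation'' to reconcile this. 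In fact $b$ does \emph{not} commute with the first-slot substitution: it fails precisely at $j=0$ and $j=2k$. For $j=0$ the Hochschild term $\psi_{2k-1}(a^0a^1,\ldots)$ involves $\rho(a^0a^1)=\rho(a^0)\rho(a^1)$, whereas $(b_0\varphi_{2k-1})(\rho(a^0),\ldots)$ involves $\rho(a^0)a^1$; the discrepancy $\rho(a^0)(a^1-\rho(a^1))=\rho(a^0)D^{-1}[D,a^1]_\sigma$ contributes $\varphi_{2k}(\rho(a^0),a^1,\ldots)=-\psi_{2k}$. The same happens at $j=2k$ after a cyclic permutation of the supertrace. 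These two corrections total $-2\psi_{2k}$, turning the ``naive'' answer $+\psi_{2k}$ into the correct $-\psi_{2k}$. This is exactly what the paper's proof does: it transfers the earlier computation only for the interior terms $j=1,\ldots,2k-1$, then recomputes $b_0\psi_{2k-1}$ and $b_{2k}\psi_{2k-1}$ directly, getting $\theta''$-type contributions rather than $\theta'$-type ones, and the mismatch supplies the sign. So the ``substitution sublemma'' you propose cannot be stated as a commutation; you would have to identify and track the two boundary correction terms, which is tantamount to redoing the direct computation you were hoping to avoid.
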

\begin{proof}
For $j=1,\ldots,2k$ let $\theta_{j}'$ and $\theta_{j}''$ be the $2k$-cochains on $\cA$ defined by
\begin{gather*}
   \theta_{j}'(a^0, \ldots, a^{2k})=\Str\left( a^0 D^{-1}[D, a^1]_{\sigma}\cdots a^{j}\cdots 
   D^{-1}[D, a^{2k}]_{\sigma}\right),\\
   \theta_{j}''(a^0, \ldots, a^{2k})=\Str\left(a^0 D^{-1}[D, a^1]_{\sigma}\cdots D^{-1}\sigma(a^j)D^{-1}\cdots D^{-1}[D, a^{2k}]_{\sigma}\right), \qquad  a^l\in\cA. 
\end{gather*}
We note that 
\begin{align*}
\theta_{j}'(a^0, \ldots, a^{2k})- \theta_{j}''(a^0, \ldots, a^{2k}) & =  \Str\left(a^0 D^{-1}[D, a^1]_{\sigma}\cdots 
(a^{j}-D^{-1}\sigma(a^j)D)\cdots D^{-1}[D, a^{2k}]_{\sigma}\right)  \\ &= \varphi_{2k}(a^{0},\ldots,a^{2k}).
\end{align*}
Using the equality $D^{-1}[D, a^j a^{j+1}]_{\sigma}=D^{-1}[D, a^j]_{\sigma}a^{j+1}+D^{-1}\sigma(a^j)D\cdot D^{-1}[D, a^{j+1}]_{\sigma}$ we also find that
\begin{align*}
    b_{j}\varphi_{2k-1}(a^{0},\ldots,a^{2k}) & = \Str\left( a^0 D^{-1}[D, a^1]_{\sigma}\cdots D^{-1}[D, a^j a^{j+1}]_{\sigma}\cdots 
   D^{-1}[D, a^{2k}]_{\sigma}\right),    \\
     & = \theta_{j+1}'(a^{0},\ldots,a^{2k}) + \theta_{j}''(a^{0},\ldots,a^{2k}).
\end{align*}
Thus $ \sum_{j=1}^{2k-1}(-1)^{j}b_{j}\varphi_{2k-1}$ is equal to
\begin{align}
   \sum_{j=1}^{2k-1}(-1)^{j}(\theta_{j+1}'+\theta_{j}'') & = -\theta_{1}''+ 
    \sum_{j=2}^{2k-1}(-1)^{j-1}(\theta_{j-1}'-\theta_{j}'')-\theta_{2k}'\nonumber \\ 
   & = -\theta_{1}''+ 
    \sum_{j=2}^{2k-1}(-1)^{j-1}\varphi_{2k}-\theta_{2k}'  \label{eq:CCC.bvphi}\\
    &= -\theta_{1}''-\theta_{2k}'. \nonumber
\end{align}
We also note that
\begin{equation*}
    b_{0}\varphi_{2k-1}(a^{0},\ldots,a^{2k})  =\Str\left(a^0 a^{1}D^{-1}[D, a^2]_{\sigma}\cdots 
D^{-1}[D, a^{2k}]_{\sigma}\right)  = \theta_{1}'(a^{0},\ldots,a^{2k}).
\end{equation*}
Moreover, the cochain $b_{2k}\varphi_{2k-1}(a^{0},\ldots,a^{2k})$ is equal to
\begin{align*}
 \Str\left(a^{2k}a^{1}D^{-1}[D, a^1]_{\sigma}\cdots D^{-1}[D, 
a^{2k-1}]_{\sigma}\right)  
   & =\Str\left(a^{0}D^{-1}[D, a^1]_{\sigma}\cdots D^{-1}[D, a^{2k-1}]_{\sigma}a^{2k}\right)  \\ 
  & = \theta_{2k}'(a^{0},\ldots,a^{2k}).   
\end{align*}
Therefore, we find that
\begin{equation*}
    b\varphi_{2k-1}=  \sum_{j=0}^{2k}(-1)^{j}b_{j}\varphi_{2k-1}= 
    b_{0}\varphi_{2k-1}-\theta_{1}''-\theta_{2k}'+b_{2k}\varphi_{2k-1}=\theta_{1}'-\theta_{1}''=\varphi_{2k}.
\end{equation*}

As $\psi_{2k-1}(a^{0},\ldots,a^{2k-1})= -\varphi_{2k-1}(D^{-1}\sigma(a^{0})D,a^{1},\ldots,a^{2k-1})$, using~(\ref{eq:CCC.bvphi}) we get
\begin{equation*}
  \sum_{j=1}^{2k-1}(-1)^{j}b_{j}\psi_{2k-1} =
  \theta_{1}''(D^{-1}\sigma(a^{0})D,a^{1},\ldots,a^{2k})+\theta_{2k}'(D^{-1}\sigma(a^{0})D,a^{1},\ldots,a^{2k}).  
\end{equation*}
We observe that $b_{0}\psi_{2k-1}(a^{0},\ldots,a^{2k})$ is equal to
\begin{equation*}
 -  \Str\left(D^{-1}a^0 a^{1}D^{-1}[D, a^2]_{\sigma}\cdots 
D^{-1}[D, a^{2k}]_{\sigma}\right)  =  -  \theta_{1}''(D^{-1}\sigma(a^{0})D,a^{1},\ldots,a^{2k-1}).
\end{equation*}Moreover, we have
\begin{align*}
b_{2k}\psi_{2k-1}(a^{0},\ldots,a^{2k}) & = - \Str\left(D^{-1}\sigma(a^{2k})\sigma(a^{0})D\cdot D^{-1}[D, 
a^1]_{\sigma}\cdots D^{-1}[D, a^{2k-1}]_{\sigma}\right)\\
& =-\Str\left(D^{-1}a^{0}D\cdot D^{-1}[D, a^1]_{\sigma}\cdots D^{-1}[D, a^{2k-1}]_{\sigma}D^{-1}\sigma(a^{2k})D\right)  \\ 
  & = -\theta_{2k}''(D^{-1}\sigma(a^{0})D,a^{1},\ldots,a^{2k}).   
\end{align*}
Therefore, we see that $b\psi_{2k-1}(a^{0},\ldots,a^{2k})=\sum_{j=0}^{2k}(-1)^{j}b_{j}\psi_{2k-1}(a^{0},\ldots,a^{2k})$ is equal to
\begin{align*}
     \theta_{2k}'(D^{-1}\sigma(a^{0})D,a^{1},\ldots,a^{2k}) -\theta_{2k}''(D^{-1}\sigma(a^{0})D,a^{1},\ldots,a^{2k}) & 
     = \varphi_{2k}(D^{-1}\sigma(a^{0})D,a^{1},\ldots,a^{2k}) \\ 
    & =- \psi_{2k}(a^{0},a^{1},\ldots,a^{2k}) .
\end{align*}
The proof is complete.
\end{proof}

\begin{proposition}[\cite{CM:TGNTQF}]\label{prop:Cochian.ConnesChernChar}
Let $k\geq \frac12(p-1)$. Then 
\begin{enumerate}
\item The cochain $\tau^{D}_{2k}$ in~(\ref{eq:tau_2k}), is a normalized cyclic cocycle. 
\item The class of $\tau^{D}_{2k}$ in $\HP^0(\cA)$ is independent of the value of $k.$
\end{enumerate}
\end{proposition}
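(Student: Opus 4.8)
The plan is to prove both parts by exploiting the cochains $\varphi_{m}$ and $\psi_{m}$ from~(\ref{eq:phi_m})--(\ref{eq:psi_m}) together with the two technical lemmas~\ref{lem:B.phi_2k+1} and~\ref{lem:b.phi_2k-1=phi_2k}, which already encode the essential computations. For part (1), I first note that $\tau^{D}_{2k}$ is visibly normalized and cyclic (both facts were already recorded right after Definition~\ref{eq:CC.tauD}: the $\sigma$-commutator vanishes when one entry is $1$, and cyclicity comes from the trace property of $\Str$ together with the identity $D^{-1}[D,a]_{\sigma}=a-D^{-1}\sigma(a)D$, i.e.\ the supertrace of a cyclic product is cyclic up to the appropriate sign). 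So the only thing to check is that $b\tau^{D}_{2k}=0$. Since $\tau^{D}_{2k}$ is cyclic, it is annihilated by $B$, so it suffices to show $(b+B)\tau^{D}_{2k}=0$, i.e.\ that $\tau^{D}_{2k}$ is a periodic cyclic cocycle. The strategy here is to realize $\tau^{D}_{2k}$, up to a scalar, as $B$ of a Hochschild cochain whose $b$-image we can control: by Lemma~\ref{lem:B.phi_2k+1}, $(2k+1)c_k^{-1}\tau^{D}_{2k}=B\varphi_{2k+1}$, and by Lemma~\ref{lem:b.phi_2k-1=phi_2k}, $b\varphi_{2k+1}=\varphi_{2k+2}$ (valid once $k+1\ge\frac12(p+1)$, i.e.\ $k\ge\frac12(p-1)$). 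Hence, using $bB+Bb=0$,
\begin{equation*}
 (2k+1)c_k^{-1}\, b\tau^{D}_{2k}= bB\varphi_{2k+1}= -Bb\varphi_{2k+1}=-B\varphi_{2k+2}.
\end{equation*}
Now I invoke Lemma~\ref{lem:B.phi_2k+1} one step up: $B\varphi_{2k+2}$ is a nonzero multiple of $\tau^{D}_{2k+1}$? No --- rather, Lemma~\ref{lem:B.phi_2k+1} applies to odd-index $\varphi$'s, so I instead observe that $\varphi_{2k+2}$ is itself a coboundary, $\varphi_{2k+2}=b\varphi_{2k+1}$ with $B\circ b\varphi_{2k+1}$ to be handled by $Bb=-bB$ again; iterating, $B\varphi_{2k+2}=Bb\varphi_{2k+1}=-bB\varphi_{2k+1}=-b\big((2k+1)c_k^{-1}\tau^D_{2k}\big)$, which gives $(2k+1)c_k^{-1}b\tau^D_{2k}=b(2k+1)c_k^{-1}\tau^D_{2k}$, a tautology. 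So this particular shuffle is circular, and the honest route is: show directly $b\tau^D_{2k}=0$ by an explicit telescoping computation in the same spirit as Lemma~\ref{lem:b.phi_2k-1=phi_2k}, expanding each $[D,a^ja^{j+1}]_{\sigma}$ via the $\sigma$-Leibniz rule~(\ref{eq:SigmaDerivation}) and collecting the resulting terms; the supertrace cyclicity then cancels everything. This telescoping argument --- organizing the $2k+1$ internal $b_j$-terms plus the two "wrap-around" terms $b_0$ and $b_{2k+1}$ and seeing that they sum to zero --- is the part I expect to be the main obstacle, as it is a genuine (if routine-once-set-up) combinatorial bookkeeping exercise with signs, exactly analogous to but slightly longer than the proof of Lemma~\ref{lem:b.phi_2k-1=phi_2k}.

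For part (2), the goal is to show $[\tau^{D}_{2k}]=[\tau^{D}_{2k+2}]$ in $\HP^{0}(\cA)$ for all $k\ge\frac12(p-1)$. The natural approach is to produce an explicit transgression: I claim
\begin{equation*}
 \tau^{D}_{2k+2}-\lambda\, S\tau^{D}_{2k}= (b+B)\,\eta
\end{equation*}
for a suitable odd Hochschild cochain $\eta$ built from the $\varphi_m$'s and $\psi_m$'s, and a suitable scalar $\lambda$ coming from the normalization constants $c_k$; since $S\tau^D_{2k}$ represents the same class as $\tau^D_{2k}$ in periodic cyclic cohomology, this yields the independence of $k$. More concretely, combining~(\ref{eq:tau_2kphi_2kpsi_2k}), which says $c_k^{-1}\tau^{D}_{2k}=\varphi_{2k}+\psi_{2k}$, with Lemma~\ref{lem:B.phi_2k+1} ($B\varphi_{2k+1}=-B\psi_{2k+1}=(2k+1)c_k^{-1}\tau^D_{2k}$) and Lemma~\ref{lem:b.phi_2k-1=phi_2k} ($b\varphi_{2k-1}=\varphi_{2k}$, $b\psi_{2k-1}=-\psi_{2k}$), I can set $\eta_{2k+1}$ proportional to $\varphi_{2k+1}-\psi_{2k+1}$ (so that $B\eta_{2k+1}$ is proportional to $\tau^D_{2k}$ while $b\eta_{2k+1}$ is proportional to $\varphi_{2k+2}+\psi_{2k+2}=c_{k+1}^{-1}\tau^D_{2k+2}$). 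Then $(b+B)\eta_{2k+1}$ is a linear combination of $\tau^D_{2k}$ and $\tau^D_{2k+2}$, and after matching the constants $c_k,c_{k+1},(2k+1)$ one reads off that $\tau^D_{2k+2}$ and a fixed multiple of $\tau^D_{2k}$ differ by a periodic coboundary. Since the multiple is precisely the one realizing the $S$-operation up to the conventional normalization (recall from~(\ref{eq:pairing-phi0-e}) that pairings with idempotents are $S$-invariant), this gives $[\tau^D_{2k+2}]=[\tau^D_{2k}]$ in $\HP^0(\cA)$, completing the proof. The delicate point in part (2) is bookkeeping the normalization constants so that the transgression identity is exact on the nose rather than merely up to a scalar that one would then have to argue away; I would handle this by working in the normalized complex $C_0^{[\bullet]}(\cA)$ and using~(\ref{eq:HP_0=HP}), where the constants are pinned down by $B=B_0(1-T)$ and $A=1+T+\cdots$.

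Throughout I would rely on the standing $p$-summability hypothesis $D^{-1}\in\cL^{p}(\cH)$ to guarantee that all the operator products appearing in $\varphi_m,\psi_m,\tau^D_{2k}$ for the relevant ranges of $m$ and $k$ are trace-class: each such product of length $m+1\ge p+1$ contains at least $p$ factors of $D^{-1}$ (after using $D^{-1}[D,a]_\sigma=a-D^{-1}\sigma(a)D$ to see the worst case), hence lies in $\cL^{1}(\cH)$, so all the supertraces are well-defined and the cyclicity-of-trace manipulations are legitimate. This is why the cocycle and transgression identities are only asserted for $k\ge\frac12(p-1)$.
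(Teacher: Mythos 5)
Your plan for part (2) is essentially the paper's argument, but your plan for part (1) has a genuine gap that, ironically, your own part (2) already contains the fix for.

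For part (1) you correctly diagnose that starting from $(2k+1)c_k^{-1}\tau^{D}_{2k}=B\varphi_{2k+1}$ and trying to close the loop through $b\varphi_{2k+1}=\varphi_{2k+2}$ leads to a tautology, and you then punt to an unperformed ``direct telescoping computation,'' flagging it as the main obstacle. But the telescoping is not needed, and the circularity evaporates once you pass from $\varphi_{2k+1}$ to the \emph{difference} $\varphi_{2k+1}-\psi_{2k+1}$. Lemma~\ref{lem:B.phi_2k+1} says $B\varphi_{2k+1}=-B\psi_{2k+1}$, so
\begin{equation*}
 \tau^{D}_{2k}=\tfrac{c_k}{2(2k+1)}\,B(\varphi_{2k+1}-\psi_{2k+1})=-c_{k+1}\,B(\varphi_{2k+1}-\psi_{2k+1}),
\end{equation*}
and Lemma~\ref{lem:b.phi_2k-1=phi_2k} together with~(\ref{eq:tau_2kphi_2kpsi_2k}) gives
\begin{equation*}
 b(\varphi_{2k+1}-\psi_{2k+1})=\varphi_{2k+2}+\psi_{2k+2}=c_{k+1}^{-1}\,\tau^{D}_{2k+2}.
\end{equation*}
The crucial point you miss is that the right-hand side is \emph{cyclic} (it is, up to a constant, $\tau^D_{2k+2}$, which is visibly cyclic by the supertrace property, quite independently of whether it is a cocycle). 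Since $B$ annihilates cyclic cochains, $B\,b(\varphi_{2k+1}-\psi_{2k+1})=0$, and then $bB=-Bb$ yields $b\tau^{D}_{2k}=c_{k+1}Bb(\varphi_{2k+1}-\psi_{2k+1})=0$. By contrast, $b\varphi_{2k+1}=\varphi_{2k+2}$ alone is \emph{not} cyclic, which is exactly why your loop with $\varphi_{2k+1}$ never terminates.

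Notice that in your part (2) sketch you already set $\eta_{2k+1}\propto\varphi_{2k+1}-\psi_{2k+1}$ and record precisely that $B\eta_{2k+1}\propto\tau^{D}_{2k}$ and $b\eta_{2k+1}\propto\tau^{D}_{2k+2}$; part (1) then follows by one more line, as above, without any new computation. For part (2) itself, your transgression idea is correct, but you overcomplicate it: there is no need to invoke $S$ or to normalize anything away. The identity $\tau^{D}_{2k+2}-\tau^{D}_{2k}=(b+B)\bigl(c_{k+1}(\varphi_{2k+1}-\psi_{2k+1})\bigr)$ is already exact on the nose, since $c_{k+1}b(\varphi_{2k+1}-\psi_{2k+1})=\tau^{D}_{2k+2}$ and $c_{k+1}B(\varphi_{2k+1}-\psi_{2k+1})=-\tau^{D}_{2k}$; this is precisely what the paper writes, and it immediately gives $[\tau^{D}_{2k+2}]=[\tau^{D}_{2k}]$ in $\HP^{0}(\cA)$. (The paper does separately note in Remark~\ref{rmk:tau2k+2.cohomologous.Stau2k} that one can also see $S\tau^D_{2k}=\tau^D_{2k+2}$ in $\HC^{2k+2}(\cA)$ via~(\ref{eq:SB=-b}), but that is an alternative route, not a needed ingredient.)
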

\begin{proof}
We already know that $\tau^{D}_{2k}$ is a cyclic normalized 
cochain. We also note that
\begin{equation*}
c_{k+1}=\frac12(-1)^{k+1}\frac{(k+1)!}{(2k+2)!}=-\frac{c_{k}}{2(2k+1)}.
\end{equation*}
Combining this with Lemma~\ref{lem:B.phi_2k+1} we get
\begin{equation}
\label{eq:tau.Bphipsi}
\tau^{D}_{2k}=\frac{c_k}{2(2k+1)}B(\varphi_{2k+1}-\psi_{2k+1})= -c_{k+1}B(\varphi_{2k+1}-\psi_{2k+1}). 
\end{equation}
Using~(\ref{eq:tau.Bphipsi}) and the fact that $bB=-Bb$ we then see that 
\begin{equation*}
b\tau^{D}_{2k}=-c_{k+1}bB(\varphi_{2k+1}-\psi_{2k+1})=c_{k+1}Bb(\varphi_{2k+1}-\psi_{2k+1}).
\end{equation*}
Moreover, using~(\ref{eq:tau_2kphi_2kpsi_2k}) and Lemma~\ref{lem:b.phi_2k-1=phi_2k} we get
\begin{equation}
\label{eq:tau.b.phi.psi}
\tau^{D}_{2k+2}=c_{k+1}(\varphi_{2k+2}+\psi_{2k+2})=c_{k+1}b(\varphi_{2k+1}-\psi_{2k+1}).
\end{equation}As $B$ is annihilated by cyclic cochains we then deduce that
\begin{equation*}
 b\tau^{D}_{2k}= B\left(c_{k+1}b(\varphi_{2k+1}-\psi_{2k+1})\right)=B\tau^{D}_{2k+2}=0.
\end{equation*}That is, $\tau^{D}_{2k}$ is a cocycle. We also see that
\begin{equation*}
    \tau^{D}_{2k+2}-\tau_{2k}^{D}=c_{k+1}b(\varphi_{2k+1}-\psi_{2k+1})+c_{k+1}B(\varphi_{2k+1}-\psi_{2k+1}). 
\end{equation*}
This shows that $\tau^{D}_{2k+2}$ and $\tau^{D}_{2k}$ define the same class in $\HP^0(\cA)$.
It then follows that the class of $\tau^{D}_{2k}$ in $\HP^0(\cA)$ is independent of the value of $k$. The proof is complete.
\end{proof}

\begin{remark}\label{rmk:CCC.unitality}
    The proof of Lemma~\ref{lem:B.phi_2k+1} uses the fact that the unit of $\cA$ is represented by the identity of $\cH$. Otherwise 
    the equalities $B_0\varphi_{2k+1}=-B_0\psi_{2k+1}=c_k^{-1}\tau^{D}_{2k}$ need not hold. Therefore, the unitality of 
    $\cA$ is a crucial ingredient of the proof of Proposition~\ref{prop:Cochian.ConnesChernChar}. 
\end{remark}

\begin{remark}
\label{rmk:tau2k+2.cohomologous.Stau2k}
    As in~\cite{Co:NCDG}, we can get a more precise relationship between the cocycles $\tau_{2k}^{D}$ and $\tau_{2k+2}^{D}$ by using 
    the $S$-operator. Indeed, using~(\ref{eq:SB=-b}) and~(\ref{eq:b.phi_2k-1=phi_2k}) we get
    \begin{equation*}
      SB(\varphi_{2k+1}-\psi_{2k+1})=-b(\varphi_{2k+1}-\psi_{2k+1})=-(\varphi_{2k+2}+\psi_{2k+2}) \quad \text{in 
      $\HC^{2k}(\cA)$}.  
    \end{equation*}Combining this with~(\ref{eq:tau.Bphipsi}) and~(\ref{eq:tau.b.phi.psi}) we then deduce that
    \begin{equation*}
        S\tau_{2k}^{D}=-c_{k+1}SB(\varphi_{2k+1}-\psi_{2k+1})=c_{k+1}(\varphi_{2k+2}+\psi_{2k+2})=\tau_{2k+2}^{D} \quad \text{in 
      $\HC^{2k}(\cA)$}. 
    \end{equation*}
    As $\tau_{2k}^{D}$ and $S\tau_{2k}^{D}$ are cohomologous in $\HP^{0}(\cA)$, this provides us with an 
    alternative argument showing that $\tau_{2k}^{D}$ and $\tau_{2k+2}^{D}$ define the same class in $\HP^{0}(\cA)$.
\end{remark}

\begin{definition}[\cite{CM:TGNTQF}]\label{def:CCC.invertible}
Let $(\cA,\cH,D)_{\sigma}$ be a $p$-summable twisted spectral triple with $D$ invertible. Then its Connes-Chern character, denoted  $\Ch(D)_{\sigma}$, 
is the class in $\HP^{0}(\cA)$ of any of the cocycles $\tau_{2k}^{D}$, $k\geq \frac{1}{2}(p-1)$.
\end{definition}

We are now in a position to reformulate the Atiyah-Singer index formula~(\ref{eq:Dirac.Atiyah-Singer}) for twisted spectral triples in the 
invertible case.  

\begin{theorem}\label{thm:CCC-index-formula.invertible}
Let $(\cA,\cH,D)_{\sigma}$ be a $p$-summable twisted spectral triple with $D$ invertible. Then, for any finitely 
generated projective right $\cA$-module $\cE$ and any $\sigma$-connection on $\cE$, we have 
\begin{equation*}
\ind D_{\nabla^{\cE}}=\acou{\Ch(D)_{\sigma}}{[\cE]}. 
\end{equation*}
\end{theorem}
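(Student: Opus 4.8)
The plan is to deduce the formula by chaining together Theorem~\ref{thm.IndexTwisted-connection}, the index formula of Proposition~\ref{lem:CCC.index-formula-Des}, and the definition of the Connes-Chern character; essentially nothing new needs to be proved.

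First I would reduce to the model case $\cE = e\cA^{q}$. By definition a finitely generated projective right $\cA$-module is a direct summand of a free module $\cA^{q}$, hence is isomorphic to $e\cA^{q}$ for some idempotent $e = e^{2}\in M_{q}(\cA)$, and under this identification $[\cE] = [e]$ in $K_{0}(\cA)$. Then Theorem~\ref{thm.IndexTwisted-connection}, applied to the given $\sigma$-connection $\nabla^{\cE}$, yields
\[
\ind D_{\nabla^{\cE}} = \ind_{D,\sigma}[\cE] = \ind_{D,\sigma}[e] = \ind D_{e,\sigma}.
\]

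Next I would fix an integer $k \geq \frac12 p$; then $k \geq \frac12(p-1)$ as well, so the cyclic cocycle $\tau^{D}_{2k}$ of Definition~\ref{eq:CC.tauD} is defined and, by Proposition~\ref{prop:Cochian.ConnesChernChar}, its class in $\HP^{0}(\cA)$ is $\Ch(D)_{\sigma}$ (Definition~\ref{def:CCC.invertible}). For such $k$ the index formula~(\ref{eq:Index.Supertrace}) rewrites, via~(\ref{eq:pairing.cyclic.cocycle.Chern.char}), as the pairing formula~(\ref{eq:ind=pairing.CCChar.K}):
\[
\ind D_{e,\sigma} = \acou{\tau^{D}_{2k}}{e}.
\]
Since $\tau^{D}_{2k}$ is a \emph{normalized} cyclic $2k$-cocycle, the discussion in Section~\ref{sec:CyclicCohomChernChar} shows that the right-hand side depends only on the class of $\tau^{D}_{2k}$ in $\HP^{0}(\cA)$ and on $[e]\in K_{0}(\cA)$, and equals the value of the pairing~(\ref{eq:cyclic.pairing-HP0-K0}). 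Combining the two displays,
\[
\ind D_{\nabla^{\cE}} = \acou{\tau^{D}_{2k}}{e} = \acou{\Ch(D)_{\sigma}}{[e]} = \acou{\Ch(D)_{\sigma}}{[\cE]},
\]
which is the claim.

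There is no genuine obstacle here beyond careful bookkeeping: one must pick $k$ large enough ($k \geq \frac12 p$, not merely $k \geq \frac12(p-1)$) for the index formula~(\ref{eq:ind=pairing.CCChar.K}) to be available, while invoking Proposition~\ref{prop:Cochian.ConnesChernChar} to see that this choice does not affect the class $\Ch(D)_{\sigma}$; and one must use the fact, recalled in Section~\ref{sec:CyclicCohomChernChar} (together with $\HP^{0}_{0}(\cA)\simeq\HP^{0}(\cA)$), that pairing a normalized cyclic cocycle with an idempotent descends to a well-defined pairing of $\HP^{0}(\cA)$ with $K_{0}(\cA)$. All of this is already established in the preceding sections.
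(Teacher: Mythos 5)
Your proof is correct and follows essentially the same route as the paper's: reduce to $\cE\simeq e\cA^{q}$, apply Theorem~\ref{thm.IndexTwisted-connection} to get $\ind D_{\nabla^{\cE}}=\ind D_{e,\sigma}$, invoke~(\ref{eq:ind=pairing.CCChar.K}) with $k\geq\frac12 p$, and identify $\acou{\tau^{D}_{2k}}{e}$ with $\acou{\Ch(D)_{\sigma}}{[\cE]}$. Your extra remarks about the choice of $k$ and the descent of the pairing to $\HP^{0}(\cA)\times K_{0}(\cA)$ simply make explicit points the paper leaves implicit; the argument is the same.
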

\begin{proof}
 Thanks to Theorem~\ref{thm.IndexTwisted-connection} we know that  $\ind D_{\nabla^{\cE}}=\ind_{D,\sigma}[\cE]$. Let $e$ be an idempotent in some 
 matrix algebra $M_{q}(\cA)$ such that $\cE\simeq e\cA^{q}$. Then~(\ref{eq:ind=pairing.CCChar.K}) shows that, for $k\geq \frac{1}{2}p$, 
 \begin{equation*}
     \ind_{D,\sigma}[e]=\ind D_{e,\sigma}=\acou{\tau_{2k}^{D}}{e}=\acou{\Ch(D)_{\sigma}}{[e]}.
 \end{equation*}As $\cE$ and $e$ defines the same class in $K_{0}(\cA)$ we then deduce that
 \begin{equation*}
     \ind D_{\nabla^{\cE}}= \ind_{D,\sigma}[e]=\acou{\Ch(D)_{\sigma}}{[\cE]}.
 \end{equation*}The proof is complete. 
\end{proof}

\subsection{General case}
The assumption on the invertibility of $D$ can be removed by passing to the unital invertible 
double as follows. Consider the Hilbert space $\tilde{\cH}=\cH\oplus \cH$, which we equip with the $\Z_{2}$-grading given by
\begin{equation*}
    \tilde{\gamma}=\begin{pmatrix}\gamma & 0 \\ 0 & -\gamma \end{pmatrix},
\end{equation*}where $\gamma$ is the grading operator of $\cH$. On $\tilde{\cH}$ consider the selfadjoint operators,
\begin{equation*}
    \tilde{D}_{0}=\begin{pmatrix} D & 0 \\  0  &  -D \end{pmatrix}, \qquad 
    J=\begin{pmatrix} 0 & 1 \\ 1 &  0 \end{pmatrix}, \qquad 
    \tilde{D}=\tilde{D}_{0}+J=\begin{pmatrix} D & 1 \\ 1 &  -D \end{pmatrix}, 
\end{equation*}where the domain of $\tilde{D}_{0}$ and $\tilde{D}$ is $\dom(D) \oplus \dom(D)$. As 
$\tilde{D}_{0}J+J\tilde{D}_{0}=0$ and $J^{2}=1$ we get
\begin{equation}
    \tilde{D}^{2}=\tilde{D}_{0}^{2}+1=\begin{pmatrix}
        D^{2}+1 & 0 \\
        0 & D^{2}+1
    \end{pmatrix}.\label{eq:CCC.tD2}
\end{equation}It then follows that $\tilde{D}$ is invertible. Moreover, 
\begin{equation}
    \Tr |\tilde{D}|^{-p}=\Tr ( \tilde{D}^{2})^{-\frac{p}{2}}=2 \Tr (D^{2}+1)^{-\frac{p}{2}}\leq 2 
    \Tr |D|^{-p}<\infty.
    \label{eq:CCC.p-summability}
\end{equation}That is, $\tilde{D}^{-1}\in \cL^{p}(\tilde{\cH})$. 

Let $\pi:\cA\rightarrow \cL(\tilde{\cH})$ be the linear map given by
\begin{equation*}
     {\pi}(a)=\begin{pmatrix} a & 0 \\ 0 & 0 \end{pmatrix} \qquad \forall a \in \cA. 
\end{equation*}We note that $\pi$ is multiplicative, but as $\pi(1)\neq 0$ this is not a representation of the unital 
algebra $\cA$. As mentioned in Remark~\ref{rmk:CCC.unitality} the representation of the unit $1$ by the identity of $\cH$ is essential to 
the construction of the Connes-Chern character in the invertible case. To remedy to the lack of unitality of $\pi$  we pass to the $*$-algebra $\tilde{\cA}=\cA\oplus \C$ with 
product and involution given by
\begin{equation*}
    (a,\lambda)(b,\mu)=(ab+\lambda b+\mu a,\lambda\mu), \qquad (a,\lambda)^{*}=(a^{*},\overline{\lambda}), \qquad 
    a,b\in \cA, \quad \lambda,\mu\in \C.
\end{equation*}The unit of $\tilde{A}$ is $1_{\tilde{A}}=(0,1)$. Thus, identifying any element $a\in \cA$ with 
$(a,0)$, any element $\tilde{a}=(a,\lambda)\in \tilde{\cA}$ can be uniquely written as $(a,\lambda)=a+\lambda 1_{\tilde{\cA}}$. 
We extend $\pi$ into the (unital) representation $\tilde{\pi}:\tilde{\cA}\rightarrow \cL(\tilde{\cH})$ given 
by
\begin{equation*}
    \tilde{\pi}(a+\lambda 1_{\tilde{\cA}})=\pi(a)+\lambda \qquad \forall (a,\lambda)\in \cA\times \C.
\end{equation*}We also extend the automorphism $\sigma$ into the 
automorphism $\tilde{\sigma}:\tilde{\cA}\rightarrow \tilde{\cA}$ given by
\begin{equation*}
    \tilde{\sigma}(a+\lambda 1_{\tilde{\cA}})=\sigma(a)+\lambda 1_{\tilde{\cA}} \qquad \forall (a,\lambda)\in \cA\times \C.
\end{equation*}
For $a\in \cA$ and $\lambda \in \C$, the twisted commutator $ [\tilde{D},\tilde{\pi}(a+\lambda 
1_{\tilde{\cA}})]_{\tilde{\sigma}}$ is equal to
\begin{equation*} 
    \begin{pmatrix}D & 1 \\ 1 & -D\end{pmatrix}\begin{pmatrix}a +\lambda & 0 \\ 0 & \lambda \end{pmatrix} 
-\begin{pmatrix}\sigma(a) + \lambda & 0 \\ 0 & \lambda \end{pmatrix}\begin{pmatrix}D & 1 \\ 1 & -D\end{pmatrix}= 
\begin{pmatrix}[D, a]_{\sigma} & -\sigma(a) \\ a & 0\end{pmatrix} \in \cL(\tilde{\cH}).
\end{equation*} Combining all these we obtain the following statement.

\begin{proposition}
    $(\tilde{\cA},\tilde{\cH},\tilde{D})_{\tilde{\sigma}}$ is a $p$-summable twisted spectral triple. 
\end{proposition}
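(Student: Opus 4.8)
The plan is to verify directly that $(\tilde{\cA},\tilde{\cH},\tilde{D})_{\tilde{\sigma}}$ satisfies all the conditions of Definition~\ref{TwistedSpectralTriple} together with $p$-summability. Most of the work has already been carried out in the discussion preceding the statement, so the proof mainly consists of assembling those pieces. First I would record that $\tilde{\cH}=\cH\oplus\cH$ is a $\Z_2$-graded Hilbert space with grading $\tilde{\gamma}$, and that $\tilde{\cA}=\cA\oplus\C$ is an involutive unital algebra represented on $\tilde{\cH}$ by the \emph{even} bounded operators $\tilde{\pi}(a+\lambda 1_{\tilde{\cA}})$, since each such operator is diagonal with respect to the splitting $\tilde{\cH}=\cH\oplus\cH$ and commutes with $\tilde{\gamma}$ (using that $\cA$ acts by even operators on $\cH$). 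Next I would check that $\tilde{\sigma}$ is an automorphism of $\tilde{\cA}$ satisfying $\tilde{\sigma}(\tilde{a})^{*}=\tilde{\sigma}^{-1}(\tilde{a}^{*})$; this is immediate from the corresponding property of $\sigma$ on $\cA$ and the fact that $\tilde{\sigma}$ fixes the unit, so condition (3) holds.

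For condition (4), I would first observe that $\tilde{D}$ is an odd selfadjoint unbounded operator: selfadjointness follows because $\tilde{D}_0$ is selfadjoint on $\dom(D)\oplus\dom(D)$ and $J$ is bounded selfadjoint, while oddness with respect to $\tilde{\gamma}$ follows from the off-diagonal/sign structure of $\tilde{D}_0$ and $J$ relative to $\tilde{\gamma}$. For the compactness of the resolvent (4)(a), I would use~(\ref{eq:CCC.tD2}), namely $\tilde{D}^2=\tilde{D}_0^2+1=\operatorname{diag}(D^2+1,D^2+1)$, so that $(\tilde{D}+i)^{-1}$ is built from $(D^2+1)^{-1}$, which is compact because $(D+i)^{-1}$ is compact by hypothesis on the original twisted spectral triple; hence $(\tilde{D}+i)^{-1}$ is compact. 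For (4)(b), the domain of $\tilde{D}$ is preserved by each $\tilde{\pi}(\tilde{a})$ since $a\dom(D)\subset\dom(D)$, and the explicit computation already displayed shows
\begin{equation*}
[\tilde{D},\tilde{\pi}(a+\lambda 1_{\tilde{\cA}})]_{\tilde{\sigma}}=\begin{pmatrix}[D,a]_{\sigma} & -\sigma(a)\\ a & 0\end{pmatrix},
\end{equation*}
whose entries are all bounded operators on $\cH$ (the $[D,a]_\sigma$ entry by the twisted-commutator boundedness in the original triple, the others because $\cA$ acts by bounded operators), so the twisted commutator is bounded on $\tilde{\cH}$.

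Finally, $p$-summability of $\tilde{D}$ is exactly the content of~(\ref{eq:CCC.tD2}) and~(\ref{eq:CCC.p-summability}): since $\tilde{D}$ is invertible (as $\tilde{D}^2=\tilde{D}_0^2+1\geq 1$) one has $\Tr|\tilde{D}|^{-p}=2\Tr(D^2+1)^{-p/2}\leq 2\Tr|D|^{-p}<\infty$, so $\tilde{D}^{-1}\in\cL^{p}(\tilde{\cH})$. Collecting these verifications establishes that $(\tilde{\cA},\tilde{\cH},\tilde{D})_{\tilde{\sigma}}$ is a $p$-summable twisted spectral triple. There is no real obstacle here; the only point requiring a little care is checking that $\tilde{\pi}$ together with the extended $\tilde{\sigma}$ genuinely satisfies the compatibility $\tilde{\sigma}(\tilde{a})^*=\tilde{\sigma}^{-1}(\tilde{a}^*)$ on the nonunital part, and that $\tilde{\pi}$ is a bona fide unital homomorphism (which is why the augmentation $\tilde{\cA}=\cA\oplus\C$ with its twisted product was introduced in the first place, as emphasized in Remark~\ref{rmk:CCC.unitality}).
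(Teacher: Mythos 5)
Your proof is correct and follows essentially the same approach as the paper: the text preceding the proposition already assembles exactly the pieces you list (the grading $\tilde{\gamma}$, selfadjointness and oddness of $\tilde{D}$, the computation $\tilde{D}^2=\tilde{D}_0^2+1$, the $p$-summability estimate~(\ref{eq:CCC.p-summability}), and the explicit twisted-commutator matrix), and the paper simply concludes with ``Combining all these we obtain the following statement.'' Your write-up is a slightly more explicit version of the same argument, and the one detail you add that the paper leaves implicit --- the check that $\tilde{\sigma}(\tilde a)^{*}=\tilde{\sigma}^{-1}(\tilde a^{*})$ persists on $\tilde{\cA}$ --- is correct and worth saying.
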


As $\tilde{D}$ is invertible, we can form the normalized cyclic cocycles $\tau_{2k}^{\tilde{D}}$, $k\geq 
\frac{1}{2}(p-1)$, as in Definition~\ref{eq:CC.tauD}.  We note that if $\tilde{\varphi}$ is a cyclic $m$-cochain on 
$\tilde{\cA}$, then its restriction $\overline{\varphi}$ to $\cA^{m+1}$ is a cyclic cochain on $\cA$. Moreover, 
it follows from the formulas~(\ref{eq:HochCoboundary}) 
 and~(\ref{eq:CC.operatorSj}) for the operators $b$ and $S$ that
 \begin{equation}
     \overline{b\varphi}=b\overline{\varphi} \qquad \text{and} \qquad \overline{S\varphi}=S\overline{\varphi}.
     \label{eq:CCC.bS-extensions}
 \end{equation}
If in addition $\varphi$ is normalized, then the normalization condition 
and~(\ref{eq:property.normalized.cyclic.cochain}) imply that
 \begin{equation}
    \tilde\varphi(a^{0}+\lambda^{0} 1_{\tilde{\cA}},\ldots,a^{m}+\lambda^{m} 1_{\tilde{\cA}})= 
    \varphi(a^{0},\ldots,a^{m}) \qquad \forall a^{j}\in \cA\ \forall \lambda^{j}\in \C.
     \label{eq:CCC.cochainsA-cochainstA}
 \end{equation}Thus $\tilde{\varphi}$ is uniquely determined by its restriction $\overline{\varphi}$ to $\cA^{m+1}$.  
 Conversely, any  
 cyclic $m$-cochain $\varphi$ on $\cA$ uniquely  extends to a normalized cyclic $m$-cochain $\tilde{\varphi}$ on 
 $\tilde{\cA}$ satisfying~(\ref{eq:CCC.cochainsA-cochainstA}).
 
\begin{definition}
    Let $k\geq \frac{1}{2}(p-1)$. Then $\overline{\tau}_{2k}^{D}$ is the cyclic $2k$-cochain on $\cA$ given by the 
    restriction of $\tau_{2k}^{\tilde{D}}$ to $\cA^{2k+1}$. 
\end{definition}
 
\begin{proposition}\label{prop:CCC.otautD2k}
     Let $k\geq \frac{1}{2}(p-1)$. Then
     \begin{enumerate}
         \item  The $2k$-cochain $\overline{\tau}_{2k}^{D}$ is a cyclic cocycle whose class in 
         ${\HP}^{0}(\cA)$ is independent of $k$.
     
         \item  For any idempotent $e \in M_{q}(\cA)$, we have
         \begin{equation*}
             \ind D_{\sigma, e}=\acou{\overline{\tau}_{2k}^{D}}{e}.
         \end{equation*}
     \end{enumerate}
 \end{proposition}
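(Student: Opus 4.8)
The plan is to reduce everything to statements already established in the invertible case for the twisted spectral triple $(\tilde{\cA},\tilde{\cH},\tilde{D})_{\tilde{\sigma}}$, transported back to $\cA$ via the restriction map $\tilde\varphi \mapsto \overline\varphi$. For part (1), I would first invoke Proposition~\ref{prop:Cochian.ConnesChernChar}, applied to the invertible twisted spectral triple $(\tilde{\cA},\tilde{\cH},\tilde{D})_{\tilde{\sigma}}$, which tells us that each $\tau_{2k}^{\tilde D}$ is a normalized cyclic cocycle on $\tilde{\cA}$ and that all the $\tau_{2k}^{\tilde D}$, $k\geq\frac12(p-1)$, define the same class in $\HP^0(\tilde{\cA})$. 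Since $\overline{\tau}_{2k}^{D}$ is by definition the restriction of $\tau_{2k}^{\tilde D}$ to $\cA^{2k+1}$, and since restriction of a cyclic cochain is cyclic and commutes with $b$ by~(\ref{eq:CCC.bS-extensions}), it follows immediately that $\overline{\tau}_{2k}^{D}$ is a cyclic cocycle on $\cA$. For the $k$-independence of the class in $\HP^0(\cA)$: from the proof of Proposition~\ref{prop:Cochian.ConnesChernChar} we have $\tau_{2k+2}^{\tilde D}-\tau_{2k}^{\tilde D}=(b+B)\eta$ for an explicit cochain $\eta=c_{k+1}(\varphi_{2k+1}^{\tilde D}-\psi_{2k+1}^{\tilde D})$ built from the $\tilde D$-analogues of~(\ref{eq:phi_m}) and~(\ref{eq:psi_m}); these $\varphi_m^{\tilde D}$, $\psi_m^{\tilde D}$ are normalized, so restricting to $\cA$ and using that $b$ and $B$ commute with restriction gives $\overline{\tau}_{2k+2}^{D}-\overline{\tau}_{2k}^{D}=(b+B)\overline{\eta}$, whence the classes agree in $\HP^0(\cA)$.

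For part (2), the idea is to relate $\ind D_{e,\sigma}$ to $\ind \tilde D_{\tilde\pi(e),\tilde\sigma}$ and then to apply the index formula~(\ref{eq:ind=pairing.CCChar.K}) already proved for the invertible case. Here I must be careful, since $\tilde\pi$ is built from $\pi$ which does not send $1\in\cA$ to the identity: for an idempotent $e\in M_q(\cA)$, the element $\tilde\pi(e)=\begin{pmatrix} e & 0\\ 0 & 0\end{pmatrix}\in M_{2q}(\cL(\cH))$ is still an idempotent in $M_q(\tilde{\cA})$ (thinking of $e$ inside $\cA\subset\tilde{\cA}$), and a direct computation with the block form of $\tilde D$ shows that $\tilde D_{e,\tilde\sigma}$, acting on $\tilde\pi(e)\tilde\cH^q = e\cH^q \oplus 0$, reduces to $D_{e,\sigma}$ on the nonzero block (the lower $\cH$-block of $\tilde\cH$ is killed by $\tilde\pi(e)$). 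Hence $\ind \tilde D_{e,\tilde\sigma} = \ind D_{e,\sigma}$, respecting the $\Z_2$-gradings since $\tilde\gamma$ restricts to $\gamma$ on the first copy of $\cH$. Then~(\ref{eq:ind=pairing.CCChar.K}) applied to $(\tilde{\cA},\tilde{\cH},\tilde{D})_{\tilde\sigma}$ gives $\ind \tilde D_{e,\tilde\sigma}=\acou{\tau_{2k}^{\tilde D}}{e}$ for $k\geq\frac12 p$; and since $e\in M_q(\cA)$, the pairing~(\ref{eq:pairing.cyclic.cocycle.Chern.char}) only involves evaluating $\tau_{2k}^{\tilde D}$ on tuples of entries of $e$, i.e.\ on elements of $\cA$, so $\acou{\tau_{2k}^{\tilde D}}{e}=\acou{\overline{\tau}_{2k}^{D}}{e}$. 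Combining these equalities yields $\ind D_{e,\sigma}=\acou{\overline{\tau}_{2k}^{D}}{e}$ for $k\geq\frac12 p$, and the extension to all $k\geq\frac12(p-1)$ follows from part (1) together with the fact that the pairing only depends on the $\HP^0$-class.

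The main obstacle I anticipate is the bookkeeping in part (2): verifying cleanly that compressing $\tilde D$ by $\tilde\pi(e)$ genuinely recovers $D_{e,\sigma}$ as unbounded operators (matching domains, adjoints, and the $\pm$-gradings), rather than merely ``up to a bounded perturbation''. One must check that the off-diagonal term $J=\begin{pmatrix}0&1\\1&0\end{pmatrix}$ in $\tilde D$ does not contribute after compression by $\tilde\pi(e)$, which is exactly the point where the non-unitality of $\pi$ (so that $\tilde\pi(e)$ annihilates the second copy of $\cH$) is used in our favor. Once that identification is in hand, everything else is a formal transport of the invertible-case results through the restriction map, using only Proposition~\ref{prop:Cochian.ConnesChernChar}, formula~(\ref{eq:ind=pairing.CCChar.K}), and the compatibility~(\ref{eq:CCC.bS-extensions})--(\ref{eq:CCC.cochainsA-cochainstA}) of $b$, $S$ and restriction with normalized cyclic cochains.
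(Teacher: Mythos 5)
Your reduction to the invertible case and your treatment of part (2) follow the paper's route closely and are correct: identifying $\tilde{\pi}(e)\tilde{\cH}^{q}$ with $e\cH^{q}\oplus\{0\}$ and computing $\sigma(e)\tilde{D}e=\bigl(\begin{smallmatrix}\sigma(e)De&0\\0&0\end{smallmatrix}\bigr)$ is exactly what the paper does, and invoking~(\ref{eq:ind=pairing.CCChar.K}) together with~(\ref{eq:CCC.cochainsA-cochainstA}) finishes part (2). The cocycle claim in part (1) is also fine, since $b$ commutes with restriction.

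The gap is in your $k$-independence argument. You write that ``$b$ and $B$ commute with restriction'' and conclude $\overline{\tau}_{2k+2}^{D}-\overline{\tau}_{2k}^{D}=(b+B)\overline{\eta}$. But equation~(\ref{eq:CCC.bS-extensions}) records only that $b$ and $S$ commute with restriction; $B$ does \emph{not}. The reason is that $B$ contains the operator $B_{0}$, which inserts the unit of the ambient algebra, and the unit of $\tilde{\cA}$ (namely $1_{\tilde{\cA}}=(0,1)$, represented by $1_{\tilde{\cH}}$) is different from the unit $1_{\cA}$ of $\cA$ (represented by $\tilde{\pi}(1_{\cA})=\bigl(\begin{smallmatrix}1&0\\0&0\end{smallmatrix}\bigr)$). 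So for a normalized cochain $\tilde{\varphi}$ on $\tilde{\cA}$,
\begin{equation*}
\overline{B_{0}\tilde{\varphi}}(a^{0},\ldots,a^{m-1})=\tilde{\varphi}(1_{\tilde{\cA}},a^{0},\ldots,a^{m-1}),
\qquad
B_{0}\overline{\tilde{\varphi}}(a^{0},\ldots,a^{m-1})=\tilde{\varphi}(1_{\cA},a^{0},\ldots,a^{m-1}),
\end{equation*}
and these differ in general (already for $\tilde{\varphi}=\varphi_{2k+1}^{\tilde{D}}$, inserting $1_{\tilde{\cH}}$ versus $\tilde{\pi}(1_{\cA})$ gives different supertraces since the twisted commutators $[\tilde{D},\tilde{\pi}(a)]_{\tilde{\sigma}}$ have nontrivial off-diagonal blocks). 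So the equality $\overline{B\eta}=B\overline{\eta}$ you rely on does not hold, and your displayed identity is unjustified. The paper avoids this by using the periodicity operator $S$ instead of $B$: Remark~\ref{rmk:tau2k+2.cohomologous.Stau2k} gives $S\tau_{2k}^{\tilde{D}}=\tau_{2k+2}^{\tilde{D}}$ in $\HC^{2k+2}(\tilde{\cA})$, and since $S$ only involves products of the $a^{j}$ (no insertion of a unit), it genuinely commutes with restriction by~(\ref{eq:CCC.bS-extensions}); restricting and then using that $\varphi$ and $S\varphi$ define the same $\HP^{0}$-class completes the argument. You should replace your $(b+B)$-coboundary step with this $S$-based argument.
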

 \begin{proof}
     It follows from~(\ref{eq:CCC.bS-extensions}) that 
     $b\overline{\tau}_{2k}^{D}=\overline{b\tau^{\tilde{D}}_{2k}}=0$, so $\overline{\tau}_{2k}^{D}$ is 
     a cyclic cocycle. By Remark~\ref{rmk:tau2k+2.cohomologous.Stau2k} the cocycles $S\tau_{2k}^{\tilde{D}}$ and $\tau_{2k+2}^{\tilde{D}}$ are 
     cohomologous in $\HC^{2k+2}(\tilde{\cA})$. Therefore, by~(\ref{eq:CCC.bS-extensions}) their respective 
     restrictions to $\cA$, namely, $S\overline{\tau}_{2k}^{D}$ and $\overline{\tau}_{2k+2}^{D}$, are cohomologous in 
     $\HC^{2k+2}(\cA)$. As $S\overline{\tau}_{2k}^{D}$ and $\overline{\tau}_{2k}^{D}$ define the same class in 
     $\HP^{0}(\cA)$, we deduce that so do $\overline{\tau}_{2k}^{D}$ and $\overline{\tau}_{2k+2}^{D}$. It then 
     follows that the class of $\overline{\tau}_{2k}^{D}$ in $\HP^{0}(\cA)$ is independent of $k$.      
     
Let $e\in M_{q}(\cA)$, $e^{2}=e$. Regarding $e$ as an idempotent in $M_{q}(\tilde{\cA})$, we have
\begin{equation*}
    \tilde{\pi}(e)\tilde{\cH}^{q}=
    \begin{pmatrix}
        e & 0 \\
        0 & 0
    \end{pmatrix}\left(\cH^{q}\oplus \cH^{q}\right)=e(\cH^{q})\oplus \{0\}\simeq e\cH^{q}.
\end{equation*}Likewise, $\tilde{\pi}(\sigma(e))\tilde{\cH}^{q}=\sigma(e)(\cH^{q})\oplus \{0\}\simeq \sigma(e)\cH^{q}$. 
Moreover, 
\begin{equation*}
\sigma(e)\tilde{D}e =\begin{pmatrix}\sigma(e) & 0 \\ 0 & 0\end{pmatrix}\begin{pmatrix} D & 1 \\ 1 & -D 
\end{pmatrix}\begin{pmatrix}e & 0 \\ 0 & 0\end{pmatrix} =\begin{pmatrix}\sigma(e)De & 0 \\ 0 & 0\end{pmatrix}.
\end{equation*}
Thus, under the identifications $ \tilde{\pi}(e)\tilde{\cH}^{q}\simeq e\cH^{q}$ and 
$\tilde{\pi}(\sigma(e))\tilde{\cH}^{q}\simeq \sigma(e)\cH^{q}$ above, the operators $\tilde{D}_{e,\sigma}$ and 
$D_{e,\sigma}$ agree. Therefore, using~(\ref{eq:ind=pairing.CCChar.K}) we get $\ind D_{e, \sigma}=\ind\tilde{D}_{e, 
\sigma}=\acou{\tau_{2k}^{\tilde{D}}}{e}$. As~(\ref{eq:CCC.cochainsA-cochainstA}) implies that 
$\acou{\tau_{2k}^{\tilde{D}}}{e}=\acou{\overline{\tau}_{2k}^{D}}{e}$, we then deduce that
$\ind D_{e, \sigma}=\acou{\overline{\tau}_{2k}^{D}}{e}$. 
%
The proof is complete. 
\end{proof}
 
\begin{definition}\label{def:CCC.general}
Let  $(\cA,\cH,D)_{\sigma}$ be a $p$-summable twisted spectral triple. Then its Connes-Chern character, denoted  $\Ch(D)_{\sigma}$, 
is the class in $\HP^{0}(\cA)$ of any of the cocycles $\overline{\tau}_{2k}^{D}$, $k\geq \frac{1}{2}(p-1)$.
\end{definition}

Assume now that $D$ is invertible. We then have two~definitions of the Connes-Chern character: one 
in terms of the cocycles $\tau_{2k}^{D}$ and the other in terms of the cocycles $\overline{\tau}_{2k}^{D}$. We shall now show that these definitions are equivalent. 

Consider the homotopy of operators,
\begin{equation*}
    \tilde{D}_{t}=\tilde{D}_{0}+tJ, \qquad 0\leq t \leq 1.
\end{equation*}In the same way as in~(\ref{eq:CCC.tD2}) we have
\begin{equation*}
    \tilde{D}_{t}^{2}=\tilde{D}_{0}^{2}+t^{2}= 
    \begin{pmatrix}
        D^{2}+t^{2} & 0 \\
        0 & D^{2}+t^{2}
    \end{pmatrix},
\end{equation*}which shows that $\tilde{D}_{t}$ is invertible for all $t\in [0,1]$. Moreover, as in~(\ref{eq:CCC.p-summability}) we have
  \begin{equation*}
    \Tr |\tilde{D}_{t}|^{-p}=2 \Tr (D^{2}+t^{2})^{-\frac{p}{2}}\leq 2 
    \Tr |D|^{-p}.
\end{equation*}Thus $(\tilde{D}_{t}^{-1})_{0\leq t \leq 1}$ is a bounded family in $\cL^{p}(\tilde{\cH})$. Therefore, 
the family $(\tilde{D}_{t})_{0\leq t \leq 1}$ satisfies the assumption of Proposition~\ref{prop:homotopy-invariance} in 
Appendix~\ref{app:homotopy-invariance} on the 
homotopy invariance of the Connes-Chern character. We then deduce that
$(\tilde{\cA},\tilde{\cH},\tilde{D}_{t})_{\tilde{\sigma}}$ is a $p$-summable twisted spectral triple for all 
$t\in[0,1]$ and, for $k\geq \frac{1}{2}(p+1)$ the cocycles 
$\tau_{2k}^{\tilde{D}_{0}}$ and $\tau_{2k}^{\tilde{D}_{1}}=\tau_{2k}^{\tilde{D}}$ are cohomologous in 
$\HC^{2k}(\tilde{\cA})$.  Denote by $\overline{\tau}_{2k}^{D_{0}}$ the restriction to $\cA^{2k+1}$ of 
$\tau_{2k}^{\tilde{D}_{0}}$. Then~(\ref{eq:CCC.bS-extensions}) shows that $\overline{\tau}_{2k}^{D_{0}}$  and 
$\overline{\tau}_{2k}^{D}$ are cohomologous in $\HC^{2k}(\cA)$.

Bearing this in mind, we note that, for $a\in \cA$, we have
\begin{equation*}
    [\tilde{D}_{0},\tilde{\pi}(a)]_{\tilde{\sigma}}=  \begin{pmatrix}D & 0 \\ 0  & -D\end{pmatrix}\begin{pmatrix}a  & 0 \\ 0 & 0 \end{pmatrix} 
-\begin{pmatrix}\sigma(a)  & 0 \\ 0 & 0 \end{pmatrix}\begin{pmatrix}D & 0 \\ 0 & -D\end{pmatrix}= 
\begin{pmatrix}[D, a]_{\sigma} & 0 \\ 0 & 0\end{pmatrix}.
\end{equation*}Thus, 
\begin{equation*}
    \tilde{D}_{0}^{-1}[\tilde{D}_{0},\tilde{\pi}(a)]_{\tilde{\sigma}}= \begin{pmatrix}D^{-1}[D, a]_{\sigma} & 0 \\ 0 & 
    0\end{pmatrix} \quad \text{and}\quad
    \tilde{\gamma}\tilde{D}_{0}^{-1}[\tilde{D}_{0},\tilde{\pi}(a)]_{\tilde{\sigma}}= \begin{pmatrix}\gamma D^{-1}[D, a]_{\sigma} & 0 \\ 0 & 
    0\end{pmatrix}. 
\end{equation*}
It then follows that, for $a^{0},\ldots,a^{2k}$ in $\cA$, we have
\begin{equation*}
  \tau_{2k}^{\tilde{D}_{0}}(a^{0},\ldots,a^{2k})    = \Tr \left( \gamma D^{-1}[D,a^{0}]_{\sigma}\cdots  
  D^{-1}[D,a^{2k}]_{\sigma}\right)
      =  \tau_{2k}^{D}(a^{0},\ldots,a^{2k}) .
\end{equation*}
That is, the restriction of $\tau_{2k}^{\tilde{D}_{0}}$ to $\cA^{2k+1}$ is precisely the cocycle $\tau_{2k}^{D}$. 
Therefore, we arrive at the following statement. 

\begin{proposition}\label{prop:CC.equivalence-tau-overlinetau}
    If $D$ is invertible, then, for any $k\geq \frac{1}{2}(p+1)$, the cyclic cocycles $\tau_{2k}^{D}$ and 
    $\overline{\tau}^{D}_{2k}$ are cohomologous in $\HC^{2k}(\cA)$, and hence define the same class in $\HP^{0}(\cA)$. 
\end{proposition}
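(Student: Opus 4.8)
The plan is to use the linear homotopy $\tilde{D}_{t}=\tilde{D}_{0}+tJ$, $0\le t\le 1$, which connects the block-diagonal operator $\tilde{D}_{0}=\tilde{D}_{0}$ to $\tilde{D}=\tilde{D}_{1}$, together with the homotopy invariance of the Connes--Chern character proved in Appendix~\ref{app:homotopy-invariance}. First I would note that, since $D$ is invertible, $\tilde{D}_{0}$ is invertible with $\tilde{D}_{0}^{-1}\in\cL^{p}(\tilde{\cH})$, so $(\tilde{\cA},\tilde{\cH},\tilde{D}_{0})_{\tilde{\sigma}}$ is a $p$-summable twisted spectral triple and the normalized cyclic cocycles $\tau_{2k}^{\tilde{D}_{0}}$ are defined as in Definition~\ref{eq:CC.tauD}. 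As already observed in the discussion preceding the statement, the uniform bound $\Tr|\tilde{D}_{t}|^{-p}\le 2\Tr|D|^{-p}$ shows that $(\tilde{D}_{t}^{-1})_{0\le t\le 1}$ is a bounded family in $\cL^{p}(\tilde{\cH})$, so the hypotheses of Proposition~\ref{prop:homotopy-invariance} are met. It follows that $\tau_{2k}^{\tilde{D}_{0}}$ and $\tau_{2k}^{\tilde{D}_{1}}=\tau_{2k}^{\tilde{D}}$ are cohomologous in $\HC^{2k}(\tilde{\cA})$ for every $k\ge\frac12(p+1)$.

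Next I would descend from $\tilde{\cA}$ to $\cA$. Writing $\overline{\tau}_{2k}^{D_{0}}$ for the restriction of $\tau_{2k}^{\tilde{D}_{0}}$ to $\cA^{2k+1}$, and using that restriction of cyclic cochains preserves cyclicity and commutes with the Hochschild coboundary by~(\ref{eq:CCC.bS-extensions}), any cohomology relation of the form $\tau_{2k}^{\tilde{D}_{0}}-\tau_{2k}^{\tilde{D}}=b\eta$ with $\eta$ cyclic restricts to $\overline{\tau}_{2k}^{D_{0}}-\overline{\tau}_{2k}^{D}=b\overline{\eta}$ with $\overline{\eta}$ cyclic; hence $\overline{\tau}_{2k}^{D_{0}}$ and $\overline{\tau}_{2k}^{D}$ are cohomologous in $\HC^{2k}(\cA)$.

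It then remains to identify $\overline{\tau}_{2k}^{D_{0}}$ with the original cocycle $\tau_{2k}^{D}$. Here I would use the block-diagonal structure: since $\tilde{\pi}(a)$ and $\tilde{D}_{0}$ are block-diagonal, so is $\tilde{D}_{0}^{-1}[\tilde{D}_{0},\tilde{\pi}(a)]_{\tilde{\sigma}}$, with upper-left block $D^{-1}[D,a]_{\sigma}$ and zero lower-right block; multiplying $2k+1$ such factors, inserting $\tilde{\gamma}$, and taking the trace over $\tilde{\cH}$ reproduces exactly $\tau_{2k}^{D}(a^{0},\ldots,a^{2k})$. Thus $\overline{\tau}_{2k}^{D_{0}}=\tau_{2k}^{D}$ on the nose, so $\tau_{2k}^{D}$ and $\overline{\tau}_{2k}^{D}$ are cohomologous in $\HC^{2k}(\cA)$ for $k\ge\frac12(p+1)$, and the natural morphism $\HC^{2k}(\cA)\to\HP^{0}(\cA)$ shows they define the same class in $\HP^{0}(\cA)$. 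I do not expect any genuine obstacle: the one point requiring care is verifying the hypotheses of the homotopy-invariance proposition (invertibility and uniform $\cL^{p}$-boundedness of the whole family $\tilde{D}_{t}$, including at $t=0$), which has already been carried out above, and the remainder is straightforward bookkeeping with the block-diagonal representation $\tilde{\pi}$ and with the compatibility of restriction with $b$.
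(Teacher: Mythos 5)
Your proof is correct and follows essentially the same route as the paper: apply the homotopy invariance of Proposition~\ref{prop:homotopy-invariance} to the family $\tilde{D}_{t}=\tilde{D}_{0}+tJ$ (using the invertibility of $D$ to ensure $\tilde{D}_{0}$ is invertible), descend the cohomology relation from $\tilde{\cA}$ to $\cA$ via~(\ref{eq:CCC.bS-extensions}), and identify $\overline{\tau}_{2k}^{D_{0}}$ with $\tau_{2k}^{D}$ by the block-diagonal computation. No gaps; this matches the paper's argument step for step.
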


It follows from this that, when $D$ is invertible, Definition~\ref{def:CCC.invertible} and Definition~\ref{def:CCC.general} provide us with equivalent definitions of the Connes-Chern 
character of $(\cA,\cH,D)_{\sigma}$.

Finally, using Proposition~\ref{prop:CCC.otautD2k} and arguing as in the proof of Theorem~\ref{thm:CCC-index-formula.invertible} enables us to remove the invertibility assumption in 
Theorem~\ref{thm:CCC-index-formula.invertible}. 
We thus obtain the following index formula. 

\begin{theorem}\label{thm:CCC-index-formula}
Let $(\cA,\cH,D)_{\sigma}$ be a $p$-summable twisted spectral triple.  Then, for any finitely generated 
projective right $\cA$-module $\cE$ and  any $\sigma$-connection on $\cE$, we have 
\begin{equation*}
\ind D_{\nabla^{\cE}}=\acou{\Ch(D)_{\sigma}}{[\cE]}.
\end{equation*}
\end{theorem}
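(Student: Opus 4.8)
The plan is to combine the geometric index formula of Section~\ref{sec:IndexMapSigmaConnections} with the cochain-level computation of the index map from Proposition~\ref{prop:CCC.otautD2k}; the argument runs exactly parallel to the proof of Theorem~\ref{thm:CCC-index-formula.invertible}, except that one works with the unital invertible double $(\tilde{\cA},\tilde{\cH},\tilde{D})_{\tilde{\sigma}}$ and the restricted cocycles $\overline{\tau}_{2k}^{D}$ rather than with $\tau_{2k}^{D}$ directly, so that no invertibility hypothesis on $D$ is needed.

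Concretely, I would first invoke Theorem~\ref{thm.IndexTwisted-connection} to reduce to the index map: for any $\sigma$-connection $\nabla^{\cE}$ on $\cE$ one has $\ind D_{\nabla^{\cE}}=\ind_{D,\sigma}[\cE]$. Next, since $\cE$ is finitely generated projective, choose an idempotent $e\in M_{q}(\cA)$ and an isomorphism $\cE\simeq e\cA^{q}$; then $\cE$ and $e$ define the same class in $K_{0}(\cA)$, so $\ind_{D,\sigma}[\cE]=\ind_{D,\sigma}[e]=\ind D_{e,\sigma}$ by the defining property~(\ref{eq:Index.index-map}) of the index map.

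Now apply Proposition~\ref{prop:CCC.otautD2k}: for $k\geq\frac{1}{2}(p-1)$ one has $\ind D_{e,\sigma}=\acou{\overline{\tau}_{2k}^{D}}{e}$, and by part~(1) of that proposition (equivalently, by Definition~\ref{def:CCC.general}) the class of $\overline{\tau}_{2k}^{D}$ in $\HP^{0}(\cA)$ is independent of $k$ and equals $\Ch(D)_{\sigma}$. Since the pairing~(\ref{eq:cyclic.pairing-HP0-K0}) between $\HP^{0}(\cA)$ and $K_{0}(\cA)$ depends only on the respective cohomology and $K$-theory classes, we obtain $\acou{\overline{\tau}_{2k}^{D}}{e}=\acou{\Ch(D)_{\sigma}}{[e]}=\acou{\Ch(D)_{\sigma}}{[\cE]}$. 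Chaining these equalities gives $\ind D_{\nabla^{\cE}}=\acou{\Ch(D)_{\sigma}}{[\cE]}$.

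There is no genuine analytic or algebraic obstacle at this stage: every ingredient---the geometric interpretation of the index map, the Fredholmness of $D_{\nabla^{\cE}}$, the $p$-summability of the invertible double, the cocycle property and $k$-independence of $\overline{\tau}_{2k}^{D}$, and the identification $\ind D_{e,\sigma}=\ind \tilde{D}_{e,\sigma}$---has already been established in the preceding sections. The only points requiring a word of care are that the same idempotent $e$ may be used simultaneously for the $K$-theory class of $\cE$ and for the supertrace computation, and that one must pass through the $k$-independence in Proposition~\ref{prop:CCC.otautD2k}(1) in order to pair with $\Ch(D)_{\sigma}$ as a periodic cyclic cohomology class rather than with a fixed representative.
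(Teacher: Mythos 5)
Your proof is correct and follows essentially the same route as the paper: reduce to the index map via Theorem~\ref{thm.IndexTwisted-connection}, pass to an idempotent $e$ with $\cE\simeq e\cA^{q}$, and invoke Proposition~\ref{prop:CCC.otautD2k} together with the fact that the pairing~(\ref{eq:cyclic.pairing-HP0-K0}) only depends on the classes in $\HP^{0}(\cA)$ and $K_{0}(\cA)$. The paper itself gives exactly this argument (by reference to the proof of Theorem~\ref{thm:CCC-index-formula.invertible} with $\overline{\tau}_{2k}^{D}$ in place of $\tau_{2k}^{D}$), so there is nothing to add.
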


\begin{remark}
    The cocycles $\tau^{D}_{2k}$ and $\overline{\tau}^{D}_{2k}$ may be difficult to compute in practice, even in the case of a Dirac spectral 
    triple (see Theorem 6.5 of \cite[Part I]{Co:NCDG} and \cite{BF:APDOIT}).  In the ordinary case, a representation of the Connes-Chern character in entire cyclic cohomology is given by 
    the JLO cocycle~\cite{JLO:CMP, Co:ECCBACTSFM}, the existence of which only requires $\theta$-summability. We refer to 
    the paper of Quillen~\cite{Qu:ACCC} for interpretations of the Connes-Chern character and JLO cocycle in terms of Chern characters 
    of superconnections on cochains. 
\end{remark}

\begin{remark}
    Under further assumptions a representative in periodic cyclic cohomology is given by the CM cocycle~\cite{CM:LIFNCG} (see also~\cite{Hi:RITCM}). 
    The components of the CM cocycle are given by formulas that are \emph{local} in the sense that they involve universal linear combination of functionals of the form, 
    \begin{equation*}
    \bint a^0[D, a^1]^{[\alpha_1]}\cdots[D, a^{2k}]^{[\alpha_{2k}]}D^{-2(|\alpha|+k)}, \qquad a^j\in\cA,
    \end{equation*}
    where $T^{[j]}$ is the $j$-th iterated commutator of $T$ with $D^2$ and $\bint$ is an analogue of the noncommutative 
    residue trace of Guillemin~\cite{Gu:NPWF} and Wodzicki~\cite{Wo:LISA}. 
     This thus expresses the index pairing as a linear combinations of residues of various zeta 
     functions, in the spirit of the index formula of Atiyah-Bott~\cite{At:GATEDO}. We refer to~\cite{Wo:Thesis, Ka:RNC, 
     MN:HPDO1MB, MN:IHPDOMB} for other  types of residue index formulas. 
\end{remark}

\begin{remark}
    Let $(M^n, g)$ be a compact Riemannian manifold. The Connes-Chern character of the Dirac spectral triple $(C^{\infty}(M), L^2(M, \sS), \sD_g)$ is represented by the CM cocycle. 
    This CM cocycle can be computed by heat kernel techniques~\cite{CM:LIFNCG, Po:CMP}. We obtain the even cocycle $\varphi=(\varphi_{2k})$ given by
    \begin{equation*}
    \varphi_{2k}(f^0, \ldots, f^{2k})=\frac{(2i\pi)^{-\frac{n}{2}}}{(2k)!}\int_M\hat{A}(R^M)\wedge f^0df^1\wedge\ldots\wedge df^k.
    \end{equation*}
    In other words $\varphi=(2i\pi)^{-\frac{n}{2}}\varphi_{\hat{A}(R^M)^{\wedge}}$ in the sense of~(\ref{eq:current.cochain}), where $\hat{A}(R^M)^{\wedge}$ is the 
    Poincar\'e dual current of the $\hat{A}$-form $\hat{A}(R^M)$. Let us explain how this enables us to recover the 
    Atiyah-Singer index formula. Let $e\in M_{q}(C^{\infty}(M))$, $e^{2}=e$, and form the vector bundle $E=\ran e$, which we equip with its Grassmannian 
    connection $\nabla^{E}$. Then by~(\ref{eq:s-connections.Dirac}) we have
    \begin{equation*}
        \ind \sD_{\nabla^{E}}=\ind_{\sD_{g},\sigma}[e]. 
    \end{equation*}As $(2i\pi)^{-\frac{n}{2}}\varphi_{\hat{A}(R^M)^{\wedge}}$ represents the Connes-Chern character, by 
    Theorem~\ref{thm:CCC-index-formula} we have
    \begin{equation*}
        \ind_{\sD_{g},\sigma}[e]=(2i\pi)^{-\frac{n}{2}}\acou{\varphi_{\hat{A}(R^M)^{\wedge}}}{e}.
    \end{equation*}Moreover, using~(\ref{eq:CC.manifold-Chern-character}) we have 
    \begin{equation*}
        \acou{\varphi_{\hat{A}(R^M)^{\wedge}}}{e}=\acou{\hat{A}(R^M)^{\wedge}}{\Ch(F^{E})}= \int_{M} 
        \hat{A}(R^M)\wedge\Ch(F^{E}), 
    \end{equation*}where $F^{E}$ is the curvature of $\nabla^{E}$. Therefore, we obtain
    \begin{equation*}
       \ind \sD_{\nabla^{E}}= (2i\pi)^{-\frac{n}{2}} \int_{M} 
        \hat{A}(R^M)\wedge\Ch(F^{E}), 
    \end{equation*}which is the Atiyah-Singer index formula. 
\end{remark}

\begin{remark}
    It remains an open question to construct a version of the CM cocycle for twisted spectral triples. 
    Moscovici~\cite{Mo:LIFTST} devised an Ansatz for such a cocycle and verified it in the case of twistings of ordinary spectral triples by scaling automorphisms. 
    To date this seems to be the only known example of twisted spectral triples satisfying Moscovici's Ansatz. It would 
    already be interesting to have a version of Connes's Hochschild character formula~\cite{Co:NCG}. We refer to~\cite{FK:TSTCCF} 
    for a Hochschild character formula in the special case of twistings of ordinary spectral triples by scaling automorphisms. 
\end{remark}

\begin{remark}
    We refer to~\cite{PW:NCGCGI.PartI} for the computation of the Connes-Chern character of the conformal 
    Dirac spectral triple of~\cite{CM:TGNTQF} (the construction of which is recalled in
    Section~\ref{sec:TwistedST}). 
\end{remark}

\appendix

\section{Proof of Lemma~\ref{lem:CanonicalHermitMetric-eA^q}}
\label{app:PfLemCanoHermMetric}
 It is immediate that the restriction of $\acoup{\cdot}{\cdot}_{0}$ is $\cA$-sesquilinear and positive. The only 
    issue at stake is nondegeneracy.
    
  \begin{lemma}
  \label{lem:NondegenPairingE^*&E}
     Set $\cE^{*}=e^{*}\cA^{q}$. Then the restriction of $\acoup{\cdot}{\cdot}$ to $\cE^{*}\times \cE$ is nondegenerate. 
  \end{lemma}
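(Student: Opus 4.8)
The statement to prove is that the restriction of $\acoup{\cdot}{\cdot}_{0}$ to $\cE^{*}\times \cE$, where $\cE=e\cA^{q}$ and $\cE^{*}=e^{*}\cA^{q}$, is nondegenerate. The plan is to reformulate this as the assertion that the $\cA$-antilinear map
$\Phi:\cE^{*}\rightarrow \cE'=\Hom_{\cA}(\cE,\cA)$ given by $\Phi(\eta)(\xi)=\acoup{\eta}{\xi}_{0}=\sum_{j}\eta_{j}^{*}\xi_{j}$ is a bijection. That $\Phi(\eta)$ is a right $\cA$-module map is clear from $\acoup{\eta}{\xi a}_{0}=\acoup{\eta}{\xi}_{0}\,a$, and $\Phi$ is antilinear over $\cA$ since $\acoup{\eta a}{\xi}_{0}=a^{*}\acoup{\eta}{\xi}_{0}$; thus nondegeneracy amounts exactly to the bijectivity of $\Phi$, which is what I would establish.

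First I would record the free case $\cE=\cA^{q}$, $\cE^{*}=\cA^{q}$: if $\epsilon_{1},\dots,\epsilon_{q}$ are the standard generators and $\varphi\in\Hom_{\cA}(\cA^{q},\cA)$, then $\varphi(\zeta)=\sum_{j}\varphi(\epsilon_{j})\zeta_{j}$ for all $\zeta=(\zeta_{j})$, so $\eta:=(\varphi(\epsilon_{1})^{*},\dots,\varphi(\epsilon_{q})^{*})$ is the unique element of $\cA^{q}$ with $\Phi(\eta)=\varphi$; this is the self-duality of a finite free module. For general $\cE=e\cA^{q}$ with $e=e^{2}$, I would prove surjectivity of $\Phi$ as follows: given $\varphi\in\Hom_{\cA}(\cE,\cA)$, the map $\tilde{\varphi}(\zeta):=\varphi(e\zeta)$ is a right $\cA$-module map on $\cA^{q}$ (since $e$ acts on the left and commutes with the right $\cA$-action), hence by the free case $\tilde{\varphi}=\Phi_{0}(\eta_{0})$ for some $\eta_{0}\in\cA^{q}$. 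Setting $\eta:=e^{*}\eta_{0}\in e^{*}\cA^{q}=\cE^{*}$ and using $e\xi=\xi$ for $\xi\in\cE$, one gets $\Phi(\eta)(\xi)=(e^{*}\eta_{0})^{*}\xi=\eta_{0}^{*}e\xi=\eta_{0}^{*}\xi=\tilde{\varphi}(\xi)=\varphi(\xi)$, so $\Phi(\eta)=\varphi$. For injectivity, if $\eta\in\cE^{*}$ satisfies $\eta^{*}\xi=0$ for all $\xi\in\cE$, then $\eta^{*}e\zeta=0$ for every $\zeta\in\cA^{q}$; taking $\zeta=\epsilon_{j}$ gives $\eta^{*}e=0$, i.e. $e^{*}\eta=0$, and since $\eta\in e^{*}\cA^{q}$ and $(e^{*})^{2}=e^{*}$ we have $e^{*}\eta=\eta$, whence $\eta=0$. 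Thus $\Phi$ is bijective, which is the desired nondegeneracy. (Equivalently, one may phrase this via the splitting $\cA^{q}=e\cA^{q}\oplus(1-e)\cA^{q}$ and the observation $\eta^{*}(1-e)=0\iff e^{*}\eta=\eta$, which identifies $\cE^{*}$ with the annihilator of $(1-e)\cA^{q}$ inside $(\cA^{q})'$, hence with $\Hom_{\cA}(\cE,\cA)$.)

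The only point requiring care — and the main obstacle — is the bookkeeping in the passage from $\cA^{q}$ to $e\cA^{q}$: one must ensure that the covector produced by free-module duality can be relocated into $\cE^{*}=e^{*}\cA^{q}$ (rather than merely $\cA^{q}$) while still representing the given functional, and one must keep the left/right actions and the $e$-versus-$e^{*}$ placements straight. Precisely this asymmetry between $e$ and $e^{*}$ is the reason the dual of $\cE$ is realized by $\cE^{*}=e^{*}\cA^{q}$ and not by $\cE$ itself; everything else ($\cA$-sesquilinearity, compatibility with the module structures) is routine.
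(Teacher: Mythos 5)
Your proof is correct. The surjectivity half is the same argument the paper uses: pull $\varphi$ back to $\cA^{q}$ via $\zeta\mapsto\varphi(e\zeta)$, invoke free-module self-duality to produce a representing vector in $\cA^{q}$, and push it into $\cE^{*}=e^{*}\cA^{q}$ by applying $e^{*}$. The injectivity half takes a genuinely different route. The paper's argument is positivity-based: for $\eta\in e^{*}\cA^{q}$ one computes $\acoup{\eta}{e\eta}_{0}=\acoup{\eta}{\eta}_{0}=\sum_{j}\eta_{j}^{*}\eta_{j}$, and if $\acoup{\eta}{\cdot}_{0}$ vanishes on $\cE$ this sum of positive operators vanishes, forcing each $\eta_{j}=0$; this relies on $\cA$ acting by bounded operators on $\cH$. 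Your injectivity argument is purely algebraic: from $\acoup{\eta}{e\epsilon_{j}}_{0}=0$ for all $j$ you extract $\eta^{*}e=0$, apply the involution to get $e^{*}\eta=0$, and then use $\eta=e^{*}\eta$ (valid since $\eta\in e^{*}\cA^{q}$ and $(e^{*})^{2}=e^{*}$) to conclude $\eta=0$. This is a genuine simplification, working over any involutive unital algebra without appeal to positivity or to the concrete representation on $\cH$. The only thing you leave implicit is the mirror statement that $\Psi:\cE\to(\cE^{*})'$ is also an isomorphism, which the paper handles with a one-line ``Likewise''; your argument applies verbatim with $e$ and $e^{*}$ interchanged, so nothing is lost.
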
 
    \begin{proof}[Proof of Lemma~\ref{lem:NondegenPairingE^*&E}]
     We need to show that $\Phi:\cE^{*}\ni \xi \rightarrow  \left.\acoup{\xi}{\cdot}_{0}\right|_{\cE}\in \cE'$ is an $\cA$-antilinear isomorphism. Let $\xi=(\xi_{j})\in \cE^{*}$.  Then
    \begin{equation*}
        \acoup{\xi}{e\xi}_0=\acoup{e^{*}\xi}{\xi}_0=\acoup{\xi}{\xi}_0=\sum \xi_{j}^{*}\xi_{j}.
    \end{equation*}It then follows that if $\acoup{\xi}{\cdot}_0$ vanishes on $\cE$, then all the positive operators 
    $\xi_{j}^{*}\xi_{j}$ vanish on $\cH$ and hence $\xi=0$. This shows that $\Phi$ is 
    injective. 
    
    Let $\varphi \in \cE'$ and let $\tilde{\varphi}\in (\cA^{q})'$ be defined by $\tilde{\varphi}(\xi)=\varphi(e\xi)$ 
    for all $\xi \in \cA^{q}$. The nondegeneracy of $\acoup{\cdot}{\cdot}_0$ implies that there is $\tilde{\eta} \in 
    \cA^{q}$ such that $\tilde{\varphi}(\xi)=\acoup{\tilde{\eta}}{\xi}_0$ for all $\xi\in \cA^{q}$. Set $\eta=e^{*}\eta\in 
    \cE^{*}$. Then, for all $\xi\in \cE$, 
    \begin{equation*}
        \varphi(\xi)=\tilde{\varphi}(e\xi)=\acoup{\tilde{\eta}}{e\xi}_0=\acoup{e^{*}\tilde{\eta}}{\xi}_0=\acoup{\eta}{\xi}_0.
    \end{equation*}Thus $\varphi=\Phi(\eta)$. This shows that $\Phi$ is surjective.  Therefore, $\Phi$ is an 
    $\cA$-antilinear isomorphism. Likewise,  $\Psi: \cE\ni \eta\rightarrow \left.\acoup{\cdot}{\eta}_{0}\right|_{\cE^{*}}\in \left(\cE^{*}\right)'$ is an 
    $\cA$-linear isomorphism. This completes the proof of the lemma.
    \end{proof}

 \begin{lemma}
 \label{lem:ALinearIsoEE^*}
  Denote by $\ft:\cE\rightarrow \cE^{*}$ the $\cA$-linear map defined by 
  \begin{equation*}
      \ft \xi =e^{*} \xi \qquad \forall \xi \in \cE.
  \end{equation*}Then $\ft$ is an $\cA$-linear isomorphism from $\cE$ onto $\cE^{*}$.
 \end{lemma}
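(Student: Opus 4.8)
The plan is to show that $\ft:\cE\to\cE^{*}$ is a bijective $\cA$-linear map. It is evidently $\cA$-linear, since left multiplication by the matrix $e^{*}\in M_{q}(\cA)$ commutes with the right $\cA$-action on $\cA^{q}$, and it maps $\cE=e\cA^{q}$ into $\cE^{*}=e^{*}\cA^{q}$ because $\ft(e\xi)=e^{*}(e\xi)\in e^{*}\cA^{q}$. So the content is surjectivity and injectivity, and I would exhibit a two-sided inverse directly.

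The natural candidate for the inverse is the map $\ft':\cE^{*}\to\cE$, $\ft'\eta=e\eta$, i.e.\ left multiplication by $e$. Then for $\xi\in\cE$ we have $\ft'\ft\xi=ee^{*}e\xi=ee^{*}\xi$ (using $e\xi=\xi$), and for $\eta\in\cE^{*}$ we have $\ft\ft'\eta=e^{*}ee^{*}\eta=e^{*}e\eta$ (using $e^{*}\eta=\eta$). So I need to know that the element $ee^{*}$ acts invertibly on $\cE=e\cA^{q}$ and that $e^{*}e$ acts invertibly on $\cE^{*}=e^{*}\cA^{q}$. The standard trick (as already used in the proof of Lemma~\ref{lm:CriteriaSigmaAdjointIntegerIndex}) is: set $a=e-e^{*}$, so that $b:=1+aa^{*}$ is invertible in $M_{q}(\cA)$ (it is $1$ plus a positive element, and $M_{q}(\cA)$ is closed under holomorphic functional calculus). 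One computes $aa^{*}=(e-e^{*})(e^{*}-e)=ee^{*}+e^{*}e-e-e^{*}$, hence $b=1+ee^{*}+e^{*}e-e-e^{*}$. A short calculation shows $b$ commutes with both $e$ and $e^{*}$, and that $ebe=ebe=ee^{*}e$ computed on $e\cA^{q}$ gives $be=eb=ee^{*}$ there; more precisely, restricting the invertible operator $b$ to the complemented submodule $e\cA^{q}$ (on which it restricts to an invertible $\cA$-linear endomorphism since it commutes with $e$), one gets that $ee^{*}=e b$ is invertible on $e\cA^{q}$ with inverse $e b^{-1}$. Symmetrically $e^{*}e$ is invertible on $e^{*}\cA^{q}$. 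Feeding this back, the genuine inverse of $\ft$ is $\eta\mapsto e b^{-1}\eta = b^{-1}e\eta$, and one checks $\ft^{-1}\ft=\ft\ft^{-1}=\op{id}$ on the respective modules.

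An alternative, slightly cleaner route is to deduce the statement from Lemma~\ref{lem:NondegenPairingE^*&E}, which has just been proved: that lemma says $\Phi:\cE^{*}\to\cE'$, $\xi\mapsto \acoup{\xi}{\cdot}_{0}|_{\cE}$, is an $\cA$-antilinear isomorphism, and similarly $\Psi:\cE\to(\cE^{*})'$, $\eta\mapsto\acoup{\cdot}{\eta}_{0}|_{\cE^{*}}$, is an $\cA$-linear isomorphism. Now observe that for $\xi\in\cE$ and $\zeta\in\cE^{*}$ one has $\acoup{\ft\xi}{\cdot}_{0}$ relates to $\acoup{\cdot}{\xi}_{0}$ via the identity $\acoup{e^{*}\xi}{\zeta}_{0}=\acoup{\xi}{e\zeta}_{0}=\acoup{\xi}{\zeta}_{0}$, so that $\Phi\circ\ft$ is, up to the canonical pairing identifications, essentially $\Psi$ composed with the double-dual identification — both being isomorphisms, $\ft$ is an isomorphism. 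I would probably present the explicit-inverse argument as the main proof since it is self-contained and avoids juggling the dual-module identifications, and only remark on the pairing interpretation.

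The main obstacle is the bookkeeping around ``an invertible element of $M_{q}(\cA)$ restricts to an invertible $\cA$-linear endomorphism of the direct summand $e\cA^{q}$.'' This needs the facts that $b$ commutes with $e$ (a direct computation from $b=1+aa^{*}$ with $a=e-e^{*}$, already invoked in Lemma~\ref{lm:CriteriaSigmaAdjointIntegerIndex}), that $e\cA^{q}$ is therefore $b$-invariant and $b^{-1}$-invariant, and that $b|_{e\cA^{q}}$ is then invertible with inverse $b^{-1}|_{e\cA^{q}}$ — after which identifying $b|_{e\cA^{q}}$ with the action of $ee^{*}$ on $e\cA^{q}$ (using $e=\op{id}$ on $e\cA^{q}$) is immediate. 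Once that is in place, the verification that $\eta\mapsto b^{-1}e\eta$ inverts $\ft$ is a one-line computation on each side, and the proof is complete.
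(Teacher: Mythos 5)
Your proof is correct, but it takes a genuinely different route from the paper's. You exhibit an explicit two-sided inverse: left multiplication by $e$ composed with $\ft$ is multiplication by $ee^{*}$ on $\cE$, and $\ft$ composed with left multiplication by $e$ is multiplication by $e^{*}e$ on $\cE^{*}$; both of these agree with the restriction of the invertible element $b=1+(e-e^{*})(e-e^{*})^{*}$ to $\cE$ (resp.\ $\cE^{*}$), since $b$ commutes with $e$ and $e^{*}$ and $eb=be=ee^{*}e$, $e^{*}b=be^{*}=e^{*}ee^{*}$. Invertibility of $b$ in $M_{q}(\cA)$ uses closure under holomorphic functional calculus, exactly as in the proof of Lemma~\ref{lm:CriteriaSigmaAdjointIntegerIndex}, and then $\ft^{-1}\eta=eb^{-1}\eta$ does the job. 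The paper instead argues in two separate steps via orthogonal complements for the canonical pairing: injectivity from $\ker\ft=(\ran e)^{\perp}\cap\ran e=0$, and surjectivity by replacing $e^{*}e$ with the similar \emph{selfadjoint} idempotent $f=e^{*}eg$ (so that the double-orthogonal-complement identity applies), deducing $\ran e^{*}e=(\ker e^{*}e)^{\perp}=(\ker e)^{\perp}=\ran e^{*}$. Both approaches lean on holomorphic functional calculus; yours is more constructive and avoids the perpendicular-complement bookkeeping and the auxiliary $g\in\GL_{q}(\cA)$, at the cost of the commutation checks for $b$, which are short. Your side remark about deducing the lemma from Lemma~\ref{lem:NondegenPairingE^*&E} via $\Phi$ and $\Psi$ is more delicate than you make it sound (one map is $\cA$-antilinear, the other $\cA$-linear, so they do not compose to a plain identification without an extra conjugation), but since you present the explicit-inverse argument as the actual proof, that aside does not affect correctness.
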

 \begin{proof}[Proof of Lemma~\ref{lem:ALinearIsoEE^*}]
   If $\cF$ is a right submodule of $\cA^{q}$ we shall denote by $\cF^{\perp}$ its orthogonal complement with respect to the 
   canonical Hermitian metric of $\cA^{q}$. For $a\in M_{q}(\cA)$ we shall identify $a$ with the associated 
   $\cA$-linear map $\cA^{q}\rightarrow \cA^{q}$. We observe that with this convention $a^{*}$ is identified with the 
   adjoint of $a$ with respect to $\acoup{\cdot}{\cdot}_{0}$, i.e., 
   \begin{equation*}
       \acoup{a^{*}\xi}{\eta}_{0}=\acoup{\xi}{a\eta}_{0} \qquad \forall \xi,\eta\in \cA^{q}.
   \end{equation*}We deduce from this that, for any idempotent $f\in M_{q}(\cA)$, 
   \begin{equation}
   \label{eq:fA^qPerp(1-f^*)A^q}
       \left(f\cA^{q}\right)^{\perp}=\left( \ran f\right)^{\perp}= \ker f^{*}=\ran (1-f^{*})=(1-f^{*})\cA^{q}.
   \end{equation}
  We note this implies that $\left( \left(f\cA^{q}\right)^{\perp}\right)^{\perp}=f\cA^{q}$. 
  
  Using~(\ref{eq:fA^qPerp(1-f^*)A^q}) we get
     \begin{equation*}
       \ker \ft = \ker e^{*} \cap \ran e = \left( \ran e \right)^{\perp}\cap \ran e =\{0\},
   \end{equation*}which shows that $\ft$ is one-to-one. Moreover, as $\cA$ is closed under holomorphic functional 
   calculus there is $g \in \GL_{q}(\cA)$ such that $f:=e^{*}eg$ is a selfadjoint idempotent which is similar to $e^{*}$ 
   (cf.\ \cite{Bl:KTOA}). Thus, 
   \begin{equation*}
   \ran e^{*}e= \ran f=  \left( \left(f\cA^{q}\right)^{\perp}\right)^{\perp}=\left(  \left( \ran 
   ee^{*}\right)^{\perp}\right)^{\perp}= \left( \ker e^{*}e\right)^{\perp}.
   \end{equation*}Obviously $\ker e\subset \ker e^{*}e$. As 
   $\acoup{e\xi}{e\xi}_{0}=\acoup{e^{*}e\xi}{\xi}_{0}$ for all $\xi \in \cA^{q}$, we see that $\ker e^{*}e$ is 
   contained in $\ker e$, and so the two submodules agree. Thus, 
   \begin{equation*}
        \ran e^{*}e=  \left( \ker e^{*}e\right)^{\perp} =  \left( \ker e\right)^{\perp} = \ran e^{*}. 
   \end{equation*}This shows that $\ft(\cE)=e^{*}(\ran e^{*})=\ran e^{*}e=\ran e^{*}=\cE^{*}$, that is, $\ft$ is onto. 
   Therefore, the $\cA$-linear map $\ft$ is an isomorphism. 
 \end{proof}
 
Let us go back to the proof of Lemma~\ref{lem:CanonicalHermitMetric-eA^q}. 
For all $\xi_{1}$ and $\xi_{2}$ in $\cE$, we have
 \begin{equation*}
     \acoup{\xi_{1}}{\xi_{2}}_{0}=\acoup{\xi_{1}}{e\xi_{2}}_{0}=\acoup{e^{*}\xi_{1}}{\xi_{2}}_{0}=\acoup{\ft\xi_{1}}{\xi_{2}}_{0}.
 \end{equation*}As $\acoup{\cdot}{\cdot}_{0}$ is nondegenerate on $\cE^{*}\times \cE$ by Lemma~\ref{lem:NondegenPairingE^*&E} and $\ft$ is an $\cA$-linear 
 isomorphism by Lemma~\ref{lem:ALinearIsoEE^*}, we then deduce that $\acoup{\cdot}{\cdot}_{0}$ is nondegenerate on $\cE\times \cE$. This completes the proof of 
 Lemma~\ref{lem:CanonicalHermitMetric-eA^q}. 

\section{Proof of Lemma~\ref{lem:H(E)topIndepHermitianMetricE}}\label{app:H(E)topIndepHermitianMetricE}
Let us first assume that $\cE=\cA^{q}$ for some $q\in \N_{0}$.  Let us denote by $\cH(\cA^{q})_{0}$ the pre-Hilbert space associated to the canonical Hermitian metric 
         $\acoup{\cdot}{\cdot}_{0}$ on $\cA^{q}$. There is a canonical isomorphism 
         $U_{0}:\cA^{q}\otimes_{\cA}\cH\rightarrow \cH^{q}$ given by 
         \begin{equation*}
             U_{0}(\xi\otimes \zeta)=  \left( \xi_{1}\zeta ,\ldots, \xi_{q}\zeta \right) \qquad \text{for all $\xi=(\xi_{j})\in 
             \cA^{q}$ and $\zeta\in \cH$}.
         \end{equation*}The inverse of $U_{0}$ is given by 
\begin{equation*}
    U^{*}\left( \zeta_{1}, \ldots,  \zeta_{q} \right) = \varepsilon_{1} 
    \otimes \zeta_{1}+ \cdots +  \varepsilon_{q} \otimes \zeta_{q}, \qquad   \zeta_{j}\in \cH,
\end{equation*}where $\varepsilon_{1},\ldots,\varepsilon_{q}$ form the canonical basis of $\cA^{q}$. We also observe 
that, for $\xi \in \cA^{q}$ and $\zeta \in \cH$, 
\begin{equation*}
    \left\| U_{0}(\xi\otimes \zeta)\right\|^{2}=\sum_{i}\acou{\xi_{j}\zeta}{\xi_{j}\zeta}= \sum_{i}\acou{\zeta}{
    \xi_{j}^{*}\xi_{j}\zeta}=\acou{\zeta}{\acoup{\xi}{\xi}_{0}\zeta}=\left\|\xi\otimes \zeta\right\|_{0}^{2},
\end{equation*}
where $\|\cdot\|_{0}$ is the norm of $\cH(\cA^{q})_{0}$ This shows that $U_{0}$ is an isometric isomorphism from $\cH(\cA^{q})_{0}$ 
onto $\cH^{q}$. As $\cH^{q}$ is complete, we deduce 
that so is $\cH(\cA^{q})_{0}$, i.e., $\cH(\cA^{q})_{0}$ is a Hilbert space. 

Let $\acoup\cdot\cdot$ be a Hermitian metric on $\cA^{q}$. We denote by $\cH(\cE)$ the associated pre-Hilbert space and 
by $\acou{\cdot}{\cdot}$ its inner product. The nondegeneracy of $\acoup\cdot\cdot$ and $\acoup{\cdot}{\cdot}_{0}$ implies 
there is a selfadjoint element $g \in \GL_{q}(\cA)$ such that
\begin{equation*}
\acoup{\xi}{\eta}  =\acoup{g\xi}{\eta}_0 \qquad \forall \xi,\eta\in \cA^{q}.
\end{equation*}
We also denote 
by $g$ the representation of $g$ as a selfadjoint bounded operator of $\cH^{q}$. Let $\xi=(\xi_{j})$ and 
$\xi'=(\xi_{j}')$ be in $\cA^{q}$ and let $\zeta$ and $\zeta' $ be in $\cH$. Then
\begin{align*}
\acou{\xi\otimes \zeta}{\xi'\otimes \zeta'} = 
\acou{\zeta}{\acoup{\xi}{\xi'}\zeta'}=\acou{\zeta}{\acoup{g\xi}{\xi'}_{0}\zeta'} = 
\sum_{i,j}\acou{\zeta}{(\xi_{j}^{*}g_{ij}^{*}\xi_{i}')\zeta'} 
& =\sum_{i,j}\acou{g_{ij}(\xi_{j}\zeta)}{\xi_{i}'\zeta'}\\
&= \acou{gU_{0}(\xi\otimes \zeta)}{U_{0}(\xi'\otimes \zeta')}.
\end{align*}
By bilinearity it then follows that $\acou{\eta}{\eta'}=\acou{gU_{0}\eta}{U_{0}\eta}$ for all $\eta$ and $\eta'$ in 
$\cH(\cA^{q})$. Thus, for all $\eta\in \cH(\cA^{q})$ and $\zeta\in \cH^{q}$, 
\begin{equation}
\label{eq:EqualityFor g}
    \|\eta\|^{2}=\acou{gU_{0}\eta}{U_{0}\eta} \qquad \text{and} \qquad \acou{g\zeta}{\zeta}=\|U_{0}^{-1}\zeta\|^{2}. 
\end{equation}

The 2nd equality in~(\ref{eq:EqualityFor g}) shows that $g$ is a positive operator of $\cH^{q}$. As $g$ is invertible, we see that its spectrum 
is contained in an interval $[c^{-1},c]$ for some $c>1$, and so, for all $\zeta\in \cH^{q}$, 
\begin{equation}
\label{eq:Estimate<gzeta,zeta>}
   c^{-1} \|\zeta\|^{2}\leq \acou{g\zeta}{\zeta}\leq c\|\zeta\|^{2} \
\end{equation}
Combining~(\ref{eq:EqualityFor g}) and the fact that $U_{0}$ is an isometry from $\cH(\cA^{q})_{0}$ onto $\cH^{q}$ we 
deduce that, for all $\eta\in \cA^{q}\otimes_{\cA}\cH$, we have
\begin{equation*}
    \|\eta\|^{2}=\acou{gU_{0}\eta}{U_{0}\eta}\in 
    \left[c^{-1}\|U_{0}\eta\|^{2},c\|U_{0}\eta\|^{2}\right]=\left[c^{-1}\|\eta\|^{2}_{0},c\|\eta\|^{2}_{0}\right].
\end{equation*}This shows that the norms $\|\cdot\|$ and $\|\cdot\|_{0}$ are equivalent on  $\cA^{q}\otimes_{\cA}\cH$. 
Therefore $\cH(\cA^{q})$ has 
same topology as $\cH(\cA^{q})_{0}$. In particular, $\cH(\cA^{q})$ is complete, and hence is a Hilbert space. This 
proves Lemma~\ref{lem:H(E)topIndepHermitianMetricE} in the special case $\cE=\cA^{q}$. 

Let us now assume that $\cE=e\cA^{q}$ with $e=e^{2}\in M_{q}(\cA)$. By Lemma~\ref{lem:CanonicalHermitMetric-eA^q} the canonical Hermitian metric of 
$\cA^{q}$ induces a Hermitian metric on $\cE$. We denote by $\acou{\cdot}{\cdot}$ and $\cH(\cE)_{0}$ the associated 
inner product and pre-Hilbert space. We also denote by $e$ the representation of $e$ as a bounded operator on 
$\cH^{q}$. We note that as $e$ is idempotent $e\cH^{q}$ is a closed subspace of $\cH^{q}$. 

Let $\xi=(\xi_{j})\in \cA^{q}$ and $\zeta \in \cH$. For $i=1,\ldots,q$,  we have 
\begin{equation*}
    U_{0}\left( (e\xi)\otimes \zeta\right)_{i} = 
    (e\xi)_{i}\zeta=\sum_{j}e_{ij}\xi_{j}\zeta=\sum_{j}e_{ij}U_{0}(\xi\otimes \zeta)_{j}=\left( eU_{0}(\xi\otimes 
    \zeta)\right)_{i}.
\end{equation*}That is,
\begin{equation*}
    U_{0}((e \xi)\otimes \zeta)= eU_{0}(\xi\otimes \zeta).
\end{equation*}
As $U_{0}$ is an isometric isomorphism from $\cH(\cA^{q})_{0}$ onto $\cH^{q}$ we see that $U_{0}$ induces an isometric 
isomorphism from $\cH(\cE)_{0}$ onto $e\cH^{q}$. As $e\cH^{q}$ is complete (since this is a closed subspace of 
$\cH^{q}$) we deduce that $\cH(\cE)_{0}$ is a Hilbert space. 

Let $\acoup{\cdot}{\cdot}$ be a Hermitian metric on $\cE$. Thanks to the nondegeneracy of $\acoup{\cdot}{\cdot}_{0}$ and $\acoup{\cdot}{\cdot}$ 
there is a unique $\cA$-linear isomorphism $a:\cE\rightarrow \cE$ such that 
\begin{equation*}
    \acoup{\xi}{\eta}=\acoup{a\xi}{\eta}_0 \qquad \text{for all $\xi$ and $\eta$ in $\cE$}.
\end{equation*}
We then extend $\acoup{\cdot}{\cdot}$ into the $\cA$-sesquilinear form on $\cA^{q}$ defined by
\begin{equation}
\label{eq:SesquilinearFormA^q}
   \acoup{\xi}{\eta}:=\acoup{ae\xi}{\eta}_{0}+\acoup{(1-e)\xi}{(1-e)\eta}_0 \qquad \text{for all $\xi$ and $\eta$ in $\cA^{q}$}.
\end{equation}
We note that $\acoup{\cdot}{\cdot}$ is positive on $\cA^{q}$, and 
\begin{equation*}
    \acoup{\xi}{\eta}=\acoup{g\xi}{\eta}_0 \qquad \text{for all $\xi$ and $\eta$ in $\cA^{q}$}.
\end{equation*}where we have set $g=e^{*}ae+(1-e)^{*}(1-e)$. 

By Lemma~\ref{lem:ALinearIsoEE^*} we know that $e^{*}$ induces an $\cA$-linear isomorphism from $e\cA^{q}$ onto $e^{*}\cA^{q}$ and $(1-e^{*})$ 
induces an isomorphism from $(1-e)\cA^{q}$ onto $(1-e^{*})\cA^{q}$. As $a$ is an isomorphism from $\cE=e\cA^{q}$ onto 
itself we deduce that $g$ is a right-module isomorphism from $\cA^{q}$ onto itself. Combining this with~(\ref{eq:SesquilinearFormA^q}) we then 
see that $\acoup\cdot\cdot$ is nondegenerate on $\cA^q\times \cA^q$. Thus $\acoup{\cdot}{\cdot}$ is a Hermitian metric on 
$\cA^{q}$. Therefore, by the first part of the proof, the associated norm on $\cA^{q}\otimes_{\cA}\cH$ is equivalent to 
the norm of $\cH(\cA^{q})$. As these norms restrict to the norms of $\cH(\cE)$ and $\cH(\cE)_{0}$ on $\cE=e\cA^{q}$, we 
then deduce that the norms of $\cH(\cE)$ and $\cH(\cE)_{0}$ are equivalent. This proves Lemma~\ref{lem:H(E)topIndepHermitianMetricE} in the special 
case $\cE=e\cA^{q}$, $e=e^{2}\in M_{q}(\cA)$. 

Let us now prove Lemma~\ref{lem:H(E)topIndepHermitianMetricE} when $\cE$ is an arbitrary finitely generated projective module, i.e., it is the direct 
summand of a free module $\cE_{0}$. Let $\phi:\cE_{0}\rightarrow \cA^{q}$ be an $\cA$-linear isomorphism. Then 
$\phi(\cE)=e\cA^{q}$ for some idempotent $e\in M_{q}(\cA)$. If $\acoup{\cdot}{\cdot}$ is a Hermitian metric on $\cE$, 
then we define a Hermitian metric $\acoup{\cdot}{\cdot}_{\phi}$ on $e\cA^{q}$ by
\begin{equation*}
    \acoup{\xi}{\eta}_{\phi}=\acoup{\phi^{-1}(\xi)}{\phi^{-1}(\eta)} \qquad \text{for all $\xi$ and $\eta$ in $e\cA^{q}$}.
\end{equation*}We denote by $\acou{\cdot}{\cdot}_{\phi}$ and $\cH(e\cA^{q})_{\phi}$the associated inner product and Hilbert space. 

Set $U_{\phi}:=\phi \otimes 1_{\cH}$. This a vector bundle isomorphism from $\cE\otimes_{\cA}\cH$ onto 
$(e\cA^{q})\otimes_{\cA}\cH$. Let $\xi$ and $\xi'$ be in $\cE$ and let $\zeta$ and $\zeta'$ be in $\cH$. Then
\begin{equation*}
    \acou{U_{\phi}(\xi\otimes \zeta)}{U_{\phi}(\xi'\otimes \zeta')}_{\phi}= 
    \acou{\zeta}{\acoup{\phi(\xi)}{\phi(\xi')}_{\phi}\zeta'}= \acou{\zeta}{\acoup{\xi}{\xi'}\zeta'}=\acou{\xi\otimes 
    \zeta}{\xi'\otimes \zeta'}.
\end{equation*}Thus $U_{\phi}$ is an isometric isomorphism from $\cH(\cE)$ and $\cH(e\cA^{q})_{\phi}$. As 
$\cH(e\cA^{q})_{\phi}$ is a Hilbert space, we then deduce that $\cH(\cE)$ is a Hilbert space as well. 

Finally, we observe that pushforwarding norms by $U_{\phi}$ gives rise to a one-to-one correspondence between norms on 
$\cE\otimes_{\cA}\cH$ and $\left( e\cA\right)^{q}\otimes \cH$ arising from Hermitian metrics on $\cE$ and $e\cA^{q}$. 
As all those norms on $e\cA^{q}$ are equivalent to each other, we then deduce that the same result holds on $\cE$. That 
is, the topology of $\cH(\cE)$ is independent of the choice of the Hermitian metric. The proof of Lemma~\ref{lem:H(E)topIndepHermitianMetricE} is 
complete. 

\section{Homotopy Invariance of the Connes-Chern Character}\label{app:homotopy-invariance}
In this appendix, we give a proof of the homotopy invariance of the Connes-Chern character in the following form.  

\begin{proposition}\label{prop:homotopy-invariance}
    Let $(\cA,\cH,D)_{\sigma}$ be a $p$-summable twisted spectral triple. Consider an operator 
    homotopy of the form,
    \begin{equation*}
        D_{t}=D+V_{t}, \qquad 0\leq t \leq 1,
    \end{equation*}where $(V_{t})_{0\leq t \leq 1}$ is a $C^{1}$ selfadjoint family in  $\cL(\cH)$ such that $D_{t}$ is 
    invertible for all $t\in [0,1]$ and $(D_{t}^{-1})_{0\leq t \leq 1}$ is a bounded family in $\cL^{p}(\cH)$. Then
    \begin{enumerate}
        \item  $(\cA,\cH,D_{t})_{\sigma}$ is a $p$-summable twisted spectral triple for all $t\in [0,1]$. 
    
        \item  For any $k\geq \frac{1}{2}(p-1)$, the cocycles $\tau_{2k}^{D_{0}}$ and $\tau_{2k}^{D_{1}}$ are 
        cohomologous in $\HC^{2k}(\cA)$.
        
       \item  The twisted spectral triples $(\cA,\cH,D_{0})_{\sigma}$ and $(\cA,\cH,D_{1})_{\sigma}$ have same 
       Connes-Chern character in $\HP^{0}(\cA)$. 
    \end{enumerate}
 \end{proposition}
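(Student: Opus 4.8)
The three assertions in Proposition~\ref{prop:homotopy-invariance} are closely intertwined, and the plan is to prove them essentially in order, with the bulk of the work devoted to part (2); parts (1) and (3) are then either easy or immediate consequences. For part~(1), the key observation is that $[D_t,a]_\sigma = [D,a]_\sigma + [V_t,a]_\sigma$, and since $V_t$ and $a$ are both bounded operators, $[V_t,a]_\sigma = V_ta - \sigma(a)V_t$ is bounded; thus $[D_t,a]_\sigma$ is bounded for every $a\in\cA$. The compactness of the resolvent $(D_t+i)^{-1}$ follows from the hypothesis that $D_t^{-1}\in\cL^p(\cH)\subset\cK(\cH)$ together with the fact that $D_t$ is invertible and selfadjoint, so that $(D_t+i)^{-1}=(1+iD_t^{-1})^{-1}D_t^{-1}$ is compact. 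Finally $p$-summability is exactly the assumption that $(D_t^{-1})_{0\le t\le1}$ is a bounded family in $\cL^p(\cH)$.

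For part~(2), the plan is to adapt the transgression argument of Connes~\cite{Co:NCDG} (in the bounded Fredholm module picture) to the present unbounded, twisted setting. First I would differentiate in $t$: writing $\dot D_t := V_t' = \frac{d}{dt}D_t$, one has $\dot D_t\in\cL(\cH)$ and the family is $C^1$ in norm. The strategy is to produce an explicit transgression cochain: a family of odd $(2k-1)$-cochains $\eta_t$ on $\cA$, built from $\dot D_t$, $D_t^{-1}$ and the twisted commutators $[D_t,a^j]_\sigma$, such that $\frac{d}{dt}\tau_{2k}^{D_t} = (b+B)\eta_t$ (or, after adjusting constants and using cyclicity, $=b\eta_t$ modulo cyclic coboundaries in the appropriate normalized complex). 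Integrating from $t=0$ to $t=1$ then yields $\tau_{2k}^{D_1}-\tau_{2k}^{D_0} = (b+B)\big(\int_0^1\eta_t\,dt\big)$, which exhibits the two cocycles as cohomologous in periodic cyclic cohomology; with a little more care (using that both $\tau_{2k}^{D_0}$ and $\tau_{2k}^{D_1}$ are cyclic of the same degree $2k$, and keeping track of the $S$-operator as in Remark~\ref{rmk:tau2k+2.cohomologous.Stau2k}) one upgrades this to cohomology in $\HC^{2k}(\cA)$. The computation of $\frac{d}{dt}\tau_{2k}^{D_t}$ rests on the derivation identities $\frac{d}{dt}D_t^{-1} = -D_t^{-1}\dot D_tD_t^{-1}$ and $\frac{d}{dt}[D_t,a]_\sigma = [\dot D_t,a]_\sigma$, combined with the Leibniz rule for $\Str$ and the trace property; all the relevant products land in the right Schatten classes because each factor $D_t^{-1}[D_t,a^j]_\sigma$ is in $\cL^p$ (by the argument in the proof of Proposition~\ref{lem:CCC.index-formula-Des}) and there are enough factors, so the supertraces converge and cyclic permutation under $\Str$ is legitimate.

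Part~(3) is then immediate: by Definition~\ref{def:CCC.invertible}, the Connes-Chern characters of $(\cA,\cH,D_0)_\sigma$ and $(\cA,\cH,D_1)_\sigma$ are the classes in $\HP^0(\cA)$ of $\tau_{2k}^{D_0}$ and $\tau_{2k}^{D_1}$ respectively (for any fixed $k\ge\frac12(p-1)$), and part~(2) shows these classes coincide. Strictly speaking one should note that the class of $\tau_{2k}^{D_t}$ in $\HP^0(\cA)$ is independent of $k$ by Proposition~\ref{prop:Cochian.ConnesChernChar}, so the statement is unambiguous.

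The main obstacle I anticipate is the careful bookkeeping in part~(2): one must (i) write down the transgression cochain $\eta_t$ with the correct normalization constants so that the identity $\frac{d}{dt}\tau_{2k}^{D_t}=(b+B)\eta_t$ holds on the nose, (ii) verify that every supertrace appearing in the derivative is of trace class — this requires re-running the Schatten-class estimates of Proposition~\ref{lem:CCC.index-formula-Des} uniformly in $t$, using the hypothesis that $(D_t^{-1})_t$ is \emph{bounded} in $\cL^p$, and also controlling the extra factor $\dot D_t$, which is only bounded, not compact (so one genuinely needs $k$ large enough, i.e.\ $2k+1$ or $2k$ factors of $D_t^{-1}\cdot(\text{bounded})$, to absorb the loss), and (iii) justify differentiating under the trace, which follows from the $C^1$-dependence of all factors together with the $\cL^p$-bounds and the continuity of $\Tr$ on $\cL^1$. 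None of these steps is conceptually deep, but the combinatorics of the Leibniz expansion of $\frac{d}{dt}\tau_{2k}^{D_t}$ into $b$- and $B$-type terms is where the real work lies; this is exactly the computation carried out in~\cite{Co:NCDG} for ordinary spectral triples, and the twisting by $\sigma$ propagates through it without essential change, precisely because $d_\sigma$ is a $\sigma$-derivation~(\ref{eq:SigmaDerivation}) and the cochains $\varphi_m,\psi_m$ of~(\ref{eq:phi_m})--(\ref{eq:psi_m}) already encode this twisting.
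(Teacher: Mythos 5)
Your proposal follows essentially the same strategy as the paper's Appendix~\ref{app:homotopy-invariance}: part~(1) via $[D_t,a]_\sigma=[D,a]_\sigma+(V_ta-\sigma(a)V_t)$, part~(2) by differentiating $\tau_{2k}^{D_t}$, exhibiting a transgression cochain $\eta$ with $\frac{d}{dt}\tau_{2k}^{D_t}$ a $B$-coboundary and $b\eta=0$, integrating, and invoking $S$ together with Remark~\ref{rmk:tau2k+2.cohomologous.Stau2k} to land in $\HC^{2k}(\cA)$, and part~(3) as an immediate consequence. The only minor slip is the stated degree of the transgression cochain — the paper works with $(2k+1)$-cochains $\eta_j^t$ whose $B$ gives the derivative of $\tau_{2k}^{D_t}$, not $(2k-1)$-cochains — but you hedge toward the correct $b/B$ bookkeeping, and your discussion of the Schatten-class estimates and of differentiating under the supertrace matches the paper's Lemma~\ref{lem:homotopy.differentiability-taut}.
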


By assumption the resolvent $D_{t}^{-1}$ lies in $\cL^{p}(\cH)$. Moreover, for all $a \in \cA$,
\begin{equation}
    [D_{t},a]_{\sigma}=[D,a]_{\sigma}+(V_{t}a-\sigma(a)V_{t})\in \cL(\cH).
    \label{eq:homotopy.[D,a]}
\end{equation}
Therefore $(\cA,\cH,D_{t})_{\sigma}$ is a $p$-summable twisted spectral triple, and so, for any integer 
$k\geq \frac{1}{2}(p-1)$, we can form the cyclic $2k$-cocycle,
\begin{equation*}
   \tau^{D_{t}}_{2k}(a^{0},\ldots, a^{2k})=c_{k}\Str\left( 
   D_{t}^{-1}[D_{t},a^{0}]_{\sigma}\cdots 
   D_{t}^{-1}[D_{t},a^{2k}]_{\sigma}\right), \quad a^{j}\in \cA.
\end{equation*}
The rest of the proof is devoted to comparing the cocycles $\tau_{2k}^{D_{1}}$ and $\tau_{2k}^{D_{0}}$. 

In what follows, we set $ \dot{V}_{t}=\frac{d}{dt}V_{t}$ and, for $a \in \cA$, we define
\begin{equation*}
  \delta_{t}(a)=D_{t}^{-1}[\dot{V}_{t}D_{t}^{-1},\sigma(a)]D_{t}. 
\end{equation*}
We note that
\begin{equation*}
     \delta_{t}(a)=[D_{t}^{-1}\dot{V}_{t},D_{t}^{-1}\sigma(a)D_{t}]= 
     [D_{t}^{-1}\dot{V}_{t},a]-[D_{t}^{-1}\dot{V}_{t},[D_{t},a]_{\sigma}].
\end{equation*}
As~(\ref{eq:homotopy.[D,a]}) shows that $([D_{t},a]_{\sigma})_{0\leq t \leq 1}$ is a continuous family in $\cL(\cH)$ and 
$(D_{t}^{-1}\dot{V}_{t})_{0\leq t \leq 1}$ is a continuous family in $\cL^{p}(\cH)$, we then see that 
$(\delta_{t}(a))_{0\leq t \leq 1}$ is a continuous family in $\cL^{p}(\cH)$. For $j=1,\ldots,2k+1$, we let $\eta^{t}_{j}$ be the $(2k+1)$-cochain on $\cA$ defined by
\begin{equation*}
    \eta_{j}^{t}(a^{0},\ldots,a^{2k+1})= \Str\left( \alpha_{j}(a^{0})
   D_{t}^{-1}[D_{t},a^{1}]_{\sigma}\cdots \delta_{t}(a^{j}) \cdots 
   D_{t}^{-1}[D_{t},a^{2k+1}]_{\sigma}\right), \quad a^{l}\in \cA,
\end{equation*}where $\alpha_{j}(a)=a$ if $j$ is even and $\alpha_{j}(a)=D_{t}^{-1}\sigma(a) D_{t}$ 
if $j$ is odd. Note that $ \eta_{j}^{t}$ is a normalized cochain. 

In what follows we shall say that a family $(\varphi^{t})_{0\leq t \leq 1}\subset C^{m}(\cA)$ is $C^{\alpha}$, 
$\alpha\geq 0$, when, for all 
$a^{0},\ldots,a^{m}$, the function $t\rightarrow \varphi^{t}(a^{0},\ldots,a^{m})$ is $C^{\alpha}$ on $[0,1]$. Given 
a $C^{1}$-family $(\varphi^{t})_{0\leq t \leq 1}$ of $m$-cochains, we define $m$-cochains $\frac{d}{dt}\varphi^{t}$, $t\in [0,1]$, by
\begin{equation*}
    \left( \frac{d}{dt}\varphi^{t}\right)(a^{0},\ldots,a^{m}):=\frac{d}{dt}\left( 
    \varphi^{t}\right)(a^{0},\ldots,a^{m}), \qquad a^{j}\in \cA.
\end{equation*}
Given a $C^{0}$-family $(\psi^{t})_{0\leq t \leq 1}$ of $m$-cochains we define the integral $\int \psi^{t}$ as the 
$m$-cochain given by 
\begin{equation*}
    \left( \int_{0}^{1} \psi^{t}dt \right)(a^{0},\ldots,a^{m}):=\int_{0}^{1}
    \psi^{t}(a^{0},\ldots,a^{m})dt, \qquad a^{j}\in \cA.
\end{equation*}
If $F$ is any of the operators $b$, $A$, $B_{0}$ or $B$, then
\begin{equation}
    F \left( \frac{d}{dt}\varphi^{t}\right)=\frac{d}{dt}\left( F\varphi^{t}\right) \qquad \text{and} \qquad 
    F \left( \int_{0}^{1} \psi^{t}dt \right)= \int_{0}^{1}F\psi^{t}dt.
    \label{eq:homotopy.F-ddt-int}
\end{equation}
Moreover, we have
\begin{equation}
    \int_{0}^{1} \left(\frac{d}{dt}\varphi^{t}\right)dt=\varphi^{1}-\varphi^{0}.
    \label{eq:homotopy.int-ddt}
\end{equation}

\begin{lemma}\label{lem:homotopy.differentiability-taut}
    The family $(\tau^{D_{t}}_{2k})_{0\leq t \leq 1}$ is a $C^{1}$-family of $2k$-cochains and we have
    \begin{equation*}
        \frac{d}{dt}\tau^{D_{t}}_{2k}= \frac{c_{k}}{2k+1}\sum_{j=1}^{2k+1}B\eta_{j}^{t}.
    \end{equation*}
\end{lemma}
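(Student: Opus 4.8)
The plan is to differentiate the supertrace expression defining $\tau^{D_t}_{2k}$ term by term, identify the derivative as a sum of $(2k+1)$-term supertraces, and then recognize that sum as a $B$-coboundary. First I would record the differentiability input: since $(V_t)$ is $C^1$ in $\cL(\cH)$ and $(D_t^{-1})$ is a bounded $C^0$-family in $\cL^p(\cH)$ (hence $C^1$ in operator norm on $\cL(\cH)$, because $\frac{d}{dt}D_t^{-1}=-D_t^{-1}\dot V_t D_t^{-1}$), all factors $D_t^{-1}$ and $[D_t,a^j]_{\sigma}=[D,a^j]_\sigma+(V_ta^j-\sigma(a^j)V_t)$ depend $C^1$-smoothly on $t$ in a way that makes the product trace-class with $C^1$-dependence; one then applies the Leibniz rule together with the continuity of $T\mapsto\Tr T$ on $\cL^1(\cH)$ to justify differentiating under the supertrace.

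Next I would carry out the differentiation. The product inside $\Str$ in $c_k^{-1}\tau^{D_t}_{2k}$ is $\prod_{j=0}^{2k}D_t^{-1}[D_t,a^j]_\sigma$. Writing $\frac{d}{dt}\big(D_t^{-1}[D_t,a^j]_\sigma\big)$ and using $\dot D_t=\dot V_t$, one gets two contributions at each slot $j$: one from $\frac{d}{dt}D_t^{-1}=-D_t^{-1}\dot V_tD_t^{-1}$ and one from $\frac{d}{dt}[D_t,a^j]_\sigma=\dot V_t a^j-\sigma(a^j)\dot V_t=[\dot V_t,a^j]_\sigma$. The key algebraic step is to repackage these into the operator $\delta_t(a)=D_t^{-1}[\dot V_tD_t^{-1},\sigma(a)]D_t$, using the identity derived just above the lemma, namely $\delta_t(a)=[D_t^{-1}\dot V_t,a]-[D_t^{-1}\dot V_t,[D_t,a]_\sigma]$. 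The point is that $D_t^{-1}\delta_t(a)D_t\cdot$ and the two raw terms combine, and after moving a trailing $\dot V_tD_t^{-1}$ cyclically around the supertrace (legitimate since $\dot V_tD_t^{-1}\in\cL^p$ makes the relevant products trace-class), the whole derivative collapses into a sum over $j$ of supertraces of the shape appearing in $\eta_j^t$, with the alternating $\alpha_j$ prefactor ($a$ for even $j$, $D_t^{-1}\sigma(a)D_t$ for odd $j$) accounting for whether the inserted $D_t$-conjugation has been absorbed into the preceding factor. This bookkeeping — tracking exactly where the stray $D_t^{\pm1}$'s land and confirming the prefactor pattern and the constant $c_k/(2k+1)$ — is the part that needs genuine care.

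Finally I would identify $\frac{d}{dt}\tau^{D_t}_{2k}$ with $\frac{c_k}{2k+1}\sum_{j=1}^{2k+1}B\eta_j^t$. Here the mechanism is the same one used in the proof of Lemma~\ref{lem:B.phi_2k+1}: each $\eta_j^t$ is a \emph{normalized} cochain, so $B\eta_j^t=AB_0\eta_j^t$, and $B_0\eta_j^t$ is, up to cyclic reordering, the $2k$-cochain obtained by ``closing up'' the $(2k+1)$-term supertrace; since the result is cyclic, applying $A=1+T+\cdots+T^{2k}$ multiplies it by $2k+1$. Matching $B_0\eta_j^t$ against the $j$-th term of the differentiated product, and summing over $j$, yields exactly $\frac{d}{dt}\tau^{D_t}_{2k}$. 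The main obstacle I anticipate is not conceptual but combinatorial: getting the $\alpha_j$ prefactors, the cyclic shifts, and the normalization constant to line up precisely so that the raw derivative equals $\frac{c_k}{2k+1}\sum_j B\eta_j^t$ rather than some reindexed or rescaled variant; I would handle this by computing $B_0\eta_j^t$ explicitly for a generic $j$ and comparing with the $j$-th Leibniz term before summing.
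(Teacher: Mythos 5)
Your overall strategy is the same as the paper's: establish $C^1$-dependence, differentiate the supertrace by the Leibniz rule, recognize each resulting term as a $B_0\eta_j^t$, and then upgrade $B_0$ to $B$ via normalization and cyclicity. But two of your intermediate steps are off the mark in ways that would cause trouble if you tried to carry them out.

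First, the Leibniz step requires no cyclic reordering and the $\alpha_j$ prefactor plays no role here. The point the paper exploits is the purely local algebraic identity
\[
\frac{d}{dt}\bigl(D_t^{-1}[D_t,a]_\sigma\bigr)
= -D_t^{-1}\dot V_t D_t^{-1}[D_t,a]_\sigma + D_t^{-1}(\dot V_t a - \sigma(a)\dot V_t)
= \delta_t(a),
\]
which holds at the level of operators, slot by slot, before any trace is taken. The derivative of the product is therefore literally $\sum_{j=0}^{2k}D_t^{-1}[D_t,a^0]_\sigma\cdots\delta_t(a^j)\cdots D_t^{-1}[D_t,a^{2k}]_\sigma$, and each summand is exactly $B_0\eta_{j+1}^t(a^0,\ldots,a^{2k})$ because $\alpha_{j+1}(1)=1$; the $\alpha_j$ prefactor is invisible in $B_0\eta_j^t$. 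There is no trailing $\dot V_tD_t^{-1}$ to shuffle around, and trying to combine derivative contributions across neighbouring slots under the trace, while salvageable, is a needless detour that obscures the index-shift $j\mapsto j+1$ you need to track.

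Second, and more importantly, the statement ``since the result is cyclic, applying $A$ multiplies it by $2k+1$'' is wrong if applied to each $B_0\eta_j^t$ individually: $T$ shifts the position of the $\delta_t$ slot, so $TB_0\eta_j^t = B_0\eta_{j-1}^t$ (indices mod $2k+1$), and no single $B_0\eta_j^t$ is cyclic. The correct justification is that $\tau^{D_t}_{2k}$ is cyclic for every $t$, hence $\tau^{D_t}_{2k}=\tfrac{1}{2k+1}A\tau^{D_t}_{2k}$, and differentiating this identity gives $\tfrac{d}{dt}\tau^{D_t}_{2k}=\tfrac{1}{2k+1}A\bigl(\tfrac{d}{dt}\tau^{D_t}_{2k}\bigr)=\tfrac{c_k}{2k+1}\sum_j AB_0\eta_j^t=\tfrac{c_k}{2k+1}\sum_j B\eta_j^t$, where the last step uses that the $\eta_j^t$ are normalized so $B=AB_0$ on them. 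Equivalently one can note that the \emph{sum} $\sum_j B_0\eta_j^t$ is invariant under $T$, but the cyclicity must be invoked for the sum (or for $\tau^{D_t}_{2k}$), not for each summand.
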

\begin{proof}
 It follows from~(\ref{eq:homotopy.[D,a]}) that $([D_{t},a]_{\sigma})_{0\leq t \leq 1}$ is a $C^{1}$-family in $\cL(\cH)$ and we have
 \begin{equation}
     \frac{d}{dt}[D_{t},a]_{\sigma}=\dot{V}_{t}a-\dot{V_{t}}\sigma(a).
             \label{eq:homotopy.ddt[D,a]}
 \end{equation}
  By assumption the family $(D_{t}^{-1})_{0\leq t \leq 1}$ is bounded in $\cL^{p}(\cH)$. Moreover, 
  \begin{equation*}
      D_{t+s}^{-1}-D_{t}^{-1}=-D_{t+s}^{-1}(D_{t+s}-D_{t})D_{t}^{-1}=-D_{t+s}^{-1}(V_{t+s}-V_{t})D_{t}^{-1}.
  \end{equation*}
 We then deduce that $(D_{t}^{-1})_{0\leq t \leq 1}$ is a continuous family in 
      $\cL^{p}(\cH)$. Combining this with the above equality then shows that $(D_{t}^{-1})_{0\leq t 
      \leq 1}$ is a differentiable family in $\cL^{p}(\cH)$ with
      \begin{equation}
          \frac{d}{dt}D_{t}^{-1}=-D_{t}^{-1}\dot{V}_{t}D_{t}^{-1}.
          \label{eq:homotopy.ddtD-1}
      \end{equation}As the above right-hand side is a continuous family  in 
      $\cL^{p}(\cH)$ we eventually see that $(D_{t}^{-1})_{0\leq t \leq 1}$ is a $C^{1}$ family in 
      $\cL^{p}(\cH)$.
      The product in $\cL(\cH)$ induces a continuous bilinear map from $\cL^{p}(\cH)\times 
      \cL(\cH)$ to $\cL^{p}(\cH)$. Therefore, we deduce that $\left(D_{t}^{-1}[D_{t},a]_{\sigma}\right)_{0\leq t \leq 1}$ is a 
    $C^{1}$-family in $\cL^{p}(\cH)$, and using~(\ref{eq:homotopy.ddt[D,a]}) and~(\ref{eq:homotopy.ddtD-1}) we obtain
    \begin{align}
       \frac{d}{dt} D_{t}^{-1}[D_{t},a]_{\sigma} & = 
       -D_{t}^{-1}\dot{V}_{t}D_{t}^{-1} [D_{t},a]_{\sigma}  +  D_{t}^{-1}\left(\dot{V}_{t}a- 
      \sigma(a)\dot{V}_{t}\right) \nonumber  \\
      & = D_{t}^{-1}\left( -\dot{V}_{t}a+ \dot{V}_{t}D_{t}^{-1}\sigma(a) + \dot{V}_{t}a- 
     \sigma(a)\dot{V}_{t}\right)  \label{eq:homotopy.ddtD-1[D,a]}
     \\ & = \delta_{t}(a). \nonumber
    \end{align}
    
    Let $a^{0},\ldots,a^{2k}$ be in $\cA$. As $2k+1\geq p$ the product of $\cL(\cH)$ induces a continuous 
$(2k+1)$-linear map from $\cL^{p}(\cH)^{2k+1}$ to $\cL^{1}(\cH)$. Therefore, 
the map $t\rightarrow  D_{t}^{-1}[D_{t},a^{0}]_{\sigma}\cdots 
   D_{t}^{-1}[D_{t},a^{2k+1}]_{\sigma}$ is a $C^{1}$-map from $[0,1]$ to 
   $\cL^{1}(\cH)$. Composing it with the supertrace on $\cL^{1}(\cH)$ we then deduce that the function 
   $t \rightarrow \tau^{D_{t}}_{2k}(a^{0},\ldots,a^{2k})$ is $C^{1}$ on $[0,1]$. Moreover, using~(\ref{lem:homotopy.differentiability-taut}) we get 
\begin{equation*}
    \frac{d}{dt}\tau^{D_{t}}_{2k}(a^{0},\ldots,a^{2k})= c_{k}\sum_{j=0}^{2k}\Str\left( 
   D_{t}^{-1}[D_{t},a^{0}]_{\sigma}\cdots \delta_{t}(a^{j}) \cdots 
   D_{t}^{-1}[D_{t},a^{2k}]_{\sigma}\right).
\end{equation*}Noting that $\alpha_{j}(1)=1$ we see that
\begin{equation*}
 \Str\left( 
   D_{t}^{-1}[D_{t},a^{0}]_{\sigma}\cdots \delta_{t}(a^{j}) \cdots 
   D_{t}^{-1}[D_{t},a^{2k}]_{\sigma}\right)=B_{0}\eta_{j+1}(a^{0}, \ldots,a^{2k}).   
\end{equation*}Therefore, we see that $(\tau^{D_{t}}_{2k})_{0\leq t \leq 1}$ is a $C^{1}$-family of cochains and 
$\frac{d}{dt}\tau^{D_{t}}_{2k}=c_{k}\sum_{j=1}^{2k+1}B_{0}\eta_{j}^{t}$.
As the $\eta_j^t$ are normalized cochains and $\tau_{2k}^{D_{t}}$ is a cyclic cocycle, using~(\ref{eq:homotopy.F-ddt-int}) we get
\begin{equation*}
   \frac{d}{dt}\tau^{D_{t}}_{2k}=\frac{1}{2k+1} \frac{d}{dt}A\tau^{D_{t}}_{2k}=\frac{1}{2k+1}A\left(  \frac{d}{dt}\tau^{D_{t}}_{2k}\right)= 
   \frac{c_{k}}{2k+1}\sum_{j=1}^{2k+1}AB_{0}\eta_{j}^{t}= \frac{c_{k}}{2k+1}\sum_{j=1}^{2k+1}
   B\eta_{j}^{t}.
\end{equation*}
The proof is complete. 
\end{proof}

\begin{lemma}\label{lem:homotopy.betaj=0}
 For $t\in[0,1]$ and $j=1,\ldots,2k+1$ the cochain $\eta_{j}^{t}$ is a Hochschild cocycle, i.e., $b\eta_{j}^{t}=0$. 
\end{lemma}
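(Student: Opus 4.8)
The plan is to verify $b\eta_j^t = 0$ by a direct computation of the Hochschild coboundary, exploiting the telescoping structure that appears in the definition of $\eta_j^t$. The key observation is that $\eta_j^t$ has the form of a ``twisted trace-type'' cochain: it is $\Str$ of an alternating product of factors $D_t^{-1}[D_t,a^l]_\sigma$, except that in slot $0$ we insert $\alpha_j(a^0)$ (which is either $a^0$ or $D_t^{-1}\sigma(a^0)D_t$) and in slot $j$ we insert $\delta_t(a^j)$ in place of $D_t^{-1}[D_t,a^j]_\sigma$. The cochains $\varphi_m$ and $\psi_m$ from~(\ref{eq:phi_m})--(\ref{eq:psi_m}) are exactly of this shape, and Lemma~\ref{lem:b.phi_2k-1=phi_2k} shows how $b$ acts on them via the Leibniz-type identity $D_t^{-1}[D_t, ab]_\sigma = D_t^{-1}[D_t,a]_\sigma b + D_t^{-1}\sigma(a)D_t\cdot D_t^{-1}[D_t,b]_\sigma$. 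The strategy is to combine this Leibniz identity with an analogous identity for $\delta_t$.

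First I would record the two algebraic identities that drive everything. One is the $\sigma$-Leibniz rule just displayed, equivalently $D_t^{-1}[D_t,\cdot]_\sigma$ is a $\tilde\sigma$-derivation where $\tilde\sigma(a) = D_t^{-1}\sigma(a)D_t$ (cf.~(\ref{eq:SigmaDerivation})). The other is the corresponding rule for $\delta_t$: since $\delta_t(a) = [D_t^{-1}\dot V_t, \tilde\sigma(a)]$ with $\tilde\sigma$ an algebra homomorphism, one gets
\[
\delta_t(ab) = \delta_t(a)\,\tilde\sigma(b) + \tilde\sigma(a)\,\delta_t(b),
\]
i.e.\ $\delta_t$ is also a $\tilde\sigma$-derivation. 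The point of inserting $\alpha_j(a^0)=a^0$ for $j$ even and $\alpha_j(a^0)=\tilde\sigma(a^0)$ for $j$ odd is precisely to make the whole string ``$\tilde\sigma$-homogeneous'' in a way that lets the coboundary telescope: applying $b$ produces, term by term, pieces $\theta'_{l+1} + \theta''_l$ exactly as in the proof of Lemma~\ref{lem:b.phi_2k-1=phi_2k}, and the inner terms $\varphi$-type differences cancel in pairs. The cyclic closing term $(-1)^{m+1}\eta_j^t(a^{m+1}a^0,\ldots,a^m)$ combines with $b_0$ using cyclicity of $\Str$ together with the fact that $D_t\cdot D_t^{-1} = 1$, which converts a factor $\tilde\sigma(a^{m+1})$ at the end into $a^{m+1}$ at the front (this is the same $\Str(\sigma(e)D(\cdots)D^{-1}) = -\Str(D^{-1}\sigma(e)D(\cdots))$ manipulation used in the proof of Proposition~\ref{lem:CCC.index-formula-Des}).

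Concretely, I would split $b\eta_j^t = \sum_{l=0}^{2k+1}(-1)^l b_l\eta_j^t$ into three ranges: $l$ strictly between $1$ and $j-1$ or strictly between $j$ and $2k$ (both slots ``pure commutator'' factors get merged via the $\tilde\sigma$-Leibniz rule), the transition indices $l \in \{0, j-1, j, 2k+1\}$ touching the special slots $0$ and $j$, and then use the $\tilde\sigma$-Leibniz rule for $\delta_t$ at the slots adjacent to $j$. In each interior range the sum telescopes as in~(\ref{eq:CCC.bvphi}), leaving only boundary contributions; these boundary contributions then cancel against $b_0\eta_j^t$ and the cyclic term after using $\Str$-cyclicity and $D_tD_t^{-1}=1$. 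I expect the main obstacle to be purely bookkeeping: getting the sign conventions right across the $j$ even/odd cases and tracking how the $\alpha_j$-insertion interacts with the closing cyclic term, since moving the last factor to the front crosses $2k+1$ commutator factors (each odd under $\Str$) \emph{and} conjugates $\sigma$ by $D_t$. There is no genuine analytic difficulty here — the families are already known to be continuous in $\cL^p$ so all traces are well-defined and the algebraic identities hold in $\cL^1(\cH)$ under $\Str$ — so the proof is a careful unwinding of the Hochschild differential, essentially a $\delta_t$-decorated rerun of the computation in Lemma~\ref{lem:b.phi_2k-1=phi_2k}.
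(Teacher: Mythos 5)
Your proposal is correct and takes essentially the same approach as the paper: you identify the two $\tilde\sigma$-Leibniz rules (for $D_t^{-1}[D_t,\cdot]_\sigma$ and for $\delta_t=[D_t^{-1}\dot V_t,\tilde\sigma(\cdot)]$), split the Hochschild boundary into interior telescoping ranges and boundary/transition indices, and use $\Str$-cyclicity together with the $j$ even/odd dichotomy for $\alpha_j$ to close the computation, which is exactly the strategy of the paper's proof (there organized via the auxiliary cochains $\beta,\gamma,\beta'_l,\beta''_l,\gamma'_l,\gamma''_l$). Your structural observation that $\alpha_j$ is inserted precisely to make the string $\tilde\sigma$-homogeneous is a nice conceptual gloss on the bookkeeping.
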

\begin{proof}
 Let $\beta$ and $\gamma$ be the $(2k+2)$-cochains on $\cA$ given by 
 \begin{gather*}
     \beta(a^{0},\ldots,a^{2k+2})= \Str \left( \alpha_{j}(a^{0})D^{-1}[D,a^{1}]_{\sigma}\cdots \delta_{t}(a^{j+1}) \cdots 
     D^{-1}[D,a^{2k+2}]_{\sigma}\right), \\
    \gamma(a^{0},\ldots,a^{2k+2})= \Str \left( \alpha_{j}(a^{0})D^{-1}[D,a^{1}]_{\sigma}\cdots \delta_{t}(a^{j}) \cdots 
     D^{-1}[D,a^{2k+2}]_{\sigma}\right), \quad a^{j}\in \cA. 
 \end{gather*}
 For $l=1,\ldots,j$ we let $\beta'_{l}$ and $\beta''_{l}$ be the $(2k+2)$-cochains defined by
 \begin{gather*}
\beta'_{l}(a^{0},\ldots,a^{2k+2})= \Str \left( \alpha_{j}(a^{0})D^{-1}[D,a^{1}]_{\sigma}\cdots a^{l}\cdots  \delta_{t}(a^{j+1}) \cdots 
     D^{-1}[D,a^{2k+2}]_{\sigma}\right), \\
    \beta''_{l}(a^{0},\ldots,a^{2k+2})= \Str \left( \alpha_{j}(a^{0})D^{-1}[D,a^{1}]_{\sigma}\cdots D^{-1}\sigma(a^{l})D\cdots  \delta_{t}(a^{j+1}) \cdots 
     D^{-1}[D,a^{2k+2}]_{\sigma}\right).
 \end{gather*}We note that $ \beta'_{l}(a^{0},\ldots,a^{2k+2})-\beta''_{l}(a^{0},\ldots,a^{2k+2})$ is equal to
 \begin{multline}
    \Str \left( 
    \alpha_{j}(a^{0})D^{-1}[D,a^{1}]_{\sigma}\cdots (a^{l}-D^{-1}\sigma(a^{l})D)\cdots  \delta_{t}(a^{j+1}) \cdots 
     D^{-1}[D,a^{2k+2}]_{\sigma}\right)\\
      = \beta(a^{0},\ldots,a^{2k+2}). 
      \label{eq:homotopy.beta'-beta''}
 \end{multline}Moreover, from the equality 
 $D^{-1}[D,a^{l}a^{l+1}]_{\sigma}=D^{-1}[D,a^{l}]_{\sigma}a^{l+1}+D^{-1}\sigma(a^{l})D\cdot D^{-1}[D,a^{l+1}]_{\sigma}$ 
 we deduce that
 \begin{equation}
     b_{l}\eta_{j}^{t}=\beta_{l+1}'+\beta_{l}''.
     \label{eq:homotopy.beta'+beta''}
 \end{equation}
 For  $l=j+1,\ldots,2k+1$ we let $\gamma'_{l}$ and $\gamma''_{l}$ be the $(2k+2)$-cochains on $\cA$ defined by
 \begin{gather}
\gamma'_{l}(a^{0},\ldots,a^{2k+2})= \Str \left( \alpha_{j}(a^{0})D^{-1}[D,a^{1}]_{\sigma}\cdots  \delta_{t}(a^{j}) \cdots a^{l}\cdots 
     D^{-1}[D,a^{2k+2}]_{\sigma}\right), \\
    \gamma''_{l}(a^{0},\ldots,a^{2k+2})= \Str \left( \alpha_{j}(a^{0})D^{-1}[D,a^{1}]_{\sigma}\cdots  \delta_{t}(a^{j}) 
    \cdots D^{-1}\sigma(a^{l})D \cdots 
     D^{-1}[D,a^{2k+2}]_{\sigma}\right).
 \end{gather}As in~(\ref{eq:homotopy.beta'-beta''}) and~(\ref{eq:homotopy.beta'+beta''}) we have
 \begin{equation}
     \gamma'_{l}-\gamma_{l}''=\gamma \qquad \text{and} \qquad b_{l}\eta=\gamma'_{l+1}+\gamma_{l}.
 \end{equation}
In addition, using the equality $\delta_{t}(a^{j}a^{j+1})= \delta_{t}(a^{j})D^{-1}\sigma(a^{j})D +D^{-1}a^{j}D\delta_{t}(a^{j+1})$ 
we find that
\begin{align}
    b_{j}\eta_{j}^{t}(a^{0},\ldots,a^{2k+2}) &=  \Str \left( \alpha_{j}(a^{0})D^{-1}[D,a^{1}]_{\sigma}\cdots \delta_{t}(a^{j}a^{j+1}) \cdots 
     D^{-1}[D,a^{2k+1}]_{\sigma}\right) \nonumber \\
    & = \gamma''_{j+1}(a^{0},\ldots,a^{2k+2})+\beta''_{j}(a^{0},\ldots,a^{2k+2}).
    \label{eq:homotopy.bjetaj}
\end{align}

Using~(\ref{eq:homotopy.beta'-beta''})--(\ref{eq:homotopy.bjetaj}) we obtain
\begin{align*} 
  \sum_{l=1}^{2k+1}b\eta_{j}^{t}&=\sum_{l=1}^{j-1}(-1)^{l}(\beta_{l+1}'+\beta_{l}'')+(-1)^{j}(\beta''_{j+1}+\beta''_{j}) + 
  \sum_{l=j+1}^{2k+1}(-1)^{l}(\beta_{l+1}'+\beta_{l}'') \\
    &= -\beta''_{1}+\sum_{l=2}^{j}(-1)^{l}(\beta''_{l}-\beta'_{l})+ 
    \sum_{l=j+2}^{2k+1}(-1)^{l}(\gamma''_{l}-\gamma'_{l})- \gamma'_{2k+1}\\
   & = -\beta''_{1}+ \sum_{l=2}^{j}(-1)^{l-1}\beta + \sum_{l=j+2}^{2k+1}(-1)^{l-1} \gamma - \gamma'_{2k+1}.
\end{align*}
Noting that $\sum_{l=2}^{j}(-1)^{l-1}=-\frac{1}{2}\left(1+(-1)^{j}\right)$ and $ \sum_{l=j+2}^{2k+1}(-1)^{l-1} 
=\frac{1}{2} \left(1-(-1)^{j}\right)$ we see that
\begin{equation}
    b\eta_{j}^{t}=  \sum_{l=0}^{2k+2}b\eta_{j}^{t}=b_{0}\eta -\beta''_{1} -\frac{1}{2}\left(1+(-1)^{j}\right)\beta 
    +\frac{1}{2}\left(1-(-1)^{j}\right) \gamma
    +b_{2k_{2}}\eta- \gamma'_{2k+1}.
    \label{eq:homotopy.beta}
\end{equation}
We note that $ b_{0}\eta_{j}^{t}(a^{0},\ldots,a^{2k+2})-\beta''_{1}(a^{0},\ldots,a^{2k+2})$ is equal to
\begin{equation}
   \Str \left( \alpha_{j}(a^{0}) \left( \alpha_{j}(a^{1})-D^{-1}\sigma(a^{1})D\right) D^{-1}[D,a^{2}]_{\sigma}\cdots \delta_{t}(a^{j+1}) \cdots 
     D^{-1}[D,a^{2k+2}]_{\sigma}\right). 
     \label{eq:homotopy.b0+2etaj-beta1}
\end{equation}We also observe that 
\begin{align*}
b_{2k+2}\eta_{j}^{t}(a^{0},\ldots,a^{2k+2})     & =  \Str \left( \alpha_{j}(a^{2k+2})\alpha_{j}(a^{0})D^{-1}[D,a^{1}]_{\sigma}\cdots \delta_{t}(a^{j}) \cdots 
     D^{-1}[D,a^{2k+2}]_{\sigma}\right) \\
     & = \Str \left( \alpha_{j}(a^{0})D^{-1}[D,a^{1}]_{\sigma}\cdots \delta_{t}(a^{j}) \cdots 
     D^{-1}[D,a^{2k+2}]_{\sigma}\alpha_{j}(a^{2k+2})\right). 
\end{align*}Thus $ b_{2k+2}\eta_{j}^{t}(a^{0},\ldots,a^{2k+2})-\beta_{2k+1}(a^{0},\ldots,a^{2k+2})$ is equal to
\begin{equation}
     \Str \left( \alpha_{j}(a^{0})D^{-1}[D,a^{1}]_{\sigma}\cdots \delta_{t}(a^{j}) \cdots 
     D^{-1}[D,a^{2k+2}]_{\sigma} \left( \alpha_{j}(a^{2k+2})-a^{2k+2}\right)\right).
     \label{eq:homotopy.b2k+2etaj-beta2k+1}
\end{equation}

Suppose that $j$ is even, so that $\alpha_{j}(a)=a$. Then~(\ref{eq:homotopy.b2k+2etaj-beta2k+1}) shows that $b_{2k+2}\eta-\beta_{2k+1}=0$. Moreover, 
$\alpha_{j}(a)-D^{-1}\sigma(a)D=D^{-1}[D,a]_{\sigma}$, and so using~(\ref{eq:homotopy.b0+2etaj-beta1}) we see that $b_{0}\eta-\beta''_{1}=\beta$. 
Therefore, in this case~(\ref{eq:homotopy.beta}) gives 
\begin{equation*}
    b\eta_{j}^{t}= \beta -\frac{1}{2}\left(1+1\right)\beta +\frac{1}{2}\left(1-1\right) 
    \gamma+0=0. 
\end{equation*}
When $j$ is odd, $\alpha_{j}(a)=D^{-1}\sigma(a)D$, and we similarly find that $b_{0}\eta-\beta''_{1}=0$ and 
$b_{2k+2}\eta-\beta_{2k+1}=-\gamma$. Thus, in this case~(\ref{eq:homotopy.beta}) gives
\begin{equation*}
    b\eta_{j}^{t}=0+ -\frac{1}{2}\left(1-1\right)\beta +\frac{1}{2}\left(1+1\right) \gamma-\gamma=0.
\end{equation*}
In any case, $\eta_{j}^{t}$ is a Hochschild cocycle. The  proof is complete.
\end{proof}

Let us go back to the proof of Proposition~\ref{prop:homotopy-invariance}. In the same way as in the proof of Lemma~\ref{lem:homotopy.differentiability-taut} it can be shown that each family $(\eta^{t}_{j})_{0\leq t \leq 1}$ is a 
continuous family of cochains. Note also that these cochains are normalized. Let $\eta$ be the $(2k+1)$-cochain defined by
\begin{equation*}
    \eta = \sum_{j=1}^{2k+1}\int_{0}^{1}\eta_{j}^{t}dt. 
\end{equation*}
It follows from~(\ref{eq:homotopy.F-ddt-int})--(\ref{eq:homotopy.int-ddt}) and Lemma~\ref{lem:homotopy.differentiability-taut} that
\begin{equation}
    B\eta = \int_{0}^{1} \biggl(\sum_{j=1}^{2k+1}B\eta_{j}^{t}\biggr)dt= (2k+1)c_{k}^{-1}\int_{0}^{1} \left( 
    \frac{d}{dt}\tau_{2k}^{D_{t}}\right)dt= (2k+1)c_{k}^{-1}(\tau_{2k}^{D_{1}}-\tau_{2k}^{D_{0}}).
    \label{eq:homotopy.Beta}
\end{equation}
Moreover, using~(\ref{eq:homotopy.F-ddt-int}) and Lemma~\ref{lem:homotopy.betaj=0} we get
\begin{equation*}
    b\eta =  \sum_{j=1}^{2k+1}\int_{0}^{1}b\eta_{j}^{t}dt=0.
\end{equation*}
In particular, as $\eta$ is a normalized cochain and $b\eta$ is cyclic, we may apply~(\ref{eq:SB=-b}) to get
\begin{equation*}
(2k+1)c_{k}^{-1}(S\tau_{2k}^{D_{1}}-S\tau_{2k}^{D_{0}})=SB\eta=-b\eta=0 \qquad \text{in $\HC^{2k+2}(\cA)$}.
\end{equation*}As by Remark~\ref{rmk:tau2k+2.cohomologous.Stau2k} we know that $\tau_{2k+2}^{D_{j}}$ and $S\tau_{2k}^{D_{j}}$ are cohomologous in 
$\HC^{2k+2}(\cA)$, it follows that so are the cocycles $ \tau_{2k+2}^{D_{0}}$ and $\tau_{2k+2}^{D_{1}}$. This proves 
the 2nd part of Proposition~\ref{prop:homotopy-invariance}. This also implies that $\tau_{k}^{D_{0}}$ and $\tau_{k}^{D_{1}}$ define the same 
class in $\HP^{0}(\cA)$, and so the twisted spectral triples $(\cA,\cH,D_{0})_{\sigma}$ and $(\cA,\cH,D_{1})_{\sigma}$ 
have same Connes-Chern character in $\HP^{0}(\cA)$. This completes the proof of Proposition~\ref{prop:homotopy-invariance}

 \begin{remark}
 By using the bounded Fredholm module pairs associated to a twisted  spectral triple in~\cite{CM:TGNTQF}, we also can deduce 
 Proposition~\ref{prop:homotopy-invariance}  from the homotopy invariance of the Connes-Chern character of a bounded 
 Fredholm module in~\cite[Part~I, \S 5]{Co:NCDG}. 
\end{remark}


\begin{thebibliography}{GMT}

\bibitem[At]{At:GATEDO} Atiyah, M.:  \emph{Global aspects of the theory of elliptic differential operators.} Proc.\ 
Internat.\ Congr.\ Math.\ (Moscow, 1966), pp.\ 57--64, Izdat.\ ``Mir'',  Moscow, 1968.    
    
    \bibitem[AS1]{AS:IEO1}  Atiyah, M.,  Singer, I.:  \emph{The index of elliptic operators. I.}
  Ann.\ of Math.\ (2) \textbf{87} (1968), 484--530.  
   
  \bibitem[AS2]{AS:IEO3}  Atiyah, M.,  Singer, I.:   \emph{The index of elliptic operators. III.}
   Ann.\ of Math.\ (2) \textbf{87} (1968), 546--604. 
   
   \bibitem[BGV]{BGV:HKDO} Berline, N., Getzler, E., Vergne, M.: 
   \emph{Heat kernels and Dirac operators}. Springer-Verlag, Berlin, 1992. 
   
\bibitem[Bl]{Bl:KTOA} Blackadar, B.:
\emph{{$K$}-theory for operator algebras.}
Mathematical Sciences Research Institute Publications Vol.~5, 2nd edition, Cambridge University Press, 1998.

  \bibitem[BF]{BF:APDOIT} Block, J.; Fox, J.: \emph{Asymptotic pseudodifferential operators and index theory.}
  \emph{Geometric and topological invariants of elliptic operators} (Brunswick, ME, 1988), \textbf{132}, 
  Contemp.\ Math.\, \textbf{105}, Amer.\ Math.\ Soc.\, Providence, RI, 1990.  
 
\bibitem[Bo]{Bo:POKTSDNC}  Bost, J.B.:  \emph{Principe d'{O}ka, {$K$}-th\'eorie et syst\`emes dynamiques non
              commutatifs.} Inv.\ Math.\ \textbf{101} (1990), 261--333.

\bibitem[BG]{BG:SODVM} Bourguignon, J.-P.; Gauduchon, P.\
\emph{Spineurs, op\'erateurs de Dirac et variations de m\'etriques.} 
Comm.\ Math.\ Phys.\ \textbf{144} no.\ 3, (1992), 581--599. 
  
 \bibitem[Co1]{Co:NCDG} Connes, A.: \emph{Noncommutative differential geometry.} Inst.\ Hautes \'Etudes Sci.\ Publ.\ Math.\ 
 \textbf{62} (1985), 257--360.
 
\bibitem[Co2]{Co:CCTFCF} Connes, A.: \emph{Cyclic cohomology and the transverse fundamental class
of a foliation.} \emph{Geometric methods in operator algebras} (Kyoto, 1983), pp.\ 52--144, Pitman Res.\ Notes in Math.\ 123 Longman, Harlow (1986).
 
 \bibitem[Co3]{Co:ECCBACTSFM} Connes, A.: \emph{Entire cyclic cohomology of Banach algebras and characters of $\theta$-summable Fredholm modules}. 
$K$-Theory \textbf{1} (1988), no.\ 6, 519--548. 
  
 \bibitem[Co4]{Co:NCG} Connes, A.:  \emph{Noncommutative geometry}. Academic Press,  San Diego, 1994.
 
 \bibitem[CM1]{CM:LIFNCG}  Connes, A., Moscovici, H.: \emph{The local index formula in
 noncommutative geometry}. Geom.\ Funct.\ Anal.\ \textbf{5} (1995), 174--243. 

\bibitem[CM2]{CM:HACCTIT} Connes, A.; Moscovici, H.: \emph{Hopf algebras, cyclic cohomology and the transverse index 
theorem.} Comm.\ Math.\ Phys.\ \textbf{198} (1998), no.\ 1, 199--246.

\bibitem[CM3]{CM:DCCHASTG} Connes, A.; Moscovici, H.: \emph{Differentiable cyclic cohomology Hopf algebraic structures in transverse geometry.}
Monographie 38 de L'Enseignement Math\'ematique, pp.\ 217--256, 2001. 
 
 \bibitem[CM4]{CM:TGNTQF} Connes, A., Moscovici, H.: \emph{Type III and spectral triples.} 
Traces in Geometry, Number Theory and Quantum Fields, Aspects of Mathematics \textbf{E38}, Vieweg Verlag 2008, 57--71. 

\bibitem[CM5]{CM:MCNC2T} Connes, A., Moscovici, H.: \emph{Modular curvature for noncommutative two-tori.} E-print, arXiv, 
Oct.~2011, 43 pages. 

\bibitem[CT]{CT:GBTNC2T} Connes, A.; Tretkoff, P.: 
\emph{The Gauss-Bonnet theorem for the noncommutative two torus.} \emph{Noncommutative geometry, arithmetic, and 
related topics}, pp.\ 141--158, Johns Hopkins Univ.\ Press, Baltimore, MD, 2011.

\bibitem[DA]{DA:QGTST} D'Andrea, F.: \emph{Quantum Groups and Twisted Spectral Triples.} E-print, arXiv, February 2007. 

\bibitem[FK1]{FK:TSTCCF} Fathizadeh, F.; Khalkhali, M.: \emph{Twisted spectral triples and Connes' character 
formula.}  \emph{Perspectives on Noncommutative Geometry}, Fields Institute Communications Series Vol.\ 61, 2011, pp.\ 
79--102. 

\bibitem[FK2]{FK:GBTNC2TGCS} Fathizadeh, F.; Khalkhali, M.: \emph{The Gauss-Bonnet theorem for noncommutative two tori 
with a general conformal structure.} J.\ Noncommut.\ Geom.\ \textbf{6} (2012), 457--480. 

\bibitem[GS]{GS:OCCTSM} Getzler, E.; Szenes, A: \emph{On the Chern character of a theta-summable module.} J.\ Funct.\ Anal.\ 
\textbf{84} (1989), 343--357. 

\bibitem[GMT]{GMT:T3sSTQSS} Greenfield, M.; Marcolli, M.; Teh, K.: \emph{Twisted spectral triples and quantum 
statistical mechanical systems.}  p-Adic Numbers Ultrametric Anal.\ Appl.\ \textbf{6} (2014), no. 2, 81--104.

\bibitem[Gu]{Gu:NPWF} Guillemin, V.:  
  \emph{A new proof of Weyl's formula on the asymptotic distribution of eigenvalues}. 
  Adv.\ in Math.\ \textbf{55} (1985), 131--160. 
  
  \bibitem[Hi]{Hi:RITCM} Higson, N.: \emph{The residue index theorem of Connes and Moscovici.}
 \emph{Surveys in Noncommutative Geometry}, pp.\ 71--126, Clay Mathematics Proceedings 6, AMS, Providence, 2006.
 
\bibitem[Hi]{Hi:HS} Hitchin, N.: \emph{Harmonic spinors.} Adv.\ Math.\ \textbf{14} (1974), 1--55.

\bibitem[H\"o]{Ho:CPAM} H\"ormander, L.: \emph{The Weyl calculus of pseudodifferential operators.} Comm.\ Pure Appl.\ Math.\ \textbf{32} (1979), 359--443.

\bibitem[IM]{IM:CPEST} Iochum, B.; Masson, T.: \emph{Crossed product extensions of spectral triples.} E-print, arXiv, June 2014.

\bibitem[JLO]{JLO:CMP} Jaffe, A.; Lesniewski, A.; Osterwalder, K.: \emph{Quantum $K$-theory, I. The Chern character}.  Comm.\ Math.\ Phys.\ \textbf{118} (1988), 1--14. 

\bibitem[KS]{KS:TSTQSU2} Kaad, J.; Senior, R.: \emph{A twisted spectral triple for quantum $SU(2)$}. 
J.\ Geom.\ Phys.\ \textbf{62} (2012), 731--739. 

\bibitem[Ka1]{Ka:JAlg87} Kassel, C.: \emph{Cyclic homology, comodules and mixed complexes}. J.\ Algebra \textbf{107} (1987), 195--216.

\bibitem[Ka2]{Ka:RNC} Kassel, C.: \emph{Le r\'esidu non commutatif (d'apr\`es M.\ Wodzicki)}. S\'eminaire Bourbaki,
Vol.\ 1988/1989. Ast\'erisque No.\ 177--178 (1989), Exp.\ No.\ 708, 199--229.

\bibitem[KW]{KW:TSTCDC} Kraehmer, U.; Wagner, E.: \emph{Twisted spectral triples and covariant differential calculi.} Banach Center Publications \textbf{93} (2011), 177--188. 

\bibitem[Lo]{Lo:CH} Loday, J.-L.: \emph{Cyclic homology.} Springer, Berlin, 1992.
 
\bibitem[Ma]{Ma:QGP} Majid, S.: \emph{A quantum groups primer}. London Mathematical Society Lecture Note Series, 292. Cambridge University Press, Cambridge, 2002. 

\bibitem[MN]{MN:HPDO1MB} Melrose, R.; Nistor, V.: \emph{Homology of pseudodifferential operators I. Manifolds with boundary.} E-print, arXiv, 
June 96. 

\bibitem[MoN]{MN:IHPDOMB} Moroianu, S.; Nistor, V.: \emph{Index and homology of pseudodifferential operators on manifolds with boundary.} 
Perspectives Oper.\ Algebras Math.\ Phys., Theta (2008), 123--148. 

\bibitem[Mo1]{Mo:EIPDNCG} Moscovici, H.: \emph{Eigenvalue inequalities and Poincar\'e duality in noncommutative geometry}. Comm.\ Math.\ 
Phys.\ \textbf{184} (1997), no. 3, 619--628.

\bibitem[Mo2]{Mo:LIFTST} Moscovici, H.: \emph{Local index formula and twisted spectral triples.} \emph{Quanta of maths}, 
pp.\ 465--500, Clay Math.\ Proc.\, 11, Amer.\ Math.\ Soc.\, Providence, RI, 2010.

  \bibitem[Po]{Po:CMP} Ponge, R.: \emph{A new short proof of the local index formula and some of its applications.} 
  Comm.\ Math.\ Phys.\ \textbf{241} (2003), 215--234. 
 
  
\bibitem[PW1]{PW:NCGCGI.PartI} Ponge, R.; Wang, H.: \emph{Noncommutative geometry and conformal geometry. I. Local 
index formula and conformal invariants}. E-print, arXiv, November 2014.  

\bibitem[PW2]{PW:NCGCGI.PartIII} Ponge, R.; Wang, H.: \emph{Noncommutative geometry and conformal geometry. III.
Vafa-Witten inequality and Poincar\'e duality}. E-print, arXiv, October 2013. 

\bibitem[Qu]{Qu:ACCC} Quillen, D.: \emph{Algebra cochains and cyclic cohomology}. Publ.\ Math.\ IHES \textbf{68}
(1989) 139--174.

\bibitem[Ts]{Ts:UMN} Tsygan, B.L.: \emph{Homology of matrix Lie algebras over rings and Hochschild homology.} Uspekhi Math.\ Nawk.\ \textbf{38} (1983), 217--218. 

  \bibitem[Wo1]{Wo:LISA} Wodzicki, M.: \emph{Local invariants of spectral asymmetry}. 
  Invent.\ Math.\ \textbf{75} (1984), 143--177. 

  \bibitem[Wo2]{Wo:Thesis} Wodzicki, M.: \emph{Spectral asymmetry and noncommutative residue}. Doctoral thesis, Steklov Institute
(former) Soviet Academy of Sciences, Moscow (1984) (in Russian).
\end{thebibliography}
\end{document}